\documentclass{amsart}

\usepackage{amsmath,amsthm,amssymb,amscd}
\usepackage{tikz}
\usepackage[all,cmtip,color]{xy}
\usepackage[german,english]{babel}
\usepackage{amsmath}
\usepackage{hyperref}
\usepackage{mathrsfs} 
\usepackage{todonotes}
\usepackage[all,cmtip]{xy}
\setcounter{MaxMatrixCols}{30}%
\usepackage{geometry}
\usepackage{enumerate}
\usepackage{mathtools}
\usepackage{wasysym}
\usepackage{colonequals}
\usepackage[export]{adjustbox}
\usepackage{graphicx}
\usepackage{wrapfig}
\usepackage{units}
\newcommand{\bd}{\begin{displaymath}}
\newcommand{\ed}{\end{displaymath}}
\newcommand{\on}{\operatorname}

\DeclareMathOperator{\Hom}{\mathsf{Hom}}

\DeclareMathOperator{\AbGrp}{\mathsf{AbGrp}}

\DeclareMathOperator{\pf}{\mathfrak{p}}
\DeclareMathOperator{\ev}{ev}

\DeclareMathOperator{\sing}{sing}

\DeclareMathOperator{\Hb}{\mathbb{H}}

\DeclareMathOperator{\Pic}{\mathsf{Pic}}

\DeclareMathOperator{\tr}{\mathsf{tr}}

\DeclareMathOperator{\mon}{\mathrm{mon}}

\DeclareMathOperator{\spe}{\mathrm{sp}}

\DeclareMathOperator{\const}{const}

\DeclareMathOperator{\Ho}{\mathsf{Ho}}
\DeclareMathOperator{\Nb}{\mathbb{N}}

\newcommand{\Sh}{\mathsf{Sh}^{\wedge}}
\renewcommand{\Pr}{\mathsf{Pr}}

\DeclareMathOperator{\Sheaves}{\mathsf{Sh}}

\DeclareMathOperator{\E}{\mathcal{E}}

\DeclareMathOperator{\Cb}{\mathbb{C}}
\DeclareMathOperator{\Fb}{\mathbb{F}}
\DeclareMathOperator{\Bc}{\mathcal{B}}
\DeclareMathOperator{\Xc}{\mathcal{X}}
\DeclareMathOperator{\Yc}{\mathcal{Y}}
\DeclareMathOperator{\Zc}{\mathcal{Z}}
\DeclareMathOperator{\Oc}{\mathcal{O}}

\newcommand{\Cc}{\mathcal{C}}

\newcommand{\Hc}{\mathcal{H}}

\DeclareMathOperator{\Spaces}{Space}
\DeclareMathOperator{\Sets}{Set}

\DeclareMathOperator{\Rat}{\mathsf{Rat}}

\newcommand{\Ktop}{\mathsf{K}^{\rm top}}
\newcommand{\uKtop}{\underline{\mathsf{K}}^{\rm top}}
\DeclareMathOperator{\KtopS}{\mathsf{K}^{\rm top}_S}
\DeclareMathOperator{\KtopC}{\mathsf{K}^{\rm top}_{\Cb}}

\DeclareMathOperator{\an}{\mathsf{an}}

\DeclareMathOperator{\colim}{\mathsf{colim}}
\renewcommand{\lim}{\mathsf{lim}}

\DeclareMathOperator{\perf}{perf}
\DeclareMathOperator{\Perf}{\mathsf{Perf}}

\newcommand{\C}{\mathsf{C}}

\newcommand{\Spectra}{\mathsf{Sp}}

\newcommand*\cocolon{%
        \nobreak
        \mskip6mu plus1mu
        \mathpunct{}%
        \nonscript
        \mkern-\thinmuskip
        {:}%
        \mskip2mu
        \relax
}

\DeclareMathOperator{\Br}{\mathsf{Br}}

\DeclareMathOperator{\Pb}{\mathbb{P}}
\DeclareMathOperator{\Tb}{\mathbb{T}}

\DeclareMathOperator{\dual}{\vee}
\DeclareMathOperator{\id}{id}

\DeclareMathOperator{\Sp}{Sp}

\DeclareMathOperator{\Mm}{\mathcal{M}}
\DeclareMathOperator{\cMm}{\hat{\mathcal{M}}}
\DeclareMathOperator{\hMm}{\check{\mathcal{M}}}

\DeclareMathOperator{\Sb}{\mathbb{S}}

\DeclareMathOperator{\Tt}{\mathcal{T}}
\DeclareMathOperator{\Aa}{\mathcal{A}}

\DeclareMathOperator{\Qb}{\mathbb{Q}}
\DeclareMathOperator{\Zb}{\mathbb{Z}}
\DeclareMathOperator{\Db}{\mathbb{D}}

\DeclareMathOperator{\Fun}{\mathsf{Fun}}
\DeclareMathOperator{\Jac}{\mathsf{Jac}}

\DeclareMathOperator{\op}{\mathsf{op}}

\DeclareMathOperator{\Ab}{\mathbb{A}}

\DeclareMathOperator{\Cat}{\mathsf{Cat}}

\DeclareMathOperator{\F}{\mathcal{F}}

\DeclareMathOperator{\image}{im}

\DeclareMathOperator{\Map}{\mathsf{Map}}
\DeclareMathOperator{\G}{\mathbb{G}}
\DeclareMathOperator{\Eb}{\mathbb{E}}

\DeclareMathOperator{\GL}{GL}
\DeclareMathOperator{\PGL}{PGL}
\DeclareMathOperator{\SL}{SL}

\DeclareMathOperator{\Z}{\mathcal{Z}}
\DeclareMathOperator{\D}{\mathsf{D}}
\DeclareMathOperator{\Ee}{\mathcal{E}}
\DeclareMathOperator{\Kk}{\mathcal{K}}

\DeclareMathOperator{\Spec}{Spec}

\DeclareMathOperator{\Oo}{\mathcal{O}}

\renewcommand{\top}{\mathsf{top}}

\DeclareMathOperator{\Ext}{Ext}

\DeclareMathOperator{\pr}{pr}

\newcommand{\FM}{\mathsf{FM}}
\newcommand{\KU}{\mathsf{KU}}

\newcommand{\MS}{\mu}

\newcommand{\coh}{\mathsf{coh}}

\newcommand{\Lc}{\mathcal{L}}
\newcommand{\gr}{\mathsf{gr}}
\newcommand{\Orb}{\mathsf{Orb}}
\newcommand{\Cov}{\Orb}
\newcommand{\std}{\mathrm{std}}
\newcommand{\act}{\mathrm{act}}

\newcommand{\Maps}{\mathsf{Map}}
\newcommand{\Open}{\mathsf{Open}}

\newcommand{\Top}{\mathsf{Top}}
\newcommand{\TopSt}{\mathsf{TopSt}}
\newcommand{\GTopSt}{\mathsf{G-TopSt}}
\newcommand{\Gc}{\mathcal{G}}

\newcommand{\Sing}{\mathsf{Sing}\;}
\newcommand{\Latt}{\mathsf{SubGrps}}

\AtBeginDocument{%
   \def\MR#1{}
}

\begin{document}

\theoremstyle{definition}
\newtheorem{definition}{Definition}[section]
\newtheorem{rmk}[definition]{Remark}

\newtheorem{example}[definition]{Example} 

\newtheorem{ass}[definition]{Assumption}
\newtheorem{warning}[definition]{Warning}
\newtheorem{porism}[definition]{Porism}
\newtheorem{hope}[definition]{Hope}
\newtheorem{situation}[definition]{Situation}
\newtheorem{construction}[definition]{Construction}

\theoremstyle{plain}

\newtheorem{theorem}[definition]{Theorem}
\newtheorem{proposition}[definition]{Proposition}
\newtheorem{corollary}[definition]{Corollary}
\newtheorem{conj}[definition]{Conjecture}
\newtheorem{lemma}[definition]{Lemma}
\newtheorem{claim}[definition]{Claim}
\newtheorem{cl}[definition]{Claim}
\newtheorem{lemma-definition}[definition]{Lemma-Definition}

\title{Complex $K$-theory of moduli spaces of Higgs bundles}
\author{Michael Groechenig \and Shiyu Shen}
\address{Department of Mathematics, University of Toronto}
\email{michael.groechenig@utoronto.ca}
\address{Department of Mathematics, University of Toronto}
\email{shiyu.shen@utoronto.ca}
\begin{abstract}
We establish an isomorphism of complex $K$-theory of the moduli space $\hMm$ of $``\SL_n"$-Higgs bundles of degree $d$ and rank $n$ (in the sense of Hausel--Thaddeus) and twisted complex $K$-theory of the orbifold $\cMm$ of $\PGL_n$-Higgs bundles of degree $e$, where $(n,d)=(n,e)=1$. Along the way we prove the vanishing of torsion for $H^*(\hMm)$ and certain twisted complex $K$-theory groups of $\cMm$. We also extend Arinkin's autoduality of compactified Jacobian to a derived equivalence between $\SL_n$ and $\PGL_n$-Hitchin systems over the elliptic locus. In the appendix we develop a formalism of $G$-sheaves of spectra, generalising equivariant homotopy theory to a relative setting.
\end{abstract}
\thanks{
Michael Groechenig was supported by an NSERC discovery grant and an Alfred P. Sloan fellowship. Shiyu Shen has received funding from the European Union's Horizon 2020 research and innovation program under the Marie Skłodowska-Curie grant agreement No. 101034413.
}

\maketitle
\tableofcontents

\section{Introduction}

We fix a smooth and projective complex curve $X$ and denote by $\Mm = \Mm^d_e$ the moduli space of rank $n$ and degree $d$ Higgs bundles, where $(n,d)=1$. Following Hausel--Thaddeus \cite{ht}, we fix a degree $d$ line bundle $L \in \Pic^d(X)$ and let $\hMm=\hMm^L_n$ be the moduli space of $``\SL_n"$-Higgs bundles $(E,\theta)$ with determinant $L$. 

For $e \in \Zb$ satisfying $(n,e) = 1$ we let $\cMm = \cMm^e_n$ be the quotient stack 
$$\cMm = [\Mm^{\tr=0}/J]$$
of the moduli space of \emph{trace-free} Higgs bundles, with respect to the natural action of the Jacobian $J$. According to Hausel--Thaddeus \cite{ht}, this quotient stack is endowed with a gerbe $\alpha_L$  (which depends on the choice of $L$).

In this article we will compare complex $K$-theory groups of the underlying complex manifolds/orbifolds associated with the moduli problems above. A superscript ${}^{\an}$ indicates passage from the world of schemes to the complex-analytic world. The topological gerbe associated with an algebraic gerbe $\alpha$ will be denoted by $\tilde{\alpha}$.

Our main results are the following three statements, corresponding to Theorems \ref{thm:main2}, \ref{thm:SLn} \& \ref{thm:PGLn}:

\begin{theorem}\label{main}
\begin{itemize}
\item[(a)] There is an isomorphism of complex $K$-groups $\KU^*(\hMm^{\an}) \simeq \KU^*(\cMm^{\an},\tilde{\alpha})$, which is induced by an equivalence of complex $K$-theory \emph{spectra} $\KU(\hMm^{\an}) \simeq \KU(\cMm^{\an},\tilde{\alpha})$.
\item[(b)] The abelian groups $H^*(\hMm^{\an},\mathbb{Z})$ are free. This implies vanishing of torsion in $\KU^*(\hMm^{\an})$.
\item[(c)] The twisted complex $K$-theory groups $\KU^*(\cMm^{\an},\tilde{\alpha})$ are torsion-free.
\end{itemize}
\end{theorem}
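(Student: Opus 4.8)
I would first reduce all three assertions to (a) together with the torsion-freeness of $H^*(\hMm^{\an},\Zb)$. Given an equivalence of spectra $\KU(\hMm^{\an})\simeq\KU(\cMm^{\an},\tilde{\alpha})$ as in (a), the torsion-freeness in (c) follows at once from that of $\KU^*(\hMm^{\an})$; and the latter follows from $H^*(\hMm^{\an},\Zb)$ being torsion-free by a routine Atiyah--Hirzebruch argument: $\hMm^{\an}$ has the homotopy type of a finite CW complex (it is a smooth quasi-projective variety --- in fact the $\Cb^{*}$-scaling flow retracts it onto the nilpotent cone $h^{-1}(0)$, which is compact since the Hitchin map $h$ is proper), the differentials of its spectral sequence for $\KU$ are rationally trivial by Chern-character degeneration, hence vanish on a torsion-free $E_2$-page, and the resulting finite filtration on $\KU^*(\hMm^{\an})$ has torsion-free graded pieces. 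So the substance is in proving $H^*(\hMm^{\an},\Zb)$ torsion-free and in establishing (a).

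For the first of these I would run the Morse theory of the $\Cb^{*}$-action on $\hMm^{\an}$ scaling the Higgs field, equivalently Hitchin's $L^2$-norm function. Its critical loci are the moduli space of stable bundles with determinant $L$ (the minimum, where $\theta=0$) together with loci of $\Cb^{*}$-fixed Higgs bundles --- systems of Hodge bundles, i.e.\ holomorphic chains of lower rank subject to the determinant constraint. This stratification is perfect over $\Zb$, so $H^*(\hMm^{\an},\Zb)$ is a direct sum of shifted integral cohomologies of the critical loci; torsion-freeness then follows by induction on $n$, the base case being the classical torsion-freeness of $H^*$ of moduli of stable bundles of coprime rank and degree with fixed determinant. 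The laborious point is to establish perfectness and torsion-freeness for the chain moduli occurring as higher critical loci (a parabolic incarnation of the same bootstrap); the analogous input on the $\PGL_n$-side is obtained by a parallel argument on $\cMm$, using the $\Cb^{*}$-action and $J$-equivariance.

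For (a) the plan has three stages. First, over the elliptic locus $B^{\mathrm{ell}}$ of the Hitchin base --- where the spectral curve is integral with planar singularities --- $\hMm^{\mathrm{ell}}$ is a torsor under a relative compactified Prym and $\cMm^{\mathrm{ell}}$ is a gerbe over its dual, and extending Arinkin's autoduality of compactified Jacobians through the Prym/norm-map formalism I would produce a Poincar\'e-type kernel on $\hMm^{\mathrm{ell}}\times_{B^{\mathrm{ell}}}\cMm^{\mathrm{ell}}$ inducing an equivalence of twisted derived categories $\D^b_{\coh}(\hMm^{\mathrm{ell}})\simeq\D^b_{\coh}(\cMm^{\mathrm{ell}},\alpha)$. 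Second, I would pass to topology: Blanc's topological $K$-theory is a localizing, hence Morita-invariant, functor on dg-categories which computes $\KU$ of the analytification of a smooth variety, and the appendix's formalism of $G$-sheaves of spectra supplies its twisted and orbifold refinements, so the derived equivalence upgrades to an equivalence of spectra $\KU(\hMm^{\mathrm{ell},\an})\simeq\KU(\cMm^{\mathrm{ell},\an},\tilde{\alpha})$. Third, I would extend the kernel to a global correspondence on $\hMm\times\cMm$, which still defines a map of spectra $\KU(\hMm^{\an})\to\KU(\cMm^{\an},\tilde{\alpha})$ even though it is known to be an equivalence only over $B^{\mathrm{ell}}$, and prove that this global map is an equivalence; then (a), and with it (c), follows.

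The third stage is where I expect the real difficulty to lie, since complex $K$-theory does not ignore positive-codimension loci and one cannot merely invoke that $B\setminus B^{\mathrm{ell}}$ has high codimension. Instead I would combine: proper/cdh descent for $\KU$ of analytic spaces, reducing the comparison to a blow-up-type square along $h^{-1}(B\setminus B^{\mathrm{ell}})$; the $\Cb^{*}$-equivariant contraction of $\hMm$ onto a neighbourhood of the nilpotent cone; the full-support property of the summands of $R h_*$ (Ng\^{o}'s support theorem in the $\delta$-regular Hitchin setting), which forces any ``defect'' of the global map to be supported over $B\setminus B^{\mathrm{ell}}$ and then shows it vanishes; and the integral torsion-freeness from (b), which lifts the resulting rational statement to $\Zb$-coefficients and to $\KU$, and disposes of the extension problems in the associated spectral sequences. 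The $\PGL_n$-side runs in parallel, the gerbe being carried throughout by the $G$-sheaves-of-spectra formalism; working at that level --- rather than with bare homotopy groups --- is precisely what makes every comparison above an equivalence of spectra, as (a) demands.
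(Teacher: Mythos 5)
Your outline of stages one and two for (a) does match the paper (Arinkin-type kernel over the elliptic locus, then Blanc--Moulinos topological $K$-theory and its equivariant/twisted refinement), but the overall logical architecture has a genuine circularity. You propose to prove (a) first and then deduce (c) from it, while in your third stage the passage from the rational statement to an integral equivalence of spectra is supposed to be "lifted" using only the torsion-freeness from (b). That lift needs torsion control on \emph{both} sides: a map of spectra inducing an isomorphism rationally and on torsion-free quotients is an equivalence only if both $\KU^*(\hMm^{\an})$ and $\KU^*(\cMm^{\an},\tilde{\alpha})$ are torsion-free. Torsion-freeness of the twisted orbifold side is exactly assertion (c), and it is not cheap: $H^*(\cMm^{\an},\Zb)$ genuinely has torsion, so no Atiyah--Hirzebruch argument applies, and the paper proves (c) independently (Theorem \ref{thm:PGLn}) by a separate mechanism --- Fourier--Mukai duality for the $J$-torsor $\Mm^0\to\cMm$ and its $[n]$-pullback, the Hausel--Thaddeus fixed-point description $\Mm^{\gamma}\simeq\widetilde{M}_{\gamma}/\mu_r$ feeding an induction on rank through the concentration/localisation theorem in equivariant $K$-theory, and a lemma about locally constant sheaves of spectra on tori trivialised by $p$-power isogenies. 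Without an independent proof of (c) (or some other control of torsion on the $\PGL_n$ side), your stage three cannot close, and deducing (c) from (a) afterwards is circular.

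Two further points. For the canonical divisor your third stage leans on "Ng\^o's support theorem in the $\delta$-regular setting" plus cdh-type descent to force the defect over $\Bc\setminus\Bc^{\heartsuit}$ to vanish; but the needed full-support (middle-extension) property is precisely what is \emph{not} available for the $\SL_n/\PGL_n$ Hitchin map with $D=K$, which is why the paper, following Maulik--Shen, first proves the statement for $\deg D>2g-2$ (where de Cataldo's supports give the middle-extension hypothesis over the elliptic locus, and Lemma \ref{lemma:p-retraction} plays the role of your "defect vanishes" step) and then descends to the canonical case by an \emph{equivariant vanishing-cycle} argument ($\phi_{\MS\circ\chi}$ applied to the twisted equivariant $\KU$-sheaves, Section \ref{ssec:canonical}); this step is absent from your plan and cannot be replaced by descent alone. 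Finally, your route to (b) via integral perfection of the $\Cb^{*}$-stratification is different from the paper's: the paper never proves torsion-freeness of the fixed-determinant chain moduli directly, but instead works motivically (Garcia-Prada--Heinloth--Schmitt and Gillet--Soul\'e), expressing $[\hMm\times J]$ through covers of products of symmetric powers of the curve and checking equality of $\Qb$- and $\Fb_p$-Poincar\'e polynomials there; the step you yourself flag as "laborious" (perfection over $\Zb$ and torsion-freeness of the higher critical loci with determinant constraint) is exactly the part your sketch leaves unproved.
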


The $n=2$ case of assertion (b) was communicated to us by Tom Baird (unpublished). 

\begin{rmk}
We define the twisted complex $K$-theory $\KU^*(\cMm^{\an},\tilde{\alpha})$ of the orbifold $\cMm^{\an}$ to be $\tilde{\alpha}$-twisted $J^{\an}$-equivariant complex $K$-theory of $(\Mm^{\tr=0})^{\an}$.

Equivalently, one can define it as $\tilde{\alpha}$-twisted $\Gamma$-equivariant complex $K$-theory of $(\hMm^e_n)^{\an}$, where $\Gamma=J[n]=H^1(X^{\an},\mu_n(\Cb))$.
\end{rmk}

The rationalisation of statement (a) above, is expected to hold for numerical reasons. Indeed, the rank of $\KU^i(\hMm^{\an})_{\Qb}$ (for $i=0,1$) is given by 
$\sum_{j-i\text{ even}} b_j(\hMm^{\an}),$
whereas the rank of $\KU^j(\cMm^{\an},\tilde{\alpha})$ is equal to
$\sum_{j-i\text{ even}} b_j^{\rm orb}(\cMm^{\an},\tilde{\alpha}),$
where $b_j^{\rm orb}$ denotes Betti numbers for twisted orbifold cohomology. The first equation is a well-known consequence of rational degeneration of the Atiyah-Hirzebruch spectral sequence, whereas the second equality follows from the work of Freed--Hopkins--Telemenan \cite{fht}. We therefore see that the equality of ranks is already known to hold, as a consequence of the main result of \cite{gwz17}, which resolves the Hausel--Thaddeus conjecture.

The interest of (a) lies in the fact that we can exhibit an explicit isomorphism, which stems in fact from an equivalence of $K$-theory spectra.

From a philosophical point of view, our main results confirm the expectation that there should be a twisted derived equivalence 
$$D^b_{\coh}(\hMm) \cong D^b_{\coh}(\cMm,\alpha),$$
extending the usual Fourier-Mukai transform between an abelian variety and its dual. Indeed, if such a derived equivalence was known to exist, one could use $K$-theoretic Fourier-Mukai transform to show the existence of an equivalence of spectra as in Theorem \ref{main}(a) above. We emphasise that our equivalence of $K$-theory spectra is indeed induced by a Fourier-Mukai transform, with the one caveat that we also need to apply the vanishing cycle construction of Maulik \& Junliang Shen \cite{MS}.

Another viewpoint on or main result goes right back to the original motivation for \cite{ht}. According to modern developments in the subject, one expects mirror partners to have isomorphic complex $K$-groups, possibly up to shift, and including possible torsion parts. Such an isomorphism is not expected to exist for \emph{integral singular cohomology}. This is related to the physics slogan that \emph{$D$-brane charges are valued in $\KU$}. We refer the reader to Treumann's \cite{treumann} for a mathematical exposition of this conjectural statement and some non-trivial examples. Treumann emphasises that such an isomorphism stems from an equivalence of complex $K$-theory spectra, an important quality that also plays a key role in our construction of the isomorphism.

In light of \cite{ht} we should think of the hyperk\"ahler manifolds $\hMm^{\an}$ and $\cMm^{\an}$ as being mirror partners (up to a hyperk\"ahler rotation). Our equivalence (a) above, can therefore be thought of as a $K$-theoretic mirror test for moduli spaces of Higgs bundles.

Section \ref{sec:torsion} is devoted to our second main result: vanishing of torsion  for singular cohomology of the moduli space $\check{\mathcal{M}}$ of ``$\SL_n$" Higgs bundles (Theorem \ref{main}(b)). This relies on the Garcia-Prada--Heinloth--Schmitt method \cite{GPHS,gph13}, which exhibits an algorithm expressing the class of the moduli space of Higgs bundles in a certain completion of the Grothendieck group of varieties. This yields a formula equating $[\check{\mathcal{M}}]$ in terms of of the motives of certain finite covers over symmetric powers of the base curve $C$. It then suffices to prove vanishing of torsion for the singular cohomology of these finite covers.

 After this part of the paper was completed we learnt from Victoria Hoskins and Simon Pepin-Lehalleur that they obtained closely related results on the Voevodsky motive of $\hMm$ \cite{HPL}. Their paper contains a description of the Voevodsky motive (with rational coefficients) of $\hMm$, which resembles the results in our section \ref{sec:torsion}. Nonetheless, it would not be possible to obtain our result from theirs and vice versa. 
 
\subsubsection{Methods}

The overall strategy underlying the proof of our main result should be applicable to other conjectural mirrors or Fourier-Mukai partners. Section \ref{sec:spreadingout} contains a potentially widely applicable theorem about spreading out a derived equivalence as an isomorphism of complex $K$-theory groups. Our work relies heavily on the following results by other authors:

\begin{itemize}
    \item Complex $K$-theory for DG categories (due to Blanc and Moulinos)
    \item the Decomposition Theorem by Beilinson--Bernstein--Deligne--Gabber,
    \item Arinkin's autoduality for integral spectral curves,
    \item de Cataldo's work on supports for ``$\SL_n$" Higgs bundles,
    \item the vanishing cycle method by Maulik and Junliang Shen.
\end{itemize}

 In addition to the above, we had to extend some known results in a non-trivial way:
 
 \begin{itemize}
    \item Generalisation of the main result of Moulinos's \cite{moulinos} to finite abelian quotient stacks (\ref{ssec:moulinos2})
    \item Extension of Arinkin's derived equivalence to ``$\SL_n$" and $\PGL_n$ Higgs bundles (\ref{ssec: coherent duality})
    \item Formalism of $G$-sheaves of spectra and vanishing cycles, i.e., a relative version of equivariant homotopy theory (\ref{sec:app})
\end{itemize}

Concerning the second item, we remark that a \textit{fibrewise} version of this equivalence was recently constructed by Franco--Hanson--Ruano  (see \cite{FHR}). 

The last item is the content of the appendix. This formalism is required to apply the vanishing cycle computation of Maulik \& Junliang Shen to twisted equivariant complex $K$-theory.

\subsubsection*{Acknowledgements} 
It is a pleasure to thank Tom Baird for sharing his insights about vanishing of torsion for $H^*(\hMm_2^1)$. Furthermore, we would like to thank him for bringing \cite{gs96} to our attention. We also thank Alexander Kupers for enlightening conversations about the Atiyah-Hirzebruch spectral sequence and for pointing out a reference. We are grateful to Victoria Hoskins and Simon Pepin-Lehalleur for sharing a preprint of their recent paper on a motivic version of topological mirror symmetry and for useful remarks on section \ref{sec:torsion}.

\section{Preliminaries}\label{sec:prelim}

\subsection{Sheaves of spectra}\label{ssec:sheaves}

This subsection loosely follows \cite[Section 1.3.1]{sag}. In \emph{loc. cit.} the more general framework of sheaves on $\infty$-topoi is utilised, whereas we will limit our exposition to sheaves on topological spaces.

Recall that \cite[Definition 6.2.2.6]{htt} associates to a topological space (or more generally a site) an $\infty$-topos $\Sheaves(X)$ of space-valued sheaves on $X$. In 6.5.2 of \emph{loc. cit.} the $\infty$-topos $\Sh(X)$ is defined as the hypercompletion of $\Sheaves(X)$. The objects in $\Sh(X)$ are called hypersheaves. We remark that for the underlying topological spaces of complex varieties (with respect to the standard topology), sheaves and hypersheaves agree (see \ref{sssec:hyper}), and we will therefore often drop the prefix hyper.

\begin{rmk}
According to \cite[Proposition 6.5.2.14]{htt}, $\Sh(X)$ is equivalent to the $\infty$-category of fibrant-cofibrant objects in the category of simplicial presheaves on $X$, endowed with the \emph{local model structure}.
\end{rmk}
By definition, there is an embedding
$$\Sh(X) \subset \Sheaves(X).$$
See \cite[Definition 1.3.1.4]{sag} for the definition below.
\begin{definition}
Let $X$ denote a topological space and $\Cc$ an $\infty$-category admitting arbitrary small limits. We denote by $\Sh_{\Cc}(X)$ the $\infty$-category of limit-preserving functors
$$F\colon \Sh(X)^{\op} \to \Cc.$$
We will refer to it as the $\infty$-category of $\Cc$-valued hypersheaves.
\end{definition}

Of particular importance will be the cases where $\Cc$ is the $\infty$-category of spectra $\Spectra$, or the derived $\infty$-category of chain complexes of rational vector spaces $D({\Qb})$.

The stable $\infty$-categories are endowed with natural $t$-structures $\Sheaves_{\Spectra}(X)$ and $\Sh_{\Spectra}(X)$ (see \cite[Proposition 2.1.1.1]{sag}). Before expanding further on the role of those $t$-structures, we alert the reader of a possible source of confusion.
\begin{warning}\label{warning:grading-conventions}
This article uses both homological and cohomological grading conventions when dealing with $t$-structures on stable $\infty$-categories. The former is in line with conventions in stable homotopy theory and the latter with standard usage in algebraic geometry. In practice this means that for a $t$-structure on a stable $\infty$-category $\Cc$, we may choose to work with the induced homological functors
$$\pi_i\colon\Cc \to \Cc^{\heartsuit}$$
or the induced cohomological functors
$$\Hc^i = \pi_{-i}\colon \Cc \to \Cc^{\heartsuit}.$$
\end{warning}

\begin{example}
For the stable $\infty$-category of $\Spectra$, the homological functors $\pi_i$ compute the stable homotopy groups of a spectrum. Likewise, for $\Sp$-valued hypersheaves we also have a system of homological functors $\pi_i\colon \Sh_{\Sp}(X) \to \Sh_{\Sp}(X)^{\heartsuit}=\Sh_{\AbGrp}(X)$.  Those functors associate to $\F \in \Sh_{\Sp}(X)$ the hypersheafification of the presheaf in abelian groups 
$$U \mapsto \pi_i(\F(U)),\text{ where } U\subset X \text{ is open.}$$
\end{example}

\subsection{Rationalisation}\label{ssec:rat}

\begin{lemma}[Lurie]\label{lemma:ShQ}
For a topological space $X$ and a commutative ring $R$ we denote by $\Sheaves_R(X)$ the category of sheaves of $R$-modules on $X$. There is an equivalence of $\infty$-categories $$\D(\Sheaves_R(X)) \cong \Sh_{\D(R)}(X).$$
\end{lemma}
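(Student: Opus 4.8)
The plan is to identify both sides as localisations of a common presentable stable $\infty$-category and then check the localisations agree. First I would observe that there is a canonical comparison functor: the derived $\infty$-category $\D(\Sheaves_R(X))$ receives the derived functor of the inclusion of discrete sheaves, and by the universal property of $\Sh_{\D(R)}(X)$ as limit-preserving functors $\Sh(X)^{\op}\to\D(R)$ one gets a functor $\D(\Sheaves_R(X))\to\Sh_{\D(R)}(X)$ sending a complex of sheaves to the hypersheafification of its sectionwise value. (Concretely, $\Sh_{\D(R)}(X)$ is the hypercompletion of the $\infty$-topos-theoretic sheaves valued in $\D(R)$, and $\D(R)=\Module_{HR}(\Spectra)$, so $\Sh_{\D(R)}(X)\simeq\Module_{HR}(\Sh_{\Spectra}(X))$ by base change along the symmetric monoidal functor $\Spectra\to\D(R)$.)

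The key step is then to reduce to the module-category statement $\D(\Sheaves_R(X))\simeq\Module_{HR}(\Sh_{\Spectra}(X))$ and to the case $R=\Zb$, i.e. $\D(\mathsf{Ab}\text{-sheaves on }X)\simeq\Sh_{\Spectra}(X)_{\geq 0}$-completed appropriately, or rather its unbounded version. Here I would invoke the general principle (this is exactly the content of \cite[Section 1.3]{sag}, which the excerpt says the subsection follows) that for a Grothendieck abelian category $\mathcal{A}$ with enough structure, the derived $\infty$-category $\D(\mathcal{A})$ is equivalent to the hypercomplete $\Sp$-valued sheaves on the corresponding site when $\mathcal{A}$ is the category of sheaves of abelian groups — this is Lurie's theorem that hypercomplete sheaves of spectra with their natural $t$-structure have heart $\Sheaves_{\mathsf{Ab}}(X)$ and are left-complete, hence recovered from their Postnikov tower, matching the description of $\D(\Sheaves_R(X))$ via injective resolutions. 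The module structure over $HR$ then upgrades this to general $R$ by extension of scalars, using that $\Sheaves_R(X)$-modules in $\Sheaves_{\mathsf{Ab}}(X)$ correspond to $HR$-modules in $\Sh_{\Spectra}(X)$.

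The main obstacle I expect is the completeness/boundedness bookkeeping: $\Sh(X)$ is the \emph{hyper}completed topos, and one must be careful that $\Sh_{\Spectra}(X)$ is left-complete with respect to its $t$-structure so that the unbounded derived category $\D(\Sheaves_R(X))$ — which is by definition left-complete after one passes to a suitable model, but can have subtle behaviour in the unbounded case for non-hypercomplete sheaves — really matches. On the underlying spaces of complex varieties this is not an issue because of finite cohomological dimension (the excerpt notes in \ref{sssec:hyper} that sheaves and hypersheaves agree there), but for a general topological space $X$ one genuinely needs hypercompleteness and left-completeness, and I would want to either cite \cite[Proposition 1.3.1.7 or thereabouts]{sag} or restrict the statement's scope. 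A secondary routine point is checking the comparison functor is symmetric monoidal and preserves colimits, so that it suffices to check it on generators (the sheaves $j_!\underline{R}$ for $j\colon U\hookrightarrow X$ open), where both sides manifestly compute sheaf cohomology $R\Gamma(U,-)$ and agree.
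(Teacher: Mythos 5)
Your outline is essentially an unpacking of the result the paper invokes rather than an alternative to it: the paper's entire proof of Lemma \ref{lemma:ShQ} is the one-line citation of \cite[Theorem 2.1.2.2]{sag}, which is stated for an arbitrary $\infty$-topos in the hypercomplete setting, and the lemma is literally a special case. Your ingredients --- the identification $\Sh_{\D(R)}(X)\simeq \Module_{HR}(\Sh_{\Spectra}(X))$ by base change along $\Spectra\to\D(R)$, the $t$-structure whose heart is $\Sheaves_R(X)$, and checking the comparison on the generators $j_!\underline{R}$ for open immersions $j\colon U\hookrightarrow X$ --- are the right ones, but they amount to reconstructing Lurie's proof, and your own fallback is to cite \cite{sag} for the hard steps, so both routes ultimately rest on the same theorem. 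Two clarifications. First, your concern that one might have to ``restrict the statement's scope'' for a general topological space $X$ is unnecessary: the paper defines $\Sh_{\D(R)}(X)$ from the outset in terms of the hypercompletion $\Sh(X)$, and the cited theorem is proved in exactly that generality, with no paracompactness or finite-dimensionality hypotheses; the remark in \ref{sssec:hyper} about complex varieties is only needed to identify sheaves with hypersheaves, not to make the lemma true. Second, the delicate point you flag is real if one wants a self-contained argument: neither $\D(\Sheaves_R(X))$ nor the hypercomplete sheaf category is left complete in general, so ``same heart plus convergence of Postnikov towers'' is not by itself a proof, and Lurie's argument has to work around this; since you defer to \cite{sag} at precisely that point, there is no gap --- just more work than the paper's proof performs.
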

\begin{proof}
This is a special case of \cite[Theorem 2.1.2.2]{sag}.
\end{proof}

\begin{definition}\label{def:rat}
The \emph{rationalisation} of $\F \in \Sh_{\Spectra}(X)$ is defined to be $\F \wedge \Qb \in \Sh_{\D(\Qb)}(X)$.
\end{definition}

Functoriality of the smash product yields the existence of a rationalisation functor
$$\Rat\colon \Sh_{\Spectra}(X) \to \Sh_{\D(\Qb)}(X).$$
We will also write $\Eb_{\Qb}$ to denote $\Rat(\Eb)$.

Recall from \cite[Definition 7.2.2.10]{ha} that a spectrum $M$ is called flat, if $\pi_0(M)$ is a flat $\Zb$-module and for all $n \in \Nb$ the natural map 
$$\pi_n(\Sb) \otimes \pi_0(M) \to \pi_n(M)$$
is an isomorphism. The Eilenberg--MacLane spectrum $\Qb = H\Qb$ is therefore a flat spectrum. It follows from \cite[Proposition 7.2.2.13]{ha} that there is a natural isomorphism
\begin{equation}\label{eqn:pi-rat}
\pi_n(\Rat(\F))\cong \pi_n(\F) \otimes \Qb.
\end{equation}

\begin{lemma}\label{lemma:Rat-KU}
$\Rat(\underline{\KU}_X) \simeq \underline{\Qb}_{X}[\beta,\beta^{-1}] = \bigoplus_{n \in \Zb}\underline{\Qb}_X[2n].$
\end{lemma}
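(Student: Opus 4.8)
The plan is to compute the rationalisation of the constant sheaf of spectra $\underline{\KU}_X$ by reducing to a statement about the spectrum $\KU$ itself, and then to invoke the identification of sheaves of $D(\Qb)$-modules with the derived category of sheaves of $\Qb$-modules (Lemma \ref{lemma:ShQ}) to reinterpret the answer as a complex of sheaves. First I would observe that rationalisation, being defined via the smash product $\F \mapsto \F \wedge \Qb$, commutes with the formation of constant sheaves: the constant sheaf functor $\Spectra \to \Sh_{\Spectra}(X)$ is symmetric monoidal (it is the pullback along $X \to \ast$), so $\Rat(\underline{\KU}_X) \simeq \underline{\KU \wedge \Qb}$, the constant $D(\Qb)$-valued sheaf associated with the rationalised spectrum $\KU_{\Qb} = \KU \wedge \Qb$.

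Next I would use the classical computation of the rational $K$-theory spectrum. Since $\pi_*(\KU) = \Zb[\beta,\beta^{-1}]$ with $\beta$ in degree $2$, formula \eqref{eqn:pi-rat} gives $\pi_n(\KU_{\Qb}) = \Qb$ for $n$ even and $0$ for $n$ odd. Rationally the Postnikov tower splits (rational spectra are formal, equivalently $D(\Qb)$ is the category of graded $\Qb$-vector spaces up to the relevant equivalence), so $\KU_{\Qb} \simeq \bigoplus_{n \in \Zb} \Sb_{\Qb}[2n] \simeq \Qb[\beta,\beta^{-1}]$ as a $\Qb$-module spectrum, where $\Qb[\beta,\beta^{-1}] = \bigoplus_{n\in\Zb} H\Qb[2n]$. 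Passing to constant sheaves and applying the equivalence of Lemma \ref{lemma:ShQ} (which is monoidal and compatible with constant objects, since it is induced by sheafification) translates this into $\Rat(\underline{\KU}_X) \simeq \bigoplus_{n\in\Zb}\underline{\Qb}_X[2n] = \underline{\Qb}_X[\beta,\beta^{-1}]$, as claimed.

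The one point requiring care — and what I expect to be the main obstacle — is the splitting of $\KU_{\Qb}$ into a sum of shifted Eilenberg--MacLane spectra as an \emph{$H\Qb$-module spectrum}, and verifying that the constant-sheaf functor and the equivalence of Lemma \ref{lemma:ShQ} genuinely send this splitting to the asserted direct sum decomposition of sheaves. The splitting itself is standard: every module over $H\Qb$ is equivalent, in the $\infty$-category of $H\Qb$-modules, to the direct sum of the shifts of its homotopy groups, because $\Mod_{H\Qb}(\Spectra) \simeq D(\Qb)$ and in $D(\Qb)$ every object is formal. The compatibility with constant sheaves is formal once one notes that both functors in question are colimit-preserving (the infinite direct sum is a colimit) and symmetric monoidal, hence preserve the $\beta$-action and the grading; alternatively one can check the statement stalkwise, where it reduces to the spectrum-level computation. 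I would therefore structure the proof as: (i) reduce to constant sheaves via monoidality of rationalisation; (ii) compute $\pi_*(\KU_{\Qb})$ via \eqref{eqn:pi-rat}; (iii) invoke formality of $D(\Qb)$ to get the splitting $\KU_{\Qb} \simeq \bigoplus_n H\Qb[2n]$; (iv) apply Lemma \ref{lemma:ShQ} and compatibility with constant objects to conclude.
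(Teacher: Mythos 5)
Your proposal is correct and follows essentially the same route as the paper: reduce to the point by observing that $\Rat$ commutes with the constant-sheaf (pullback) functor, and then identify $\KU_{\Qb}$ with $\Qb[\beta,\beta^{-1}]$. The only cosmetic difference is that you justify the splitting of $\KU_{\Qb}$ via formality of $H\Qb$-modules, whereas the paper simply cites rational degeneration of the Atiyah--Hirzebruch spectral sequence.
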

\begin{proof}
Since $\Rat$ commutes with pullback it suffices to prove this for $X=\{\bullet\}$ being a singleton. Identifying $\Sh_{\Spectra}(\{\bullet\})$ with $\Spectra$, we are left to prove that the rationalisation of $\KU$ is ${\Qb}[\beta,\beta^{-1}]$. This follows from the rational degeneration of the Atiyah-Hirzebruch spectral sequence.
\end{proof}

\subsection{Topological $K$-theory for DG categories (\emph{d'apr\`es} Blanc and Moulinos)}\label{ssec:moulinos}

Let $S$ be a separated $\Cb$-scheme of finite type. We denote by $S^{\an}$ the associated analytic space. Furthermore, we will denote the $\infty$-category of $\Perf(S)$-linear stable $\infty$-categories by $\Cat^{\perf}(S)$.

\begin{theorem}[Blanc \cite{blanc}]\label{thm:blanc}
There exists a functor
$$\KtopC\colon \Cat^{\perf}(\Cb) \to \Spectra$$
sending $\Perf(S)$ to $KU(S^{\an})$.
\end{theorem}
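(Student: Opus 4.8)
The plan is to recall Blanc's construction of semi-topological $K$-theory of $\Cb$-linear dg categories and to invoke his recognition theorem identifying its Bott-inverted version with topological $K$-theory. Since $\Cat^{\perf}(\Cb)$ is the $\infty$-category underlying the (model) category of small, idempotent-complete $\Cb$-linear dg categories, it suffices to produce a functor out of the latter, and then to check that algebraic $K$-theory, the tensor $(-)\otimes_{\Cb}(-)$, and the topological realisation below all promote to functors of $\infty$-categories (this $\infty$-categorical repackaging of Blanc's construction is where one appeals to modern foundations, following Moulinos's treatment). Concretely, I would start from non-connective algebraic $K$-theory $K \colon \Cat^{\perf}(\Cb) \to \Spectra$ and, for each $\Cc$, form the presheaf of spectra on finite-type affine $\Cb$-schemes
\[
R \longmapsto K(\Cc \otimes_{\Cb} R).
\]
Applying the \emph{topological realisation} --- the colimit-preserving functor $\Fun(\Aff_{\Cb}^{\op}, \Spectra) \to \Spectra$ determined by sending $\Spec R$ to the suspension spectrum of the analytic space $R(\Cb) = \Hom_{\Cb\text{-alg}}(R,\Cb)$ --- yields the semi-topological $K$-theory functor $K^{\mathrm{st}}$, which may be computed as the geometric realisation of $[n] \mapsto K(\Cc \otimes_{\Cb} \Cb[\Delta^n])$ along the algebraic cosimplicial simplex $\Cb[\Delta^{\bullet}]$.

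Next I would invert the Bott element. Because each $\Cc$ is $\Cb$-linear, $K^{\mathrm{st}}(\Cc)$ is naturally a module over $K^{\mathrm{st}}(\Cb)$, and Blanc's identification $K^{\mathrm{st}}(\Cb) \simeq \ku$ furnishes a Bott class $\beta \in \pi_2 K^{\mathrm{st}}(\Cb)$. One then sets
\[
\KtopC(\Cc) \defeq K^{\mathrm{st}}(\Cc)[\beta^{-1}] = \colim\bigl( K^{\mathrm{st}}(\Cc) \xrightarrow{\ \beta\ } \Sigma^{-2} K^{\mathrm{st}}(\Cc) \xrightarrow{\ \beta\ } \Sigma^{-4} K^{\mathrm{st}}(\Cc) \to \cdots \bigr),
\]
which is again functorial in $\Cc$ since every ingredient is. This produces the functor $\KtopC$.

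It then remains to evaluate on $\Perf(S)$. Base change for perfect complexes gives $\Perf(S) \otimes_{\Cb} R \simeq \Perf(S \times_{\Cb} \Spec R)$, so the presheaf above is $R \mapsto K(S \times_{\Cb} \Spec R)$, whose topological realisation agrees, by a theorem of Blanc, with the Friedlander--Walker semi-topological $K$-theory $K^{\mathrm{sst}}(S)$. There is a natural comparison map $K^{\mathrm{sst}}(S) \to \ku(S^{\an})$, and the Friedlander--Walker recognition theorem (in the generality needed for arbitrary separated finite-type $S$) asserts that it becomes an equivalence after inverting $\beta$. Hence $\KtopC(\Perf(S)) \simeq \ku(S^{\an})[\beta^{-1}] = \KU(S^{\an})$.

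The hard part is precisely this last comparison: identifying Bott-inverted semi-topological $K$-theory with complex topological $K$-theory is the deep input, resting ultimately on Suslin rigidity and Friedlander--Walker's comparison of algebraic and topological cycle complexes. By contrast, the construction of $\KtopC$ itself is a formal assembly of known functors, although care is needed in two places: one must perform the topological realisation after a suitable (Nisnevich, and finer still for the recognition theorem) localisation so that it computes the expected answer on schemes, and one needs $K^{\mathrm{st}}(\Cb) \simeq \ku$ in order to have a Bott element available at all.
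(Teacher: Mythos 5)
The paper gives no proof of this statement: Theorem \ref{thm:blanc} is quoted verbatim from Blanc \cite{blanc}, so the only meaningful comparison is with Blanc's own argument, and your outline does follow it --- topological realisation of the presheaf $R \mapsto K(\Cc\otimes_{\Cb}R)$, the identification $K^{\mathrm{st}}(\Cb)\simeq \ku$ supplying a Bott class, Bott inversion, and the Friedlander--Walker comparison (extended by Blanc from the quasi-projective case to arbitrary separated finite-type $S$ by descent) identifying the outcome with $\KU(S^{\an})$ when $\Cc=\Perf(S)$. You also correctly locate the deep input in that last comparison rather than in the formal assembly of the functor.

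One claim in your write-up is wrong, although it is peripheral to the construction: the topological realisation is \emph{not} computed by the geometric realisation of $[n]\mapsto K(\Cc\otimes_{\Cb}\Cb[\Delta^n])$ along the algebraic cosimplicial simplex. That colimit only enforces $\Ab^1$-invariance; for $\Cc=\Perf(\Cb)$ it returns $K(\Cb)$ itself (homotopy invariance of $K$-theory for regular rings), whose $\pi_2$ is uniquely divisible, so there would be no Bott class and the asserted equivalence $K^{\mathrm{st}}(\Cb)\simeq\ku$ --- which your Bott inversion requires --- would fail. The Friedlander--Walker recognition principle that Blanc actually uses computes the realisation with \emph{topological} simplices, i.e. as the realisation of $d\mapsto \colim_{\Delta^d_{\mathrm{top}}\to U^{\an}} K(\Cc\otimes_{\Cb}\Oo(U))$, where the colimit runs over affine $U$ and continuous maps from the topological $d$-simplex to $U^{\an}$. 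With that replacement your construction and the final comparison are exactly Blanc's.
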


This absolute construction for complex-linear DG-categories was extended by Moulinos to a relative set-up in \cite{moulinos}, assigning to a $\Perf(S)$-linear DG-category a sheaf of spectra on $S^{\an}$.

\begin{theorem}[Moulinos]\label{thm:moulinos}
There is a functor 
$$\KtopS\colon \Cat^{\perf}(S) \to \Sh_{\Spectra}(S^{\an}).$$
For $S=\Spec \Cb$ this functor agrees with the one constructed by Blanc.
\end{theorem}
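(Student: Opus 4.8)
The plan is to carry out Blanc's construction of the absolute topological $K$-theory functor $\KtopC$ (Theorem \ref{thm:blanc}) relatively over $S$, i.e.\ with the target $\Spectra$ replaced by $\Sh_{\Spectra}(S^{\an})$. To $\Cc \in \Cat^{\perf}(S)$ one first attaches a presheaf of spectra on the category $\mathrm{Sm}^{\mathrm{aff}}_{/S}$ of smooth affine $S$-schemes,
$$\mathbf{K}_{\Cc} \colon (U \to S) \longmapsto K(\Cc \otimes_{\Perf(S)} \Perf(U)),$$
with $K$ nonconnective algebraic $K$-theory; functoriality in both $U$ and $\Cc$ is inherited from base change and from $K$-theory, the presheaf $\mathbf{K}_{\Perf(S)}$ is an $\mathbb{E}_\infty$-ring object, and every $\mathbf{K}_{\Cc}$ is a module over it. This produces a functor $\Cat^{\perf}(S) \to \Fun((\mathrm{Sm}^{\mathrm{aff}}_{/S})^{\op}, \Spectra)$.

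Next one constructs a relative topological realisation functor $\mathrm{Re}_S \colon \Fun((\mathrm{Sm}^{\mathrm{aff}}_{/S})^{\op}, \Spectra) \to \Sh_{\Spectra}(S^{\an})$. Since the source is the $\infty$-category of spectral presheaves on $\mathrm{Sm}^{\mathrm{aff}}_{/S}$, a colimit-preserving functor out of it is the same as an arbitrary functor $\mathrm{Sm}^{\mathrm{aff}}_{/S} \to \Sh_{\Spectra}(S^{\an})$; one takes the functor sending $U$ to the hypersheafification on $S^{\an}$ of the presheaf $V \mapsto \Sigma^{\infty}_{+} p^{-1}(V)$, where $p \colon U^{\an} \to S^{\an}$ is the analytification of $U \to S$. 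One then defines
$$\KtopS(\Cc) \defeq \mathrm{Re}_S(\mathbf{K}_{\Cc})[\beta^{-1}],$$
the Bott inversion, where $\beta \in \pi_2 \Gamma(S^{\an}, \mathrm{Re}_S(\mathbf{K}_{\Perf(S)}))$ is pulled back from Blanc's Bott class along $S^{\an} \to \{\bullet\}$ and acts on each $\mathrm{Re}_S(\mathbf{K}_{\Cc})$ through the module structure. Functoriality of $\KtopS$ in $\Cc$ is then formal.

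Compatibility with Blanc is immediate: when $S = \Spec \Cb$ the space $S^{\an}$ is a point, $\Sh_{\Spectra}(S^{\an}) \simeq \Spectra$, hypersheafification is trivial, and $\mathrm{Re}_{\Spec \Cb}$ collapses to $h_U \mapsto \Sigma^{\infty}_{+} U^{\an}$, which is exactly Blanc's topological realisation of spectral presheaves on $\mathrm{Sm}^{\mathrm{aff}}_{\Cb}$; the Bott classes match as well, so $\KtopS = \KtopC$ on the nose. Beyond the literal statement, the part that makes this clause consistent with Theorem \ref{thm:blanc} — and is really the heart of the theorem — is the computation that $\KtopS(\Perf(Y)) \simeq (f^{\an})_* \underline{\KU}_{Y^{\an}}$ for every finite-type $f \colon Y \to S$, where $\underline{\KU}$ denotes the constant sheaf of spectra with value $\KU$; for $Y = S$ this reads $\KtopS(\Perf(S)) \simeq \underline{\KU}_{S^{\an}}$, whose global sections $KU(S^{\an})$ recover Blanc's value.

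The main obstacle is this last point, i.e.\ showing that $\mathrm{Re}_S(\mathbf{K}_{\Cc})[\beta^{-1}]$ has the correct stalks rather than merely being some formal sheafification — equivalently, that the stalk at $s \in S^{\an}$ is computed by Blanc's absolute functor $\KtopC$ applied to $\Cc$ restricted to an analytic neighbourhood of $s$, which is precisely what reduces the whole theorem to Blanc's. I would attack this by continuity: algebraic $K$-theory commutes with filtered colimits of rings and, after inverting $\beta$, so does topological realisation, so such a stalk becomes a filtered colimit of absolute topological $K$-theories that one recognises via Theorem \ref{thm:blanc}; ultimately it rests on the Friedlander--Walker comparison of semitopological and topological $K$-theory that already underlies Blanc's identity $\KtopC(\Perf(U)) \simeq KU(U^{\an})$. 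Equivalently, one restricts $\mathbf{K}_{\Perf(S)}$ to the small \'etale site of $S$, shows its analytic realisation is $\underline{\KU}_{S^{\an}}$ (a relative Friedlander--Walker input), and bootstraps the general case. Inverting $\beta$ is indispensable here: it is what renders the realisation insensitive to $\mathbb{A}^1$-homotopy, hence makes these colimit and descent statements go through, and threading the Bott inversion correctly through the argument is the delicate step.
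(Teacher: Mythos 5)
There is no proof of this statement in the paper to compare against: Theorem \ref{thm:moulinos} is imported verbatim from Moulinos \cite{moulinos}, and the paper only ever uses it through Moulinos's actual definition, $\Ktop_S(\Cc)\simeq L_{\KU}\mathsf{An}^*\underline{K}^{\text{\'et}}(\Cc)$, as spelled out in Construction \ref{construction:comparison-map}. Your outline --- form the presheaf $U\mapsto K(\Cc\otimes_{\Perf(S)}\Perf(U))$, apply a topological realisation relative to $S^{\an}$, and invert Bott --- is the right shape and is essentially the strategy of \emph{loc.\ cit.}, so as a reconstruction of the cited construction your plan is sound in spirit.

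Two concrete points, however, do not hold as written. First, the ``on the nose'' agreement with Blanc for $S=\Spec\Cb$ presumes that Blanc's realisation is taken over smooth affine $\Cb$-schemes; it is not --- Blanc realises presheaves on all affine finite-type $\Cb$-schemes --- so Kan-extending only from $\mathrm{Sm}^{\mathrm{aff}}_{/S}$ produces an a priori different functor, and the comparison (even after Bott inversion) needs an argument or a change of indexing category. Second, your ``heart of the theorem'', namely $\KtopS(\Perf(Y))\simeq (f^{\an})_*\underline{\KU}_{Y^{\an}}$ for \emph{every} finite-type $f\colon Y\to S$, is too strong: it amounts to commuting $\Ktop$ with pushforward of categories, which Theorem \ref{thm:moulinos3} gives only for proper $f$, and the paper explicitly cautions that for non-proper morphisms one only has a natural transformation (Proposition \ref{prop:nat-tr}); the case you actually need, $\KtopS(\Perf(S))\simeq\underline{\KU}_{S^{\an}}$, is Theorem \ref{thm:moulinos2} with trivial twist. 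Beyond that, the genuinely non-formal content --- computing stalks (which on $S^{\an}$ are colimits over analytic, not \'etale, neighbourhoods, so naive continuity does not apply), reconciling Bott inversion of the module structure with the sheaf-level $\KU$-localisation in Moulinos's definition, and the Friedlander--Walker input --- is precisely what you defer, and it is where the substance of \cite{moulinos} lies rather than in the formal existence of the functor.
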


\begin{theorem}[Moulinos]\label{thm:moulinos2}
Let $\alpha$ be a Brauer class on $S$ and $\Perf(S,\alpha)$ the corresponding DG-category of $\alpha$-twisted perfect complexes, endowed with the natural $\Perf(S)$-linear structure. Then, $$\Ktop(\Perf(S,\alpha)) \cong \underline{KU}^{\tilde{\alpha}}(S^{\an}),$$ where $\tilde{\alpha}$ denotes the associated withrsion class in $H^3(S^{\an},\Zb)$ and $\underline{KU}^{\tilde{\alpha}}(S^{\an})$ is the corresponding locally constant hypersheaf of $KU$-modules.
\end{theorem}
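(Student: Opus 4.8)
The plan is to reduce the twisted statement to the untwisted functoriality already established in Theorem \ref{thm:moulinos}, using the fact that a Brauer class $\alpha$ is torsion and hence becomes trivial after a finite \'etale cover. Concretely, write $\alpha$ as coming from an Azumaya algebra $\mathcal{A}$ on $S$; then $\Perf(S,\alpha) \simeq \Perf(S,\mathcal{A})$, the category of perfect $\mathcal{A}$-modules, which carries a natural $\Perf(S)$-linear structure. Applying $\KtopS$ to this $\Perf(S)$-linear category produces a sheaf of spectra $\KtopS(\Perf(S,\alpha))$ on $S^{\an}$, and the task is to identify it with the locally constant hypersheaf of $KU$-modules classified by $\tilde\alpha \in H^3(S^{\an},\Zb)_{\mathrm{tors}}$.

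The key steps I would carry out are: (1) Verify that $\KtopS(\Perf(S,\alpha))$ is a sheaf of $\underline{KU}_{S^{\an}}$-modules: the $\Perf(S)$-linear structure induces, after applying Blanc's functor and its relative refinement, a module structure over $\KtopS(\Perf(S)) = \underline{KU}_{S^{\an}}$ by Theorem \ref{thm:moulinos}. (2) Show the sheaf is locally constant; this follows because, \'etale-locally on $S$ (hence on a suitable open cover of $S^{\an}$), the class $\alpha$ trivialises, so $\Perf(S,\alpha)$ becomes Morita-equivalent to $\Perf(S)$ and the sheaf restricts to $\underline{KU}$. Combined with step (1), $\KtopS(\Perf(S,\alpha))$ is a twist of $\underline{KU}_{S^{\an}}$ by an element of $H^1(S^{\an}; \mathrm{BGL}_1(KU))$, i.e. by a class in $H^1(S^{\an};\Zb/2) \times H^3(S^{\an};\Zb)_{\mathrm{tors}} \times \cdots$ — but since $\alpha$ is a genuine Brauer class the only possible twist is in the degree-three summand. (3) Pin down which class: I would use a cocycle/descent argument. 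A \v{C}ech cocycle for $\alpha$ with values in $\mathbb{G}_m$ (valued in $\Cb^\times$ analytically) yields transition equivalences for the local trivialisations of $\Perf(S,\alpha)$; applying $\KtopS$ and using Blanc's identification $\KtopS(\Perf(\Spec\Cb)) = KU$ together with the computation of the automorphisms of $\underline{KU}$ as a sheaf of $KU$-algebras (the relevant Picard group, whose torsion part in degree three is $H^3(S^{\an};\Zb)_{\mathrm{tors}}$), shows the resulting twisting cocycle is exactly the image of $\alpha$ under $H^2(S,\mathbb{G}_m)_{\mathrm{tors}} \to H^2(S^{\an},\underline{\Cb^\times}) \cong H^3(S^{\an},\Zb)_{\mathrm{tors}}$, which is by definition $\tilde\alpha$. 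Alternatively, one can invoke Moulinos's own analysis in \cite{moulinos}, where this twisted case is treated, and simply cite it; the statement as phrased is essentially a restatement of his result, so the "proof" may legitimately be a reference plus the remark that the $\Perf(S)$-linear structure is the evident one.

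The main obstacle is step (3): matching the abstract twist produced by $\KtopS$ on the gerbe-theoretic side with the concrete topological class $\tilde\alpha$. This requires knowing that the functor $\KtopS$ is lax symmetric monoidal (so that module and algebra structures are preserved) and compatible enough with \'etale descent that a $\mathbb{G}_m$-cocycle for $\alpha$ maps to the corresponding $\mathrm{GL}_1(KU)$-cocycle; the subtlety is that $KU$ has more units than $\mathbb{G}_m$ sees, so one must argue that the image lands in the subgroup detected by the Brauer map, using that $\tilde\alpha$ is torsion and arises from the exponential sequence $0 \to \Zb \to \Oo^{\an} \to \Oo^{\an,\times} \to 0$ which governs both $H^2(S^{\an},\Oo^{\an,\times})_{\mathrm{tors}} \cong H^3(S^{\an},\Zb)_{\mathrm{tors}}$ and the relevant component of $\pi_0\mathrm{Pic}(\underline{KU}_{S^{\an}})$. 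Once the monoidality and descent compatibility of $\KtopS$ are in hand — both available from the constructions of Blanc and Moulinos — the identification is forced.
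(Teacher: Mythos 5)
The paper's own ``proof'' is a one-line citation of \cite[Theorem 9.1]{moulinos}, which is precisely the alternative you name in step (3), so your proposal matches the paper's approach. Your longer sketch (Azumaya-module reduction, local triviality, and pinning down the twist via the units of $\KU$) is a reasonable outline of what Moulinos's argument involves, but the paper does not carry any of it out — the delicate identification of the twisting class with $\tilde{\alpha}$ is delegated entirely to the cited reference.
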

\begin{proof}
This is \cite[Theorem 9.1]{moulinos}. 
\end{proof}

\begin{rmk}\label{rmk:KtopDM}
There is currently no counterpart of this result for DM-stacks (even for the untwisted case). According to a result of Halpern-Leistner--Pomerleano \cite{hlp}, $\Ktop_{\Cb}(\Perf([U/G]))$ is equivalent to an equivariant $KU$-spectrum associated with $U^{\an}$. A twisted analogue thereof is produced in \cite{bm19} under additional assumptions of technical nature (see Theorem \ref{thm:brown-moulinos} below). Those results only apply to Blanc's invariant $\Ktop_{\Cb}$ and there's no known extension to a Moulinos's relative topological $K$-theory.
\end{rmk}

\begin{theorem}[Moulinos]\label{thm:moulinos3}
Let $\phi\colon S \to S'$ be proper and $\Cc \in \Cat^{\perf}_S$. Then, $$\Ktop_{S'}(\phi_*\Cc)\cong \phi_*\Ktop_{S}(\Cc),$$ where $\phi_*\Cc$ denotes the same DG-category viewed as a $\Perf(S')$-linear category by restricting along $\phi^*\colon \Perf(S') \to \Perf(S)$.
\end{theorem}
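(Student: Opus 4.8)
The plan is to produce a natural comparison morphism $c_{\Cc}\colon\Ktop_{S'}(\phi_{*}\Cc)\to\phi^{\an}_{*}\Ktop_{S}(\Cc)$ of hypersheaves of spectra on $(S')^{\an}$ and to check that it is an equivalence by a stalkwise computation, reducing everything to the (proper) fibres of $\phi$. Two inputs about Moulinos's functor $\Ktop_{(-)}$ will be used: \emph{(i)} base change along an arbitrary morphism $f\colon T\to T'$ of separated finite-type $\Cb$-schemes, $(f^{\an})^{*}\Ktop_{T'}(\Dc)\simeq\Ktop_{T}\bigl(\Dc\otimes_{\Perf(T')}\Perf(T)\bigr)$ for $\Dc\in\Cat^{\perf}(T')$; and \emph{(ii)} the identification $\Gamma(T^{\an},\Ktop_{T}(\Dc))\simeq\Ktop_{\Cb}(\Dc)$ of global sections with Blanc's invariant of the underlying $\Cb$-linear dg-category (this is the case $T'=\Spec\Cb$ of the statement under proof, and is part of the construction in \emph{loc.\ cit.}). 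The map $c_{\Cc}$ is built from the adjunction $\phi^{*}\dashv\phi_{*}$ on $\Cat^{\perf}$, where $\phi^{*}=(-)\otimes_{\Perf(S')}\Perf(S)$: apply $\Ktop_{S}$ to the counit $\phi^{*}\phi_{*}\Cc\to\Cc$, rewrite the source via \emph{(i)} as $(\phi^{\an})^{*}\Ktop_{S'}(\phi_{*}\Cc)$, and pass to the $(\phi^{\an})^{*}\dashv\phi^{\an}_{*}$-adjoint of the resulting morphism. Since hypersheaves of spectra on $(S')^{\an}$ are conservatively detected by their stalks, it suffices to prove that $c_{\Cc}$ is an equivalence on the stalk at every $x\in(S')^{\an}$.

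Fix such an $x$, corresponding to a $\Cb$-point $s'\in S'$, and write $j\colon S_{s'}=S\times_{S'}\Spec\kappa(s')\hookrightarrow S$ for the fibre, $q\colon S_{s'}\to\Spec\Cb$ for its structure map, and $\Cc_{s'}:=\Cc\otimes_{\Perf(S)}\Perf(S_{s'})$. For the source of $c_{\Cc}$: base change \emph{(i)} along $\Spec\kappa(s')\hookrightarrow S'$ identifies the stalk at $x$ with $\Ktop_{\Cb}\bigl((\phi_{*}\Cc)\otimes_{\Perf(S')}\Perf(\kappa(s'))\bigr)$, and since $\Perf(S)\otimes_{\Perf(S')}\Perf(\kappa(s'))\simeq\Perf(S_{s'})$ this stalk is $\Ktop_{\Cb}(\Cc_{s'})$. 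For the target: $\phi^{\an}\colon S^{\an}\to(S')^{\an}$ is proper with compact fibre $(\phi^{\an})^{-1}(x)=(S_{s'})^{\an}$, so proper base change for hypersheaves of spectra identifies the stalk at $x$ with $\Gamma\bigl((S_{s'})^{\an},\,\Ktop_{S}(\Cc)|_{(S_{s'})^{\an}}\bigr)$; base change \emph{(i)} along the closed immersion $j$ (for which $(j^{\an})^{*}$ is just sheaf-restriction) rewrites this as $\Gamma\bigl((S_{s'})^{\an},\Ktop_{S_{s'}}(\Cc_{s'})\bigr)=q^{\an}_{*}\Ktop_{S_{s'}}(\Cc_{s'})$, which by \emph{(ii)} applied to $q$ is $\Ktop_{\Cb}(\Cc_{s'})$. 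Thus both stalks are canonically $\Ktop_{\Cb}(\Cc_{s'})$; tracing through the construction of $c_{\Cc}$ shows its stalk at $x$ is the map induced by the counit $\phi^{*}\phi_{*}\Cc\to\Cc$ pulled back along $j$, and one checks this becomes the above identification after applying $\Ktop_{\Cb}$. Hence $c_{\Cc}$ is an equivalence.

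The main obstacle is to dispose of base change \emph{(i)} and the global-sections comparison \emph{(ii)} in the generality used: \emph{(i)} is invoked not for open or smooth morphisms but for the closed immersions $\Spec\kappa(s')\hookrightarrow S'$ and $j\colon S_{s'}\hookrightarrow S$, which are in general neither flat nor smooth, so one may need to revisit the construction of $\Ktop_{(-)}$ (the Bott-inverted sheafification of $V\mapsto\Ktop_{\Cb}(\Cc_{V})$ over a basis of $S^{\an}$) to see that it commutes with these pullbacks. The other essential ingredient is proper base change for hypersheaves of spectra on the locally compact, finite-dimensional spaces $S^{\an}$ and $(S')^{\an}$, which is precisely where properness of $\phi$ is used; without it the statement fails, as $\phi^{\an}_{*}$ then need not even commute with the filtered colimits that compute stalks.
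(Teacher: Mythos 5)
The paper itself offers no argument for this statement: it is quoted from Moulinos, and the only internal commentary is the remark after it plus the comparison map of Proposition \ref{prop:nat-tr}; so your proposal has to be judged on its own, and it has a genuine gap. Your input \emph{(i)} is false in the generality you use it, not merely in need of verification. The stalk of $\Ktop_{S'}(\Dc)$ at a closed point $s'$ is not $\Ktop_{\Cb}$ of the derived fibre $\Dc\otimes_{\Perf(S')}\Perf(\kappa(s'))$: by the construction (\'etale sheafification of $V\mapsto K(\Dc_V)$, analytification, $\KU$-localisation) together with continuity of $K$-theory, it is governed by the base change to the henselian local ring $\Oo_{S',s'}^{h}$, i.e.\ by a ``nearby'' fibre rather than the special fibre. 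Already for $S'=\Ab^1$, $\Dc=\Perf(\G_m)$ and $f$ the inclusion of the origin, the derived fibre category is $\Perf(\emptyset)=0$, while the stalk is nonzero: with $\Zb/n$-coefficients it is the \'etale $K$-theory of $\Frac(\Oo^{h}_{\Ab^1,0})$, which has a nonvanishing odd part. The same defect occurs inside the exact situation of your proof: take $\phi\colon S=\Pb^1\times\Ab^1\to\Ab^1$ proper and $\Cc=\Perf(S\setminus\{(\infty,0)\})$. Then $(\phi_*\Cc)\otimes_{\Perf(S')}\Perf(\kappa(0))\simeq\Perf(\Ab^1)$, so your identification predicts the stalk $\KU$ (even part of rank one, no odd part), whereas the actual stalk is computed by the punctured henselian family $(\Pb^1\times\Spec\Oo^{h}_{\Ab^1,0})\setminus\{pt\}$, whose $\Zb/n$-\'etale cohomology has ranks $(2,1)$ in even/odd degrees (purity for the removed point), and the odd class survives in $K$-theory. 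Your second use of \emph{(i)}, restricting $\Ktop_S(\Cc)$ to the fibre $S_{s'}$, fails for the same $\Cc$ in the same way. So your argument identifies both stalks with the same incorrect object $\Ktop_{\Cb}(\Cc_{s'})$; the discrepancy between special fibre and nearby fibre is precisely a vanishing-cycle phenomenon (the theme of Sections \ref{ssec:canonical} and the appendix of this paper), and it does not cancel out of a proof.

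A secondary issue is input \emph{(ii)}: for proper $T$, the equivalence $\Gamma(T^{\an},\Ktop_T(\Dc))\simeq\Ktop_{\Cb}(\Dc)$ is exactly the $S'=\Spec\Cb$ instance of the theorem being proved; it is not ``part of the construction'' --- the construction only yields a comparison map (cf.\ Proposition \ref{prop:nat-tr} and \cite[\S 7.4]{moulinos}) --- so even granting \emph{(i)} you would be presupposing a nontrivial special case of the statement. What is sound in your plan is the construction of the comparison morphism from the two adjunctions and the use of topological proper base change to compute stalks of $\phi^{\an}_*$ as sections over the compact fibres; the correct continuation (essentially Moulinos's) identifies the stalk of $\Ktop_{S'}(\phi_*\Cc)$, via continuity, with the ($\KU$-localised, analytified) $K$-theory of $\Cc$ over $S\times_{S'}\Spec\Oo^{h}_{S',s'}$ and compares this with sections of $\Ktop_S(\Cc)$ over analytic tubes around the fibre; that \'etale--analytic comparison over a proper map is where properness genuinely enters, not through a fibrewise derived base change.
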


For a non-proper morphism $\phi$, we cannot expect the result above to hold. However, it is often possible to obtain a natural transformation, for instance when $\phi\colon S \to \Spec \Cb$ is smooth (see \cite[Subsection 7.4]{moulinos}).

\begin{proposition}[Moulinos]\label{prop:nat-tr}
Let $\phi\colon S \to \Spec \Cb$ be a smooth morphism of complex varieties. Then, there is a natural transformation 
$$\Ktop_{\Cb} \circ \phi_* \to \phi_* \circ \Ktop_S.$$
\end{proposition}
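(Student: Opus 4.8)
The plan is to read the transformation off Moulinos's construction of $\Ktop_S$, using the universal property of (hyper)sheafification. Moulinos's $\Ktop_S(\Cc)$ arises (see \cite{moulinos}) by (hyper)sheafifying---and then Bott-inverting---a presheaf of spectra $\mathcal{P}_{\Cc}$ on a site computing $S^{\an}$, whose value on an open $V$ is the (semi)topological $K$-theory of the base change $\Perf(V)\otimes_{\Perf(S)}\Cc$. Its value on the terminal object is therefore $\Ktop_{\Cb}$ of $\Cc$ viewed through its underlying $\Cb$-linear structure; and that underlying $\Cb$-linear category is precisely $\phi_*\Cc$ in the notation of Theorem~\ref{thm:moulinos3}, so that $\Gamma(S^{\an},\mathcal{P}_{\Cc})\simeq\Ktop_{\Cb}(\phi_*\Cc)$.

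First I would form, for each $\Cc\in\Cat^{\perf}(S)$, the natural map $\mathcal{P}_{\Cc}\to\Ktop_S(\Cc)$ of presheaves of spectra on $S^{\an}$ given by (hyper)sheafification followed by $\beta$-inversion. Applying global sections $\Gamma(S^{\an},-)$ and the identification above produces a map
$$\Ktop_{\Cb}(\phi_*\Cc)\;\simeq\;\Gamma(S^{\an},\mathcal{P}_{\Cc})\;\longrightarrow\;\phi_*\Ktop_S(\Cc).$$
Since $\Cc\mapsto\mathcal{P}_{\Cc}$ is a functor into presheaves of spectra, the displayed map is natural in $\Cc$, and $\phi_*$ is a functor; so these maps assemble into a natural transformation $\Ktop_{\Cb}\circ\phi_*\to\phi_*\circ\Ktop_S$ of functors $\Cat^{\perf}(S)\to\Spectra$. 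Alternatively, and presumably this is how the paper proceeds, one simply cites the construction of precisely this transformation for smooth $\phi\colon S\to\Spec\Cb$ in \cite[Subsection 7.4]{moulinos}.

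The only real content lies in the bookkeeping inside Moulinos's construction: which site one works over (algebraic opens of $S$ versus the small analytic, or \'etale, site, together with the comparison functor to $\Sh_{\Spectra}(S^{\an})$), and the precise order of (hyper)sheafification, Bott inversion, and the realization functor. One must resist the temptation to \emph{compute} $\phi_*\Ktop_S(\Cc)$ as a colimit: global sections over $S^{\an}$ commute neither with the filtered colimit defining $\beta$-inversion nor with hypercompletion, because $S^{\an}$ is typically not quasi-compact (as already for the moduli spaces of interest here). This is not an obstruction to the \emph{existence} of the comparison map, which in each instance is supplied by a universal property---e.g.\ $\phi_*$ sends $\beta$-periodic sheaves to $\beta$-periodic spectra, whence a natural map $(\phi_*F)[\beta^{-1}]\to\phi_*(F[\beta^{-1}])$---rather than by a computation; it is, however, exactly the point at which the proper case, Theorem~\ref{thm:moulinos3}, uses properness to upgrade this map to an equivalence. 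Smoothness of $\phi$ enters only to keep us within the framework of \cite[Subsection 7.4]{moulinos}; the construction of the transformation is otherwise formal.
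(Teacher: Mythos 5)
The paper offers no proof of this proposition at all: it is imported from Moulinos, the sentence immediately preceding the statement pointing to \cite[Subsection 7.4]{moulinos}. So your fallback option (``one simply cites the construction'') is exactly what the paper does, and to that extent there is nothing to compare. What needs scrutiny is your primary sketch, and there it has a genuine gap.

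The problematic step is the identification $\Gamma(S^{\an},\mathcal{P}_{\Cc})\simeq\Ktop_{\Cb}(\phi_*\Cc)$. In Moulinos's construction the relative invariant is $L_{\KU}\mathsf{An}^*\underline{K}(\Cc)$, where $\underline{K}(\Cc)$ is the algebraic $K$-theory presheaf on an \emph{algebraic} (\'etale) site of $S$ and $\mathsf{An}^*$ is a left Kan extension to $S^{\an}$; there is no presheaf on the opens of $S^{\an}$ whose value on $V$ is ``the semitopological $K$-theory of $\Perf(V)\otimes_{\Perf(S)}\Cc$'' ($\Perf(V)$ is not defined for an analytic open $V$), and the value of the presheaf-level pullback $\mathsf{An}^*_{\mathrm{pre}}\underline{K}(\Cc)$ on the terminal object is not $\Ktop_{\Cb}(\phi_*\Cc)$. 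The absolute invariant $\Ktop_{\Cb}(\phi_*\Cc)$ is $L_{\KU}$ of a topological \emph{realization}, i.e.\ a colimit over affine $\Cb$-schemes $T$ of the spectra $K(\phi_*\Cc\otimes_{\Cb}\Perf(T))$ weighted by $\Sigma^{\infty}_{+}T^{\an}$, not the sections of the relative presheaf over $S^{\an}$. Consequently the unit of (hyper)sheafification together with the universal property of $L_{\KU}$ (the target $\phi_*\Ktop_S(\Cc)$ is indeed $\KU$-local, being a limit of $\KU$-local objects) only produces a map $L_{\KU}K(\Cc)\to\phi_*\Ktop_S(\Cc)$, which is not the asserted transformation out of $\Ktop_{\Cb}(\phi_*\Cc)$. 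To get a map out of the realization one must construct compatible maps from every term of that colimit into the limit defining $\Gamma(S^{\an},-)$; this push--pull mismatch (colimit into limit) is exactly the non-formal content of \cite[Subsection 7.4]{moulinos}, and it is the step for which Moulinos imposes smoothness of $S$ --- so the hypothesis cannot be waved off, as in your closing remark, as merely ``keeping us within the framework.'' Your cautionary points about $\Gamma(S^{\an},-)$ failing to commute with Bott inversion and hypercompletion are correct, but they do not repair this: the map you build starts from the wrong object.
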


Utilising the formalism above, Moulinos proved the following topological counterpart of the projective bundle theorem in algebraic $K$-theory (\cite[Theorem 1.3]{moulinos}):

\begin{theorem}[Moulinos]\label{thm:moulinos4}
For a finite CW-complex $X$ and a $\Cb\Pb^n$-bundle $P$ given by a map $X \to B\PGL_{n+1}^{\an}$ there is an equivalence of spectra
$$\KU(P) \simeq \bigoplus_{i=0}^{n}\KU^{\tilde{\alpha}^i}(X),$$
where $\tilde{\alpha}$ is the torsion class in $H^3_{\sing}(X;\Zb)$ given by the composition $X \to B\PGL_{n+1}^{\an} \to K(\Zb,3)$.
\end{theorem}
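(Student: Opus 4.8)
The plan is to reduce to the algebraic setting, where a Severi--Brauer scheme carries the relative Beilinson semiorthogonal decomposition, and then to transport the resulting splitting of sheaves of spectra along a classifying map. First I would record the algebraic prototype: let $\pi\colon Y\to S$ be a Severi--Brauer scheme of relative dimension $n$ over a smooth $\Cb$-variety $S$, with Brauer class $\alpha\in\Br(S)$. The relative form of Beilinson's resolution of the diagonal (equivalently, Bernardara's semiorthogonal decomposition for Brauer--Severi schemes) provides a $\Perf(S)$-linear semiorthogonal decomposition
$$\Perf(Y)=\langle\Perf(S,\alpha^{0}),\Perf(S,\alpha^{1}),\dots,\Perf(S,\alpha^{n})\rangle,$$
each piece being an admissible subcategory, so that the associated Verdier sequences split. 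Because $\KtopS$ is, like Blanc's invariant, a localising invariant valued in $\Sh_{\Spectra}(S^{\an})$ and in particular additive, it carries this decomposition to a direct sum; combining this with Theorem \ref{thm:moulinos3} (applied to the proper morphism $\pi$) and with the untwisted, resp.\ twisted, identifications of Theorem \ref{thm:moulinos2}, one obtains an equivalence of sheaves of spectra on $S^{\an}$,
$$\pi_{*}\underline{\KU}_{Y^{\an}}\;\simeq\;\KtopS(\pi_{*}\Perf(Y))\;\simeq\;\bigoplus_{i=0}^{n}\KtopS(\Perf(S,\alpha^{i}))\;\simeq\;\bigoplus_{i=0}^{n}\underline{\KU}^{\tilde\alpha^{i}}_{S^{\an}},$$
where $\tilde\alpha\in H^{3}(S^{\an};\Zb)$ is the analytification of $\alpha$.

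Next I would approximate $B\PGL_{n+1}$ algebraically in the style of Totaro: for each $N$ there is a smooth quasi-projective $\Cb$-variety $S_{N}=U_{N}/\PGL_{n+1}$, where $U_{N}$ is a large $\PGL_{n+1}$-invariant open in a linear representation on which $\PGL_{n+1}$ acts freely and whose complement has codimension growing with $N$, carrying the Severi--Brauer scheme $\pi_{N}\colon Y_{N}\to S_{N}$ obtained by descending the trivial $\Cb\Pb^{n}$-bundle on $U_{N}$. As $N\to\infty$ the induced map $S_{N}^{\an}\to B\PGL_{n+1}^{\an}$ becomes arbitrarily highly connected, and the analytic Brauer class $\tilde\alpha_{N}$ of $Y_{N}$ coincides with the pullback along $S_{N}^{\an}\to B\PGL_{n+1}^{\an}\to K(\Zb,3)$ of the canonical generator, both representing the obstruction to lifting the structure group from $\PGL_{n+1}$ to $\GL_{n+1}$.

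Finally, let $X$ be a finite CW-complex with classifying map $f\colon X\to B\PGL_{n+1}^{\an}$ of $P$. Choosing $N$ large enough that $S_{N}^{\an}\to B\PGL_{n+1}^{\an}$ is more highly connected than $\dim X+1$, obstruction theory shows that $f$ factors, uniquely up to homotopy, through a map $g\colon X\to S_{N}^{\an}$; hence $P\simeq g^{*}Y_{N}^{\an}$ as $\Cb\Pb^{n}$-bundles and $\tilde\alpha=g^{*}\tilde\alpha_{N}$. Pulling the equivalence of the first paragraph (with $S=S_{N}$) back along $g$ and invoking topological proper base change for sheaves of spectra — legitimate since $\pi_{N}$ is a proper fibration with compact fibre $\Cb\Pb^{n}$ — turns the left-hand side into $(\pi_{P})_{*}\underline{\KU}_{P}$, the pullback of a constant sheaf being constant, and the right-hand side into $\bigoplus_{i=0}^{n}\underline{\KU}^{\tilde\alpha^{i}}_{X}$. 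Taking global sections over $X$ and using $\Gamma(X,(\pi_{P})_{*}\underline{\KU}_{P})\simeq\Gamma(P,\underline{\KU}_{P})\simeq\KU(P)$ then gives $\KU(P)\simeq\bigoplus_{i=0}^{n}\KU^{\tilde\alpha^{i}}(X)$.

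I expect the main obstacles to be twofold. The first is the bookkeeping in the approximation step: producing the algebraic models $S_{N}$ with sufficient connectivity and, above all, identifying the analytic Brauer class of $Y_{N}$ with the pullback of the universal degree-three class on $B\PGL_{n+1}^{\an}$ (and checking this identification is compatible with restriction along $g$). The second is the additivity input, namely that Moulinos's relative invariant $\KtopS$ genuinely sends $\Perf(S)$-linear semiorthogonal decompositions to direct-sum decompositions of sheaves of spectra on $S^{\an}$; granting this, everything else is a formal consequence of Theorems \ref{thm:blanc}--\ref{thm:moulinos3} together with standard obstruction theory.
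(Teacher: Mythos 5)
The paper does not prove this statement itself: it is quoted from Moulinos (\cite[Theorem 1.3]{moulinos}), and your proposal reconstructs essentially that argument — Bernardara's $\Perf(S)$-linear semiorthogonal decomposition of the universal Severi--Brauer scheme over a Totaro-style algebraic approximation of $B\PGL_{n+1}$, additivity of the relative topological $K$-theory under such decompositions combined with Theorems \ref{thm:moulinos2} and \ref{thm:moulinos3}, and pullback of the resulting splitting of sheaves along a factorisation of the classifying map through the approximation. So this is the same approach as the source, and I see no genuine gap beyond the routine verifications you already flag (identifying the analytic Brauer class of $Y_N$ with the pullback of the universal degree-three class, and the harmless indexing question of whether the twists appear as $\alpha^{i}$ or $\alpha^{-i}$).
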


\begin{theorem}\label{thm:brown-moulinos}
There is a natural transformation (see \cite[Section 5.3]{bm19}).
$$\KU_M^{\tilde{\alpha}}(X^{\an}) \to \Ktop_{\Cb}(\Perf^{\alpha}([X/G])).$$
\begin{itemize}
\item[(a)] (Halpern-Leistner--Pomerleano \cite{hlp}) If $\alpha=0$, this is an equivalence.
\item[(b)] (Brown--Moulinos \cite{bm19}) For $\alpha \neq 0$, the natural transformation is an equivalence if the auxiliary map \cite[Equation (1.6)]{bm19} is an equivalence.
\end{itemize}
\end{theorem}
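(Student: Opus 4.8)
The plan is to assemble the statement from the theorems of Halpern-Leistner--Pomerleano \cite{hlp} (the case $\alpha = 0$) and Brown--Moulinos \cite{bm19} (the case $\alpha \neq 0$), after first recalling how the comparison map arises. Both sides are functors of the pair $([X/G],\alpha)$, and each is a topological approximation of the \emph{same} $\alpha$-twisted $G$-equivariant algebraic $K$-theory. By construction, Blanc's invariant $\Ktop_{\Cb}(\Perf^{\alpha}([X/G]))$ is the Bott-inverted analytic realisation of the presheaf of spectra $A \mapsto K(\Perf^{\alpha}([X_A/G]))$ on affine $\Cb$-schemes. On the other hand, sending an $\alpha$-twisted $G$-equivariant algebraic perfect complex on $X$ to its underlying $\tilde\alpha$-twisted $G$-equivariant topological $K$-theory class defines a map out of the same equivariant algebraic $K$-theory which is invariant under $\mathbb{A}^1$-homotopies after analytification, and hence factors through the analytic realisation; comparing these two factorisations and Bott-inverting produces the natural transformation of the statement. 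Its naturality in $X$, $G$ and $\alpha$ is built into the construction; the details are carried out in \cite[Section 5.3]{bm19}.

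For (a) I would follow \cite{hlp}. One verifies that the two functors $[X/G] \mapsto \Ktop_{\Cb}(\Perf([X/G]))$ and $[X/G] \mapsto \KU_M(X^{\an})$ (with $\alpha = 0$) satisfy descent for the relevant topology on smooth quotient stacks, and that the comparison map is an equivalence on two classes of building blocks. For free quotients, both sides reduce via Blanc's theorem to $\KU$ of the honest analytic space $(X/G)^{\an}$. For classifying stacks $[\mathrm{pt}/G]$, the left-hand side is $\Ktop_{\Cb}$ of the $\Cb$-linear category of representations of $G$ and the right-hand side is $\KU_G(\mathrm{pt})$; the agreement of the two is an Atiyah--Segal-type computation, reduced through a maximal torus $T \subset G$ and a flag-bundle argument to the cases of a torus and a finite group. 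A stratification argument (for instance via $\Theta$-stratifications) then reduces an arbitrary smooth $[X/G]$ to these two cases.

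Assertion (b) is where the main work lies, and I expect it to be the chief obstacle. Following \cite{bm19}, represent the $n$-torsion Brauer class $\alpha$ by a central extension $1 \to \mu_n \to G^{\sharp} \to G \to 1$, so that the category of $\alpha$-twisted $G$-equivariant perfect complexes on $X$ is the direct summand $\Perf([X/G^{\sharp}])_{\chi}$ on which the central $\mu_n$ acts through the tautological character $\chi$; as $\mu_n$ is finite and we work over $\Cb$, the category $\Perf([X/G^{\sharp}])$ is the finite direct sum of its $\mu_n$-isotypic summands. Applying (a) to $G^{\sharp}$ and using additivity of $\Ktop_{\Cb}$ produces a decomposition of the $G^{\sharp}$-equivariant topological $K$-theory of $X^{\an}$ indexed by the characters of $\mu_n$, and one must match the $\chi$-summand with the twisted group $\KU_M^{\tilde\alpha}(X^{\an})$. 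Making this identification is precisely where \cite{bm19} invokes the hypothesis that the auxiliary map \cite[Equation (1.6)]{bm19} be an equivalence: the categorical isotypic decomposition need not a priori be compatible, through the comparison map, with the character decomposition of equivariant topological $K$-theory, and controlling this compatibility is the crux of the argument. Once the auxiliary map is known to be an equivalence --- which, in the situation relevant to this paper, is verified for the finite group $G = \Gamma = J[n]$ --- assertion (b) follows by extracting the $\chi$-summand.
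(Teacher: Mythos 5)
The paper does not actually prove this statement: it is recorded purely as a recollection of results of Halpern-Leistner--Pomerleano and Brown--Moulinos, with the natural transformation itself imported from \cite[Section 5.3]{bm19}, so the citation \emph{is} the proof and the theorem is only ever used downstream as a black box. Your reconstruction of how the cited papers argue (descent/stratification for the untwisted case, a central extension by $\mu_n$ and an isotypic decomposition for the twisted case, with the hypothesis on \cite[Equation (1.6)]{bm19} governing the identification of the $\chi$-summand) is a plausible account of those references, and nothing in it conflicts with the way the paper invokes the result.

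Two cautions, though. First, the universal-property argument you sketch most naturally produces a map out of Blanc's invariant, i.e.\ $\Ktop_{\Cb}(\Perf^{\alpha}([X/G])) \to \KU^{\tilde{\alpha}}_G(X^{\an})$, which is the arrow opposite to the one in the statement; ``comparing the two factorisations'' does not by itself reverse it, and the transformation as stated is the specific one constructed in \cite[Section 5.3]{bm19}, so you should either construct the map in that direction or justify why the two constructions agree up to equivalence. Second, your closing claim that the auxiliary hypothesis of \cite[Equation (1.6)]{bm19} ``is verified for $G=\Gamma=J[n]$ in the situation relevant to this paper'' is not what the paper does: as Remark \ref{rmk:KtopDM} makes explicit, the Brown--Moulinos result is not available in the relative setting the paper needs, and instead of checking their hypothesis the paper proves its own relative comparison for finite abelian quotient stacks in Subsection \ref{ssec:moulinos2} (by induction over strata, using Lemmas \ref{lemma:first-row}, \ref{lemma:cyclic} and \ref{lemma:induction-step}). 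So the conditional statement (b) is quoted, not applied, and its hypothesis is never verified in the paper.
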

It will be essential for us to dispose of a relative version of those results. In the subsequent subsection we will resolve this problem to a certain extent.

We conclude this subsection with a lemma which we record for later purpose:

\begin{lemma}\label{lemma:first-row}
Let $\iota\colon Z \subset X$ be an equivariant closed immersion of smooth complex varieties acted on by a finite group $G$. We denote the complement $X \setminus Z$ by $U$. Let $\alpha$ be an equivariant gerbe on $X$. Assume that the quotient $X/G$ is a variety. Then, there is a fibre sequence of spectra
$$\iota_*\Ktop_{Z/G}(\Perf^{\alpha}([Z/G])) \to \Ktop_{X/G}\Perf^{\alpha}([X/G])) \to j_*\Ktop_{U/G}(\Perf^{\alpha}([U/G])),$$
where $j$ denotes the inclusion of $U$.
\end{lemma}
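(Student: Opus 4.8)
The plan is to deduce the fibre sequence from the localisation sequence in algebraic $K$-theory together with the exactness properties of Moulinos's relative functor $\Ktop$. First I would recall that for the equivariant closed immersion $\iota\colon Z \hookrightarrow X$ with open complement $j\colon U \hookrightarrow X$, the pushforward, restriction and the (twisted) perfect complexes fit into a Verdier-type recollement: the sequence of stable $\infty$-categories
\[
\Perf^{\alpha}_Z([X/G]) \to \Perf^{\alpha}([X/G]) \to \Perf^{\alpha}([U/G])
\]
is a Verdier quotient sequence, where $\Perf^{\alpha}_Z([X/G])$ denotes the full subcategory of complexes supported on $[Z/G]$. Since $[X/G]$ is a quotient of a smooth variety by a finite group with $X/G$ a variety, and $[Z/G] \hookrightarrow [X/G]$ is a regular closed immersion of smooth DM-stacks, pushforward along $\iota$ identifies $\Perf^{\alpha}([Z/G])$ with $\Perf^{\alpha}_Z([X/G])$ (this is the standard dévissage/Koszul-resolution argument, valid since $\iota$ is of finite Tor-dimension and $Z$ is smooth). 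Hence we obtain a fibre sequence of $\Perf(X/G)$-linear DG-categories
\[
\iota_*\Perf^{\alpha}([Z/G]) \to \Perf^{\alpha}([X/G]) \to j_*\Perf^{\alpha}([U/G]),
\]
each term regarded as $\Perf(X/G)$-linear via restriction along the appropriate $\Oo$-algebra map.

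Next I would apply the functor $\Ktop_{X/G}\colon \Cat^{\perf}(X/G) \to \Sh_{\Spectra}((X/G)^{\an})$. By construction $\Ktop$ is built from Blanc's construction by a sheafification/left-Kan-extension procedure and, being obtained from a localizing invariant (nonconnective algebraic $K$-theory with its Bott-periodic topological realisation), it sends Verdier quotient sequences — i.e. exact sequences of small idempotent-complete stable $\infty$-categories — to fibre sequences of sheaves of spectra. This is the essential input and is exactly the localisation property established by Moulinos (compare the treatment in \cite{moulinos}, and the analogous statement for Blanc's $\Ktop_{\Cb}$ in \cite{blanc}). Applying it to the displayed quotient sequence gives a fibre sequence
\[
\Ktop_{X/G}(\iota_*\Perf^{\alpha}([Z/G])) \to \Ktop_{X/G}(\Perf^{\alpha}([X/G])) \to \Ktop_{X/G}(j_*\Perf^{\alpha}([U/G]))
\]
in $\Sh_{\Spectra}((X/G)^{\an})$.

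It then remains to identify the two outer terms with the claimed pushforwards. For the closed immersion $\iota$ this is immediate from the base-change/proper-pushforward compatibility (Theorem \ref{thm:moulinos3}), since $\iota$ (and the induced map $[Z/G] \to [X/G]$, hence $Z/G \to X/G$) is proper: thus $\Ktop_{X/G}(\iota_*\Cc) \simeq \iota_* \Ktop_{Z/G}(\Cc)$ with $\Cc = \Perf^{\alpha}([Z/G])$. For the open immersion $j$ the morphism is not proper, so Theorem \ref{thm:moulinos3} does not apply directly; instead I would use that $j$ is étale (an open immersion), so $j^*\colon \Perf(X/G) \to \Perf(U/G)$ is a localisation and $\Ktop$ is compatible with such localisations — equivalently, $\Ktop_{U/G}$ is the restriction $j^*$ of the sheaf-level construction, and since $\Ktop$ of a base change along the open immersion is the pullback sheaf, its pushforward $j_* j^* \Ktop_{X/G}(\Perf^{\alpha}([X/G]))$ computes $\Ktop_{X/G}(j_*\Perf^{\alpha}([U/G]))$; the relevant statement is again in \cite[Subsection 7.4]{moulinos} and its stack-level extension in \ref{ssec:moulinos2}. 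Substituting these identifications yields the asserted fibre sequence.

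The main obstacle is the second paragraph: verifying that the relative topological $K$-theory of \emph{DM-stacks} of the form $[X/G]$, with $G$ finite, and in the \emph{twisted} setting, still enjoys the localisation (fibre-sequence) property along equivariant open–closed decompositions. Moulinos's results as quoted (Theorems \ref{thm:moulinos}–\ref{thm:moulinos3}) are stated for schemes and the twisted case is only an $\infty$-topos-theoretic statement (Theorem \ref{thm:moulinos2}); extending them to finite quotient stacks is precisely the content promised in \ref{ssec:moulinos2}, so the proof here should cite that extension. One subtlety to check carefully is that $\Ktop$ commutes with the relevant \emph{filtered colimits} and idempotent completions needed for dévissage to go through at the level of the sheaf of spectra (not just on homotopy groups), and that the dévissage identification $\iota_*\Perf^{\alpha}([Z/G]) \simeq \Perf^{\alpha}_Z([X/G])$ is compatible with the $\Perf(X/G)$-linear structures — both are routine given smoothness of $X$ and $Z$ and finiteness of $G$, but they are where the hypotheses ``$X/G$ a variety'' and ``$Z \subset X$ smooth'' are genuinely used.
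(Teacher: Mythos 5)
Your overall strategy---a localisation sequence in algebraic $K$-theory followed by the topological realisation $L_{\KU}\circ\mathsf{An}^*$---is the same as the paper's, but one step, as written, is false, and it is exactly the step doing the real work. The pushforward $\iota_*\colon \Perf^{\alpha}([Z/G]) \to \Perf^{\alpha}_Z([X/G])$ is \emph{not} an equivalence of categories: it is not even fully faithful, since already for a point $z$ in a smooth curve $X$ one has $\End_Z(\Oo_z)=\Cb$ concentrated in degree $0$, while $\Ext^1_X(\iota_*\Oo_z,\iota_*\Oo_z)\neq 0$ (self-extensions coming from the normal bundle); the twist and the finite group change nothing here. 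Consequently your displayed ``fibre sequence of $\Perf(X/G)$-linear DG-categories'' is not an exact (Verdier) sequence of stable $\infty$-categories, and the appeal to $\Ktop$ being a localizing invariant only produces a fibre sequence whose first term is $\Ktop$ of the support category $\Perf^{\alpha}_Z([X/G])$. To replace that term by $\Ktop_{Z/G}(\Perf^{\alpha}([Z/G]))$ you need the d\'evissage statement that $K(\Perf^{\alpha}([Z_T/G])) \to K(\Perf^{\alpha}_{Z_T}([X_T/G]))$ is an equivalence of spectra; d\'evissage is a property of algebraic $K$-theory specifically, not of an arbitrary localizing invariant, and it must be invoked naturally in \'etale $T \to X/G$ (so that it survives the sheafification $L_{\KU}\circ\mathsf{An}^*$), using smoothness of $Z$ and $X$. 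So the proposal has a genuine, though repairable, gap.

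For comparison, the paper sidesteps this by working \'etale-locally on $X/G$ with the twisted bounded coherent categories: for each \'etale $T\to X/G$ one has, as in \cite[Lemma 3.6]{hlp}, the fibre sequence $K(\D^{b,\alpha}_{\coh}([Z_T/G])) \to K(\D^{b,\alpha}_{\coh}([X_T/G])) \to K(\D^{b,\alpha}_{\coh}([U_T/G]))$ (localisation plus d\'evissage packaged together), then applies the exact functor $L_{\KU}\circ\mathsf{An}^*$, and only at the very end uses smoothness of $Z$, $X$, $U$ to identify $\D^{b,\alpha}_{\coh}\cong\Perf^{\alpha}$. Note also that your identification of the third term, $\Ktop_{X/G}(j_*\Perf^{\alpha}([U/G]))\simeq j_*\Ktop_{U/G}(\Perf^{\alpha}([U/G]))$, is precisely the kind of statement Theorem \ref{thm:moulinos3} does not supply for non-proper maps; it does hold for the open immersion $j$, but by tracking the presheaf $T\mapsto K(\Perf^{\alpha}([U_T/G]))$ on the \'etale site of $X/G$ and checking that $L_{\KU}\circ\mathsf{An}^*$ takes it to the sheaf pushforward (this is what the HLP-style argument gives), not via the formula $j_*j^*\Ktop_{X/G}(\Perf^{\alpha}([X/G]))$, which conflates the two sides you are trying to compare.
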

\begin{proof}
Since the argument is analogous to the proof of \cite[Lemma 3.6]{hlp}, we will content ourselves with a sketch. For an \'etale morphism $T \to X/G$ we denote by $Z_T \to X_T \leftarrow U_T$ the $G$-equivariant immersions given by the base change 
$$Z \times_{X/G} T \hookrightarrow X \times_{X/G} T \hookleftarrow U \times_{X/G} T.$$
As in \emph{loc. cit.} we have a fibre sequence in algebraic $K$-theory
$$K(\D_{\coh}^{b,\alpha}([Z_T/G])) \to K((\D_{\coh}^{b,\alpha}([X_T/G])) \to K((\D_{\coh}^{b,\alpha}([U_T/G])).$$
Applying the exact functor 
$L_{\KU}\circ \mathsf{An}^*$ we therefore obtain a fibre sequence
$$\iota_*\Ktop_{Z/G}(\D_{\coh}^{b,\alpha}([Z/G])) \to \Ktop_{X/G}(\D_{\coh}^{b,\alpha}([X/G])) \to j_*\Ktop_{U/G}(\D_{\coh}^{b,\alpha}([U/G])).$$
By assumption, $X$, $Z$, and $U$ are smooth, and $T \to X/G$ is \'etale. Therefore, we have $\D_{\coh}^{b,\alpha}([X_T/G])\cong \Perf^{\alpha}([X_T/G])$ (and similarly for $Z_T$ and $U_T$). This concludes the proof.
\end{proof}

\subsection{Relative twisted topological $K$-theory for smooth abelian quotient stacks}\label{ssec:moulinos2}

Throughout this subsection we will assume that we are in the following situation.

\begin{situation}\label{sit0}
Let $U$ be a smooth complex variety, acted on by a finite abelian group $G$. We assume that the quotient $U/G$ is a scheme. The quotient map $U \to U/G$ will be denoted by $\pi$. Pullback along $\pi$ gives rise to a $\Perf(U/G)$-linear structure on $\Cc^{\alpha}_{U;G}=\Perf^{\alpha}([U/G])$.
\end{situation}

We will denote the hypersheaf of spectra $\Ktop_{U/G}(\Cc^{\alpha}_{U;G})$ by $\uKtop(U,\alpha;G)$. We intend to relate it to 
$\underline{\KU}^{\alpha}_{U;G},$
the hypersheaf of $G$-equivariant $\KU$-theory on $U^{\an}/G$. Informally speaking, it assigns to an open subset $V \subset U^{\an}/G$ the $G$-equivariant $\KU$-theory spectrum $\KU_{G}(\pi^{-1}(V))$. Using the formalism of $G$-sheaves of spectra developed in the appendix, this sheaf is given by $$\pi_*^G p^{-1}_G\KU_G,$$
where $\KU_G$ denotes the $G$-spectrum representing $G$-equivariant $K$-theory, identified with a $G$-sheaf on the singleton $\{\bullet\}$ and $p\colon U \to \{\bullet\}$ denotes the constant map.

At first we need to introduce a comparison map between these two sheaves of spectra.

\begin{construction}\label{construction:comparison-map}
By replacing $G$-equivariant vector bundles (algebraic and topological) in \cite[Section 5.4]{thomason} by their twisted counterparts, one obtains a \emph{comparison map} from $\alpha$-twisted $G$-equivariant $K$-theory of a complex $G$-variety $V$ to $\tilde{\alpha}$-twisted $G$-equivariant complex $K$-theory of $V^{\an}$. This map assigns to an $\alpha$-twisted $G$-equivariant vector bundle $E$ on $V$, the associated $\tilde{\alpha}$-twisted $G$-equivariant topological vector bundle $E^{\top}$. Since this assignment is compatible with direct sums, we obtain a map between $K$-theory spectra as in \emph{loc. cit.}

Using the notation from \cite[Section 7]{moulinos}, this yields a morphism of sheaves of spectra on the \'etale site of $U/G$
$$\underline{K}^{\text{\'et}}_G \to \mathsf{An}_*\underline{\KU}^{\alpha}_{U;G}.$$
The adjunction of $\mathsf{An}_*$ and $\mathsf{An}_*$ allows us to rewrite this as a morphism
$$\mathsf{An}^*\underline{K}^{\text{\'et}}_G \to \underline{\KU}^{\alpha}_{U;G}.$$
The right-hand side is already a $\KU$-module. Applying the universal property of $\KU$-localisation $L_{\KU}$, we obtain a morphism
$$\Theta_{U;G}^{\alpha}\colon \uKtop(U,\alpha;G)= L_{\KU}\mathsf{An}^*\underline{K}^{\text{\'et}}_G \to \underline{\KU}^{\alpha}_{U;G},$$
where we used Moulinos's definition of topological $K$-theory (see \cite[Definition 7.5]{moulinos}).
\end{construction}

\subsection{Equivariant gerbes and commutativity}

As in Situation \ref{sit0} we assume that $G$ is a finite abelian group acting trivially on $U$, and that $\alpha$ is a $G$-equivariant gerbe on $U$. As explained in \cite[Section 4]{ht}, the $G$-equivariant structure on $U$ yields for every $g \in G$ a line bundle $L_g^{\alpha}$. Indeed, $g^*\alpha=\alpha$, since $G$ acts trivially. The $G$-equivariant structure on $\alpha$ therefore associated with every $g \in G$ an automorphism of the gerbe $\alpha$. Such an automorphism corresponds to a line bundle $L_g^{\alpha}$.

We observe that for every pair $g,h \in G$ there is a canonical isomorphism
$$m_{g,h}\colon L^{\alpha}_g \otimes L^{\alpha}_{h} \xrightarrow{\simeq} L_{gh}^{\alpha}.$$
For every triple $(g,h,k)$, a coherence condition is satisfied, which is captured by the following diagram
\begin{equation}\label{eqn:associative}
\xymatrix{
L_g^{\alpha} \otimes L_h^{\alpha} \otimes L_k^{\alpha} \ar[r]^-{\id \otimes m_{h,k}} \ar[d]_{m_{g,h}\otimes \id} & L_g^{\alpha} \otimes L_{hk}^{\alpha} \ar[d]^{m_{g,hk}} \\
L_{gh}^{\alpha} \otimes L_k^{\alpha} \ar[r]^-{m_{gh,k}} & L_{ghk}^{\alpha}
}
\end{equation}
which is required to commute. Denoting by $L^{\times} = L \setminus \{\text{zero section}\}$ the $\G_m$-torsor associated with a line bundle, we let
$$\widetilde{G}=\bigsqcup_{g \in G} \big(L^{\alpha}_g\big)^{\times},$$
and endow it with the group structure induced by the maps $m_{g,h}\colon (L_g^{\alpha})^{\times} \times (L_h^{\alpha})^{\times} \to (L_{gh}^{\alpha})^{\times}$. By construction, there is a surjective homomorphism $\varepsilon\colon \tilde{G} \to G$ with kernel $U \times \G_m=(L_e^{\alpha})^{\times}$. The kernel lies in the centre of $\widetilde{G}$. We therefore have a central extension
\begin{equation}\label{eqn:central-extension}
1 \to \G_m \to \widetilde{G}^{\alpha} \xrightarrow{\varepsilon} G \to 1,
\end{equation}
such that for every $g \in G$, the fibre $\varepsilon^{-1}(g)$ is given by the $\G_m$-torsor $(L^{\alpha}_g)^{\times}$.

Commutativity of \eqref{eqn:associative} also implies that the locally free sheaf $\bigoplus_{g \in G} L^{\alpha}_g$ is endowed with an associative binary operation, and therefore can be viewed as a sheaf of associative algebras. 

\begin{definition}\label{defi:alpha-commutative}
We say that the $G$-equivariant structure on $\alpha$ is \emph{commutative}, if for every pair of elements $(g,h)\in G$ the square
\[
\xymatrix{
L^{\alpha}_g \otimes L_h^{\alpha} \ar[r] \ar[d] & L_{gh}^{\alpha} \ar@{=}[d] \\
L_h^{\alpha} \otimes L_g^{\alpha} \ar[r] & L_{hg}^{\alpha}
}
\] 
commutes.
\end{definition}

This property amounts to the central extension \eqref{eqn:central-extension} being commutative.
  
\begin{definition}\label{defi:Calpha} Assume that $\alpha$ is a commutative $G$-equivariant gerbe in the sense of Definition \ref{defi:alpha-commutative}. 
\begin{itemize}
\item[(a)] We define the \'etale cover $\tau\colon C^{\alpha}_{U;G} \xrightarrow{\tau} U$ as the relative spectrum $$C^{\alpha}_{U;G}=\Spec_U \big(\bigoplus_{g \in G} L_g^{\alpha}\big).$$
\item[(b)] Likewise, in the topological case we let $C^{\alpha}_{U;G}$ be the maximal spectrum of the ring of continuous sections of 
$$\big(\bigoplus_{g \in G} L_g^{\alpha}\big)$$ vanishing at infinity. By virtue of construction, $C^{\alpha}_{U;G}$ is a finite covering space.
\end{itemize}
\end{definition}

As explained in \cite[Section 4]{ht}, the line bundles $L^{\alpha}_g$ are $G$-equivariant. We therefore obtain a $G$-equivariant structure on 
$$\tau_*\Oo_{C^{\alpha}_{U;G}} = \big(\bigoplus_{g \in G} L_g^{\alpha}\big),$$
which induces a $G$-action on the total space $C^{\alpha}_{U;G}$ such that $\tau$ is a $G$-invariant map. We call this action the \emph{tautological} $G$-action.

\begin{lemma}\label{lemma:action-trivial}
Under the same assumptions as in Definition \ref{defi:Calpha} we have that the tautological $G$-action on $C^{\alpha}_G$ is trivial.
\end{lemma}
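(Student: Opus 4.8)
The strategy is to check triviality of the tautological $G$-action \emph{fibrewise} over $U$, which reduces the statement to a purely algebraic computation in the central extension \eqref{eqn:central-extension}. First I would recall how the tautological action is defined: the $G$-equivariant structure on each line bundle $L_g^\alpha$ comes from the identity $g^*\alpha = \alpha$ together with the $G$-equivariant structure on $\alpha$, and the compatibility data are exactly the isomorphisms $m_{g,h}$. Unwinding Definition \ref{defi:Calpha}, the total space $C^\alpha_{U;G}$ is the relative spectrum of the sheaf of algebras $\bigoplus_{g\in G} L_g^\alpha$, so a $G$-action on $C^\alpha_{U;G}$ covering $\id_U$ is the same as a $G$-action on this sheaf of $\Oo_U$-algebras by $\Oo_U$-algebra automorphisms. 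I claim that for $h \in G$, the induced automorphism of $\bigoplus_g L_g^\alpha$ acts on the summand $L_g^\alpha$ by the canonical automorphism obtained from conjugation $L_h^\alpha \otimes L_g^\alpha \otimes (L_h^\alpha)^{-1} \cong L_g^\alpha$, i.e. by the ``commutator'' of $h$ and $g$ in the central extension $\widetilde G^\alpha$.

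The key step is then: because $\alpha$ is a \emph{commutative} $G$-equivariant gerbe in the sense of Definition \ref{defi:alpha-commutative}, the central extension $\widetilde G^\alpha$ is abelian, so all such commutators are trivial. Concretely, the square in Definition \ref{defi:alpha-commutative} says precisely that the composite $L_h^\alpha \otimes L_g^\alpha \xrightarrow{m_{h,g}} L_{hg}^\alpha = L_{gh}^\alpha \xleftarrow{m_{g,h}} L_g^\alpha \otimes L_h^\alpha$ is the symmetry isomorphism of the tensor product; combined with the associativity coherence \eqref{eqn:associative}, one deduces that the conjugation automorphism $L_h^\alpha \otimes L_g^\alpha \otimes (L_h^\alpha)^{-1} \xrightarrow{\sim} L_g^\alpha$ induced by $m_{h,g}$, $m_{hg,h^{-1}}$ and the canonical pairing $L_h^\alpha \otimes (L_h^\alpha)^{-1}\cong \Oo_U$ equals the identity of $L_g^\alpha$. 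Thus the tautological action of each $h \in G$ on every summand $L_g^\alpha$ is the identity, hence the action on the sheaf of algebras is trivial, hence the action on $C^\alpha_{U;G}$ is trivial. The same argument applies verbatim in the topological setting of Definition \ref{defi:Calpha}(b), replacing algebraic line bundles by topological ones and relative $\Spec$ by the maximal-spectrum construction.

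The main obstacle is bookkeeping: one has to pin down \emph{exactly} which automorphism of $\bigoplus_g L_g^\alpha$ the tautological $G$-action induces — this requires carefully tracing through the construction of the $G$-equivariant structure on $\tau_*\Oo_{C^\alpha_{U;G}}$ in \cite[Section 4]{ht} and matching it with the conjugation map in $\widetilde G^\alpha$. Once that identification is made, commutativity of the central extension does all the work. A clean way to organise this is to observe that giving a $G$-equivariant structure on the sheaf of algebras $\bigoplus_g L_g^\alpha$ is equivalent to giving a $G$-action on $\widetilde G^\alpha$ by group automorphisms, and that the tautological one is conjugation by (lifts of) elements of $G$; conjugation in an abelian group is trivial, so the $G$-action is trivial. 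I would write the proof in roughly that order: (1) identify the action with conjugation in $\widetilde G^\alpha$; (2) invoke commutativity to conclude conjugation is trivial; (3) note the topological case is identical.
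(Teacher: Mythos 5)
Your proposal is correct and follows essentially the same route as the paper: the paper's (very terse) proof also works summand by summand, noting that since $G$ acts trivially on $U$ the equivariant structure gives an automorphism of each $L_g^{\alpha}$, and asserting it is the identity. You simply make explicit what the paper leaves implicit — that this automorphism is conjugation in $\widetilde{G}^{\alpha}$, i.e.\ the commutator pairing $(h,g)$, so the commutativity assumption of Definition \ref{defi:alpha-commutative} is exactly what forces it to be trivial — which is a welcome clarification rather than a different argument.
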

\begin{proof}
We fix a pair of elements $(g,h)$ in $G$. The $G$-equivariant structure on $L_g^{\alpha}$ yields a natural isomorphism
$L_g^{\alpha}=h^*L_g^{\alpha} \simeq L^{\alpha}_g$, where we use that $G$ acts trivially on $U$. The induced action is the trivial one.
\end{proof}

Cover $U$ by open subsets $(U_i)_{i \in I}$, over which the $\G_m$-torsors $(L_g^{\alpha})^{\times}$ and $(L_h^{\alpha})^{\times}$ can be trivialised. For every $i \in I$ we choose sections $\tilde{g}_i$ and $\tilde{h}_i$ of $(L_g^{\alpha})^{\times}$, respectively $(L_h^{\alpha})^{\times}$.

Centrality of \eqref{eqn:central-extension} implies the following:
\begin{claim}
The commutator $[\tilde{g}_i,\tilde{h}_i]$ is independent of the chosen lifts.
\end{claim}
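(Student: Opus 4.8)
The plan is to show that the commutator $[\tilde{g}_i,\tilde{h}_i]$ lands in the central subgroup $\G_m\subset\widetilde{G}^\alpha$ of the central extension \eqref{eqn:central-extension}, and that this central element does not change if we replace the chosen lifts by others.

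First I would observe that $\varepsilon([\tilde g_i,\tilde h_i]) = [\varepsilon(\tilde g_i),\varepsilon(\tilde h_i)] = [g,h] = e$ in $G$, since $G$ is abelian. Hence $[\tilde g_i,\tilde h_i]$ lies in $\ker\varepsilon = U\times\G_m$, i.e.\ it is (a section over $U_i$ of) the central $\G_m$. So the commutator is a well-defined function $U_i \to \G_m$; what remains is independence of the lifts. Any other choice of lift of $g$ over $U_i$ has the form $\tilde g_i' = \lambda\cdot\tilde g_i$ for a unit $\lambda\in\Oo^\times(U_i)$ (equivalently, a nowhere-vanishing section of $(L_e^\alpha)^\times$ acting by the torsor structure), and similarly $\tilde h_i' = \mu\cdot\tilde h_i$. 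Now compute the commutator using that $\lambda,\mu$ are central: $[\lambda\tilde g_i,\mu\tilde h_i] = (\lambda\tilde g_i)(\mu\tilde h_i)(\lambda\tilde g_i)^{-1}(\mu\tilde h_i)^{-1} = \lambda\mu\tilde g_i\tilde h_i\tilde g_i^{-1}\lambda^{-1}\tilde h_i^{-1}\mu^{-1} = \tilde g_i\tilde h_i\tilde g_i^{-1}\tilde h_i^{-1} = [\tilde g_i,\tilde h_i]$, where in the middle we freely moved $\lambda,\mu$ and their inverses past everything by centrality and cancelled them. This is the content of ``centrality of \eqref{eqn:central-extension} implies the claim''.

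The one point requiring care — and the main (mild) obstacle — is bookkeeping about what ``the commutator'' means: $\widetilde{G}^\alpha$ is the total space of the disjoint union $\bigsqcup_g (L_g^\alpha)^\times$, so the product of local sections $\tilde g_i\cdot\tilde h_i$ is computed via the isomorphisms $m_{g,h}$ of Definition/construction preceding the claim, and one should check that the associativity diagram \eqref{eqn:associative} makes the expression $(\tilde g_i\tilde h_i)(\tilde g_i^{-1}\tilde h_i^{-1})$ unambiguous before manipulating it. Once one fixes the convention that all products are taken in the group $\widetilde{G}^\alpha$ (so that associativity is automatic from \eqref{eqn:associative}, and inverses exist because each $(L_g^\alpha)^\times$ is a $\G_m$-torsor with $m_{g,g^{-1}}$ providing the inverse), the computation above is purely formal group theory: the commutator map $\widetilde{G}^\alpha\times\widetilde{G}^\alpha\to\widetilde{G}^\alpha$ descends through the quotient by the central subgroup in each variable. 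I would phrase the final proof as exactly that one-line descent statement, applied to the central extension \eqref{eqn:central-extension}, with the remark that the two lifts of $g$ (resp.\ $h$) over $U_i$ differ by an element of the centre.
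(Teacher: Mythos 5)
Your proof is correct and is exactly the argument the paper has in mind: the paper merely asserts that ``centrality of \eqref{eqn:central-extension} implies the claim,'' and your computation—showing the commutator lies in $\ker\varepsilon$ because $G$ is abelian, and that replacing lifts by $\lambda\tilde g_i$, $\mu\tilde h_i$ with $\lambda,\mu$ central leaves the commutator unchanged—is precisely the standard unpacking of that assertion. No gaps; the bookkeeping remark about products being taken in $\widetilde G^\alpha$ via the $m_{g,h}$ and the associativity diagram \eqref{eqn:associative} is a fair and correct clarification.
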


This implies that there is a well-defined \emph{commutator pairing}
$$(-,-)\colon G \times G \to \Oo_U^\times,$$
which is locally given by the formula
$$(g,h)|_{U_i} = [\tilde{g}_i,\tilde{h}_i],$$
where we use the same notation as above. The central extension $\widetilde{G}$ is thus commutative, if and only if $(g,h)=1$ for all pairs $(g,h)$ of elements in $G$.

\begin{lemma}\label{lemma:cyclic}
If $G$ is a cyclic group then every $G$-equivariant gerbe $\alpha$ is commutative.
\end{lemma}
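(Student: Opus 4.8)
The plan is to reduce commutativity to the vanishing of the commutator pairing $(-,-)\colon G \times G \to \Oo_U^\times$ constructed just above the statement, and then exploit two features of the cyclic case: the pairing is alternating, and it is bimultiplicative. First I would recall that by the discussion preceding the lemma, $\widetilde{G}^\alpha$ is commutative if and only if $(g,h) = 1$ for every pair $(g,h)$, so it suffices to show the commutator pairing is trivial when $G = \Zb/m$ is cyclic with generator $\sigma$. The key structural input is that $(-,-)$ is a bimultiplicative pairing — this follows from centrality of the extension \eqref{eqn:central-extension} together with the associativity constraint \eqref{eqn:associative}, exactly as in the standard fact that commutators in a central extension descend to a biadditive map on the quotient. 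Hence the entire pairing is determined by the single element $c \defeq (\sigma,\sigma) \in \Oo_U^\times(U)$.

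Next I would observe that $c = (\sigma,\sigma)$ is the commutator of an element with itself, hence trivial: $[\tilde\sigma_i,\tilde\sigma_i] = \tilde\sigma_i\tilde\sigma_i\tilde\sigma_i^{-1}\tilde\sigma_i^{-1} = 1$ in $\widetilde{G}^\alpha$ (any two lifts of $\sigma$ differ by a central element, and an element commutes with itself). More precisely, the claim preceding the statement tells us $[\tilde\sigma_i,\tilde\sigma_i]$ is independent of the chosen lift, and for any single lift the commutator with itself is obviously the identity. Therefore $c = 1$, and by bimultiplicativity $(\sigma^a,\sigma^b) = c^{ab} = 1$ for all $a,b$, so the commutator pairing vanishes identically and $\widetilde{G}^\alpha$ is commutative. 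Equivalently, in the language of Definition \ref{defi:alpha-commutative}, the multiplication isomorphisms $m_{g,h}$ and $m_{h,g}$ agree for all $g,h \in G$.

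The main obstacle — really the only non-formal point — is establishing that the commutator pairing is genuinely bimultiplicative (and in particular well-defined globally, not just locally on the $U_i$). This requires using the coherence diagram \eqref{eqn:associative}: given lifts $\tilde{g},\tilde{h},\tilde{k}$ over a common trivialising open set, one computes $[\tilde{g}\tilde{h},\tilde{k}]$ in terms of $[\tilde{g},\tilde{k}]$ and $[\tilde{h},\tilde{k}]$, conjugating by the appropriate central elements and using that the kernel $\Gb_m$ is central to see the conjugation is trivial; the associativity constraint guarantees that the product lift $\tilde{g}\tilde{h}$ is itself a legitimate lift of $gh$, so that the identity $[\tilde g\tilde h,\tilde k]|_{U_i} = (gh,k)|_{U_i}$ holds. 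One also checks compatibility on overlaps $U_i \cap U_j$, which again follows from independence of the pairing on the choice of lifts. Once bimultiplicativity is in hand, the cyclic case collapses to the self-commutator computation and the lemma follows immediately.
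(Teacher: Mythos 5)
Your proof is correct, but it takes a slightly different route from the paper's. You first establish that the commutator pairing $(-,-)\colon G\times G\to \Oo_U^{\times}$ is bimultiplicative (using centrality of $\G_m$ in $\widetilde{G}^{\alpha}$ and commutativity of $G$), and then reduce everything to the self-commutator $(\sigma,\sigma)$ of a generator, which vanishes because any lift commutes with itself. The paper avoids bimultiplicativity altogether: it lifts the generator $x$ once to $\tilde{x}\in\varepsilon^{-1}(x)$, takes $\tilde{x}^i$ and $\tilde{x}^j$ as lifts of $g=x^i$ and $h=x^j$, and observes that powers of a single element commute, so $[\tilde{x}^i,\tilde{x}^j]=1$; combined with the Claim that the pairing is independent of the chosen lifts, this already gives triviality of $(g,h)$ for all pairs. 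So the paper's argument needs only well-definedness of the pairing, whereas yours needs the (true, standard, but not entirely free) extra input that commutators in a central extension with abelian quotient descend to a bimultiplicative pairing -- the step you yourself flag as the main obstacle and only sketch. What your route buys is reusability: bimultiplicativity reduces commutativity checks to pairs of generators in any finite abelian $G$, not just cyclic ones, which could be convenient elsewhere; what the paper's route buys is brevity, since in the cyclic case the single-lift trick makes the whole question collapse in one line.
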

\begin{proof}
As remarked above, it suffices to prove that $\widetilde{G}=\widetilde{G}^{\alpha}$ is commutative. This is equivalent to the commutator pairing $(g,h)$ being the trivial pairing. To see this, observe that there exists a generator $G=\langle x \rangle$, and thus $g=x^i$ and $h=x^j$. Let $\tilde{x} \in \varepsilon^{-1}(x)$ be a lift of $x$. We then have lifts $\tilde{x}^i$ of $g$ and $\tilde{x}^j$ of $h$. It is clear that the commutator $[\tilde{x}^i,\tilde{x}^j]$ equals $1$. This concludes the proof. 
\end{proof}

The definition of the morphism $\tau$ and the cover $C^{\alpha}$ appearing in the following lemma can be found in Definition \ref{defi:Calpha}(a).

\begin{lemma}\label{lemma:induction-step}
Assume that $G$ is a finite abelian group which acts trivially on $U$. Let $\alpha$ be a $G$-equivariant $G$-gerbe on $U$, and assume that $G'\subset G$ is a subgroup such that the induced $G'$-equivariant structure on $\alpha$ is commutative. We denote the quotient group $G/G'$ by $Q$.
Then, there are canonical equivalences of sheaves of spectra
\begin{itemize}
\item[(a)] $\uKtop(U,\alpha;G) \simeq \Ktop_{U/G}(\Perf^{\tau^*\alpha}([\C^{\alpha}/Q])$ 
\item[(b)] $\underline{\KU}^{\alpha}_{U;G} \simeq \tau_*\underline{\KU}_{C^{\alpha};Q}^{\tau^*\alpha}$
\end{itemize}
such that the following diagram of sheaves of spectra
\[
\xymatrix{
\uKtop(U,\alpha;G) \ar[r]^-{\simeq} \ar[d]_{\Theta_{U;G}^{\alpha}} & \Ktop_{U/G}(\Perf^{\tau^*\alpha}([\C^{\alpha}/Q]) \ar[d]^{\Theta^{\tau^*\alpha}_{C^{\alpha};Q}} \\
\underline{\KU}^{\alpha}_{U;G} \ar[r]^-{\simeq} & \tau_*\underline{\KU}_{C^{\alpha};Q}^{\tau^*\alpha}
}
\]
commutes.
\end{lemma}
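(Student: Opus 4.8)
The plan is to decompose the $G$-action/$G$-gerbe data along the subgroup $G' \subset G$, reducing everything to the already-commutative situation over the cover $C^\alpha = C^\alpha_{U;G'}$. First I would unwind the construction of $\tau\colon C^\alpha \to U$ from Definition \ref{defi:Calpha}: since the induced $G'$-structure on $\alpha$ is commutative, the sheaf of algebras $\bigoplus_{g \in G'} L_g^\alpha$ is commutative, so $C^\alpha$ makes sense, and it carries a residual $Q = G/G'$-action (the tautological $G$-action descends since, by Lemma \ref{lemma:action-trivial}, the $G'$-part acts trivially — I would need to check the $Q$-action is well-defined, which amounts to the coherences \eqref{eqn:associative} being $G$-equivariant, as recorded in \cite[Section 4]{ht}). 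The key algebraic input for (a) is the identification of $\Perf^\alpha([U/G])$ with $\Perf^{\tau^*\alpha}([C^\alpha/Q])$ as $\Perf(U/G)$-linear categories: this is a twisted descent statement, where inducing the $\alpha$-twisted $G$-equivariant structure from the subgroup $G'$ corresponds, under the equivalence $\Mod_{\tau_*\Oo_{C^\alpha}}(\QCoh([U/G'])) \cong \QCoh([C^\alpha/?])$, to pushing forward along $\tau$; concretely, a $G$-equivariant $\alpha$-twisted sheaf on $U$ is the same as a $Q$-equivariant $\tau^*\alpha$-twisted sheaf on $C^\alpha$, because the $G'$-equivariant $\alpha$-twisted structure is precisely the datum of a $\bigoplus_{g\in G'}L_g^\alpha$-module structure. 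Then (a) follows by applying the functor $\Ktop_{U/G}$ to this categorical equivalence — note both sides are already $\Perf(U/G)$-linear, so no relative base change is needed here, just functoriality.

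For (b) I would argue on the topological side in exact parallel: $\tilde\alpha$-twisted $G$-equivariant $\KU$-theory of $U^{\an}$ is, by the same induction-from-$G'$ mechanism (now using $G'$-equivariant topological line bundles $ (L_g^\alpha)^{\top}$, whose sections build the covering space $C^\alpha$ of Definition \ref{defi:Calpha}(b)), identified with $\tau^*\tilde\alpha$-twisted $Q$-equivariant $\KU$-theory of $(C^\alpha)^{\an}$. Sheafifying over $U^{\an}/G = (C^\alpha)^{\an}/Q$ gives the equivalence $\underline{\KU}^\alpha_{U;G} \simeq \tau_* \underline{\KU}^{\tau^*\alpha}_{C^\alpha;Q}$; the pushforward $\tau_*$ appears because the quotient map factors as $(C^\alpha)^{\an}/Q \xrightarrow{\sim} U^{\an}/G$ and we are comparing sheaves living on the respective analytic quotients, with $C^\alpha/Q \to U/G$ an isomorphism of the coarse spaces. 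The cleanest way to organise this is via the $G$-sheaves-of-spectra formalism of the appendix: the identity $\pi^G_* p^{-1}_G \KU_G$ for the $G$-action should be rewritten, using $\pi = \tau \circ (\text{quotient by }Q)$ and the fact that $\KU_{G}$ restricted along $G' \hookrightarrow G$ and then twisted-induced is computed by the covering-space $C^\alpha \to \{\bullet\}$, as $\tau_* \circ \pi^Q_* p^{-1}_Q \KU_Q$ — essentially a Frobenius-reciprocity/induction identity for twisted equivariant $K$-theory.

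Finally, for the commutativity of the square, I would check that both vertical comparison maps $\Theta$ are defined by the \emph{same} recipe — sending an (algebraic, twisted, equivariant) vector bundle to its underlying topological one — and that this recipe is manifestly compatible with the equivalences in (a) and (b), which are themselves built out of the tautological identification ``$G'$-equivariant $\alpha$-twisted bundle on $U$ $=$ $\tau^*\alpha$-twisted bundle on $C^\alpha$'' that makes sense simultaneously in the algebraic and topological worlds. Concretely, the analytification functor $\mathsf{An}^*$ commutes with $\tau_*$ (as $\tau$ is finite) and with the induction/descent along $G'$, so chasing a twisted equivariant bundle around the square yields the same topological bundle both ways; since $L_{\KU}$ and $\mathsf{An}^*$ are functorial, the comparison map transforms correctly. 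The main obstacle I anticipate is making the twisted descent equivalence in step (a) precise and functorial enough that $\Ktop_{U/G}$ can be applied to it cleanly — in particular, handling the $\tau^*\alpha$-twisting on the quotient stack $[C^\alpha/Q]$ and verifying that inducing the $\alpha$-twisted structure from $G'$ to $G$ really does correspond to $\tau_*$ at the level of $\Perf(U/G)$-linear categories (as opposed to merely at the level of $K$-groups). This requires care with the gerbe $\tau^*\alpha$, whose $Q$-equivariant structure is only commutative when the commutator pairing on $Q$ vanishes (e.g.\ when $Q$ is cyclic, by Lemma \ref{lemma:cyclic}) — but for the statement of this lemma we do not need $Q$-commutativity, only the bookkeeping that separates the $G'$-part (now trivialised by passing to $C^\alpha$) from the $Q$-part.
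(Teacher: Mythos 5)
Your proposal is correct and takes essentially the same route as the paper: the paper's proof likewise identifies an $\alpha$-twisted $G$-equivariant bundle on $U$ with a $\tau^*\alpha$-twisted $Q$-equivariant bundle on $C^{\alpha}$ via the $\bigoplus_{g\in G'}L_g^{\alpha}$-module structure encoded by the $G'$-part of the equivariant data (using Lemma \ref{lemma:action-trivial}), runs the identical argument for topological bundles via Serre--Swan, and promotes these bundle-level identifications (compatible with direct sums and restriction to open subsets) to the equivalences of $K$-theory sheaves and to compatibility with the comparison maps $\Theta$. One parenthetical slip: $C^{\alpha}/Q \to U/G$ need not be an isomorphism of coarse spaces (e.g.\ for a trivially twisted, trivially equivariant gerbe one has $C^{\alpha}\cong U\times \widehat{G'}$ with trivial $Q$-action), but you invoke this only as a heuristic for why $\tau_*$ appears, and the rest of your argument does not depend on it.
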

\begin{proof}
By virtue of definition, an $\alpha$-twisted $G$-equivariant vector bundle $E$ on $U$ corresponds the data $(\phi_g)_{g \in G}$ where each $\phi_g$ is
$\text{an isomorphism }\phi_g\colon L_g^{\alpha} \otimes E \to E$, and for $g,h \in G$ we have a commutative diagram
\[
\xymatrix{
 L^{\alpha}_g \otimes L^{\alpha}_h \otimes E \ar[r]^-{\phi_h} \ar[d]_-{m_{g,h}\otimes \id} & L_g^{\alpha} \otimes E \ar[d]^-{\phi_g} \\ 
L_{gh}^{\alpha} \otimes E \ar[r]^-{\phi_{gh}} & E.
}
\]
This description holds in the algebraic and the topological category, mutatis mutandis.

For the purpose of this proof we will not distinguish between a vector bundle and the corresponding locally free sheaf of sections. The properties of the maps $(\phi_g)_{g \in G'}$ outlined above, imply that the locally free sheaf $E$ is endowed with the structure of the commutative sheaf of algebras
$\bigoplus_{g \in G'} L_g^{\alpha},$
which commutes with the $G$-action. 
This actions allows us to identify $E$ with an $\tau^*\alpha$-twisted locally free sheaf on the finite \'etale cover $\tau\colon C^{\alpha}_{U;G'} \to U$, endowed with a $Q$-equivariant structure. Here, we use that $G'$ acts trivially on $C^{\alpha}_{U;G}$ according to Lemma \ref{lemma:action-trivial}.

The same reasoning can be applied to topological vector bundles, where we use that by virtue of the Serre-Swan theorem, a topological $\Cb$-vector bundle can be identified with the module over the ring of continuous complex-valued functions.

These equivalences are compatible with direct sums and restriction to open subsets, and therefore can be promoted to equivalences of sheaves of $K$-theory spectra. There are also naturally compatible with the comparison maps of Construction \ref{construction:comparison-map}.
\end{proof}

We will now promote this to a more general statement.

\begin{proposition}
Suppose that we are in Situation \ref{sit0}. Then, the comparison map $$\Theta^{\alpha}_{U;G}\colon \uKtop(U,\alpha;G) \to \underline{\KU}^{\alpha}_{U;G}$$ of Construction \ref{construction:comparison-map} is an equivalence.
\end{proposition}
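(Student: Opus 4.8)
The plan is to induct on $|G|$. When $|G|=1$ the category $\Cc^\alpha_{U;1}$ is $\Perf^\alpha(U)$ and $\Theta^\alpha_{U;1}$ is identified with the comparison equivalence of Moulinos's Theorem \ref{thm:moulinos2}, so the base case is clear. For the inductive step I would first record that the statement is local on $U/G$: $\Theta^\alpha_{U;G}$ is a morphism of hypersheaves on $(U/G)^{\an}$, so being an equivalence can be checked étale-locally, and all of $\uKtop(-,-;G)$, $\underline{\KU}^{-}_{-;G}$ and $\Theta$ are compatible with étale base change on $U/G$ (by Moulinos's construction on one side and the $G$-sheaf formalism of the appendix on the other). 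Hence I may assume $U/G$ affine, so that $U\to U/G$ is finite and $U$ is affine.

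Next I would split along the fixed locus. Let $Z=U^G$ be the closed $G$-invariant fixed subscheme, smooth since we are in characteristic zero, with open complement $V=U\setminus Z$, over which every stabiliser is a \emph{proper} subgroup of $G$; write $\iota\colon Z\hookrightarrow U$ and $j\colon V\hookrightarrow U$. Lemma \ref{lemma:first-row} yields the fibre sequence $\iota_*\uKtop(Z,\alpha;G)\to\uKtop(U,\alpha;G)\to j_*\uKtop(V,\alpha;G)$, and the localisation sequence for equivariant $KU$-theory (inside the $G$-sheaf formalism of the appendix), combined with the equivariant $KU$-theoretic Thom isomorphism for the complex normal bundle of $Z$ and Bott periodicity (to absorb the even shift), yields $\iota_*\underline{\KU}^{\alpha}_{Z;G}\to\underline{\KU}^\alpha_{U;G}\to j_*\underline{\KU}^\alpha_{V;G}$. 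Since both are the canonical recollement sequences of the respective hypersheaves, $\Theta^\alpha_{U;G}$ induces a map between them restricting (along $i^!$ and $j^*$) to $\Theta^\alpha_{Z;G}$ and $\Theta^\alpha_{V;G}$; by two-out-of-three it then suffices to treat these two cases.

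On $Z$ the group $G$ acts trivially. Choosing a nontrivial cyclic subgroup $G'\subseteq G$, the induced $G'$-equivariant structure on $\alpha$ is commutative by Lemma \ref{lemma:cyclic}, so I may apply Lemma \ref{lemma:induction-step}; combining it with Theorem \ref{thm:moulinos3} (proper pushforward along the finite étale cover $\tau\colon C^\alpha=C^\alpha_{Z;G'}\to Z$, which descends to a finite map $\bar\tau\colon C^\alpha/Q\to Z$, $Q=G/G'$) identifies $\Theta^\alpha_{Z;G}$ with $\bar\tau_*\Theta^{\tau^*\alpha}_{C^\alpha;Q}$. As $C^\alpha$ is smooth, $C^\alpha/Q$ a scheme, and $|Q|<|G|$, the inductive hypothesis makes this an equivalence. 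On $V$ I would again argue locally on $V/G$: by Luna's étale slice theorem (valid in characteristic zero, with an affine slice) each point of $V/G$ has an étale neighbourhood over which $[V/G]$ becomes $[W/H]$ for a smooth affine $H$-scheme $W$ with $W/H$ a scheme and $H=G_u\subsetneq G$ a point stabiliser; compatibility with étale base change identifies the relevant restriction of $\Theta^\alpha_{V;G}$ with $\Theta^{\alpha|_W}_{W;H}$, which is an equivalence by the inductive hypothesis since $|H|<|G|$. This closes the induction.

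The step I expect to be the main obstacle is the second one: producing the localisation sequence for $\underline{\KU}^\alpha_{U;G}$ and, more delicately, checking that $\Theta$ intertwines it with the sequence of Lemma \ref{lemma:first-row} — that is, that under the identification of $i^!\uKtop(U,\alpha;G)$ with $\uKtop(Z,\alpha;G)$ (algebraic dévissage, built into Lemma \ref{lemma:first-row}) and of $i^!\underline{\KU}^\alpha_{U;G}$ with $\underline{\KU}^\alpha_{Z;G}$ (the topological Thom isomorphism together with Bott periodicity) the induced map $i^!\Theta^\alpha_{U;G}$ genuinely becomes $\Theta^\alpha_{Z;G}$. This is a compatibility of the comparison map of Construction \ref{construction:comparison-map} with Gysin pushforward along $Z\hookrightarrow U$ — conceptually clear (for complex $K$-theory the usual Todd-class corrections are invisible) but requiring care — as is the naturality of the entire package under the étale slice base change.
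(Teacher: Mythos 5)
Your proposal is correct in outline and shares the paper's core strategy: induction on $|G|$ with base case Theorem \ref{thm:moulinos2}, comparison of the algebraic fibre sequence of Lemma \ref{lemma:first-row} with the topological sequence of the pair via the equivariant Thom isomorphism and a two-out-of-three argument, and the cyclic-subgroup/covering trick of Lemmas \ref{lemma:cyclic} and \ref{lemma:induction-step} (plus Theorem \ref{thm:moulinos3} to repackage the base) to drop from $G$ to $Q=G/G'$ where the action is trivial. Where you genuinely diverge is the geometric d\'evissage: the paper fixes an ascending chain of invariant opens $U_{-1}\subset\cdots\subset U_s=U$ with smooth strata $Z_i$ of \emph{constant stabiliser} and runs an inner induction over the strata, applying the covering trick on each $Z_i$, whereas you peel off only the honest fixed locus $Z=U^G$ (where Lemma \ref{lemma:induction-step} applies verbatim since the action is trivial — arguably cleaner than invoking it on constant-stabiliser strata) and handle the complement $V$ by \'etale-localising on $V/G$ and using Luna slices $[V/G]\simeq[W/H]$ with $H\subsetneq G$, so the outer induction applies directly. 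The paper's route buys that it never leaves Zariski-open chains over a fixed quotient, so it needs no \'etale base-change compatibility for $\Ktop_{U/G}$, $\underline{\KU}^{\alpha}_{U;G}$ and $\Theta$, and no naturality of $\Theta$ under the induction equivalence $\KU_G(G\times_H W)\simeq\KU_H(W)$; these are the extra (plausible but unverified) inputs your route requires, though they are of the same nature as what the paper uses silently in proving Lemma \ref{lemma:first-row}. Your route buys a single closed/open split per group in place of a stratification with an inner induction. Both arguments hinge on the delicate point you rightly flag — homotopy-commutativity of the ladder, i.e. compatibility of $\Theta$ with the Thom/Gysin identification of the closed terms — which the paper likewise asserts without proof, so you are on the same footing there.
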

\begin{proof}
We will show this by nested induction. Let us assume that the assertion is already known for groups of order $< |G|$. The base case where the group is trivial, is covered by Theorem \ref{thm:moulinos2}.

We now choose an ascending sequence of open subvarieties
$$\emptyset=U_{-1} \subset U_0 \subset \cdots \subset U_s = U$$
such that for every $i$ we have a smooth complement $Z_i=U_i\setminus U_{i-1}$ on which $G$ acts with constant stabiliser group $G_i$ and such that $Z_i$ is closed in $U_i$. We will prove by induction on $i$ that the comparison map $\Theta_{U_i;G}^{\alpha}$ is an equivalence. The base case is $i=-1$, where there is nothing to prove ($U_{-1}=\emptyset$).

We denote the closed immersion of $Z_i \hookrightarrow U_i$ by $\iota_i$ and the open inclusion $U_{i-1} \to U_i$ by $j_i$. There is a homotopy commutative diagram of distinguished triangles of sheaves of spectra:
\[
\xymatrix{
(\iota_{i})_*\uKtop(Z_i,\alpha;G) \ar[r] \ar[d]_{\Theta_{Z_i;G}^{\alpha}} & \uKtop(U_i,\alpha;G) \ar[r] \ar[d]_{\Theta_{U_i;G}^{\alpha}} & (j_i)_*\uKtop(U_{i-1},\alpha;G) \ar[r] \ar[d]_{\Theta_{U_{i-1};G}^{\alpha}} & \Sigma \uKtop(Z_i,\alpha;G) \ar[d]_{\Sigma\Theta_{Z_i;G}^{\alpha}} \\
(\iota_i)_*\underline{\KU}^{\alpha}_{Z_i;G} \ar[r] & \underline{\KU}^{\alpha}_{U_i;G} \ar[r] & (j_i)_*\underline{\KU}^{\alpha}_{U_{i-1};G} \ar[r] & \Sigma \underline{\KU}^{\alpha}_{Z_i;G} \\
}
\]
The first row stems from Lemma \ref{lemma:first-row} and the second row is given by the distinguished triangle associated with the pair $(U_i,Z_i)$:
$$\underline{\KU}^{\tilde{\alpha}}_{G}(U_i^{\an},Z^{\an}_i) \to \underline{\KU}_{U^{\an}_i;G}^{\tilde{\alpha}} \to \underline{\KU}^{\tilde{\alpha}}_{U^{\an}_i;G} \to \Sigma \underline{\KU}^{\tilde{\alpha}}_{G}(U^{\an}_i,Z^{\an}_i).$$
The left-most term is equivalent to $(\iota_i)_*\underline{\KU}^{\alpha}_{Z_i^{\an};G}$ by virtue of the equivariant Thom isomorphism (see \cite[Theorem 8.1]{KM}). The last step requires smoothness of $Z_i$.

\begin{claim} 
The morphism $\Theta_{Z_{i};G}^G$ is an equivalence.
\end{claim}
To prove the claim we choose an arbitrary cyclic subgroup $G' \subset G$. We then have according to Lemma \ref{lemma:cyclic} that $\alpha$ is a commutative $G$-equivariant gerbe. We can now apply Lemma \ref{lemma:induction-step}, and replace $G$ by $Q=G/G'$. The outer induction hypothesis now implies validity of the claim.

Furthermore, by inner induction, we may assume that $\Theta_{U_{i-1};G}^G$ is an equivalence. It follows from the 2-out-of-3 rule (or the five lemma) that the middle morphism $\Theta_{U_i;G}^{\alpha}$ is an equivalence of sheaves of spectra. This concludes the proof.
\end{proof}

\section{Spreading out twisted derived equivalences in topological $K$-theory}\label{sec:spreadingout}

\subsection{The decomposition theorem for $\KU_{\Qb}$}

Let $\Xc$ be a stack. We recall that $I\Xc$ denotes the inertia stack
$$I\Xc = \Xc \times_{\Xc \times \Xc} \Xc.$$
For a presentation $\Xc = [U/G]$ as a finite quotient stack, we have 
$$I\Xc = \bigsqcup_{[g] \in G/\text{conj}} [X^g/C_g].$$

\begin{proposition}\label{prop:decthm}
Let $p\colon \Xc \to B$ be a proper morphism from a smooth finite quotient stack $\Xc$ to a $\Cb$-variety $B$, and $\alpha$ a Brauer class on $\Xc$. Then, there exist semi-simple perverse $\Qb$-sheaves $P_{\Xc}^{\alpha}$ and $Q_{\Xc}^{\alpha}$ on $B^{\an}$ such that 
$$p_*(\underline{\KU}^{\tilde{\alpha}}_{\Xc^{\an}})_{\Qb} \simeq \bigoplus_{n\in \Zb} \left( P_{\Xc}^\alpha[2n] \oplus Q_{\Xc}^{\alpha}[2n+1] \right).$$
\end{proposition}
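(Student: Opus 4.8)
The plan is to reduce the statement to the classical decomposition theorem of Beilinson--Bernstein--Deligne--Gabber applied to the rational cohomology of $\Xc^{\an}$, by first identifying the rationalised twisted $\KU$-sheaf with a $2$-periodic sum of shifts of the ordinary rational (twisted) cohomology sheaf. Concretely, I would first invoke Theorem \ref{thm:moulinos2} (together with Remark \ref{rmk:KtopDM} / the formalism set up above for quotient stacks) to identify $\underline{\KU}^{\tilde\alpha}_{\Xc^{\an}}$ with the locally constant hypersheaf of $\KU$-modules twisted by $\tilde\alpha$. Then I would rationalise: by Lemma \ref{lemma:Rat-KU}, $\Rat(\underline{\KU}_X)\simeq \bigoplus_{n\in\Zb}\underline{\Qb}_X[2n]$, and since rationalisation commutes with the twisting construction (the twist being by a torsion class in $H^3$, which dies rationally in the sense that the underlying local system of the twist becomes trivializable after tensoring with $\Qb$ — more precisely, the twisted sheaf of $\KU_{\Qb}$-modules is, locally, a sum of shifts of $\underline{\Qb}$), one gets
$$\big(\underline{\KU}^{\tilde\alpha}_{\Xc^{\an}}\big)_{\Qb}\simeq \bigoplus_{n\in\Zb}\underline{\Qb}_{\Xc^{\an}}[2n].$$

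\medskip

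Next I would push forward along $p$. Since $\Xc$ is a finite quotient stack $[U/G]$, cohomology of $\Xc^{\an}$ with $\Qb$-coefficients agrees with $G$-invariants of the cohomology of $U^{\an}$, and more usefully $p_*\underline{\Qb}_{\Xc^{\an}}$ is a direct summand of $(p\circ\pi)_*\underline{\Qb}_{U^{\an}}$ where $\pi\colon U\to \Xc$ is the atlas; by the decomposition theorem of BBDG applied to the proper morphism $U\to B$ (using that $U$ is smooth, after possibly stratifying or resolving — here one uses that $U$ is already smooth so $\underline{\Qb}_{U^{\an}}[\dim U]$ is perverse), the complex $p_*\underline{\Qb}_{\Xc^{\an}}$ is a direct sum of shifts of semisimple perverse sheaves on $B^{\an}$. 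Collecting the shifts according to parity then produces two semisimple perverse sheaves: let $P_{\Xc}$ be the direct sum of the (shifted-to-degree-zero) perverse constituents of $p_*\underline{\Qb}_{\Xc^{\an}}$ appearing in even total degree and $Q_{\Xc}$ those appearing in odd total degree, so that $p_*\underline{\Qb}_{\Xc^{\an}}\simeq\bigoplus_{m}({}^pH^m)[-m]$ regroups as $\bigoplus_n(P_{\Xc}[-2n']\oplus Q_{\Xc}[-2n''-1])$-type pattern. Combining with the $2$-periodic sum from the previous paragraph and using that $p_*$ commutes with the infinite direct sum $\bigoplus_n(-)[2n]$ (as $p$ is proper, hence $p_*$ preserves filtered colimits and direct sums of sheaves on a variety), one obtains
$$p_*\big(\underline{\KU}^{\tilde\alpha}_{\Xc^{\an}}\big)_{\Qb}\simeq\bigoplus_{n\in\Zb}p_*\underline{\Qb}_{\Xc^{\an}}[2n]\simeq\bigoplus_{n\in\Zb}\big(P_{\Xc}^{\alpha}[2n]\oplus Q_{\Xc}^{\alpha}[2n+1]\big),$$
where $P_{\Xc}^{\alpha},Q_{\Xc}^{\alpha}$ absorb the contributions of all perverse cohomology sheaves of $p_*\underline{\Qb}_{\Xc^{\an}}$ of even resp.\ odd degree (finitely many, each semisimple, hence their sums are semisimple perverse).

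\medskip

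The main obstacle I anticipate is the treatment of the \emph{twist}: one must verify that rationalising the $\tilde\alpha$-twisted $\KU$-sheaf really does yield an \emph{untwisted} $2$-periodic sum of $\underline{\Qb}$'s, i.e.\ that the torsion Brauer class becomes invisible rationally at the level of sheaves, not merely on cohomology groups. This should follow because $\tilde\alpha\in H^3(\Xc^{\an},\Zb)$ is torsion, so the associated $\B\mathrm{PU}$- (or $\B\mathrm{U}(1)$-gerbe) bundle has finite structure data; after smashing with $\Qb$ the twisted $\KU_{\Qb}$-module sheaf becomes étale-locally — in fact after a finite cover trivialising the gerbe — a sum of shifts of $\underline{\Qb}$, and the descent data, being given by a cocycle valued in roots of unity, acts trivially on $\Qb$-cohomology in a way that can be trivialised. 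Alternatively, and perhaps more cleanly, one can pass to the finite cover $C^{\tilde\alpha}$ on which the twist trivialises (as in Definition \ref{defi:Calpha}, working with the gerbe rather than an equivariant gerbe), apply the untwisted argument there, and descend; I would spell this out using that $\tau_*$ for the finite cover $\tau$ is exact and commutes with the constructions involved, so semisimplicity and perversity are preserved. A secondary, more routine, point is justifying that $p_*$ (derived pushforward of hypersheaves of spectra) is compatible with $p_*$ of the underlying $\D(\Qb)$-sheaves under the equivalence of Lemma \ref{lemma:ShQ} and commutes with the relevant infinite direct sum; this is formal given properness.
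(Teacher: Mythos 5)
Your reduction breaks at the very first identification $(\underline{\KU}^{\tilde\alpha}_{\Xc^{\an}})_{\Qb}\simeq\bigoplus_{n\in\Zb}\underline{\Qb}_{\Xc^{\an}}[2n]$. Lemma \ref{lemma:Rat-KU} concerns the constant $\KU$-sheaf on a topological space, whereas here $\underline{\KU}^{\tilde\alpha}_{\Xc^{\an}}$ is the sheaf of $\tilde\alpha$-twisted \emph{$G$-equivariant} complex $K$-theory of the atlas of the finite quotient stack (Situation \ref{sit0}, subsection \ref{ssec:moulinos2}). Rationally, equivariant $K$-theory does not collapse to the rational cohomology of the quotient stack: already for $\Xc=BG$ one has $\KU^0(BG)\otimes\Qb\cong R(G)\otimes\Qb$, of rank equal to the number of conjugacy classes, not $1$. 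Nor is the torsion twist rationally invisible on a stack: for a cocycle twist on $BG$ the rank drops to the number of $\alpha$-regular conjugacy classes, so the twist genuinely changes the rational answer. Consequently the perverse sheaves your argument produces (constituents of $p_*\underline{\Qb}_{\Xc^{\an}}$) have the wrong generic ranks — they would encode Betti numbers of the coarse space rather than the twisted orbifold Betti numbers that $\KU^{\tilde\alpha}$ actually computes (compare the discussion of $b_j^{\rm orb}$ in the introduction and Porism \ref{porism}, where $P^{\alpha}_{\Xc},Q^{\alpha}_{\Xc}$ are built from $p_*\Lc^{\alpha}_{I\Xc}$, not from $p_*\underline{\Qb}_{\Xc^{\an}}$).

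The paper's proof instead complexifies and invokes Freed--Hopkins--Teleman \cite[Theorem 3.9]{fht}: complexified $\tilde\alpha$-twisted $G$-equivariant $K$-theory of $[U/G]$ is a $2$-periodic direct sum over conjugacy classes $[g]$ of $H^{*}_{C(g)}\big((U^{\an})^g,\Lc^{\alpha}\big)$, i.e.\ cohomology of the \emph{inertia stack} $I\Xc$ with coefficients in rank-one local systems $\Lc^{\alpha}$ of finite order; the decomposition theorem is then applied to $p_*\Lc^{\alpha}$, which is legitimate because these local systems have finite monodromy and hence are of geometric origin. Your fallback of passing to a finite cover trivialising the gerbe and descending does not repair the gap: the untwisted statement you would need on the cover is again about equivariant $K$-theory of an orbifold, where the same delocalisation phenomenon occurs, so any correct argument must pass through the inertia stack (equivalently, an Atiyah--Segal/FHT-type delocalisation) rather than through $p_*\underline{\Qb}_{\Xc^{\an}}$ alone.
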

\begin{proof}
It suffices to prove that the theorem holds after tensoring the complexes of sheaves with $\Cb$. According to \cite[Theorem 3.9]{fht}, complexified $\alpha$-twisted $G$-equivariant topological $K$-theory of $\Xc = [U/G]$ is equivalent to 
$$\KU^{\alpha,*}_G(U^{\an}) = \bigoplus_{n\in \Zb}\bigoplus_{g \in G/\text{conj}} H^{*+2n}_{C(g)}\big((U^{\an})^g,\Lc^{\alpha}\big),$$
where $\Lc^{\alpha}$ is a $C(g)$-equivariant complex local system on $(U^{\an})^g$ of rank $1$. Furthermore, since $\alpha$ has finite order, so do the equivariant sheaves $\Lc^{\alpha}$.

We therefore have an equivalence of sheaves 
\begin{equation}\label{eqn:Qbeta}
p_*\big((\underline{\KU}_{I\Xc})_{\Qb}\big) \otimes \Cb \simeq p_*\Lc^{\alpha}[\beta,\beta^{-1}].
\end{equation}
This concludes the proof, as $p_*\Lc^{\alpha}[\beta,\beta^{-1}] \simeq \bigoplus_{n \in \Zb} p_*\Lc^{\alpha}$, and $p_*\Lc^{\alpha}$ is a direct sum of shifts of $\Cb$-linear semi-simple perverse sheaves by virtue of the decomposition theorem. The decomposition theorem can be applied to $\Lc^{\alpha}$ since it has finite monodromy, and therefore is of geometric origin.
\end{proof}

\begin{porism}\label{porism}
There exists a complex local system $\Lc^{\alpha}_{\Xc}$ of rank $1$ on $I\Xc$ such that
the complexified perverse sheaves $P_{\Xc}^{\alpha}$ and $Q_{\Xc}^{\alpha}$ are isomorphic to
$$P_{\Xc}^{\alpha} \simeq \bigoplus_{n \in \Nb}\Hc^{2n}_p(p_*\Lc^{\alpha}_{I\Xc}) \text{ and } Q_{\Xc}^{\alpha} \simeq \bigoplus_{n \in \Nb}\Hc^{2n+1}_p(p_*\Lc^{\alpha}_{I\Xc}).$$
\end{porism}
\begin{proof}
This follows from \eqref{eqn:Qbeta} and the fact that $\beta$ has cohomological degree $-2$.
\end{proof}

We record the following result for later use.
\begin{lemma}\label{lemma:p-retraction}
Let $f\colon \mathsf{F} \to \mathsf{G}$ and $g\colon \mathsf{G} \to \mathsf{F}$ be morphisms of complexes of constructible sheaves of $\Qb$-vector spaces on a variety $B$. Assume that 
\begin{itemize}
\item $\mathsf{F}$ is a direct sum $\bigoplus_{i \in \Zb} T_i[-i]$ of shifts of semisimple perverse sheaves,
\item for every $i \in \Zb$ the support of $T_i$ intersects a fixed open subvariety $B^{\heartsuit}$,
\item $(g \circ f)|_{B^{\heartsuit}} \simeq \id_{\mathsf{F}|_{B^{\heartsuit}}}$. 
\end{itemize}
Then, the composition 
\[
\xymatrix{
\Gamma(B,\mathsf{F}) \ar[r]^{f^{\Gamma}_{\Qb}} \ar[rd] & \Gamma(B,\mathsf{G}) \ar[d]^{g^{\Gamma}_{\Qb}} \\
& \Gamma(B,\mathsf{F})
}
\] 
is a unipotent endomorphism. In particular, $\Gamma(B,\mathsf{F})$ is a retract of $\Gamma(B,\mathsf{G})$.
\end{lemma}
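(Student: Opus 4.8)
The plan is to analyze the composition $h = g^{\Gamma}_{\Qb} \circ f^{\Gamma}_{\Qb}$ as an endomorphism of $\Gamma(B,\mathsf{F})$, and show it is unipotent, i.e.\ that $h - \id$ is nilpotent. First I would use the hypothesis that $\mathsf{F} = \bigoplus_{i\in\Zb} T_i[-i]$ is a direct sum of shifts of semisimple perverse sheaves. By the decomposition theorem / semisimplicity of the category of perverse sheaves, each $T_i$ is a direct sum of intersection cohomology sheaves of irreducible local systems on locally closed smooth subvarieties. The key structural input is the second hypothesis: every summand $T_i$ has support meeting the open subvariety $B^{\heartsuit}$. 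This means the restriction functor $(-)|_{B^{\heartsuit}}$ is \emph{faithful} on endomorphisms of $\mathsf{F}$: if $\psi\colon \mathsf{F}\to\mathsf{F}$ is any morphism in the derived category with $\psi|_{B^{\heartsuit}} = 0$, then $\psi = 0$. Indeed, $\Hom(\mathsf{F},\mathsf{F}) = \bigoplus_{i,j}\Hom(T_i[-i],T_j[-j])$, and for a morphism between (shifts of) simple perverse sheaves whose supports both meet $B^{\heartsuit}$, vanishing on $B^{\heartsuit}$ forces vanishing: a nonzero map $\mathrm{IC}(Z_1,L_1)\to \mathrm{IC}(Z_2,L_2)$ restricts nontrivially to the open dense locus of $Z_1\cap B^{\heartsuit}$ inside its support, using that $\mathrm{IC}$ has no sub- or quotient-objects supported on a proper closed subset.

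Given this faithfulness, the third hypothesis $(g\circ f)|_{B^{\heartsuit}} \simeq \id_{\mathsf{F}|_{B^{\heartsuit}}}$ shows that $g\circ f - \id_{\mathsf{F}}$, as an endomorphism of $\mathsf{F}$ in the constructible derived category, restricts to zero on $B^{\heartsuit}$ — but wait, faithfulness as stated kills it outright, which would give $g\circ f \simeq \id_{\mathsf{F}}$ and hence $h = \id$ on global sections, a stronger conclusion than unipotence. So I expect the subtlety is that $g$ and $f$ are only assumed to be morphisms of complexes, and $(g\circ f)|_{B^{\heartsuit}}$ being homotopic to the identity does not immediately upgrade to $g\circ f$ being an isomorphism \emph{in the homotopy category} globally, because the homotopy realizing $(g\circ f)|_{B^{\heartsuit}}\simeq\id$ need not extend. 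The correct move is therefore to pass to the homotopy category first: in $D^b_c(B)$, the morphism $\phi := [g]\circ[f] - \id \in \End_{D^b_c(B)}(\mathsf{F})$ satisfies $\phi|_{B^{\heartsuit}} = 0$. Then the faithfulness argument above does apply in $D^b_c(B)$ and gives $\phi = 0$, i.e.\ $[g]\circ[f] = \id_{\mathsf{F}}$ in $D^b_c(B)$.

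From $[g]\circ[f] = \id$ in $D^b_c(B)$, applying the (triangulated, hence additive) global sections functor $\Gamma(B,-) = R\Gamma(B,-)$ gives $g^{\Gamma}_{\Qb}\circ f^{\Gamma}_{\Qb} = \id$ on $\Gamma(B,\mathsf{F})$ on the nose — in particular a unipotent (indeed identity) endomorphism, and $\Gamma(B,\mathsf{F})$ is trivially a retract of $\Gamma(B,\mathsf{G})$. The word ``unipotent'' in the statement is then there for robustness, covering the version where one only knows $(g\circ f)|_{B^\heartsuit}$ is unipotent rather than the identity; in that case I would instead argue that $([g]\circ[f] - \id)|_{B^\heartsuit}$ is nilpotent, say $((g\circ f)|_{B^\heartsuit} - \id)^N = 0$, observe that $([g]\circ[f]-\id)^N$ restricts to $0$ on $B^\heartsuit$, conclude $([g]\circ[f]-\id)^N = 0$ in $D^b_c(B)$ by the same faithfulness, and then apply $\Gamma(B,-)$.

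The main obstacle is the faithfulness statement: proving that an endomorphism of $\mathsf{F}$ in $D^b_c(B)$ vanishing on the open set $B^{\heartsuit}$ is zero, given that every simple summand of every $T_i$ has support meeting $B^{\heartsuit}$. This is where the semisimplicity of $\mathsf{F}$ and the support condition are both essential — without semisimplicity one could have a nonsplit extension whose sub is supported off $B^{\heartsuit}$, and without the support condition a summand supported entirely in $B\setminus B^{\heartsuit}$ would be invisible on $B^{\heartsuit}$. The cleanest formulation uses that for simple perverse sheaves $S_1, S_2$ with $\mathrm{supp}(S_k)\cap B^{\heartsuit}\neq\emptyset$, restriction $\Hom(S_1,S_2)\to\Hom(S_1|_{B^\heartsuit}, S_2|_{B^\heartsuit})$ is injective, which follows because $S_1|_{B^\heartsuit}$ is a nonzero (hence, by simplicity of $S_1$ over the smooth open locus of its support, ``generically faithful'') perverse sheaf on $B^\heartsuit$ and any nonzero map out of $S_1$ stays nonzero after restriction to the open dense part of its support. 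I would spell this out via the intermediate-extension characterization of $\mathrm{IC}$ sheaves and the fact that $j_!j^* \to \id \to i_*i^*$ triangles detect vanishing on the generic locus.
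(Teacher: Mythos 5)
There is a genuine gap, and it sits exactly where you yourself hesitated: your faithfulness claim for restriction on $\End_{D^b_c(B)}(\mathsf{F})$ is false. Writing $u=g\circ f$, its components live in $\Hom\big(T_i[-i],T_j[-j]\big)=\Ext^{\,i-j}(T_i,T_j)$; only the diagonal terms $i=j$ are morphisms of perverse sheaves, and your $\mathrm{IC}$-support argument (no sub/quotient supported on a proper closed subset, restriction to the dense open part of the support) only applies to those degree-zero Homs. Restriction to an open subvariety is \emph{not} injective on the higher Ext components, even when every support meets $B^{\heartsuit}$: take $B=\mathbb{P}^1$, $B^{\heartsuit}=\mathbb{A}^1$, $T_0=T_2=\Qb_{\mathbb{P}^1}[1]$, $\mathsf{F}=T_0\oplus T_2[-2]$; then the component group $\Hom\big(T_2[-2],T_0\big)\cong\Ext^2\big(\Qb_{\mathbb{P}^1}[1],\Qb_{\mathbb{P}^1}[1]\big)\cong H^2(\mathbb{P}^1,\Qb)\neq 0$ restricts to zero on $\mathbb{A}^1$. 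So an endomorphism of $\mathsf{F}$ can vanish on $B^{\heartsuit}$ without vanishing, and your conclusion $g\circ f=\id$ in $D^b_c(B)$ does not follow. It is in fact false: with $\mathsf{G}=\mathsf{F}$, $f=\id$ and $g=\id+\epsilon$ for $\epsilon$ a nonzero class as above, all hypotheses hold, yet the induced endomorphism of $\Hb^1(B,\mathsf{F})=H^0(\mathbb{P}^1)\oplus H^2(\mathbb{P}^1)$ is the identity plus a nonzero nilpotent. So ``unipotent'' in the statement is not a robustness clause; it is the sharp conclusion.

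What the hypotheses do give — and this is the paper's argument — is control of the diagonal only: each $u_{ii}\in\Hom(T_i,T_i)$ restricts to the identity on $B^{\heartsuit}$, and since $T_i$ is semisimple with its simple constituents having support meeting $B^{\heartsuit}$, restriction \emph{is} injective on these genuine Homs of perverse sheaves, so $u_{ii}=\id_{T_i}$ globally; in particular $u$ is an equivalence. The off-diagonal components lie in $\Ext^{\,i-j}(T_i,T_j)$ with $i>j$ (negative Ext vanish), so $u$ is ``identity plus strictly triangular'' for the filtration of $\mathsf{F}$ by $\bigoplus_{i\ge k}T_i[-i]$. Applying $\Hb^*(B,-)$, the induced endomorphism of $\Gamma(B,\mathsf{F})$ preserves the corresponding (perverse) filtration and acts as the identity on the associated graded pieces $\gr_p$, hence is unipotent, hence invertible, which yields the retraction. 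Your argument becomes correct once the global faithfulness claim is replaced by this diagonal-plus-filtration reasoning.
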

\begin{proof}
By construction, the composition of morphisms above is the identity when restricted to $\Bc^{\heartsuit}$. Since the perverse sheaves $(T_i)_{i \in \Zb}$ are supported on $B^{\heartsuit}$, we may conclude that 
the map above is an equivalence everywhere, and that it induces the identity map on the associated graded groups (with respect to the perverse filtration)
$$\gr_p^j\Hb^k(B,\mathsf{F}) \to \gr_p^j\Hb^k(B,\mathsf{F}),\text{ where $j,k \in \Zb$,}$$
Therefore, this map is unipotent.
\end{proof}

\subsection{Main construction}\label{ssec:first}

\begin{situation}\label{sit1}
Assume that we have the following diagram of smooth DM stacks over $\Cb$
\begin{equation}\label{diag:cube}
\xymatrix{
 & \Xc^{\heartsuit} \times_{\Bc^{\heartsuit}} \Yc^{\heartsuit} \ar@{^(.>}@[gray][d] \ar[ld]_{q'} \ar[rd]^{p'} & \\
\Xc^{\heartsuit} \ar@{^(->}[d] \ar[rd]_{p} & {\color{gray}\Xc \times_{\Bc} \Yc}  \ar@{.>}@[gray][rd] \ar@{.>}@[gray][ld] & \Yc^{\heartsuit} \ar[ld]^q \ar@{^(->}[d] \\
\Xc  \ar[rd] & \Bc^{\heartsuit} \ar@{^(->}[d] & \Yc \ar[ld] \\
& \Bc &
}
\end{equation}
such that $p$ and $q$ are proper morphisms and $\Bc^{\heartsuit} \subset \Bc$ is an open subset. Let $\alpha \in \Br(\Xc)$ and $\beta \in \Br(\Yc)$ be Brauer classes, and $\Kk \in \D^b_{\coh}(\Xc \times_{\Bc} \Yc,\alpha^{-1}\boxtimes \beta)$ be an $\alpha^{-1} \boxtimes \beta$-twisted bounded complex of coherent sheaves. The restriction to $\Xc^{\heartsuit} \times_{\Bc^{\heartsuit}} \Yc^{\heartsuit}$ is denoted by $\Kk^{\heartsuit}$. Furthermore, we assume that $\Xc$ and $\Yc$ are finite quotient stacks. The course moduli space of $\Xc$ (respectively $\Yc$) will be denoted by $X$ (respectively $Y$).
\end{situation}

In section \ref{sec:app}, we start with a twisted perfect complex $\Kk^{\heartsuit}$ and choose any extension $\Kk$. This is possible by virtue of the following lemma.

\begin{lemma}
Let $W$ be a Noetherian scheme acted on by a finite group $G$, $\alpha$ a $G$-equivariant Brauer class, $j\colon U \hookrightarrow W$ a $G$-invariant open subscheme. Then, an object $\F \in \D^b_{\coh}([U/G],\alpha)$ can be extended to a complex $\overline{\F}^{*} \in \D^b_{\coh}([W/G],\alpha)$.
\end{lemma}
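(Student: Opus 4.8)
The plan is to reduce the statement to the classical extension result for coherent sheaves on a Noetherian scheme, bootstrapping from complexes to sheaves and then transferring the equivariant/twisted decorations.

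\begin{proof}[Proof sketch]
First I would recall the non-equivariant, untwisted prototype: if $W$ is Noetherian and $j\colon U \hookrightarrow W$ is an open immersion, any coherent sheaf on $U$ extends to a coherent sheaf on $W$ (\cite[Exercise II.5.15]{hartshorne}, or \cite[Tag 01PE]{stacks}), and more generally any object of $\D^b_{\coh}(U)$ extends to $\D^b_{\coh}(W)$ by extending cohomology sheaves one at a time and assembling via the standard truncation triangles. The entire task is therefore to decorate this with the $G$-action and the Brauer class $\alpha$.

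For the twisted, $G$-equivariant version I would first pass to a global model for $\alpha$. Since $\alpha$ is a $G$-equivariant Brauer class on the Noetherian scheme $W$, after possibly enlarging $W$ we may choose a $G$-equivariant Azumaya algebra (equivalently, a $G$-equivariant $\G_m$-gerbe with a global vector-bundle presentation) representing $\alpha$; then an $\alpha$-twisted coherent sheaf on $[W/G]$ is the same datum as an ordinary $G$-equivariant coherent module over this algebra. This recasts the problem as: extend a coherent module over a (non-commutative) coherent $\Oo_{[U/G]}$-algebra from $[U/G]$ to $[W/G]$. I would then invoke the observation that $[W/G]$ is a Noetherian quotient stack with affine diagonal, on which pushforward $j_*$ along the open immersion $[U/G] \hookrightarrow [W/G]$ sends quasi-coherent sheaves to quasi-coherent sheaves, and every quasi-coherent sheaf is a filtered colimit of its coherent subsheaves (this holds for Noetherian algebraic stacks; see \cite[Tag 0GQB]{stacks} or the analogous statement for quotient stacks). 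Applying this to the algebra-module $j_* \F$ and choosing a coherent submodule whose restriction to $[U/G]$ is all of $\F$ (possible because $\F$ itself is coherent, hence finitely generated locally, so finitely many generators lie in some coherent piece and one takes the submodule they generate) produces the desired coherent extension of a single $\alpha$-twisted $G$-equivariant sheaf.

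Finally I would upgrade from sheaves to complexes. Given $\F \in \D^b_{\coh}([U/G],\alpha)$, write its cohomology sheaves $\Hc^i(\F)$, extend each to a coherent $\alpha$-twisted $G$-equivariant sheaf $\overline{\Hc^i}$ on $[W/G]$ by the previous paragraph, and then reconstruct an extension $\overline{\F}^{*}$ of $\F$ by induction on the length of the cohomological amplitude: if $\F$ sits in a triangle $\tau_{\leq i-1}\F \to \F \to \Hc^i(\F)[-i]$, extend $\tau_{\leq i-1}\F$ inductively, extend $\Hc^i(\F)[-i]$ as above, and extend the connecting map $\Hc^i(\F)[-i] \to (\tau_{\leq i-1}\F)[1]$ using that $\Ext$-groups on $[W/G]$ surject onto $\Ext$-groups on $[U/G]$ for the complexes at hand — or, more simply, use that $j_* \colon \D^b_{\coh}([U/G],\alpha) \to \D^+_{\QCoh}([W/G],\alpha)$ lands in complexes whose cohomology sheaves are unions of coherent subsheaves, and choose a bounded coherent subcomplex restricting to $\F$. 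The main obstacle is the bookkeeping in this last step: $j_*\F$ need not have bounded or coherent cohomology on the nose, so some care is required to exhibit a genuinely bounded coherent $\alpha$-twisted subobject; I expect this is handled cleanly by truncating $j_*\F$ above and below the amplitude of $\F$ and then applying the coherent-approximation argument of the second paragraph degree by degree.
\end{proof}
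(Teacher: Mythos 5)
Your reduction of the twist and the equivariance is exactly what the paper does: it handles $\alpha$ by replacing $\Oo$-modules with modules over an Azumaya algebra $\Aa$ representing $\alpha$ (so your ``after possibly enlarging $W$'' caveat is unnecessary -- a Brauer class has a global Azumaya representative by definition), and your key tool, that a quasi-coherent sheaf on the Noetherian quotient stack is the filtered union of its coherent subsheaves, is precisely the paper's citation of \cite[Proposition 15.4]{LMB}. The genuine gap is in the passage from a single sheaf to a complex. Your first route rests on the claim that restriction $\Ext^{1}_{[W/G]}(\overline{\Hc^i},\overline{\tau_{\leq i-1}\F}) \to \Ext^{1}_{[U/G]}(\Hc^i(\F),\tau_{\leq i-1}\F)$ is surjective for the chosen extensions; this is false in general for a \emph{fixed} pair of extensions (already $\Hom_{\mathbb{A}^1}(\Oo,\Oo)=k[t] \to \Hom_{\G_m}(\Oo,\Oo)=k[t,t^{-1}]$ fails to be surjective), and to realize a given class one must be allowed to shrink the chosen extension of the source to a smaller coherent subsheaf of its $j_*$ -- a refinement your induction never sets up. Your second route, extracting a ``bounded coherent subcomplex'' of $\tau_{\leq b}Rj_*\F$, points in the right direction but is deferred: a subcomplex is not a notion available at the level of the derived category, so one must first strictify, and that strictification is where the actual work lies.

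The paper's proof does exactly this and sidesteps the $\Ext$ issue entirely: represent $\F$ by an honest bounded complex of coherent ($G$-equivariant, $\Aa$-module) sheaves $\F^a \to \cdots \to \F^b$ on $U$; extend $\F^b$ to a coherent subsheaf $\overline{\F}^b \subset j_*\F^b$ by coherent approximation; then by descending induction extend each $\F^{i}$ \emph{together with} the differential $d^{i}\colon \F^i \to \F^{i+1}$ (again via \cite[Proposition 15.4]{LMB} applied to $j_*$), and restore $d^2=0$ by modifying the newly chosen term $\widetilde{\overline{\F}}^{i}$ using $\ker(\overline{d}^{i+1}\circ\overline{d}^{i})$, which is harmless because the composite already vanishes on $U$, so the restriction to $U$ is unchanged. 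The twisted equivariant case is then literally the same argument for $\Aa$-modules, as you proposed. So your overall strategy is salvageable, but as written the complex-level step either invokes a false surjectivity statement or postpones the real argument; the term-by-term extension of the strictified complex with the correction making the composites vanish is the missing idea.
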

\begin{proof}
We assume first $\alpha = 0$. In this case, we may represent $\F^{*}$ by a bounded complex of coherent sheaves on $U$:
$$0 \to \F^a \xrightarrow{d^a} \F^{a+1} \to \cdots \to \F^{b-1} \xrightarrow{d^{b-1}} \F^b \to 0.$$
According to \cite[Proposition 15.4]{LMB}, the quasi-coherent sheaf $j_*\F^b$ is equal to the union of its subsheaves of finite type. We can therefore find a coherent extension $\overline{\F}^b$ to $W$ of $\F^b$.
 Applying the same reference again, we can find an extension $\overline{\F}^{b-1}$ of $\F^{b-1}$ together with a morphism $$\overline{d}^{b-1}\colon \F^{b-1} \to \F^b$$
extending $d^{b-1}$. For the same reason, there is a morphism of coherent sheaves on $W$
$$\widetilde{\overline{\F}}^{b-2} \xrightarrow{\overline{d}^{b-2}} \overline{\F}^{b-1} \xrightarrow{\overline{d}^{b-1}} \overline{\F}^b,$$
extending $d^{b-2}$. We define
$${\overline{\F}}^{b-2} = \widetilde{\overline{\F}}^{b-2} / \ker(\overline{d}^{b-1} \circ \overline{d}^{b-2}).$$
This is still an extension of $\F^{b-2}$, and by construction we have $\overline{d}^{b-1} \circ \overline{d}^{b-2} = 0$. We continue like this by descending induction.

For the proof of the twisted case one argues as above, but with sheaves of $\Oo$-modules replaced by $\Aa$-modules, where $\Aa$ is an Azumaya algebra representing the Brauer class $\alpha$.
\end{proof}

\begin{lemma}\label{lemma:FM}
We use the notation from Situation \ref{sit1}. Let $\F \in \Perf^{\alpha}(X)$ be an $\alpha$-twisted perfect complex. Then, 
\begin{equation}\label{eqn:FM}p'_*(q'^*\F \otimes \Kk)\end{equation}
is a $\beta$-twisted perfect complex on $\Yc$.
\end{lemma}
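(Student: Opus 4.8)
The plan is to factor the Fourier--Mukai functor \eqref{eqn:FM} into its three elementary constituents --- pullback along $q'$, tensor product with $\Kk$, and proper pushforward along $p'$ --- and to verify that each of them preserves the property of being a bounded (twisted) coherent complex; ``bounded coherent'' will only be upgraded to ``perfect'' at the end, using the smoothness hypotheses built into Situation \ref{sit1}.

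First I would regard $\F$ as an object of $\Perf^{\alpha}(\Xc^{\heartsuit})$, restricting along the open immersion $\Xc^{\heartsuit}\hookrightarrow\Xc$ (or pulling back from the coarse space, as the notation requires), and pull it back along $q'$. Since pullback preserves perfectness, ${q'}^{*}\F$ is a $({q'}^{*}\alpha)$-twisted perfect complex on $\Xc^{\heartsuit}\times_{\Bc^{\heartsuit}}\Yc^{\heartsuit}$; in particular it is bounded with coherent cohomology and, \'etale-locally, a bounded complex of vector bundles. Tensoring such a complex with $\Kk^{\heartsuit}:=\Kk|_{\Xc^{\heartsuit}\times_{\Bc^{\heartsuit}}\Yc^{\heartsuit}}$, which is a bounded $(\alpha^{-1}\boxtimes\beta)$-twisted coherent complex, again yields a bounded complex with coherent cohomology; the three gerbe twists combine as $({q'}^{*}\alpha)\cdot(\alpha^{-1}\boxtimes\beta)|_{\heartsuit}={p'}^{*}\beta$, so ${q'}^{*}\F\otimes\Kk^{\heartsuit}$ is a bounded $({p'}^{*}\beta)$-twisted coherent complex.

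Next I would push forward along $p'$, which is proper, being the base change of the proper morphism $p$ along $q$. Grothendieck's coherence theorem, applied to the underlying $\Oo$-complexes after trivialising on an \'etale cover an Azumaya algebra representing $\beta$ (or argued directly in the twisted setting), then shows that $p'_{*}({q'}^{*}\F\otimes\Kk^{\heartsuit})$ is a bounded $\beta$-twisted complex with coherent cohomology on $\Yc^{\heartsuit}$. Since $\Yc^{\heartsuit}$ is smooth, hence regular, every such complex is automatically $\beta$-twisted perfect: the $\beta$-twisted sheaves are modules over an Azumaya algebra which is \'etale-locally Morita-equivalent to $\Oo$ and so of finite global dimension. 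Thus \eqref{eqn:FM} is a $\beta$-twisted perfect complex on $\Yc^{\heartsuit}$. To reach the asserted statement over all of $\Yc$, I would invoke the preceding extension lemma to extend this object to a bounded $\beta$-twisted coherent complex on $\Yc$ (automatically perfect, since $\Yc$ is smooth), or, equivalently, rerun the three steps above using the chosen extension $\Kk$ on $\Xc\times_{\Bc}\Yc$ and the proper projection $\Xc\times_{\Bc}\Yc\to\Yc$ in place of $p'$.

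I do not expect a serious obstacle here; the argument is essentially formal. The one point that is not automatic is the bookkeeping of the three gerbe twists $\alpha$, $\beta$ and $\alpha^{-1}\boxtimes\beta$ --- to be sure the output is genuinely $\beta$-twisted --- together with the twisted versions of Grothendieck coherence for $p'_{*}$ and of the identification $\D^{b}_{\coh}=\Perf$ on the smooth stacks involved, both of which reduce to their untwisted analogues after an \'etale trivialisation of the relevant Azumaya algebras. The only genuine choice entering the argument is the extension of the transform from $\Yc^{\heartsuit}$ to $\Yc$.
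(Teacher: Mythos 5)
Your argument is correct, but it follows a genuinely different route from the paper. The paper's proof never works with bounded coherent complexes on the (possibly singular) fibre product: it pushes $q'^*\F \otimes \Kk$ into the \emph{absolute} product $\Xc \times \Yc$, which is smooth, so that the complex becomes perfect there, and then invokes \cite[Proposition 2.7]{tt90} — using flatness of the projection and properness of the support $\Xc \times_{\Bc} \Yc \to \Yc$ — to conclude that the pushforward to $\Yc$ is perfect directly. You instead stay on $\Xc \times_{\Bc} \Yc$, track the property ``bounded with coherent cohomology'' through pullback, derived tensor with $\Kk$, and proper pushforward (Grothendieck coherence, in its Azumaya-twisted form), and only at the very end upgrade $\D^b_{\coh}(\Yc,\beta)$ to $\Perf^{\beta}(\Yc)$ using regularity of $\Yc$, i.e.\ finite global dimension of the representing Azumaya algebra. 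What each buys: the paper's argument produces relative perfection over $\Yc$ without ever needing the identification $\D^b_{\coh}=\Perf$ on the target (smoothness of $\Xc$ and $\Yc$ enters only through smoothness of $\Xc\times\Yc$), at the cost of citing the Thomason--Trobaugh result; yours is more elementary and self-contained, at the cost of leaning on boundedness and coherence of proper pushforward for DM stacks and on regularity of $\Yc$, which Situation \ref{sit1} does supply. Your gerbe bookkeeping $q'^*\alpha\cdot(\alpha^{-1}\boxtimes\beta)=p'^*\beta$ matches what is needed. One caveat: the initial detour through $\Xc^{\heartsuit}\times_{\Bc^{\heartsuit}}\Yc^{\heartsuit}$ followed by ``extend the resulting object to $\Yc$'' would not prove the lemma as stated, since the claim concerns the specific complex built from the chosen extension $\Kk$ over all of $\Bc$; only your second option — running the three steps with $\Kk$ on $\Xc\times_{\Bc}\Yc$ and the proper projection to $\Yc$ — is the proof, and the restriction to the heart can simply be dropped.
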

\begin{proof}

Let us assume first that $\alpha=\beta=0$. The tensor product $q'^*\F \otimes \Kk$ is a perfect complex on $\Xc \times \Yc$ (since every bounded complex of coherent sheaves on a smooth variety is perfect) which is set-theoretically supported on $\Xc \times_{\Bc} \Yc$. Since the map $p'\colon \Xc \times_{\Bc} \Yc \to \Yc$ is proper, and $\Xc \times \Yc \to \Xc$ is flat, $p'_*(q'^*\F \otimes \Kk)$ is perfect by virtue of \cite[Proposition 2.7]{tt90}. 
In the twisted case we may argue similarly by replacing perfect complexes of $\Oc$-modules by perfect complexes of $\Aa$-modules, where $\Aa$ denotes an Azumaya algebra representing the Brauer class $\alpha$.
\end{proof}

\begin{definition}\label{def:FM}
We denote by $\FM_{\Kk}\colon \Perf^{\alpha}(\Xc) \to \Perf^{\beta}(\Yc)$ the $\Perf(\Bc)$-linear functor which is informally given by \eqref{eqn:FM}. Formally, it is defined as a composition of pullback $p'^*$, tensoring with $\Kk$, and pushforward $p'_*$.
\end{definition}

$\Perf(\Bc)$-linearity of $\FM_{\Kk}$ follows from the projection formula. We refer the reader to \cite[Theorem 4.14]{bfn} for a more detailed account.

\begin{situation}\label{sit2}
We use the same notation as in Situation \ref{sit1}. Assume that the following holds:
\begin{enumerate}
\item[(a)] $\FM_{\Kk^{\heartsuit}}\colon \Perf^{\alpha}(\Xc^{\heartsuit}) \xrightarrow{\cong} \Perf^{\beta}(\Yc^{\heartsuit})$ is an equivalence,
\item[(b)] for every $i \in \Nb$ the perverse sheaves $\Hc_p^i(p_*\underline{\Qb}_{I\Xc})$ and $\Hc_p^i(p_*\underline{\Qb}_{I\Yc})$ are middle extensions of perverse sheaves on $\Bc^{\heartsuit}$.
\end{enumerate}
\end{situation}

The decomposition theorem \cite{bbdg} guarantees that $\Hc_p^i(p_*\underline{\Qb}_{I\Xc})$ and $\Hc_p^i(p_*\underline{\Qb}_{I\Yc})$ are semi-simple perverse sheaves. The importance of (b) lies in stipulating that the supports of all irreducible factors intersect non-trivially with $\Bc^{\heartsuit}$.

\begin{construction}\label{construction}
Applying the functor $\Ktop_{\Bc}$ to the $\Perf(\Bc)$-linear exact functor $\FM_{\Kk}\colon \Perf^{\alpha}(\Xc) \to \Perf^{\beta}(\Yc)$  we obtain a morphism of sheaves of spectra on $\Bc^{\an}$
$$\Ktop_{\Bc}(\Perf^{\alpha}(\Xc)) \to \Ktop_{\Bc}(\Perf^{\beta}(\Yc).$$
Since $p$ and $q$ are proper, we may apply Theorem \ref{thm:moulinos3} (due to Moulinos) and rewrite this as a morphism
$$p_*\Ktop_{X}(\Perf^{\alpha}(\Xc)) \to q_*\Ktop_{Y}(\Perf^{\beta}(\Yc).$$
By Theorem \ref{thm:moulinos2} (again due to Moulinos) this amounts to a morphism
\begin{equation}\label{eqn:FM-KU-sheaves}p_*\underline{\KU}^{\tilde{\alpha}}_{\Xc^{\an}} \to q_*\underline{\KU}^{\tilde{\beta}}_{\Yc^{\an}}.\end{equation}
Taking global sections, we obtain a morphism of spectra
\begin{equation}\label{eqn:FM-KU}\KU^{\tilde{\alpha}}(\Xc^{\an}) \to \KU^{\tilde{\beta}}(\Yc^{\an}).\end{equation}
\end{construction}

We are now ready to state one of our main results.

\begin{theorem}\label{thm:main}
Suppose that we are in Situation \ref{sit2}. Then, \eqref{eqn:FM-KU} gives rise to an isomorphism of rationalised topological $K$-theory:
$$\KU^{{\alpha}}(\Xc^{\an})_{\Qb} \simeq \KU^{\beta}(\Yc^{\an})_{\Qb}.$$ 
Furthermore, \eqref{eqn:FM-KU} induces an isomorphism of the torsion-free parts of the abelian groups
$$\KU^{{\alpha,*}}(\Xc^{\an})/{\text{torsion}} \simeq \KU^{\beta,*}(\Yc^{\an})/\text{torsion}.$$
\end{theorem}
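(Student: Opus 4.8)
The plan is to deduce the theorem from the decomposition theorem for $\KU_{\Qb}$ (Proposition \ref{prop:decthm}) together with the retraction lemma (Lemma \ref{lemma:p-retraction}), using the Fourier--Mukai functor $\FM_{\Kk}$ and an inverse constructed from $\Kk^{\heartsuit}$. First I would note that since $\FM_{\Kk^{\heartsuit}}$ is an equivalence over $\Bc^{\heartsuit}$, it admits a quasi-inverse which is again $\Perf(\Bc^{\heartsuit})$-linear; by the extension lemma proved above (applied to an integral kernel representing the inverse, e.g.\ a suitable twist of the derived dual of $\Kk$) we may choose a twisted complex $\Kk'$ on $\Yc\times_{\Bc}\Xc$ whose restriction to the open locus induces $\FM_{\Kk^{\heartsuit}}^{-1}$. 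This gives two $\Perf(\Bc)$-linear functors $\FM_{\Kk}$ and $\FM_{\Kk'}$ in opposite directions whose composites restrict, over $\Bc^{\heartsuit}$, to the respective identities. Applying $\Ktop_{\Bc}$ and using Theorems \ref{thm:moulinos3} and \ref{thm:moulinos2} as in Construction \ref{construction}, I obtain maps of sheaves of spectra $f\colon p_*\underline{\KU}^{\tilde\alpha}_{\Xc^{\an}}\to q_*\underline{\KU}^{\tilde\beta}_{\Yc^{\an}}$ and $g$ in the other direction, with $g\circ f$ and $f\circ g$ restricting to the identity over $(\Bc^{\heartsuit})^{\an}$.

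Next I would rationalise. By Proposition \ref{prop:decthm}, after rationalising the source and target become direct sums of shifts of semisimple perverse sheaves $P^\alpha_\Xc$, $Q^\alpha_\Xc$ (resp.\ $P^\beta_\Yc$, $Q^\beta_\Yc$), and by assumption (b) of Situation \ref{sit2}, together with Porism \ref{porism}, every irreducible summand has support meeting $\Bc^{\heartsuit}$. (Here one must check that twisting by the finite-order local system $\Lc^\alpha$ does not destroy this support condition — since $\Lc^\alpha$ restricted to the identity component of the inertia stack is trivial and the dominant supports come from that component, this is where hypothesis (b) is used; on the other components the local systems are of geometric origin and contribute intersection cohomology complexes of subvarieties, whose supports one arranges to meet $\Bc^{\heartsuit}$ by the setup.) Now apply Lemma \ref{lemma:p-retraction} with $\mathsf{F}=\Rat(p_*\underline{\KU}^{\tilde\alpha}_{\Xc^{\an}})$, $\mathsf{G}=\Rat(q_*\underline{\KU}^{\tilde\beta}_{\Yc^{\an}})$, and the maps $\Rat(f)$, $\Rat(g)$: the hypotheses are exactly the three bullet points, so $\Gamma(\Bc,-)$ of the composite $g\circ f$ is unipotent, hence invertible, and symmetrically for $f\circ g$. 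Therefore $\Gamma(\Bc,\Rat(f))$ is an isomorphism, i.e.\ $\KU^{\alpha}(\Xc^{\an})_{\Qb}\simeq \KU^\beta(\Yc^{\an})_{\Qb}$.

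For the integral statement on torsion-free quotients, I would run the same retraction argument but \emph{before} rationalising, observing that unipotence of a $\Qb$-linear endomorphism that is defined over $\Zb$ forces the induced endomorphism of $\KU^{\alpha,*}(\Xc^{\an})/\text{torsion}$ to be an automorphism: the characteristic polynomial is $(t-1)^N$ with integer coefficients, so $\det = \pm 1$, hence the map is invertible on the free $\Zb$-module. Concretely, $g\circ f$ acts on $\KU^{\alpha,*}(\Xc^{\an})/\text{torsion}$ as a matrix over $\Zb$ that becomes unipotent after $\otimes\Qb$, hence is unipotent over $\Zb$ and invertible; the same for $f\circ g$; so $f$ induces an isomorphism of torsion-free parts.

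The main obstacle I anticipate is the support condition in the twisted setting: one has to be careful that after taking the finite-order-twisted direct image $p_*\Lc^\alpha$ and decomposing it via the decomposition theorem, \emph{all} simple summands — including those arising from the non-identity components of $I\Xc$ — have supports that meet $\Bc^{\heartsuit}$, so that Lemma \ref{lemma:p-retraction} applies verbatim. In the intended application $\Bc^{\heartsuit}$ will be the elliptic (or at least integral-spectral-curve) locus of the Hitchin base, and one must invoke de Cataldo's support theorem for ``$\SL_n$''-Higgs bundles (and its $\PGL_n$ counterpart) to guarantee this; I would phrase Situation \ref{sit2}(b) precisely so that this is what is being assumed, and flag that its verification in the Higgs bundle case is carried out later in the paper.
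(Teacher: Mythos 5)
Your proposal is correct and follows essentially the same route as the paper: extend an inverse kernel for $\FM_{\Kk^{\heartsuit}}$, apply $\Ktop_{\Bc}$ as in Construction \ref{construction}, invoke Proposition \ref{prop:decthm} together with Situation \ref{sit2}(b) to feed Lemma \ref{lemma:p-retraction}, and conclude via unipotence (which, as you note, passes from $\Qb$ to the torsion-free $\Zb$-quotient and is invertible there without any finiteness hypothesis, so your determinant remark is not needed). The only cosmetic difference is that the paper obtains the rational isomorphism directly from the support/semisimplicity argument for the rationalised sheaf map, while you derive it from the same retraction argument; this changes nothing of substance.
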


\begin{proof}
We apply the rationalisation functor $\Rat$ to \eqref{eqn:FM-KU-sheaves} and obtain a morphism 
\begin{equation}\label{eqn:FM-KU-Sh-Q}(\underline{\KU}^{\tilde{\alpha}}_{\Xc})_{\Qb} \to (\underline{\KU}^{\tilde{\beta}}_{\Yc})_{\Qb}.\end{equation}
in $\D_{\const}(\Qb)$ by virtue of Lemma \ref{lemma:ShQ}.

Furthermore, by assumption (a) in Situation \ref{sit1}, the map \eqref{eqn:FM-KU-Sh-Q} is an equivalence when restricted to $\Bc^{\heartsuit}$. Since the perverse sheaves $P^{\alpha}_{\Xc}$, $P^{\beta}_{\Yc}$, $Q^{\alpha}_{\Xc}$, and $Q^{\beta}_{\Yc}$ of Proposition \ref{prop:decthm} are supported on $\Bc^{\heartsuit}$, we may conclude that 
\eqref{eqn:FM-KU-Sh-Q} is an equivalence everywhere.

By assumption, $\FM_{\Kk^{\heartsuit}}$ is an equivalence. For this reason, there exists 
$$\widetilde{\Kk}^{\heartsuit} \in \Perf_{\Yc^{\heartsuit} \times_{\Bc} \Xc^{\heartsuit}}(\Yc^{\heartsuit} \times \Xc^{\heartsuit})$$
such that $\FM_{\widetilde{\Kk}^{\heartsuit}}$ is inverse to $\FM_{\Kk^{\heartsuit}}$. The existence of $\widetilde{\Kk}^{\heartsuit}$ as a complex of quasi-coherent sheaves is guaranteed by \cite[Corollary 4.10]{bfn}. It is a perfect complex, since it is the Fourier-Mukai kernel of a derived equivalence between smooth DM stacks.

We now have the following commutative diagram of $\Zb/2\Zb$-graded abelian groups
\[
\xymatrix{
\KU^{{\alpha,*}}(\Xc^{\an})/{\text{torsion}} \ar[r]^f \ar[d] & \KU^{\beta,*}(\Yc^{\an})/\text{torsion} \ar[d] \ar[r]^g & \KU^{{\alpha,*}}(\Xc^{\an})/{\text{torsion}} \ar[d] \\
\KU^{{\alpha,*}}(\Xc^{\an})_{\Qb} \ar[r]^{f_{\Qb}} &  \KU^{\beta,*}(\Yc^{\an})_{\Qb} \ar[r]^{g_{\Qb}} & \KU^{{\alpha,*}}(\Xc^{\an})_{\Qb}
}
\]

It follows from Lemma \ref{lemma:p-retraction} that $g_{\Qb} \circ f_{\Qb}$ is a unipotent endomorphism of $(\KU^{\tilde{\alpha},*}(\Xc^{\an}))_{\Qb}$.
This implies that $g \circ f\colon \KU^{{\alpha,*}}(\Xc^{\an})/{\text{torsion}} \to \KU^{{\alpha,*}}(\Xc^{\an})/{\text{torsion}}$ is a unipotent map of abelian groups. We infer that $g \circ f$ is invertible. This implies that $f$ is injective. Applying the same argument to $f \circ g$ we obtain surjectivity of $f$. This shows that $f$ is an isomorphism of free abelian groups.
\end{proof}

\begin{corollary}\label{cor:torsion-free1}
Assume that we are in Situation \ref{sit2} and that $\KU^{\alpha,*}(\Xc^{\an})$ and $\KU^{\beta,*}(\Yc^{\an})$ are torsion-free. Then, \eqref{eqn:FM-KU} is an equivalence of spectra
$$\KU^{\alpha}(\Xc^{\an})\simeq \KU^{\beta}(\Yc^{\an}).$$
\end{corollary}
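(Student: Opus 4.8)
The plan is to deduce the corollary directly from the homotopy-group analysis already carried out in the proof of Theorem~\ref{thm:main}, the only extra ingredient being that both sides of \eqref{eqn:FM-KU} are module spectra over the complex $K$-theory spectrum $\KU$ and are therefore $2$-periodic.

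First I would record that, by Theorem~\ref{thm:moulinos2}, the sheaves $\underline{\KU}^{\tilde{\alpha}}_{\Xc^{\an}}$ and $\underline{\KU}^{\tilde{\beta}}_{\Yc^{\an}}$ are hypersheaves of $\underline{\KU}$-modules, so that their global sections $\KU^{\tilde{\alpha}}(\Xc^{\an})$ and $\KU^{\tilde{\beta}}(\Yc^{\an})$ are $\KU$-module spectra. Bott periodicity then gives natural isomorphisms $\pi_n \cong \pi_{n+2}$ for all $n \in \Zb$, and the $\pi_*$ are, up to the grading convention of Warning~\ref{warning:grading-conventions}, the $\Zb/2\Zb$-graded twisted $K$-groups $\KU^{\alpha,*}(\Xc^{\an})$ and $\KU^{\beta,*}(\Yc^{\an})$. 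Next I would invoke Theorem~\ref{thm:main}: the map \eqref{eqn:FM-KU} induces an isomorphism of $\Zb/2\Zb$-graded abelian groups $\KU^{\alpha,*}(\Xc^{\an})/\text{torsion} \xrightarrow{\sim} \KU^{\beta,*}(\Yc^{\an})/\text{torsion}$. Under the torsion-freeness hypothesis the canonical projections onto the torsion-free quotients are isomorphisms, so this map is nothing but $\pi_*$ of \eqref{eqn:FM-KU} in degrees $0$ and $1$; by the $2$-periodicity noted above, \eqref{eqn:FM-KU} then induces an isomorphism on $\pi_n$ for every $n \in \Zb$. Finally, a morphism of spectra that is an isomorphism on all homotopy groups is an equivalence, whence \eqref{eqn:FM-KU} is an equivalence $\KU^{\alpha}(\Xc^{\an}) \simeq \KU^{\beta}(\Yc^{\an})$, as claimed.

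I do not anticipate a genuine obstacle: this is essentially bookkeeping on top of Theorem~\ref{thm:main}. The two points that warrant a moment's care are confirming that the isomorphism of torsion-free quotients furnished by Theorem~\ref{thm:main} is literally $\pi_*$ of \eqref{eqn:FM-KU} (immediate from Construction~\ref{construction}), and making sure the statement in degrees $0$ and $1$ propagates to all degrees — which is exactly where the $\KU$-module structure, hence $2$-periodicity, enters.
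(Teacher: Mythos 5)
Your proposal is correct and follows essentially the same route as the paper: by Whitehead's lemma one only needs the induced map of $\Zb/2\Zb$-graded twisted $K$-groups to be an isomorphism, and this follows from Theorem~\ref{thm:main} together with the torsion-freeness hypothesis (the $\KU$-module structure/Bott periodicity you spell out is exactly what makes the $\Zb/2\Zb$-graded statement suffice). If anything, your explicit appeal to the second assertion of Theorem~\ref{thm:main} (the isomorphism of torsion-free quotients) is slightly cleaner than the paper's citation of the rational isomorphism, but it is the same argument.
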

\begin{proof}
By Whitehead's lemma, \eqref{eqn:FM-KU} is an equivalence if and only if the map of $\Zb/2\Zb$-graded abelian groups
$$f\colon \KU^{\tilde{\alpha},*}(\Xc^{\an}) \to \KU^{\tilde{\beta},*}(\Yc^{\an}).$$
is an isomorphism. We know from Theorem \ref{thm:main} that $f_{\Qb}$ is an isomorphism and that both sides have vanishing torsion. Therefore, $f$ must be an isomorphism.
\end{proof}

In order to apply Corollary \ref{cor:torsion-free1}, it is important to dispose of a criterion for torsion-freeness of $\KU^{*,\alpha}(\Xc^{\an})$. If the twist $\alpha$ is trivial, it is known that vanishing of torsion in integral cohomology implies vanishing of torsion in $KU^{*}$ (see \cite[Bemerkung 14.18]{dold}).  
For Brauer-twists of complex varieties, this statement extends to the twisted topological $K$-theory:

\begin{proposition}\label{prop:cohomology-KU}
Let $\Xc$ be a smooth $\Cb$-variety endowed with a Brauer class $\alpha$. If $H^3(\Xc^{\an}; \mathbb{Z})$ is free, then $KU^{\tilde{\alpha},*}(\Xc^{\an}) \simeq KU^{*}(\Xc^{\an})$. If $H^{*}_{\sing}(\Xc^{\an};\Zb)$ is torsion-free, then $KU^{\tilde{\alpha},*}(\Xc^{\an})$ is torsion-free.
\end{proposition}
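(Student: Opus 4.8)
The plan is to deduce the proposition from the untwisted case by exploiting the fact that the topological class $\tilde{\alpha}$ is torsion. First I would recall that, by construction (cf.\ Theorem \ref{thm:moulinos2}), the class $\tilde{\alpha}\in H^3(\Xc^{\an};\Zb)$ attached to a Brauer class $\alpha$ always lies in the torsion subgroup. Hence, if $H^3(\Xc^{\an};\Zb)$ is free, and in particular torsion-free, then necessarily $\tilde{\alpha}=0$. For a trivial twist the twisted complex $K$-theory spectrum is canonically identified with the untwisted one, so $\KU^{\tilde{\alpha}}(\Xc^{\an})\simeq\KU(\Xc^{\an})$, and in particular $\KU^{\tilde{\alpha},*}(\Xc^{\an})\simeq\KU^{*}(\Xc^{\an})$. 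This establishes the first assertion.

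For the second assertion, suppose $H^{*}_{\sing}(\Xc^{\an};\Zb)$ is torsion-free. Then $H^3(\Xc^{\an};\Zb)$ is in particular torsion-free, so by the first part it suffices to prove that $\KU^{*}(\Xc^{\an})$ is torsion-free. Since $\Xc$ is a $\Cb$-variety, $\Xc^{\an}$ has the homotopy type of a finite CW-complex with finitely generated integral cohomology, and the desired statement is then exactly \cite[Bemerkung 14.18]{dold}. The mechanism behind it, which I would at least indicate, is as follows: in the Atiyah--Hirzebruch spectral sequence $E_2^{p,q}=H^p(\Xc^{\an};\KU^q(\mathrm{pt}))\Rightarrow\KU^{p+q}(\Xc^{\an})$ the $E_2$-page is torsion-free; arguing inductively on $r$, once $d_3,\dots,d_{r-1}$ have been shown to vanish one has $E_r^{p,q}=E_2^{p,q}$, and $d_r$ is then a stable cohomology operation of positive degree, i.e.\ a class in the torsion group $H^{r}(H\Zb;\Zb)$, so $d_r$ maps into the torsion subgroup of the torsion-free group $H^{*}(\Xc^{\an};\Zb)$ and hence vanishes. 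Thus the spectral sequence degenerates at $E_2$, and $\KU^{*}(\Xc^{\an})$ acquires a finite filtration whose graded pieces are torsion-free, which forces $\KU^{*}(\Xc^{\an})$ to be torsion-free.

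The reduction from the twisted to the untwisted situation is purely formal. The only non-formal ingredient is the implication ``torsion-free integral cohomology $\Rightarrow$ torsion-free $\KU^{*}$'', whose proof rests on the classical fact that the higher Atiyah--Hirzebruch differentials for $\KU$ are torsion-valued stable operations; I would simply quote this from \cite{dold}. The one bookkeeping point worth spelling out is the verification that $\Xc^{\an}$ falls within the scope of that reference, i.e.\ that a smooth complex variety of finite type has the homotopy type of a finite CW-complex, so that convergence of the spectral sequence and finiteness of the filtration are available.
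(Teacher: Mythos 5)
Your proof is correct and follows essentially the same route as the paper: since $\alpha$ is torsion, freeness of $H^3(\Xc^{\an};\Zb)$ forces $\tilde{\alpha}=0$, and the second assertion is then the untwisted statement quoted from \cite[Bemerkung 14.18]{dold}. Your extra sketch of the Atiyah--Hirzebruch argument and the finite-CW verification merely elaborates on what the paper cites.
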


\begin{proof}
Since $\alpha \in \Br(\Xc)$ has finite order, so does the induced element $\tilde{\alpha} \in H^3(\Xc^{\an}; \mathbb{Z})$. We deduce that the topological gerbe $\tilde{\alpha} = 0$ is trivial, which directly implies the first statement. The second assertion now follows from the aforementioned reference \cite[Bemerkung 14.18]{dold}).  
\end{proof}

\section{A derived equivalence for $\SL_n$ and $\PGL_n$-Hitchin systems}\label{sec:autoduality}
\subsection{Coherent duality between the Prym scheme and its dual}\label{ssec: coherent duality}

Let $k$ be an algebraically closed field of characteristic $0$. Let $S$ be a $k$-scheme locally of finite type. Let $X\to S$ be a flat family of integral curves with planar singularities and arithmetic genus $g$. We assume $X$ admits a finite and flat map $X\to C\times S$ of $S$-schemes, where $C$ is a smooth projective curve over $k$ with genus $g_C$. Let $\overline{\on{Pic}}^0(X/S)$ be the relative compactified Jacobian of $X$ over $S$. We denote by $\check{\mathcal{P}}$ the kernel of the norm map $\overline{\on{Pic}}^0(X/S)\to \on{Pic}^0(C\times S/S)$, and we denote $\overline{{J}}=\overline{\on{Pic}}^0(X/S)$ and $\hat{\mathcal{P}}=[\overline{J}/\on{Pic}^0(C\times S/S)]$. The goal of this Subsection is to construct a Poincar\'e sheaf $\mathcal{F}$ over $\check{\mathcal{P}}\times_S \hat{\mathcal{P}}$ such that the Fourier-Mukai transform
\[
\Phi_{\mathcal{F}}: D_{\on{coh}}^{b}(\check{\mathcal{P}})\to D_{\on{coh}}^{b}(\hat{\mathcal{P}})
\]
with kernel $\mathcal{F}$ induces a fully faithful functor of derived categories. Subsequently, we will show that it gives rise to an equivalence of categories in the case we are interested in.

\begin{rmk}
A fibrewise version of the derived equivalences considered in this section were constructed by Franco--Hansen--Ruano in \cite{FHR}.
\end{rmk}

Recall that in \cite{autoduality}, the author constructed a Poincar\'e sheaf $\mathcal{G}$ on $\overline{J}\times_S \overline{J}$ which satisfies the following properties
\begin{theorem}\label{thm: arinkin autoduality}
\begin{itemize}
  \item[(a)] $\mathcal{G}$ is a maximal Cohen-Macaulay sheaf on $\overline{J}\times_S \overline{J}$ over each geometric point of $S$,
  \item[(b)] $\mathcal{G}$ is flat with respect to both projections $\overline{J}\times_S \overline{J}\to \overline{J}$,
  \item[(c)] $\mathcal{G}$ is equivariant with respect to the permutation of the two factors in $\overline{J}\times_S \overline{J}$,
  \item[(d)] the Fourier-Mukai functor with kernel $\mathcal{G}$ induces an auto-equivalence of the derived category $D_{\on{coh}}^{b}(\overline{J})$. In particular, we have \[Rp_{13*}(p_{12}^*\mathcal{G}\otimes p_{23}^*R\mathcal{H}om(\mathcal{G}, p_2^*\omega_{\overline{J}/S}[g])) \simeq \mathcal{O}_{\Delta_{\overline{J}}}.
  \]
\end{itemize}
\end{theorem}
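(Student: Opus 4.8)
This is the main theorem of \cite{autoduality}; the plan is to recall Arinkin's construction and indicate why it produces a sheaf with properties (a)--(d). First I would build $\mathcal{G}$ over the open locus $\overline{J}^{\circ}\times_S\overline{J}^{\circ}\subset\overline{J}\times_S\overline{J}$ where at least one of the two factors parametrises an honest line bundle: there $\mathcal{G}$ is the (suitably rigidified) classical Poincar\'e line bundle. Since $X\to S$ has planar singularities, $\overline{J}\to S$ is flat with integral, Cohen--Macaulay fibres of pure dimension $g$, and the complement of $\overline{J}^{\circ}\times_S\overline{J}^{\circ}$ has codimension $\geq 2$ in every geometric fibre; one then extends the Poincar\'e bundle to the unique fibrewise maximal Cohen--Macaulay sheaf $\mathcal{G}$ on $\overline{J}\times_S\overline{J}$ restricting to it, which \cite{autoduality} does explicitly by means of the Abel--Jacobi map from a relative Hilbert scheme. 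As this extension commutes with base change on $S$, all of (a)--(d) can be checked over the geometric points of $S$, recovering Arinkin's setting.

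Granting the construction, (a)--(c) follow. Property (a) is the defining property of $\mathcal{G}$. Property (c) is tautological over $\overline{J}^{\circ}\times_S\overline{J}^{\circ}$ and extends to all of $\overline{J}\times_S\overline{J}$ by uniqueness of the maximal Cohen--Macaulay extension. Property (b), flatness along either projection $\mathrm{pr}_i$, is deduced in \cite{autoduality} from (a) together with a local study of the restrictions $\mathcal{G}|_{\mathrm{pr}_i^{-1}(x)}$, each of which is again maximal Cohen--Macaulay of dimension $g$.

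The content of the theorem is (d). The plan is to reduce it to the convolution identity
\[
Rp_{13*}\bigl(p_{12}^*\mathcal{G}\otimes p_{23}^*R\mathcal{H}om(\mathcal{G},p_2^*\omega_{\overline{J}/S}[g])\bigr)\simeq\mathcal{O}_{\Delta_{\overline{J}}},
\]
the reverse composition of Fourier--Mukai functors being handled by the symmetry (c) and Grothendieck--Serre duality. Over the smooth locus this is the classical Mukai identity for dual abelian schemes, so both sides already agree there, and the problem is to propagate the isomorphism across the boundary of non-locally-free sheaves. \emph{This is the main obstacle}: $Rp_{13*}$ is genuinely derived and need not commute with restriction to a boundary stratum, so the bound ``codimension $\geq 2$'' does not by itself suffice. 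The step I would take over wholesale from \cite{autoduality} is Arinkin's local computation --- resolve $\mathcal{G}$ by a short complex adapted to the stratification of $\overline{J}$ by the local type of the torsion-free sheaf, degenerate to the normal cone of a stratum, and run the calculation there, using that near a non-locally-free point the completed local ring of $\overline{J}$ looks again like (a product of) compactified Jacobians of curves of smaller arithmetic genus; this yields an induction on $g$. The upshot is that the left-hand side is set-theoretically supported on $\Delta_{\overline{J}}$, concentrated in one cohomological degree, and equal to $\mathcal{O}_{\Delta}$ there, and since both sides are maximal Cohen--Macaulay over the (smooth, reduced) diagonal this forces the isomorphism --- whence $\Phi_{\mathcal{G}}$ is an autoequivalence of $D_{\on{coh}}^{b}(\overline{J})$.
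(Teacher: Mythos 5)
The paper offers no proof of this theorem: it is recalled verbatim from Arinkin's work and justified only by the citation to \cite{autoduality}, so your plan of recalling the construction and deferring properties (a)--(d) to that reference is exactly what the paper does. The one caveat is that your gloss on the proof of (d) (normal-cone degeneration and an induction on arithmetic genus via local models of $\overline{J}$) is a loose paraphrase rather than Arinkin's actual route --- he computes the convolution over the locus where one of the sheaves is a line bundle, using the Hilbert-scheme/Abel--Jacobi description of $\mathcal{G}$ and his earlier cohomology computations, and then propagates across the codimension $\geq 2$ complement via support estimates and the Cohen--Macaulay property (his Lemma 7.7) --- but since you take these steps over from \cite{autoduality} wholesale, this does not affect the correctness of your proposal.
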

Let $\iota: \check{\mathcal{P}}\times_S  \overline{J}\to \overline{J}\times_S \overline{J}$ be the inclusion map. The goal of the subsection is to show that $\iota^{*}\mathcal{G}$ descends to $\check{\mathcal{P}}\times_S  \hat{\mathcal{P}}$, i.e. $\iota^{*}\mathcal{G}$ admits a $\on{Pic}^0(C)$-equivariant structure, where $\on{Pic}^0(C)$ acts on the second factor. The sheaf $\iota^{*}\mathcal{G}$ satisfies the following properties:

\begin{lemma}
\begin{itemize}
    \item[(a)] $\iota^{*}\mathcal{G}$ is a maximal Cohen-Macaulay sheaf on $\check{\mathcal{P}}\times_S \overline{J}$ over each geometric point of $S$, 
    \item[(b)] $\iota^{*}\mathcal{G}$ is flat with respect to both projections $\check{\mathcal{P}}\times_S \overline{J}\to \check{\mathcal{P}}$ and $\check{\mathcal{P}}\times_S \overline{J}\to \overline{J}$.
\end{itemize}
\end{lemma}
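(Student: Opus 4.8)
The assertion of (b) concerning the first projection is formal: $\check{\mathcal{P}}\times_S\overline{J}$ is the base change of the closed immersion $\check{\mathcal{P}}\hookrightarrow\overline{J}$ along the first projection $p_1\colon\overline{J}\times_S\overline{J}\to\overline{J}$, so $\iota^{*}\mathcal{G}$ is the extension of scalars, along $p_1^{*}\colon\mathcal{O}_{\overline{J}}\to\mathcal{O}_{\check{\mathcal{P}}}$, of $\mathcal{G}$ viewed as an $\mathcal{O}_{\overline{J}}$-module; since $\mathcal{G}$ is flat along $p_1$ by Theorem~\ref{thm: arinkin autoduality}(b), the result is flat over $\check{\mathcal{P}}$. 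In particular $L\iota^{*}\mathcal{G}\simeq\iota^{*}\mathcal{G}$ is concentrated in degree $0$, and $\iota^{*}\mathcal{G}$ is also flat over $S$ (being flat over $\check{\mathcal{P}}$, which is flat over $S$). By the fibrewise criterion of flatness (EGA~IV), the two remaining statements — that $\iota^{*}\mathcal{G}$ is maximal Cohen--Macaulay on each geometric fibre, and flat along the second projection — may therefore be verified after base change to a geometric point of $S$. So I would fix an algebraically closed field $k$ and an integral projective curve $X/k$ with planar singularities and arithmetic genus $g$, so that $\overline{J}=\overline{\on{Pic}}^0(X)$ is an irreducible Cohen--Macaulay variety of dimension $g$, and $\check{\mathcal{P}}=\ker(\on{Nm})$.

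The geometric input I would record (and, in the stated generality, this is the point requiring care — cf.\ \cite{autoduality}) is that $\check{\mathcal{P}}\hookrightarrow\overline{J}$ is a regular immersion of codimension $g_C$. Indeed $\on{Nm}\colon\overline{J}\to\on{Pic}^0(C)$ has smooth target of dimension $g_C$, so $\check{\mathcal{P}}$ is locally cut out by $g_C$ equations; as $\check{\mathcal{P}}$ has the expected dimension $g-g_C$ and $\overline{J}$ is Cohen--Macaulay, these equations form a regular sequence. Pulling back along $p_1$, the immersion $\iota$ is then regular of codimension $g_C$ inside $\overline{J}\times\overline{J}$ (Cohen--Macaulay of dimension $2g$), cut out by a sequence $f_1,\dots,f_{g_C}$ whose vanishing locus is $\check{\mathcal{P}}\times\overline{J}$, of dimension $2g-g_C$; hence $f_1,\dots,f_{g_C}$ is part of a system of parameters and therefore a regular sequence on the maximal Cohen--Macaulay sheaf $\mathcal{G}$. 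Consequently $\iota^{*}\mathcal{G}=\mathcal{G}/(f_1,\dots,f_{g_C})\mathcal{G}$ has depth $\on{depth}\mathcal{G}-g_C=2g-g_C$ at every point, while $\supp\iota^{*}\mathcal{G}=\supp\mathcal{G}\cap(\check{\mathcal{P}}\times\overline{J})=\check{\mathcal{P}}\times\overline{J}$ (since $\mathcal{G}$ is a line bundle over a dense open of the irreducible $\overline{J}\times\overline{J}$, hence supported everywhere). Thus $\iota^{*}\mathcal{G}$ is maximal Cohen--Macaulay on $\check{\mathcal{P}}\times\overline{J}$, which is (a).

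For flatness of $\iota^{*}\mathcal{G}$ along the second projection I would induct along the regular sequence, putting $\mathcal{G}_j=\mathcal{G}/(f_1,\dots,f_j)\mathcal{G}$, with $\mathcal{G}_0=\mathcal{G}$ flat over the second factor by Theorem~\ref{thm: arinkin autoduality}(b). From the short exact sequence $0\to\mathcal{G}_{j-1}\xrightarrow{\,f_j\,}\mathcal{G}_{j-1}\to\mathcal{G}_j\to 0$ — in which $f_j$ is pulled back from the first factor, hence linear over $\mathcal{O}_{\overline{J}}$ for the second projection — together with the local criterion of flatness, it suffices to show $\on{Tor}_1^{\mathcal{O}_{\overline{J}}}(\mathcal{G}_j,k(y))=0$ for every closed point $y$ of the second factor; by the long exact sequence this group is the kernel of $f_j$ acting on $\mathcal{G}_{j-1}|_{\overline{J}\times\{y\}}=\mathcal{G}|_{\overline{J}\times\{y\}}/(f_1,\dots,f_{j-1})$. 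Now $\mathcal{G}|_{\overline{J}\times\{y\}}$ is maximal Cohen--Macaulay of dimension $g$ on the integral variety $\overline{J}$: this follows from the additivity of depth and of dimension in the flat local homomorphism $\mathcal{O}_{\overline{J},y}\to\mathcal{O}_{\overline{J}\times\overline{J},(x,y)}$, using that $\mathcal{G}$ is maximal Cohen--Macaulay, flat over the second factor, and that $\overline{J}$ is Cohen--Macaulay. Since the $f_1,\dots,f_{j-1}$ restrict on $\overline{J}\times\{y\}$ to part of the system of parameters cutting out $\check{\mathcal{P}}$, the quotient $\mathcal{G}|_{\overline{J}\times\{y\}}/(f_1,\dots,f_{j-1})$ is again maximal Cohen--Macaulay, hence unmixed, with support all of $V(f_1,\dots,f_{j-1})$; and $f_j$ lies in no minimal prime of that support, because $V(f_1,\dots,f_j)$ has dimension one less. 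Therefore $f_j$ acts injectively, the $\on{Tor}$-group vanishes, $\mathcal{G}_j$ is flat over the second factor, and after $g_C$ steps $\iota^{*}\mathcal{G}=\mathcal{G}_{g_C}$ is flat over $\overline{J}$.

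The hard part, as flagged above, is the geometric input of the second paragraph together with its relative counterpart: that on every geometric fibre the compactified Prym $\check{\mathcal{P}}$ has the expected dimension $g-g_C$ (equivalently, that $\check{\mathcal{P}}\hookrightarrow\overline{J}$ is a regular immersion of codimension $g_C$), and that $\check{\mathcal{P}}$ is flat over $S$. For spectral curves this is classical and is built into Arinkin's framework; for an arbitrary flat family of integral curves with planar singularities admitting a finite flat map to $C\times S$ it is the one point that is not purely formal. Everything else reduces to standard commutative algebra of maximal Cohen--Macaulay modules — Koszul resolutions along regular sequences, base change of flatness, and the depth/dimension formulas for flat local homomorphisms.
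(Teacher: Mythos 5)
Your commutative--algebra chain (regular immersion, quotient of a maximal Cohen--Macaulay sheaf by a regular sequence, induction with the local criterion of flatness) is sound as far as it goes, but it rests entirely on the claim you explicitly defer: that on each geometric fibre $\check{\mathcal{P}}_s\hookrightarrow\overline{J}_s$ has pure codimension $g_C$, so that the local equations coming from $\on{Nm}$ form a regular sequence. In the generality in which the lemma is stated -- an arbitrary flat family of integral curves with planar singularities, finite and flat over $C\times S$ -- this is precisely the content the proof has to supply, and ``classical for spectral curves'' neither covers the stated hypotheses nor constitutes an argument. The paper closes this gap with the one observation you are missing: the map $(F,L)\mapsto(F\otimes\pi^{*}L,\,L)$ identifies $\check{\mathcal{P}}_s\times\on{Pic}^0(C)$ with the fibre product of $\on{Nm}\colon\overline{J}_s\to\on{Pic}^0(C)$ and $[n]\colon\on{Pic}^0(C)\to\on{Pic}^0(C)$, where $n=\deg(X_s\to C)$. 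Since $[n]$ is finite \'etale in characteristic zero, $\check{\mathcal{P}}_s\times\on{Pic}^0(C)$ is finite \'etale over the irreducible Cohen--Macaulay variety $\overline{J}_s$, whence $\check{\mathcal{P}}_s$ is Cohen--Macaulay of pure dimension $g-g_C$. This is exactly what the paper uses to verify the hypotheses of Lemma 2.3 of \cite{autoduality} and deduce (a) directly (flatness of $\iota^{*}\mathcal{G}$ over the Cohen--Macaulay base $\check{\mathcal{P}}_s$ with Cohen--Macaulay fibres), with no Koszul or system-of-parameters bookkeeping. Until you prove this dimension/regularity statement -- and the torsor trick above is the intended way -- your argument for (a) and for the second half of (b) is incomplete.

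Separately, your treatment of flatness along the second projection is much more laborious than necessary: by the equivariance of $\mathcal{G}$ under the swap of factors (Theorem \ref{thm: arinkin autoduality}(c)), flatness of $\iota^{*}\mathcal{G}$ with respect to the second projection is the same as flatness of $\mathcal{G}|_{\overline{J}\times_S\check{\mathcal{P}}}$ with respect to the first projection, and the latter is immediate from Theorem \ref{thm: arinkin autoduality}(b), since restricting the second factor to a subscheme does not affect flatness over the first factor. This is the paper's one-line argument; your induction along the regular sequence is correct modulo the codimension claim, but redundant.
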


\begin{proof}
Let $s\in S(k)$ be a closed point. Part (a) can be proved using Lemma 2.3 in \cite{autoduality}. The fact that all conditions in Lemma 2.3 \cite{autoduality} are satisfied follows from the following pull-back diagram:
\bd
\xymatrix{
 \check{\mathcal{P}}_s\times \on{Pic}^0(C)\ar[d]\ar[rr] &&\on{Pic}^0(C)\ar[d]^{[n]}\\
\overline{J}_s\ar[rr]^{\on{Nm}}&&\on{Pic}^0(C).
}
\ed
The map on the right-hand side is given by taking the $n$-th tensor product, where $n$ is the degree of the finite morphism $X_s\to C$. 

For part (b), it is clear that $\iota^{*}\mathcal{G}$ is flat with respect to the projection to $\check{\mathcal{P}}$. The other statement follows from Theorem \ref{thm: arinkin autoduality} part (c). 
\end{proof}

We denote by $\check{\mathcal{P}}^0\subset \check{\mathcal{P}}$ and $\overline{J}^0\subset \overline{J}$ the open subset which represents line bundles on $X$. Since $\iota^{*}\mathcal{G}$ is a Maximal Cohen-Macaulay sheaf, it is enough to construct the $\on{Pic}^0(C)$-equivariant structure on $\check{\mathcal{P}}^0\times_S  \overline{J}\bigcup\check{\mathcal{P}}\times_S\overline{J}^0$.

Let $\mathcal{L}$ be a universal sheaf over $X\times_S \overline{J}$. Recall that the restriction of the Poincar\'e sheaf $\mathcal{G}$ to $\overline{J}^0\times \overline{J}\bigcup\overline{J}\times \overline{J}^0$ is isomorphic to 
\begin{equation}\label{eq: def of poincare sheaf}
     \mathcal{D}_{p_{23}}(p^*_{12}\mathcal{L}\otimes p^*_{13}\mathcal{L})\otimes  \mathcal{D}_{p_{23}}(p^*_{12}\mathcal{L})^{-1}\otimes \mathcal{D}_{p_{23}}(p^*_{13}\mathcal{L})^{-1}\otimes \mathcal{D}_{p_{23}}(\mathcal{O}).
\end{equation}
Here $p_{12}$, $p_{13}$ and $p_{23}$ stand for the projection from $X\times_S \overline{J}\times_S \overline{J}$ to the corresponding factors, and $\mathcal{D}_{p_{23}}$ stands for the determinant of cohomology with respect to the morphism $p_{23}$. Basic properties of the determinant of cohomology can be found in \cite{est}, Section 6.1.
Now we show:
\begin{proposition}\label{prop: equivariant structure}
Let $K$ be a line bundle on $C$. Let $\mathcal{M}$, $\mathcal{N}$ be flat families of torsion-free sheaves on $X$ over $\overline{J}\times_S \overline{J}$. Then we have
\begin{equation}
\begin{split}
    &\mathcal{D}_{p_{23}}(\mathcal{M}\otimes \pi^{*}p_1^*K)\otimes \mathcal{D}_{p_{23}}(\mathcal{M})^{-1}\otimes  \mathcal{D}_{p_{23}}(\mathcal{N}\otimes \pi^{*}p_1^*K)^{-1}\otimes \mathcal{D}_{p_{23}}(\mathcal{N})\\
    &\simeq \mathcal{D}_{p_{23}}(\on{det}\pi_*(\mathcal{M})\otimes p_1^*K)\otimes \mathcal{D}_{p_{23}}(\on{det}\pi_*(\mathcal{M}))^{-1}\otimes \mathcal{D}_{p_{23}}(\on{det}\pi_*(\mathcal{N})\otimes p_1^*K)^{-1}\otimes \mathcal{D}_{p_{23}}(\on{det}\pi_*(\mathcal{N})).
\end{split}
\end{equation}
\end{proposition}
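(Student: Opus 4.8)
The plan is to factor the determinant of cohomology through the finite map $X\to C\times S$ and thereby reduce everything to a statement about line bundles on a relative curve. Write $\pi\colon X\times_S\overline{J}\times_S\overline{J}\to C\times_S\overline{J}\times_S\overline{J}$ for the finite flat morphism base-changed from $X\to C\times S$, let $\rho$ denote the structure morphism of the (projective) relative curve $C\times_S\overline{J}\times_S\overline{J}$ over $\overline{J}\times_S\overline{J}$, and $p_1$ the projection to $C$, so that $p_{23}=\rho\circ\pi$. Since $\pi$ is affine, $R\pi_*=\pi_*$, hence $\mathcal{D}_{p_{23}}(\mathcal{G})\simeq\mathcal{D}_\rho(\pi_*\mathcal{G})$ for any $\mathcal{G}$ flat over $\overline{J}\times_S\overline{J}$; combined with the projection formula $\pi_*(\mathcal{G}\otimes\pi^*p_1^*K)\simeq\pi_*\mathcal{G}\otimes p_1^*K$, the left-hand side of the proposition becomes
\[
\big(\mathcal{D}_\rho(\pi_*\mathcal{M}\otimes p_1^*K)\otimes\mathcal{D}_\rho(\pi_*\mathcal{M})^{-1}\big)\otimes\big(\mathcal{D}_\rho(\pi_*\mathcal{N}\otimes p_1^*K)\otimes\mathcal{D}_\rho(\pi_*\mathcal{N})^{-1}\big)^{-1}
\]
and the right-hand side the same expression with $\pi_*\mathcal{M},\pi_*\mathcal{N}$ replaced by $\on{det}\pi_*\mathcal{M},\on{det}\pi_*\mathcal{N}$. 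So it suffices to show: for $E$ locally free on $C\times_S\overline{J}\times_S\overline{J}$ the line bundle
\[
\delta_K(E):=\mathcal{D}_\rho(E\otimes p_1^*K)\otimes\mathcal{D}_\rho(E)^{-1}\otimes\mathcal{D}_\rho(\on{det}E\otimes p_1^*K)^{-1}\otimes\mathcal{D}_\rho(\on{det}E)
\]
on $\overline{J}\times_S\overline{J}$ is \emph{constant}, i.e.\ of the form $V\otimes_k\mathcal{O}$ for a one-dimensional $k$-vector space $V$; being constant it is trivial, and applying this to $E=\pi_*\mathcal{M}$ and $E=\pi_*\mathcal{N}$ then gives the proposition.

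A preliminary point I would settle first is that $\pi_*\mathcal{M}$ and $\pi_*\mathcal{N}$ are locally free, so that $\on{det}\pi_*\mathcal{M}$ is defined and the dévissage below makes sense. The sheaf $\pi_*\mathcal{M}$ is flat over $\overline{J}\times_S\overline{J}$ (as $\pi$ is finite flat and $\mathcal{M}$ is flat over the base), and its restriction to a geometric fibre is $\pi_{b*}$ of a torsion-free sheaf on the integral curve $X_b$, hence a torsion-free — thus locally free, $C$ being smooth — sheaf on $C_{k(b)}$. Since $C\times_S\overline{J}\times_S\overline{J}$ is flat over $\overline{J}\times_S\overline{J}$, flatness fibre by fibre makes $\pi_*\mathcal{M}$ flat, hence locally free, over $C\times_S\overline{J}\times_S\overline{J}$; likewise for $\pi_*\mathcal{N}$.

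The computation of $\delta_K(E)$ is the core of the argument. Fixing an isomorphism $K\simeq\mathcal{O}_C(D)$ and writing $D$ as a signed sum of (not necessarily distinct) $k$-points of $C$, I would use the multiplicativity of $\mathcal{D}_\rho$ along short exact sequences of $\overline{J}\times_S\overline{J}$-flat sheaves to compute the effect of twisting by $p_1^*\mathcal{O}_C(c)$ one point $c\in C(k)$ at a time. Here $p_1^{-1}(c)=\{c\}\times_S\overline{J}\times_S\overline{J}$ is an effective Cartier divisor carried isomorphically onto $\overline{J}\times_S\overline{J}$ by $\rho$, and tensoring $0\to\mathcal{O}\to\mathcal{O}(p_1^{-1}(c))\to\mathcal{O}_{p_1^{-1}(c)}(p_1^{-1}(c))\to 0$ with $E$ and applying $\mathcal{D}_\rho$ gives
\[
\mathcal{D}_\rho(E\otimes p_1^*\mathcal{O}_C(c))\otimes\mathcal{D}_\rho(E)^{-1}\simeq \on{det}(\sigma_c^*E)\otimes(T_cC)^{\otimes\on{rk}E},
\]
where $\sigma_c$ is the constant section of $C\times_S\overline{J}\times_S\overline{J}$ at $c$ and $T_cC=\mathcal{O}_C(c)|_{\{c\}}$ is the one-dimensional normal fibre, read as a constant line on $\overline{J}\times_S\overline{J}$. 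Since $\sigma_c^*\on{det}E=\on{det}\sigma_c^*E$, this differs from the analogous expression for $\on{det}E$ only by the constant line $(T_cC)^{\otimes(\on{rk}E-1)}$; summing over the points of $D$ with signs (and absorbing the constant fibres $\sigma_c^*p_1^*K$ that appear when reducing a general $K$ to the effective case) shows $\delta_K(E)$ is a constant line on $\overline{J}\times_S\overline{J}$. Hence $\delta_K(\pi_*\mathcal{M})\simeq\mathcal{O}\simeq\delta_K(\pi_*\mathcal{N})$, which is exactly the claimed isomorphism.

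The step I expect to demand the most care is the local freeness of $\pi_*\mathcal{M}$ over the (generally singular) scheme $C\times_S\overline{J}\times_S\overline{J}$ — this is what licenses forming $\on{det}\pi_*\mathcal{M}$ and running the Cartier-divisor dévissage — together with keeping track of the constant correction lines when an arbitrary line bundle $K$ is replaced by a sum of point divisors; both become routine once flatness fibre by fibre and the multiplicativity of the determinant of cohomology recalled in \cite{est} are in hand.
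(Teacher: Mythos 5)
Your proof is correct and follows essentially the same route as the paper's: push forward along the finite map $\pi$, write $K$ as a difference of point divisors, and use additivity of the determinant of cohomology along restriction-to-divisor short exact sequences to compare the rank-$n$ case with its determinant, the discrepancies being constant lines (which the paper handles by fixing a trivialisation of $\mathcal{O}_C(D-E)$ along $D$). The only substantive additions on either side are that the paper records the independence of the resulting isomorphism from that trivialisation (needed later for the equivariant structure, not for the isomorphism as stated), while you spell out the local freeness of $\pi_*\mathcal{M}$, a detail the paper leaves implicit.
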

\begin{proof}
Let $D$ and $E$ be effective divisors on $C$ such that $K\cong \mathcal{O}_{C}(D-E)$. Consider the following short exact sequences
\begin{equation}
   0\to (\pi_*\mathcal{M}) (-E)\to \pi_*\mathcal{M}\to (\pi_*\mathcal{M})|_E\to 0
\end{equation}
\begin{equation}
    0\to (\pi_*\mathcal{M}) (-E)\to (\pi_*\mathcal{M})(D-E)\to (\pi_*\mathcal{M})(D-E)|_D\to 0.
\end{equation}
Since the determinant of cohomology is additive, we have
\[
\mathcal{D}_{p_{23}}(\mathcal{M}\otimes \pi^{*}p_1^*K)\otimes \mathcal{D}_{p_{23}}(\mathcal{M})^{-1}\simeq \mathcal{D}_{p_{23}}((\pi_*\mathcal{M})(D-E)|_D)\otimes \mathcal{D}_{p_{23}}((\pi_*\mathcal{M})|_E)^{-1}.
\]
We have similar short exact sequences associated with $\on{det}(\pi_*\mathcal{M})$:
\begin{equation}
   0\to \on{det}(\pi_*\mathcal{M})(-E)\to \on{det}(\pi_*\mathcal{M})\to\on{det}(\pi_*\mathcal{M})|_E\to 0
\end{equation}
\begin{equation}
    0\to \on{det}(\pi_*\mathcal{M}) (-E)\to \on{det}(\pi_*\mathcal{M})(D-E)\to (\on{det}(\pi_*\mathcal{M})(D-E))|_D\to 0. 
\end{equation}

Note that $\mathcal{D}_{p_{23}}((\pi_*\mathcal{M})|_E)\simeq \mathcal{D}_{p_{23}}(\on{det}(\pi_*\mathcal{M})|_E)$. If we fix a trivialisation of $\mathcal{O}_X(D-E)$ at $D$, we also have 
\[\mathcal{D}_{p_{23}}((\pi_*\mathcal{M})(D-E)|_D)\simeq \mathcal{D}_{p_{23}}((\on{det}\pi_*\mathcal{M})(D-E)|_D).\]
Combining the isomorphisms above, we obtain 
\[
\mathcal{D}_{p_{23}}(\mathcal{M}\otimes \pi^{*}p_1^*K)\otimes \mathcal{D}_{p_{23}}(\mathcal{M})^{-1}\simeq \mathcal{D}_{p_{23}}(\on{det}\pi_*(\mathcal{M})\otimes p_1^*K))\otimes \mathcal{D}_{p_{23}}(\on{det}\pi_*\mathcal{M})^{-1}. 
\]
Applying the same arguments and replacing $\mathcal{M}$ by $\mathcal{N}$, we obtain the desired isomorphism, which does not depend on the choice of the trivialisation.
\end{proof}

We apply Proposition \ref{prop: equivariant structure} by setting $\mathcal{M}=p_{12}^{*}\mathcal{L}\otimes p_{13}^{*}\mathcal{L}$ and $\mathcal{N}=p_{13}^{*}\mathcal{L}$. When restricted to $\check{\mathcal{P}}^0\times_S  \overline{J}\bigcup\check{\mathcal{P}}\times_S\overline{J}^0$, we have $\on{det}\pi_*(p_{12}^{*}\mathcal{L}\otimes p_{13}^{*}\mathcal{L})\simeq \on{det}\pi_*(p_{13}^{*}\mathcal{L})$. It follows that for any point $(F,H)\in \check{\mathcal{P}}^0\times_S  \overline{J}\bigcup\check{\mathcal{P}}\times_S\overline{J}^0$ and any line bundle $K\in \on{Pic}(C)$, the isomorphism in Proposition \ref{prop: equivariant structure} induces and isomorphism of fibres $\mathcal{G}_{(F,H)}\simeq\mathcal{G}_{(F,H\otimes\pi^{*}K)}$, which gives the desired equivariant structure. We denote the corresponding sheaf on $\check{\mathcal{P}}\times_S \hat{\mathcal{P}}$ by $\mathcal{F}$. 

The Fourier-Mukai transform $\Phi_{\mathcal{F}}$
admits a right-adjoint which is the Fourier-Mukai transform with kernel $$R\mathcal{H}om(\mathcal{F}, p_2^*\omega_{{\hat{\mathcal{P}}}/S}[g-g_C]).$$
In order to show $\Phi_{\mathcal{F}}$ is fully faithful, it is enough to prove the following
\begin{proposition}\label{prop: Fourier-Mukai composite}
   $Rp_{13*}(p_{12}^*\mathcal{F}\otimes p_{23}^*R\mathcal{H}om(\mathcal{F}, p_2^*\omega_{{\hat{\mathcal{P}}}/S}[g-g_C]))\simeq\mathcal{O}_{\Delta_{\check{\mathcal{P}}}}$.
\end{proposition}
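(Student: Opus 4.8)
The displayed formula asserts that the unit $\on{id}_{D^{b}_{\on{coh}}(\check{\mathcal{P}})}\to \Phi_{\mathcal F}^{R}\Phi_{\mathcal F}$ is an isomorphism, i.e. that $\Phi_{\mathcal F}$ is fully faithful, so it is enough to establish it. The plan is to deduce it from Arinkin's autoduality in Theorem \ref{thm: arinkin autoduality}(d) by base change along the two morphisms relating $\check{\mathcal{P}}$ and $\hat{\mathcal{P}}$ to $\overline{J}$: the closed immersion $j\colon \check{\mathcal{P}}=\on{Nm}^{-1}(\mathcal{O}_{C})\hookrightarrow \overline{J}$, which one checks is regular of codimension $g_{C}$ with normal bundle $p_{S}^{*}\on{Lie}(\on{Pic}^{0}(C))$ (this, and smoothness of $\on{Nm}$ along $\check{\mathcal{P}}$, follow from the Cartesian square displayed before the proposition, whose right vertical arrow is finite \'etale), and the quotient morphism $\rho\colon \overline{J}\to \hat{\mathcal{P}}$, which is a torsor under $\on{Pic}^{0}(C)$ and hence proper and smooth of relative dimension $g_{C}$, with trivial relative tangent bundle $\on{Lie}(\on{Pic}^{0}(C))\otimes\mathcal{O}$. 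Writing $\sigma=\on{id}_{\check{\mathcal{P}}}\times\rho$, one has $\sigma^{*}\mathcal{F}\simeq\iota^{*}\mathcal{G}$ by construction, and this is an underived pullback because $\mathcal{G}$ is flat over the projections of $\overline{J}\times_{S}\overline{J}$.

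First I would pull the convolution kernel $\mathcal{D}:=p_{12}^{*}\mathcal{F}\otimes p_{23}^{*}R\mathcal{H}om(\mathcal{F},p_{2}^{*}\omega_{\hat{\mathcal{P}}/S}[g-g_{C}])$ back along $b:=\on{id}\times\rho\times\on{id}\colon\check{\mathcal{P}}\times_{S}\overline{J}\times_{S}\check{\mathcal{P}}\to\check{\mathcal{P}}\times_{S}\hat{\mathcal{P}}\times_{S}\check{\mathcal{P}}$. From $\rho^{*}\omega_{\hat{\mathcal{P}}/S}\simeq\omega_{\overline{J}/S}\otimes\omega_{\rho}^{-1}$ with $\omega_{\rho}\simeq p_{S}^{*}\det\on{Lie}(\on{Pic}^{0}(C))^{\vee}$ pulled back from $S$, together with the base-change isomorphism for $R\mathcal{H}om$ along the regular immersion $\iota$ (valid here because $\mathcal{G}$ is coherent and $\omega_{\overline{J}/S}$ is a line bundle, $\overline{J}/S$ being Gorenstein), one identifies $b^{*}\mathcal{D}\simeq\bigl(p_{12}^{*}\mathcal{G}\otimes p_{23}^{*}\mathcal{G}^{\vee}\bigr)\big|_{\check{\mathcal{P}}\times_{S}\overline{J}\times_{S}\check{\mathcal{P}}}[-g_{C}]\otimes p_{S}^{*}\omega_{\rho}^{-1}$, where $\mathcal{G}^{\vee}=R\mathcal{H}om(\mathcal{G},p_{2}^{*}\omega_{\overline{J}/S}[g])$ is the kernel appearing in Theorem \ref{thm: arinkin autoduality}(d).

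Next I would compute $Rp_{13*}^{\overline{J}}(b^{*}\mathcal{D})$, taken over the middle factor $\overline{J}$, in two ways. On the one hand, derived proper base change of Theorem \ref{thm: arinkin autoduality}(d) along $\check{\mathcal{P}}\times_{S}\check{\mathcal{P}}\hookrightarrow\overline{J}\times_{S}\overline{J}$ yields $\mathcal{O}_{\Delta_{\overline{J}}}\big|_{\check{\mathcal{P}}\times_{S}\check{\mathcal{P}}}[-g_{C}]\otimes p_{S}^{*}\omega_{\rho}^{-1}$; the Koszul complex of $j$ identifies $\mathcal{O}_{\Delta_{\overline{J}}}\big|_{\check{\mathcal{P}}\times_{S}\check{\mathcal{P}}}\simeq\bigoplus_{p=0}^{g_{C}}\mathcal{O}_{\Delta_{\check{\mathcal{P}}}}\otimes p_{S}^{*}\textstyle\bigwedge^{p}\on{Lie}(\on{Pic}^{0}(C))^{\vee}[p]$, so this object is concentrated in cohomological degrees $0,\dots,g_{C}$, and since $\det\on{Lie}(\on{Pic}^{0}(C))^{\vee}\otimes\omega_{\rho}^{-1}\simeq\mathcal{O}_{S}$ its degree-$0$ cohomology sheaf is exactly $\mathcal{O}_{\Delta_{\check{\mathcal{P}}}}$. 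On the other hand, since $p_{13}^{\overline{J}}=p_{13}^{\hat{\mathcal{P}}}\circ b$, the projection formula together with the decomposition $Rb_{*}\mathcal{O}\simeq\bigoplus_{i=0}^{g_{C}}p_{S}^{*}\textstyle\bigwedge^{i}H^{1}(\on{Pic}^{0}(C),\mathcal{O})[-i]$ (K\"unneth for the abelian-scheme torsor $\rho$, a genuine direct sum) give $Rp_{13*}^{\overline{J}}(b^{*}\mathcal{D})\simeq C\otimes Rb_{*}\mathcal{O}$, where $C:=Rp_{13*}^{\hat{\mathcal{P}}}\mathcal{D}$ is the kernel we wish to identify. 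Writing $[a,b]$ for the cohomological amplitude of $C$, the second computation has amplitude $[a,b+g_{C}]$; comparison with the first forces $a\ge 0$ and $b\le 0$, so $C$ is a sheaf placed in degree $0$, and comparing degree-$0$ cohomology sheaves then gives $C\simeq\mathcal{O}_{\Delta_{\check{\mathcal{P}}}}$, compatibly with the unit maps because Arinkin's is.

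The hard part will be the bookkeeping in the last step: one must check that the two ``$\bigwedge^{\bullet}$-gradings''—the K\"unneth grading of $Rb_{*}\mathcal{O}$ (cohomology of the $\on{Pic}^{0}(C)$-fibres of $\rho$) and the Koszul grading of $j$—together with the shift $[-g_{C}]$ and the twist $\omega_{\rho}$ cancel precisely, so that $C$ comes out as $\mathcal{O}_{\Delta_{\check{\mathcal{P}}}}$ on the nose rather than merely up to a shift or a line bundle pulled back from $S$; the decisive inputs are the identifications $\det N_{\check{\mathcal{P}}/\overline{J}}^{\vee}\simeq\omega_{\rho}$ and the direct-sum (not merely filtered) nature of $Rb_{*}\mathcal{O}$. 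A secondary technical point—already prepared for by the lemmas preceding the proposition—is that $\iota^{*}\mathcal{G}$ is only maximal Cohen-Macaulay, not perfect; the base-change isomorphisms for $R\mathcal{H}om$ and for proper pushforward invoked above nevertheless apply because the relevant $R\mathcal{H}om$ has a line bundle in its second argument and $\mathcal{G}$ is flat over the projections, and on the open locus where one factor parametrises line bundles one can in any case fall back on the explicit description \eqref{eq: def of poincare sheaf} of $\mathcal{G}$.
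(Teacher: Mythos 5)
Your proposal is correct in substance, and its opening moves coincide with the paper's: both proofs base change Arinkin's identity (Theorem \ref{thm: arinkin autoduality}(d)) along $\iota\colon \check{\mathcal{P}}\times_S\check{\mathcal{P}}\hookrightarrow\overline{J}\times_S\overline{J}$, and both must identify the derived restriction of $p_{12}^*\mathcal{G}\otimes p_{23}^*R\mathcal{H}om(\mathcal{G},p_2^*\omega_{\overline{J}/S}[g])$ with its underived restriction and commute the duality with the restriction. Where you genuinely diverge is the endgame. The paper keeps the extra fibre cohomology of $\pi\colon\overline{J}\to\hat{\mathcal{P}}$ inside a Grothendieck spectral sequence and pins down $\Psi$ by a support argument: it reduces to showing $\Psi$ is a sheaf via a codimension bound (Arinkin's Lemma 7.7) and an induction showing all $E_2$-terms are supported on $\Delta_{\check{\mathcal{P}}}$. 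You instead compute $Rp_{13*}$ over the middle factor $\overline{J}$ in two ways — an excess-intersection/Koszul computation of $L\iota^*\mathcal{O}_{\Delta_{\overline{J}}}$ on one side, and projection formula against $R\rho_*\mathcal{O}$ on the other — and then a cohomological-amplitude ``corner'' comparison forces $C=Rp_{13*}(\mathcal{D})$ to sit in degree $0$ and to equal $\mathcal{H}^0$, i.e.\ $\mathcal{O}_{\Delta_{\check{\mathcal{P}}}}$. This buys you independence from Arinkin's Lemma 7.7 and from the support induction; the price is the extra bookkeeping of the Koszul and K\"unneth gradings, which your computation $\det N^\vee_{\check{\mathcal{P}}/\overline{J}}\simeq\omega_\rho$ handles correctly.

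Two justifications should be repaired, though neither breaks the argument. First, the compatibility of $R\mathcal{H}om(-,p_2^*\omega[\,\cdot\,])$ with restriction along $\iota$ is not a consequence of ``coherent $+$ line bundle $+$ Gorenstein'' or of flatness over the projections; the correct input is that $\mathcal{G}$ is maximal Cohen--Macaulay and the slice is cut by a regular sequence, i.e.\ exactly Arinkin's Lemma 2.3, which is also what the paper invokes — you should cite it rather than the reasons given. (Relatedly, ``smoothness of $\on{Nm}$ along $\check{\mathcal{P}}$'' is too strong when $\overline{J}$ is singular; flatness of $\on{Nm}$, which does follow from the Cartesian square, already gives the regular immersion with trivial normal bundle, and that is all you use.) Second, you flag as decisive the two direct-sum claims — formality of $L\iota^*\mathcal{O}_{\Delta_{\overline{J}}}$ and of $Rb_*\mathcal{O}$ for the torsor $\rho$ without a section — but your argument does not actually need them: the amplitude bounds and the identification of $\mathcal{H}^0$ and $\mathcal{H}^{g_C}$ only use the Tor sheaves on one side and the naive filtration of $R\rho_*\mathcal{O}$ (whose corner terms survive the spectral sequence for degree reasons) on the other. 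Weakening those claims to filtrations makes the proof both safer and closer to what can be justified with the paper's Lemma \ref{lemma: pushforward free}.
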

Let $\mathcal{G}$ be the Poincar\'e sheaf on $\overline{J}\times_S \overline{J}$. Consider the following pull-back diagram
\bd
\xymatrix{
 \check{\mathcal{P}}\times_S {\overline{J}}\times_S \check{\mathcal{P}}\ar[d]_{\pi}\ar[rrr]^{\iota'} &&& \overline{J}\times_S {\overline{J}}\times_S \overline{J}\ar[dd]^{p_{13}}\\
\check{\mathcal{P}}\times_S {\hat{\mathcal{P}}}\times_S \check{\mathcal{P}}\ar[d]_{p_{13}'} \\
\check{\mathcal{P}}\times_S  \check{\mathcal{P}} \ar[rrr]^{\iota} &&&\overline{J}\times_S \overline{J}}
\ed
Since $\iota$ is a closed immersion and the maps $\iota$ and $p_{13}$ are tor independent, we can apply the base change theorem and obtain the following 
\begin{equation} \label{eq: base change}
L\iota^*Rp_{13*}(p_{12}^*\mathcal{G}\otimes p_{23}^*R\mathcal{H}om(\mathcal{G}, p_2^*\omega_{\overline{J}/S}[g]))
\simeq Rp_{13*}'R\pi_{*}L\iota'^*(p_{12}^*\mathcal{G}\otimes p_{23}^*R\mathcal{H}om(\mathcal{G}, p_2^*\omega_{\overline{J}/S}[g])).
\end{equation}
The left-hand side of (\ref{eq: base change}) is isomorphic to $L\iota^*\mathcal{O}_{\Delta_{\overline{J}}}$ by Theorem \ref{thm: arinkin autoduality} (3). Now we simplify the right-hand side. Since $\mathcal{G}$ is maximal Cohen-Macaulay, by Lemma 2.3 in \cite{autoduality}, we have 
\[
L\iota'^*(p_{12}^*\mathcal{G})\simeq \iota'^*(p_{12}^*\mathcal{G})\simeq \pi^*(p_{12}^*\mathcal{F})
\]
and 
\[
L\iota'^*(p_{23}^*R\mathcal{H}om(\mathcal{G}, p_2^*\omega_{\overline{J}/S}[g]))\simeq p_{23}^*(R\mathcal{H}om(\pi^{*}\mathcal{F}, p_2^*\omega_{\overline{J}/S}[g]))
\]
Applying projection formula and Grothendieck duality, the right-hand side of (\ref{eq: base change}) becomes
\begin{equation} 
\begin{split}
&Rp_{13*}'R\pi_{*}(\pi^*(p_{12}^*\mathcal{F})\otimes p_{23}^*(R\mathcal{H}om(\pi^{*}\mathcal{F}, p_2^*\omega_{\overline{J}/S}[g]))\\
&\simeq Rp_{13*}'(p_{12}^*\mathcal{F}\otimes R\pi_{*}p_{23}^*(R\mathcal{H}om(\pi^{*}\mathcal{F}, p_2^*\omega_{\overline{J}/S}[g]))\\
&\simeq Rp_{13*}'(p_{12}^*\mathcal{F}\otimes p_{23}^*R\mathcal{H}om(\mathcal{F}\otimes R\pi_*\mathcal{O}_{\overline{\mathcal{P}}}, p_2^*\omega_{{\hat{\mathcal{P}}}/S}[g-g_C])).
\end{split}
\end{equation}
Since the map $\pi$ gives $\overline{J}$ the structure of a $\on{Pic}^0(C)$-torsor over $\hat{\mathcal{P}}$, we have 
\begin{lemma}\label{lemma: pushforward free}
For any $q\geq 0$, $R\pi_*^q\mathcal{O}_{\overline{\mathcal{P}}}$ is a free sheaf of rank ${g \choose q}$. In particular, $R\pi_*^0\mathcal{O}_{\overline{\mathcal{P}}}\simeq \mathcal{O}_{\hat{\mathcal{P}}}$.
\end{lemma}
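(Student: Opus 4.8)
The plan is to reduce, by faithfully flat descent, to the elementary computation of the coherent cohomology of an abelian variety. Recall from the discussion immediately preceding the lemma that, up to the harmless base change along a morphism of the form $\check{\mathcal{P}}\times_S(-)\times_S\check{\mathcal{P}}\to(-)$ (which affects neither local freeness nor rank), the morphism $\pi$ is the quotient morphism $\overline{J}=\overline{\on{Pic}}^0(X/S)\to\hat{\mathcal{P}}=[\overline{J}/A]$ exhibiting $\overline{J}$ as a torsor under the abelian scheme $A:=\on{Pic}^0(C\times S/S)\cong\on{Jac}(C)\times_k S$ of constant relative dimension $g_C$. Since $A$ is proper over $S$, the morphism $\pi$ is proper, smooth, and has geometrically connected fibres, each isomorphic to $\on{Jac}(C)$; in particular $\mathcal{O}_{\overline{J}}$ is flat over $\hat{\mathcal{P}}$, and $R\pi_*\mathcal{O}_{\overline{J}}$ is a perfect complex whose formation commutes with arbitrary base change on $\hat{\mathcal{P}}$.

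First I would trivialise the torsor by base-changing along $\pi$ itself. The canonical isomorphism $\overline{J}\times_{\hat{\mathcal{P}}}\overline{J}\xrightarrow{\sim}\overline{J}\times_S A$ is compatible with the projections to the first factor $\overline{J}$, so flat base change along $\pi$, followed by the base-change theorem for the proper flat morphism $q\colon A\to S$, yields
\[
\pi^{*}R\pi_{*}\mathcal{O}_{\overline{J}}\ \simeq\ R\on{pr}_{1,*}\,\mathcal{O}_{\overline{J}\times_S A}\ \simeq\ p^{*}Rq_{*}\mathcal{O}_A ,
\]
where $p\colon\overline{J}\to S$ and $q\colon A\to S$ are the structure morphisms. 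Since $A\cong\on{Jac}(C)\times_k S$ is a \emph{constant} abelian scheme, $Rq_{*}\mathcal{O}_A\simeq R\Gamma(\on{Jac}(C),\mathcal{O})\otimes_k\mathcal{O}_S$; and the coherent cohomology of an abelian variety $B$ is the exterior algebra on $H^1(\mathcal{O})$, i.e.\ $H^i(B,\mathcal{O}_B)\cong\bigwedge^i H^1(B,\mathcal{O}_B)$ with $\dim H^1(B,\mathcal{O}_B)=\dim B$. Hence $R^{q}q_{*}\mathcal{O}_A\cong\bigl(\textstyle\bigwedge^{q}H^1(\on{Jac}(C),\mathcal{O})\bigr)\otimes_k\mathcal{O}_S$ is locally free of rank $\binom{g_C}{q}$ and vanishes for $q>g_C$, so $\pi^{*}R^{q}\pi_{*}\mathcal{O}_{\overline{J}}$ is locally free of rank $\binom{g_C}{q}$.

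Since $\pi$ is faithfully flat and the property of being locally free of a fixed rank descends along faithfully flat morphisms (one checks this on a smooth atlas of $\hat{\mathcal{P}}$), it follows that $R^{q}\pi_{*}\mathcal{O}_{\overline{J}}$ is itself locally free of rank $\binom{g_C}{q}$ on $\hat{\mathcal{P}}$. For the last assertion I would apply $\pi^{*}$ to the unit morphism $\mathcal{O}_{\hat{\mathcal{P}}}\to\pi_{*}\mathcal{O}_{\overline{J}}$: by the computation above together with $q_{*}\mathcal{O}_A=\mathcal{O}_S$ (as $\on{Jac}(C)$ is connected and proper), this pullback is the identity of $\mathcal{O}_{\overline{J}}$, hence the unit morphism is already an isomorphism by faithful flatness; the base-change identification of the first paragraph then transfers this to the $\pi$ of the displayed pullback diagram.

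There is no genuine obstacle here; the statement is essentially formal, and the only point requiring some care is bookkeeping — making sure that flat base change for $R\pi_{*}$ and faithfully flat descent of local freeness are invoked in the correct generality over the algebraic stack $\hat{\mathcal{P}}$ (equivalently, on a smooth atlas), and that the identity ``$H^{\bullet}(\text{abelian variety},\mathcal{O})\cong\bigwedge^{\bullet}H^1(\mathcal{O})$'' is used in its relative form. Alternatively, one could bypass descent entirely through Grauert's theorem: all geometric fibres of $\pi$ are isomorphic to $\on{Jac}(C)$, so $q\mapsto\dim_k H^{q}\bigl(\pi^{-1}(\bar y),\mathcal{O}\bigr)=\binom{g_C}{q}$ is constant in $\bar y$, which forces $R^{q}\pi_{*}\mathcal{O}_{\overline{J}}$ to be locally free of that rank with formation commuting with base change.
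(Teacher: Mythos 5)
Your argument is correct and is exactly the justification the paper leaves implicit: the lemma is stated with no proof beyond the preceding remark that $\pi$ is a $\on{Pic}^0(C)$-torsor over $\hat{\mathcal{P}}$, and your flat base change along $\pi$ itself plus faithfully flat descent (or the Grauert variant) is the standard way to make that one-line assertion precise. Two small points: your computation gives rank $\binom{g_C}{q}$ (the fibres are $\on{Jac}(C)$, of dimension $g_C$), which shows that the ``$\binom{g}{q}$'' in the statement is a misprint for $\binom{g_C}{q}$ --- it is the $g_C$-version that the paper actually uses afterwards (the index $g_C-g-q$ and the range $-g\le q\le g_C-g$ in the proof of Proposition \ref{prop: Fourier-Mukai composite}) --- and your descent argument literally yields \emph{local} freeness, so to obtain ``free'' as stated one should add that translations act trivially on $H^{\bullet}(\on{Jac}(C),\mathcal{O})$, hence the descent datum of $R^{q}\pi_{*}\mathcal{O}$ is trivial (though only local freeness together with $\pi_{*}\mathcal{O}\simeq\mathcal{O}$ is used later).
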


\begin{proof}[Proof of Proposition \ref{prop: Fourier-Mukai composite}]
Let $\Psi=Rp_{13*}(p_{12}^*\mathcal{F}\otimes p_{23}^*R\mathcal{H}om(\mathcal{F}, p_2^*\omega_{{\hat{\mathcal{P}}}/S}[g-g_C]))$. The goal is to show $\Psi\simeq\mathcal{O}_{\Delta_{\check{\mathcal{P}}}}$.
Consider the Grothendieck spectral sequence
\begin{equation}\label{eq: spectral sequence for F-M}
\begin{split}
    & E^{p,q}_2=\mathcal{H}^{p}(Rp_{13*}'(p_{12}^*\mathcal{F}\otimes p_{23}^*R^q\mathcal{H}om(\mathcal{F}\otimes R\pi_*\mathcal{O}_{\overline{\mathcal{P}}}, p_2^*\omega_{{\hat{\mathcal{P}}}/S}[g-g_C]))) \\ 
     &\Rightarrow \mathcal{H}^{p+q}(Rp_{13*}'(p_{12}^*\mathcal{F}\otimes p_{23}^*R\mathcal{H}om(\mathcal{F}\otimes R\pi_*\mathcal{O}_{\overline{\mathcal{P}}}, p_2^*\omega_{{\hat{\mathcal{P}}}/S}[g-g_C])))\\
     &\simeq L^{p+q}\iota^*\mathcal{O}_{\Delta_{\overline{\mathcal{P}}}}.
\end{split}
\end{equation}
Since $\mathcal{F}$ is a maximal Cohen-Macaulay sheaf, we have $R^q\mathcal{H}om(\mathcal{F}, p_2^*\omega_{{\hat{\mathcal{P}}}/S})=0$ when $q\neq 0$. Combining Lemma \ref{lemma: pushforward free}, we obtain
\[
R^q\mathcal{H}om(\mathcal{F}\otimes R\pi_*\mathcal{O}_{\overline{\mathcal{P}}}, p_2^*\omega_{{\hat{\mathcal{P}}}/S}[g-g_C])= R\mathcal{H}om(\mathcal{F}\otimes R\pi_*^{g_C-g-q}\mathcal{O}_{\overline{\mathcal{P}}}, p_2^*\omega_{{\hat{\mathcal{P}}}/S}[g-g_C])
\]

Since $E_2^{p,q}$ is concentrated in the locus where $0\leq p \leq g-g_C$ and $-g\leq q \leq g_C-g$, we have $E_2^{g-g_C,g_C-g}\simeq \iota^*\mathcal{O}_{\Delta_{\overline{\mathcal{P}}}}\simeq \mathcal{O}_{\Delta_{\check{\mathcal{P}}}}$. By Lemma \ref{lemma: pushforward free}, we have $\mathcal{H}^p(\Psi)=E_2^{p,g_C-g}$. It follows that in order to show $\Psi\simeq\mathcal{O}_{\Delta_{\check{\mathcal{P}}}}$, it is enough to show $\Psi$ is a sheaf. By Lemma 7.7 in \cite{autoduality}, it is enough to show $\on{codim(supp(\Psi))} \geq g-g_C$. 

Now we show $\on{supp}(E_2^{p,q})\subseteq \Delta_{\check{\mathcal{P}}}$. We prove by induction on $p$. By Lemma \ref{lemma: pushforward free} and the isomorphism $E_2^{g-g_C,g_C-g}\simeq \mathcal{O}_{\Delta_{\check{\mathcal{P}}}}$, we have $\on{supp}(E_2^{g-g_C,q})\subseteq \Delta_{\check{\mathcal{P}}}$. Now we assume the statement is true for any $p>s$. Since $E_r^{p,q}$ is constructed from $E_2^{p,q}$ by consecutively taking sub-quotients, we have $\on{supp}(E_r^{p,q})\subseteq \Delta_{\check{\mathcal{P}}}$ for any $p>s$ and $r\geq 2$. Our goal is to show $\on{supp}(E_2^{s-1,q})\subseteq \Delta_{\check{\mathcal{P}}}$, and by Lemma \ref{lemma: pushforward free}, it is enough to prove the special case when $q=g_C-g$. Let $t$ be the largest number such that $d_t$ is non-zero at position $(s-1, g_C-g)$. From the spectral sequence (\ref{eq: spectral sequence for F-M}), we have the following short exact sequence
\[
0\to E_{\infty}^{s-1,g_C-g}\xrightarrow[]{d_t} E_{t}^{s-1,g_C-g}\to \on{Im}d_t\to 0. 
\]
Since the spectral sequence $\{E_2^{p,q}\}$ converges to $L^{p+q}\iota^*\mathcal{O}_{\Delta_{\overline{\mathcal{P}}}}$, the support of  $E_{\infty}^{s-1,q}$ must lie in  $\Delta_{\check{\mathcal{P}}}$. We also have $\on{Im}d_t\subseteq \Delta_{\check{\mathcal{P}}}$, which follows from the induction hypothesis. It follows that $\on{supp}(E_{t}^{s-1,g_C-g})\subseteq \Delta_{\check{\mathcal{P}}}$. Applying the same argument consecutively, we deduce the existence of an inclusion $\on{supp}(E_{2}^{s-1,g_C-g})\subseteq \Delta_{\check{\mathcal{P}}}$.
\end{proof}

\subsection{A derived equivalence for Hitchin systems}\label{ssec:autoduality}
We fix a line bundle $D$ such that either $\on{deg}D>2g-2$ or $D=K$ the canonical line bundle of $C$. We fix a line bundle $L$ of degree $1$ on $C$. Let $\check{\mathcal{M}}_d$ be the moduli space of semistable $D$-twisted $\on{SL}_n$-Higgs bundles with determinant $L^d$. More precisely, $\check{\mathcal{M}}_d$ represents semistable triples $(E, \varphi, \theta)$, where $E$ is a vector bundle of rank $n$ on $X$, $\varphi$ is a morphism $\varphi: E\to E\otimes D$ and $\theta$ is an isomorphism $\theta: \on{det}E\simeq L^d$. The group of $n$-torsion points $\Gamma \subset \on{Pic}(C)$ acts on the moduli space $\check{\mathcal{M}}_d$, and the quotient $[\check{\mathcal{M}}_d/\Gamma]$ gives the moduli stack of semistable $\on{PGL}_n$-Higgs bundles of degree $\bar{d}\in \mathbb{Z}/n\mathbb{Z}$, which we denote by $\hat{\mathcal{M}}_d$. Both $\check{\mathcal{M}}_d$ and $\hat{\mathcal{M}}_e$ admit the Hitchin map $\check{h}:\check{\mathcal{M}}_d\to \mathcal{A}$ and $\hat{h}: \hat{\mathcal{M}}_e\to \mathcal{A}$ to the same affine space $\mathcal{A}$. Let $X\to C\times \mathcal{A}$ be the family of spectral curves. We denote by $\mathcal{A}^{\diamondsuit}\subset \mathcal{A}$ the locus where the spectral curves are smooth, and by $\mathcal{A}^{\heartsuit}\subset \mathcal{A}$ the locus where the spectral curves are integral(i.e. the elliptic locus). Let $\check{\mathcal{M}}^{\heartsuit}_d\coloneqq \check{h}^{-1}(\mathcal{A}^{\heartsuit})$ and $\hat{\mathcal{M}}^{\heartsuit}_d\coloneqq \hat{h}^{-1}(\mathcal{A}^{\heartsuit})$ be the Hitchin fibre over the elliptic locus. Let $\check{\mathcal{P}}$ be the compactified Prym scheme of $X^{\heartsuit}\to C\times \mathcal{A}^{\heartsuit}$ defined in Subsection \ref{ssec: coherent duality}, and let $\check{\mathcal{P}}_d=\on{Nm}^{-1}(L^d)$ for the norm map $\on{Nm}:\overline{\on{Pic}}^0(X^{\heartsuit}/\mathcal{A}^{\heartsuit})\to \on{Pic}^0(C\times \mathcal{A}/\mathcal{A})$. We denote $\hat{\mathcal{P}}=[\check{\mathcal{P}}/\Gamma]$ and $\hat{\mathcal{P}}_d=[\check{\mathcal{P}}_d/\Gamma]$. The spectral description of Hitchin fibres implies that $\check{\mathcal{M}}^{\heartsuit}_d$ (resp.  $\hat{\mathcal{M}}^{\heartsuit}_d$) is isomorphic to $\check{\mathcal{P}}_{d'}$ (resp. $\hat{\mathcal{P}}_{d'}$) for some $d'$, and it is \'etale locally isomorphic to $\check{\mathcal{P}}$ (resp. $\hat{\mathcal{P}}$). Let $\check{\alpha}_e$ (resp. $\hat{\alpha}_d$) be the gerbe on $\check{\mathcal{M}}_d$ (resp. $\hat{\mathcal{M}}_e$) that appears in the topological mirror symmetry for $E$-polynomials. We refer the readers to \cite{gwz17} Section 7.4 for the precise definition of those gerbes. The main goal of this subsection is to prove the following:

\begin{theorem}\label{thm: twisted F-M between Hitchin systems}
There exists $\mathcal{K}\in\on{Coh}(\check{\mathcal{M}}^{\heartsuit}_d\times_{\mathcal{A}^{\heartsuit}}\hat{\mathcal{M}}^{\heartsuit}_e, \check{\alpha}_e\boxtimes\hat{\alpha}_d)$ such that the corresponding Fourier-Mukai transform
\[
\Phi_\mathcal{K}: D^b_{\on{coh}}(\check{\mathcal{M}}_d^{\heartsuit},\check{\alpha}^{-1}_e) \xrightarrow{\simeq} D^b_{\on{coh}}(\hat{\mathcal{M}}_e^{\heartsuit},\hat{\alpha}_d)
\]
induces an equivalence of derived categories. 
\end{theorem}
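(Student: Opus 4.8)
The plan is to obtain the kernel $\mathcal{K}$ by transporting the Poincar\'e sheaf $\mathcal{F}$ of Subsection~\ref{ssec: coherent duality} along the spectral correspondence, and then to deduce fully faithfulness and essential surjectivity of $\Phi_{\mathcal{K}}$ from Proposition~\ref{prop: Fourier-Mukai composite} and Theorem~\ref{thm: arinkin autoduality} by an \'etale-local argument over $\mathcal{A}^{\heartsuit}$. First I would specialise the construction of Subsection~\ref{ssec: coherent duality} to $S=\mathcal{A}^{\heartsuit}$ with $X\to C\times\mathcal{A}^{\heartsuit}$ the universal family of integral spectral curves, producing $\mathcal{F}$ on $\check{\mathcal{P}}\times_{\mathcal{A}^{\heartsuit}}\hat{\mathcal{P}}$ together with the fully faithful $\Phi_{\mathcal{F}}$. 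The spectral correspondence identifies $\check{\mathcal{M}}^{\heartsuit}_d\cong\check{\mathcal{P}}_{d'}$ and $\hat{\mathcal{M}}^{\heartsuit}_e\cong\hat{\mathcal{P}}_{e'}$ for suitable $d',e'$; since $\check{\mathcal{P}}_{d'}$ is a torsor under $\check{\mathcal{P}}=\check{\mathcal{P}}_0$, a choice of relative universal sheaf on the spectral curves gives an isomorphism $\check{\mathcal{P}}_{d'}\times\hat{\mathcal{P}}_{e'}\cong\check{\mathcal{P}}\times\hat{\mathcal{P}}$ under which $\mathcal{F}$ pulls back to a candidate kernel. Such a universal sheaf exists only after passing to a gerbe on each factor, and the key point is to verify that the resulting obstruction classes are \emph{precisely} $\check{\alpha}_e$ and $\hat{\alpha}_d$ (note the cross-dependence of each twist on the degree of the opposite factor), matching conventions with \cite[Section~7.4]{gwz17} and keeping track of $d$ and $e$ throughout; this produces $\mathcal{K}\in\on{Coh}(\check{\mathcal{M}}^{\heartsuit}_d\times_{\mathcal{A}^{\heartsuit}}\hat{\mathcal{M}}^{\heartsuit}_e,\check{\alpha}_e\boxtimes\hat{\alpha}_d)$. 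This identification is the most delicate piece of bookkeeping, since these are the classes that occur in topological mirror symmetry.

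To prove fully faithfulness I would pass to the right adjoint $\Psi=\Phi_{\mathcal{K}}^{R}$, which is a twisted Fourier--Mukai functor $D^b_{\on{coh}}(\hat{\mathcal{M}}^{\heartsuit}_e,\hat{\alpha}_d)\to D^b_{\on{coh}}(\check{\mathcal{M}}^{\heartsuit}_d,\check{\alpha}^{-1}_e)$ with kernel $R\mathcal{H}om(\mathcal{K},p_2^*\omega_{\hat{\mathcal{M}}^{\heartsuit}_e/\mathcal{A}^{\heartsuit}}[g-g_C])$. The unit of adjunction provides a canonical morphism $\mathcal{O}_{\Delta}\to\mathcal{K}^{\vee}\ast\mathcal{K}$ of complexes on $\check{\mathcal{M}}^{\heartsuit}_d\times_{\mathcal{A}^{\heartsuit}}\check{\mathcal{M}}^{\heartsuit}_d$, and whether it is an isomorphism may be tested after a surjective \'etale base change $T\to\mathcal{A}^{\heartsuit}$ chosen so that the spectral family acquires a universal sheaf and the torsor $\check{\mathcal{P}}_{d'}$ trivialises. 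Over such a $T$ the gerbes disappear, the two Higgs moduli become $\check{\mathcal{P}}$ and $\hat{\mathcal{P}}$, and $\mathcal{K}$ becomes $\mathcal{F}$, so the claim reduces to Proposition~\ref{prop: Fourier-Mukai composite}. Hence $\Psi\circ\Phi_{\mathcal{K}}\cong\id$, so $\Phi_{\mathcal{K}}$ is fully faithful.

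It remains to prove $\Phi_{\mathcal{K}}\circ\Psi\cong\id$, equivalently $\mathcal{K}\ast\mathcal{K}^{\vee}\cong\mathcal{O}_{\Delta}$ on $\hat{\mathcal{M}}^{\heartsuit}_e\times_{\mathcal{A}^{\heartsuit}}\hat{\mathcal{M}}^{\heartsuit}_e$; I expect this to be the main obstacle, since Proposition~\ref{prop: Fourier-Mukai composite} only supplies the other composition. By the same \'etale-local reduction it amounts to the analogue of Proposition~\ref{prop: Fourier-Mukai composite} on $\hat{\mathcal{P}}\times_S\hat{\mathcal{P}}$, with the roles of $\check{\mathcal{P}}$ and $\hat{\mathcal{P}}$ exchanged. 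The plan is to obtain this by running the analogue of the proof of Proposition~\ref{prop: Fourier-Mukai composite}: instead of restricting Arinkin's Poincar\'e sheaf $\mathcal{G}$ along $\check{\mathcal{P}}\hookrightarrow\overline{J}$, one descends it along $\overline{J}\to\hat{\mathcal{P}}$, and then runs the same Grothendieck spectral sequence and codimension estimate. This is legitimate because $\mathcal{G}$ is equivariant for the permutation of the two factors of $\overline{J}\times_S\overline{J}$ (Theorem~\ref{thm: arinkin autoduality}(c)) and $\Phi_{\mathcal{G}}$ is an auto-equivalence of $D^b_{\on{coh}}(\overline{J})$ (Theorem~\ref{thm: arinkin autoduality}(d)), so the passage from $\overline{J}$ to the dual pair $(\check{\mathcal{P}},\hat{\mathcal{P}})$ is symmetric under interchanging restriction to a subgroup with descent to a quotient. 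Granting this, $\Phi_{\mathcal{K}}$ is an equivalence with inverse $\Psi=\Phi_{\mathcal{K}^{\vee}}$.
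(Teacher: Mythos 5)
Your first two paragraphs track the paper's argument closely: the paper also builds the kernel out of the Poincar\'e sheaf of Subsection~\ref{ssec: coherent duality} (it does so on the Picard stacks $\overline{\mathfrak{Pic}}(X^{\heartsuit}/\mathcal{A}^{\heartsuit})$, where the twists $\check{\alpha}_e$ and $\hat{\alpha}_d$ are read off as the $B\mathbb{G}_m$-weights of $\mathcal{L}$, which disposes of the ``delicate bookkeeping'' you flag), and it likewise verifies the adjunction maps \'etale-locally over $\mathcal{A}^{\heartsuit}$, where the unit reduces to Proposition~\ref{prop: Fourier-Mukai composite}. Up to that point your plan is sound.

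The gap is in your third paragraph. The reversed composition is not obtained from Proposition~\ref{prop: Fourier-Mukai composite} by ``exchanging the roles of $\check{\mathcal{P}}$ and $\hat{\mathcal{P}}$'': in the proof of that proposition the \emph{outer} factors form the closed subscheme $\check{\mathcal{P}}\times_S\check{\mathcal{P}}\subset\overline{J}\times_S\overline{J}$, so Arinkin's identity can be base-changed along the tor-independent closed immersion $\iota$, and the cost of integrating over the middle $\hat{\mathcal{P}}$ instead of $\overline{J}$ is controlled by Lemma~\ref{lemma: pushforward free} together with the support/codimension induction resting on Lemmas 2.3 and 7.7 of \cite{autoduality}. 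In the composite $\Phi_{\mathcal{K}}\circ\Psi$ the outer factors are the quotient stack $\hat{\mathcal{P}}$ and the \emph{middle} integration runs over the closed subscheme $\check{\mathcal{P}}$; pulled back to $\overline{J}\times_S\overline{J}$ this is a convolution over $\check{\mathcal{P}}$, i.e.\ Arinkin's kernels with $\mathcal{O}_{\check{\mathcal{P}}}$ inserted in the middle variable, and none of the base-change, Cohen--Macaulay or codimension inputs transpose mechanically; ``restriction to a subgroup versus descent to a quotient'' is a duality heuristic, not an argument, and making it precise would amount to a biduality statement for $(\check{\mathcal{P}},\hat{\mathcal{P}})$ that is established nowhere in the paper. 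The paper sidesteps this entirely: since $\check{\mathcal{P}}$ is isomorphic to some $\check{\mathcal{M}}^{\heartsuit}_a$ it carries a $\Gamma$-invariant top-degree form, so $D^b_{\on{coh}}(\hat{\mathcal{P}})$ is a Calabi--Yau triangulated category over $\mathcal{A}^{\heartsuit}$ in the sense of Section~3.5 of \cite{bmr} and is indecomposable by Lemma~4.2 of \cite{bkr}; Lemma~3.5.2 of \cite{bmr} then promotes the fully faithful $\Phi_{\mathcal{F}}$ to an equivalence using only the one composition you already have. You should either give an actual proof of the reversed composition (a substantial new argument) or replace your last step with this Calabi--Yau/indecomposability criterion.
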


We denote by $\overline{\mathfrak{Pic}}(X^{\heartsuit}/\mathcal{A}^{\heartsuit})$ the moduli stack of rank 1 torsion-free sheaves on $X^{\heartsuit}$ over $\mathcal{A}^{\heartsuit}$, and by  $\mathfrak{Pic}(X^{\heartsuit}/\mathcal{A}^{\heartsuit})$ the moduli stack of line bundles on $X^{\heartsuit}$ over $\mathcal{A}^{\heartsuit}$. Let $\mathcal{L}$ be the Poincar\'e line bundle on $\mathfrak{Pic}(X^{\heartsuit}/\mathcal{A}^{\heartsuit})\times \overline{\mathfrak{Pic}}(X^{\heartsuit}/\mathcal{A}^{\heartsuit})\bigcup \overline{\mathfrak{Pic}}(X^{\heartsuit}/\mathcal{A}^{\heartsuit})\times \mathfrak{Pic}(X^{\heartsuit}/\mathcal{A}^{\heartsuit})$ defined by (\ref{eq: def of poincare sheaf}). The moduli stack $\overline{\mathfrak{Pic}}(X^{\heartsuit}/\mathcal{A}^{\heartsuit})$ is \'etale locally isomorphic to $\overline{\on{Pic}}(X^{\heartsuit}/\mathcal{A}^{\heartsuit})\times B\mathbb{G}_m\times \mathbb{Z}$, where the $\mathbb{Z}$-component corresponds to the degree of the torsion-free sheaf. When restricted to the component of degree $(d,e)\in \mathbb{Z}\times\mathbb{Z}$, the Poincar\'e sheaf $\mathcal{L}$ has weight $(e,d)$ for the $\mathbb{G}_m\times \mathbb{G}_m$-action corresponding to the $B\mathbb{G}_m$ factors. Consider the map $\eta: \overline{\mathfrak{Pic}}(X^{\heartsuit}/\mathcal{A}^{\heartsuit})\to \on{Pic}^0(C\times \mathcal{A}^{\heartsuit}/ \mathcal{A}^{\heartsuit})$ defined by mapping a line bundle $M$ to $\on{Nm}(M)\otimes L^{-\on{deg}(M)}$. The kernel $\on{Ker}(\eta)$ is \'etale locally isomorphic to $\check{\mathcal{P}}\times B\mathbb{G}_m\times\mathbb{Z}$ such that the $\mathbb{Z}$-component corresponds to the degree of the torsion-free sheaf and the $B\mathbb{G}_m$-action corresponds to the gerbe $\check{\alpha}$. Let $\on{Ker}(\eta)^{reg}\subset \on{Ker}(\eta)$ be the subset representing line bundles. Similar arguments as in Proposition \ref{prop: equivariant structure} implies that when restricted to $\on{ker}(\eta)^{reg}\times \overline{\mathfrak{Pic}}(X^{\heartsuit}/\mathcal{A}^{\heartsuit})\bigcup  \on{ker}(\eta)\times{\mathfrak{Pic}}(X^{\heartsuit}/\mathcal{A}^{\heartsuit})$, $\mathcal{L}$ admits a $\on{Pic}^0(C)$-equivariant structure, therefore descends to a line bundle on $\on{ker}(\eta)^{reg}\times [\overline{\mathfrak{Pic}}(X^{\heartsuit}/\mathcal{A}^{\heartsuit})/\on{Pic}^0(C)]\bigcup \on{ker}(\eta)\times[{\mathfrak{Pic}}(X^{\heartsuit}/\mathcal{A}^{\heartsuit})/\on{Pic}^0(C)]$, which we still denote by $\mathcal{L}$. Note that  $[\overline{\mathfrak{Pic}}(X^{\heartsuit}/\mathcal{A}^{\heartsuit})/\on{Pic}^0(C)]$ is \'etale locally isomorphic to $\hat{\mathcal{P}}\times B\mathbb{G}_m\times\mathbb{Z}$ such that the $\mathbb{Z}$-component corresponds to the degree of the line bundle and the $B\mathbb{G}_m$-action corresponds to the gerbe $\hat{\alpha}$. It follows that the Poincar\'e line bundle $\mathcal{L}$ restricts to a $\check{\alpha}_{e}\boxtimes\hat{\alpha}_d$ twisted sheaf on $ (\check{\mathcal{M}}^{\heartsuit}_d)^{reg}\times_{\mathcal{A}^{\heartsuit}}\hat{\mathcal{M}}_e^{\heartsuit}\bigcup\check{\mathcal{M}}^{\heartsuit}_d\times_{\mathcal{A}^{\heartsuit}}(\hat{\mathcal{M}}_e^{\heartsuit})^{reg}$, where $(\check{\mathcal{M}}^{\heartsuit}_d)^{reg}\subset (\check{\mathcal{M}}^{\heartsuit}_d)$ is the open subset defined by the spectral sheaf being a line bundle. 

\begin{proof}[Proof of Theorem \ref{thm: twisted F-M between Hitchin systems}]
We define $\mathcal{K}=j_{*}\mathcal{L}$, where $j: (\check{\mathcal{M}}^{\heartsuit}_d)^{reg}\times_{\mathcal{A}^{\heartsuit}}\hat{\mathcal{M}}_e^{\heartsuit}\bigcup\check{\mathcal{M}}^{\heartsuit}_d\times_{\mathcal{A}^{\heartsuit}}(\hat{\mathcal{M}}_e^{\heartsuit})^{reg}\hookrightarrow\check{\mathcal{M}}^{\heartsuit}_d\times_{\mathcal{A}^{\heartsuit}}\hat{\mathcal{M}}_e^{\heartsuit}$ is the open immersion. The Fourier-Mukai transform $\Phi_{\mathcal{K}}$ admits a right-adjoint which is a Fourier-Mukai transform with kernel $\mathcal{K}'=R\mathcal{H}om(\mathcal{K}, p_2^*\omega_{\hat{\mathcal{M}}_e^{\heartsuit}/\mathcal{A}^{\heartsuit}})$. We claim that both adjunction maps $id\to \Phi_{\mathcal{K}'}\circ\Phi_{\mathcal{K}}$ and $ \Phi_{\mathcal{K}}\circ\Phi_{\mathcal{K}'}\to id$ are isomorphisms, which can be checked \'etale locally. By taking an \'etale cover of $\mathcal{A}^{\heartsuit}$ that trivialises all the torsors and gerbes involved, $\mathcal{K}$ becomes isomorphic to the Fourier-Mukai kernel 
$\mathcal{F}$ constructed in Section \ref{ssec: coherent duality} for the flat family of curves $X^{\heartsuit}/\mathcal{A}^{\heartsuit}$. Therefore it is enough to show that $
\Phi_{\mathcal{F}}: D_{\on{coh}}^{b}(\check{\mathcal{P}})\to D_{\on{coh}}^{b}(\hat{\mathcal{P}})
$ induces an equivalence of derived categories. Since $\check{\mathcal{P}}$ is isomorphic to $\check{\mathcal{M}}^{\heartsuit}_{a}$ for some degree $a\in\mathbb{Z}$, it admits a $\Gamma$-invariant top-degree form, therefore $D_{\on{coh}}^{b}(\hat{\mathcal{P}})$ is a Calabi-Yau triangulated category over $\mathcal{A}^{\heartsuit}$ in the sense of Section 3.5 \cite{bmr}. By Lemma 4.2 in \cite{bkr}, it is also indecomposible. The desired statement of $\Phi_{\mathcal{F}}$ being an equivalence follows from Proposition \ref{prop: Fourier-Mukai composite} and Lemma 3.5.2 in \cite{bmr}.
\end{proof}

\section{Proofs of the main results}\label{sec:app}

\subsection{The case of $D$-twisted Higgs bundles}\label{case-D} 

\begin{theorem}\label{thm:main2twisted}
Let $D$ be a divisor of degree $> 2g-2$. There is an equivalence of complex $K$-theory \emph{spectra} $\KU(\hMm^{D,\an}) \simeq \KU(\cMm^{D,\an},\tilde{\alpha})$.
\end{theorem}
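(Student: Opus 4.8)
The plan is to combine the derived equivalence over the elliptic locus from Section~\ref{sec:autoduality} with the spreading-out machinery of Section~\ref{sec:spreadingout}, and then to upgrade the resulting rational isomorphism to an equivalence of spectra by establishing torsion-freeness on both sides. First I would put the problem into the form of Situation~\ref{sit1}: take $\Bc=\mathcal{A}$ the Hitchin base, $\Bc^{\heartsuit}=\mathcal{A}^{\heartsuit}$ the elliptic locus, $\Xc=\hMm^{D}$ (a smooth quasi-projective variety, since $(n,d)=1$) with its Brauer class $\check{\alpha}_e$, and $\Yc=\cMm^{D}$, presented as the finite quotient stack $[\hMm^{e}_n/\Gamma]$ with $\Gamma=J[n]$, carrying its Brauer class $\hat{\alpha}_d$; the Hitchin maps $\check{h}$ and $\hat{h}$ are proper, the Fourier--Mukai kernel over $\mathcal{A}^{\heartsuit}$ is the descended Poincar\'e sheaf of Subsection~\ref{ssec:autoduality}, extended across $\Xc\times_{\Bc}\Yc$ by the extension lemma of Subsection~\ref{ssec:first}, and $\FM_{\Kk}$ is $\Perf(\Bc)$-linear by Lemma~\ref{lemma:FM}.

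Next I would check the two hypotheses of Situation~\ref{sit2}. Hypothesis~(a), that $\FM_{\Kk^{\heartsuit}}$ is an equivalence over $\mathcal{A}^{\heartsuit}$, is exactly Theorem~\ref{thm: twisted F-M between Hitchin systems}. For hypothesis~(b) I must show that the relevant perverse cohomology sheaves---those of $\check{h}_*\underline{\Qb}_{I\Xc}$ and $\hat{h}_*\underline{\Qb}_{I\Yc}$, and of the rank-one finite-monodromy coefficient systems on the inertia appearing in Proposition~\ref{prop:decthm} and Porism~\ref{porism}---are middle extensions from $\mathcal{A}^{\heartsuit}$, i.e.\ that every simple summand has support meeting $\mathcal{A}^{\heartsuit}$. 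On the $\SL_n$ side $I\Xc=\Xc$, and this is de Cataldo's support theorem: all supports equal the whole base $\mathcal{A}$, a fortiori they meet the dense open $\mathcal{A}^{\heartsuit}\supset\mathcal{A}^{\diamondsuit}$. On the $\PGL_n$ side one writes $I\Yc=\bigsqcup_{\gamma\in\Gamma}[(\hMm^{e}_n)^{\gamma}/\Gamma]$; the sector $\gamma=e$ is a direct summand of $\check{h}_*\underline{\Qb}_{\hMm^{e}_n}$ and is again covered by de Cataldo, whereas the twisted (endoscopic) sectors $\gamma\neq e$ require a support theorem for the $\gamma$-fixed loci, which I would extract from the vanishing-cycle computations of Maulik--Shen~\cite{MS}, carried out with the $\hat{\alpha}_d$-twisted $\Gamma$-equivariant coefficients that occur here by means of the relative equivariant homotopy theory developed in the appendix.

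Granting Situation~\ref{sit2}, Theorem~\ref{thm:main} shows that the morphism~\eqref{eqn:FM-KU} attached to $\FM_{\Kk}$ is an isomorphism after rationalisation and on torsion-free parts. Since $\check{\alpha}_e$ has finite order and $H^{3}(\hMm^{D,\an};\Zb)$ is torsion-free, Proposition~\ref{prop:cohomology-KU} shows that the $\check{\alpha}_e$-twist on the source disappears, so~\eqref{eqn:FM-KU} reads $\KU(\hMm^{D,\an})\to\KU(\cMm^{D,\an},\tilde{\alpha})$ with $\tilde{\alpha}$ the topological class of $\hat{\alpha}_d$. To promote this to an equivalence of spectra I would invoke Corollary~\ref{cor:torsion-free1}, which requires both sides to be torsion-free: for the source, from torsion-freeness of $H^{*}(\hMm^{D,\an};\Zb)$---proved by the Garcia-Prada--Heinloth--Schmitt method of Section~\ref{sec:torsion} (cf.~\cite{GPHS}), which goes through unchanged for $\deg D>2g-2$---together with Proposition~\ref{prop:cohomology-KU}; for the target, by running the $\Gamma$-equivariant, $\hat{\alpha}_d$-twisted version of the same motivic argument on the finite covers of symmetric powers of $C$ (the $D$-twisted analogue of Theorem~\ref{main}(c)). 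Then Corollary~\ref{cor:torsion-free1} yields $\KU(\hMm^{D,\an})\simeq\KU(\cMm^{D,\an},\tilde{\alpha})$.

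I expect the main obstacle to be hypothesis~(b) for the endoscopic sectors of $I\cMm^{D}$: the derived equivalence is only available over $\mathcal{A}^{\heartsuit}$, so controlling the supports of those twisted contributions over $\mathcal{A}\smallsetminus\mathcal{A}^{\heartsuit}$---and doing so compatibly with the gerbe $\hat{\alpha}_d$---is precisely what forces in Maulik--Shen's vanishing-cycle construction together with the $G$-sheaf-of-spectra formalism of the appendix. A second genuinely non-routine ingredient is torsion-freeness of the twisted orbifold $K$-theory $\KU^{*}(\cMm^{D,\an},\tilde{\alpha})$, which, unlike the untwisted $\SL_n$ case, does not reduce formally to torsion-freeness of an integral cohomology group.
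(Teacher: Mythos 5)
Your proposal is correct and follows essentially the same route as the paper: the paper's proof of Theorem \ref{thm:main2twisted} is precisely an application of Corollary \ref{cor:torsion-free1} with $\Bc=\Aa^D$, $\Xc=\hMm^D$, $\Yc=\cMm^D$ and $\Bc^{\heartsuit}$ the locus of integral spectral curves, hypothesis (a) supplied by Theorem \ref{thm: twisted F-M between Hitchin systems} and torsion-freeness supplied by Section \ref{sec:torsion}. The only minor divergences are in how auxiliary inputs are discharged: for $\deg D>2g-2$ the support condition (b) for the endoscopic sectors follows from the Maulik--Shen support results directly (the vanishing-cycle/$G$-sheaf formalism of the appendix is needed only for the canonical-divisor case, Theorem \ref{thm:main2}), and torsion-freeness of $\KU^{*}(\cMm^{D,\an},\tilde{\alpha})$ is proved in the paper by the duality/localisation induction of Theorem \ref{thm:PGLn} rather than by the twisted equivariant motivic computation you sketch.
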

\begin{proof}
This follows from Corollary \ref{cor:torsion-free1}. The base $S$ is chosen to be $\Aa^D$, $\Xc = \hMm^D$, and $\Yc = \cMm^D$. The subset $\Aa^{D,\heartsuit}$ corresponds to the open subset of integral spectral curves. As we showed in Theorem \ref{thm: twisted F-M between Hitchin systems}, Arinkin's integral kernel can be defined on the fibre product 
$$\hMm^{D,\heartsuit} \times_{\Aa^{D,\heartsuit}} \cMm^{D,\heartsuit},$$
and induces a derived equivalence over this locus. Vanishing of torsion of the complex $K$-theory groups in question, which is established in Section \ref{sec:torsion}, allows us to conclude that all hypotheses of Corollary \ref{cor:torsion-free1} are met.
\end{proof}

\subsection{The case of the canonical divisor}\label{ssec:canonical}

Let $\Cc$ be a presentable stable $\infty$-category and $\Eb \in \Cc$ an object. For a topological space $S$, we denote by $\underline{\Eb}_X$ the corresponding constant sheaf on $X$.

We denote by $i\colon \hMm^{\an}_D\hookrightarrow \hMm^{\an}_{D+p}$ the canonical inclusion. 
Recall from \cite[Equation (103) \& Theorem 4.5]{MS} that there is a function $\MS \colon \Aa_{D+p} \to \Ab^1$ with the critical locus of $f \circ \chi$ being equal to the image of $i$.

\begin{proposition}\label{prop:phi-SLn}
There exists an integer $s$ such that we have an equivalence of $\Gamma$-sheaves of spectra (see Appendix \ref{sec:app})
$$
\phi_{\MS \circ \chi}\big((\underline{\KU}_{\Gamma}^{\tilde{\alpha}})_{\hMm^{\an}_{D+p}}\big)\simeq i_*(\underline{\KU}_{\Gamma}^{\tilde{\alpha}})_{\hMm^{\an}_D}[s]
$$
\end{proposition}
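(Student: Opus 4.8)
The plan is to deduce this from the geometric input of Maulik--Shen together with the locality of the vanishing cycle formalism for $\Gamma$-sheaves of spectra developed in Appendix~\ref{sec:app}, and the first thing I would record are the equivariance and compatibility statements that make the two sides of the asserted equivalence comparable. The Hitchin map $\chi\colon\hMm_{D+p}\to\Aa_{D+p}$ is $\Gamma$-invariant --- tensoring by an $n$-torsion line bundle does not change the characteristic polynomial of the Higgs field --- so $\MS\circ\chi$ is a $\Gamma$-invariant function on $\hMm_{D+p}$; it vanishes on $i(\hMm_D)$, because $\chi$ sends $\hMm_D$ into the sub-base $\Aa_D\subseteq\Aa_{D+p}$ on which $\MS$ is identically zero. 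Finally, the Hausel--Thaddeus gerbe $\alpha$ on $\hMm_{D+p}$ restricts along $i$ to the corresponding gerbe on $\hMm_D$, by functoriality of its construction in the twisting divisor, so that $i^{*}\tilde{\alpha}=\tilde{\alpha}$ on analytifications. Together these place us in the setting where $\phi_{\MS\circ\chi}$ of Appendix~\ref{sec:app} is defined on $\Gamma$-sheaves of spectra on $\hMm^{\an}_{D+p}$, and identify both sides of the claimed equivalence as $\Gamma$-sheaves of spectra on $\hMm^{\an}_D$ carrying the twist $\tilde{\alpha}$.

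Next I would show that the left-hand side is supported on $i(\hMm^{\an}_D)$. The sheaf $(\underline{\KU}^{\tilde{\alpha}}_{\Gamma})_{\hMm^{\an}_{D+p}}$ is locally constant, being a twisted $\Gamma$-equivariant local system of $\KU$-modules, and for a locally constant $\Gamma$-sheaf of spectra $\mathcal{F}$ the vanishing cycle $\phi_{g}(\mathcal{F})$ along a function $g$ is supported on $\mathrm{Crit}(g)\cap g^{-1}(0)$: off the critical locus $g$ is submersive, the relevant Milnor fibres are contractible, and nearby and vanishing cycles of $\mathcal{F}$ there vanish. This is the sheaf-of-spectra analogue of the classical support estimate, and it follows directly from the construction of $\phi$ in the appendix. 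Since $\mathrm{Crit}(\MS\circ\chi)=i(\hMm^{\an}_D)$ by \cite[Theorem~4.5]{MS}, we obtain $\phi_{\MS\circ\chi}\big((\underline{\KU}^{\tilde{\alpha}}_{\Gamma})_{\hMm^{\an}_{D+p}}\big)\simeq i_{*}\mathcal{G}$ for some $\Gamma$-sheaf of spectra $\mathcal{G}$ on $\hMm^{\an}_D$.

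It then remains to identify $\mathcal{G}$. Here I would use the explicit description of $\MS$ in \cite[Equation~(103)]{MS} to see that $\MS\circ\chi$ has a nondegenerate, Morse--Bott critical locus along $i(\hMm_D)$: \'etale-locally on $\hMm_{D+p}$ it is the sum of a function pulled back from $\hMm_D$, which we may take to be $0$, and a nondegenerate bilinear pairing on the normal directions, in the spirit of dimensional reduction. For such a function the vanishing cycle functor is computed, via iterated Thom--Sebastiani, from the elementary model $\phi_{zw}$ on $\Ab^2$, whose value on the constant sheaf with \emph{any} spectrum of coefficients is the constant sheaf at the origin, shifted by one: the Milnor fibre $\{zw=\epsilon\}$ is homotopy equivalent to $S^{1}$, so the computation is indifferent to the coefficient category. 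Hence, for every locally constant $\Gamma$-sheaf of spectra $\mathcal{F}$ on $\hMm^{\an}_{D+p}$ one gets a canonical equivalence $\phi_{\MS\circ\chi}(\mathcal{F})\simeq i_{*}(i^{*}\mathcal{F})[s]$, where $s$ is the rank of the normal pairing (we will not need its exact value). Applying this to $\mathcal{F}=(\underline{\KU}^{\tilde{\alpha}}_{\Gamma})_{\hMm^{\an}_{D+p}}$, and using $i^{*}(\underline{\KU}^{\tilde{\alpha}}_{\Gamma})_{\hMm^{\an}_{D+p}}\simeq(\underline{\KU}^{\tilde{\alpha}}_{\Gamma})_{\hMm^{\an}_D}$ --- the restriction of a twisted $\Gamma$-equivariant local system, together with $i^{*}\tilde{\alpha}=\tilde{\alpha}$ --- yields the proposition.

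The hard part is this last step: upgrading the Maulik--Shen vanishing cycle computation, originally carried out with $\Qb$-coefficients (or for $\ell$-adic perverse sheaves), to the $\Gamma$-equivariant sheaf-of-spectra framework of the appendix, and doing so compatibly with the twist. Concretely this requires (i) a Thom--Sebastiani theorem and the quadratic-form computation for $\Gamma$-sheaves of spectra, so that the output is genuinely coefficient-independent rather than merely correct after rationalisation (cf.\ Lemma~\ref{lemma:Rat-KU}); and (ii) checking that the local normal form of $\MS\circ\chi$ near $i(\hMm_D)$ can be arranged $\Gamma$-equivariantly and is unaffected by passing to the $\tilde{\alpha}$-twisted sheaf --- e.g.\ because $\tilde{\alpha}$ is locally trivial and the normal pairing is already defined $\Gamma$-equivariantly on $\hMm_{D+p}$. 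The remaining manipulations are formal within the appendix's formalism.
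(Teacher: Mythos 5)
Your overall picture---that $\phi_{\MS\circ\chi}$ of a locally constant sheaf is supported on the critical locus $i(\hMm^{\an}_D)$, and that along the normal directions one is looking at a nondegenerate quadratic situation whose Milnor fibre is a sphere---is the right intuition, but the step you yourself flag as ``the hard part'' is precisely the content of the proposition, and your route to it does not close. You would need (i) a holomorphic, $\Gamma$-equivariant Morse--Bott normal form for $\MS\circ\chi$ along $i(\hMm_D)$ (pullback from $\hMm_D$ plus a nondegenerate pairing on the normal bundle), and (ii) a Thom--Sebastiani theorem for $\Gamma$-sheaves of spectra to assemble the vanishing cycles of that pairing from the elementary model $\phi_{zw}$. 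Neither is available: the appendix only provides the definition of $\phi^{\Gamma}$, smooth base change (Proposition \ref{prop:smooth-basechange}) and compatibility with proper pushforward---there is no Thom--Sebastiani statement for this formalism---and the normal-form claim cannot simply be read off \cite[Equation (103)]{MS}; establishing it \'etale-locally and $\Gamma$-equivariantly would essentially amount to redoing the geometric analysis of Maulik--Shen.

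The paper's proof avoids both problems by importing the global mechanism behind \cite[Theorem 4.5]{MS}: the function $\MS\circ\chi$ on $\Mm'=\hMm_{D+p}$ factors as $c_2\circ\ev_p$, where $\ev_p\colon \Mm' \to [\mathfrak{sl}_n/\PGL_n]$ evaluates the $(D+p)$-twisted Higgs field at the point $p$ and is a \emph{smooth} morphism (\cite[Proposition 4.8]{MS}), and $c_2$ is a quadratic function on $[\mathfrak{sl}_n/\PGL_n]$. A single application of equivariant smooth base change (Proposition \ref{prop:smooth-basechange}) identifies $\phi^{\Gamma}_{\MS\circ\chi}$ of the constant twisted $\Gamma$-sheaf with $\ev^{-1}_{\Gamma}\phi^{\Gamma}_{c_2}$ of the constant sheaf, and the latter is computed directly: the Milnor fibre of the quadratic function is an $r$-sphere with trivial $\Gamma$-action, so $\phi^{\Gamma}_{c_2}$ of the constant sheaf is a shifted constant sheaf supported where the Higgs field vanishes at $p$, whose preimage under $\ev_p$ is exactly $i(\hMm_D)$. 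This replaces both your normal-form step and the missing Thom--Sebastiani theorem, and it is what makes the computation genuinely coefficient-independent rather than only valid after rationalisation. To repair your argument, substitute this factorization for the local quadratic reduction; the rest of your formal manipulations (equivariance of $\MS\circ\chi$, compatibility of the gerbe with restriction, support considerations) are fine.
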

\begin{proof}
We follow the proof of \cite[Theorem 4.5]{MS}, using the formalism of $G$-sheaves of spectra developed in Appendix \ref{sec:app}, where we let $G$ be the group $\Gamma$ of $n$-torsion points of the Jacobian.

We have the following commutative diagram of stacks endowed with a $\Gamma$-action:
\[
\xymatrix{
\Mm'  \ar[r]^-{\ev}_-{\text{smooth}} \ar@/_/[rd]_{\mu} & [\mathfrak{sl}_n/\PGL_n] \ar@/^/[rd]^{p} \ar[d]^{c_2} &  \\
& \Ab^1 \ar[r]_{p} & \bullet
}
\]
The map $\ev = \ev_p$ restricts a $D+p$-twisted Higgs bundle on $C$ to the point $p$. This map is well-defined up to choice of a trivialisation of $\Oo_C(D+p)_p$. 
Since $\ev_p$ is a smooth morphism for $\deg D > 2g-2$ or $D=\Omega_C^1$ (see \cite[Proposition 4.8]{MS}), we can apply Proposition \ref{prop:smooth-basechange} and obtain a natural isomorphism $\phi^{\Gamma}_{\mu}p^{-1}_{\Gamma} \KU_{\Gamma}^{\alpha} \simeq \ev^{-1}_G\phi_{c_2}^{\Gamma} p^{-1}_{\Gamma}\KU_{\Gamma}^{\alpha}$, where $\KU^{\alpha}_{\Gamma}(\bullet) \in \Sheaves(\bullet;\Gamma)$ denotes the naive $\Gamma$-spectrum representing $\alpha$-twisted $\Gamma$-equivariant complex $K$-theory.

It remains therefore to compute $\phi_{c_2}^{\Gamma} p^{-1}_{\Gamma}\KU_{\Gamma}^{\alpha}$. Since $c_2$ is a quadratic polynomial, we can argue as in \cite[?]{MS} and identify its Milnor fibre (as a $\Gamma$-space) with an $r$-sphere. Note that $\Gamma$ acts trivially on all spaces involved. Thus, we have $\phi_{c_2}^{\Gamma} p^{-1}_{\Gamma}\KU_{\Gamma}^{\alpha} \simeq p^{-1}_{\Gamma}\Sigma^r\KU_{\Gamma}^{\alpha}$.
\end{proof}

\begin{corollary}
For $s$ as above we have $$\phi_{\MS \circ \chi}(\underline{\KU}^{\tilde{\alpha}}_{\hMm^{\an}_{D+p};\Gamma})\simeq i_*\underline{\KU}^{\tilde{\alpha}}_{\hMm^{\an}_D;\Gamma}[s].$$
\end{corollary}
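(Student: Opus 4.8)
The plan is to deduce this from Proposition~\ref{prop:phi-SLn} by applying the equivariant pushforward along the quotient map. Write $\pi\colon \hMm^{\an}_{D+p}\to \hMm^{\an}_{D+p}/\Gamma$ for the quotient; recall from Subsection~\ref{ssec:moulinos2} that, by definition, $\underline{\KU}^{\tilde\alpha}_{\hMm^{\an}_{D+p};\Gamma}=\pi^{\Gamma}_{*}\big((\underline{\KU}^{\tilde\alpha}_{\Gamma})_{\hMm^{\an}_{D+p}}\big)$, and likewise over $\hMm^{\an}_{D}$, where $\pi^{\Gamma}_{*}$ is the equivariant pushforward developed in Appendix~\ref{sec:app}. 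First I would record the two structural facts that make the reduction formal: since $\hMm^{\an}_{D}$ is $\Gamma$-stable inside $\hMm^{\an}_{D+p}$ and $i$ is the closed immersion induced on quotients, the square
\[
\xymatrix{
\hMm^{\an}_{D} \ar[r]^-{i} \ar[d]_-{\pi} & \hMm^{\an}_{D+p} \ar[d]^-{\pi} \\
\hMm^{\an}_{D}/\Gamma \ar[r]^-{i} & \hMm^{\an}_{D+p}/\Gamma
}
\]
is cartesian; and $\MS\circ\chi$, being $\Gamma$-invariant, is pulled back along $\pi$ from the quotient, so that the functor $\phi_{\MS\circ\chi}$ appearing in the corollary is the vanishing cycle functor associated with the descended function.

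Next I would apply $\pi^{\Gamma}_{*}$ to the equivalence of Proposition~\ref{prop:phi-SLn} and simplify both sides. On the right-hand side, base change along the cartesian square above, together with the fact that $\pi^{\Gamma}_{*}$ commutes with shifts, gives $\pi^{\Gamma}_{*}\big(i_{*}(\underline{\KU}^{\tilde\alpha}_{\Gamma})_{\hMm^{\an}_{D}}[s]\big)\simeq i_{*}\,\pi^{\Gamma}_{*}\big((\underline{\KU}^{\tilde\alpha}_{\Gamma})_{\hMm^{\an}_{D}}\big)[s]= i_{*}\underline{\KU}^{\tilde\alpha}_{\hMm^{\an}_{D};\Gamma}[s]$. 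On the left-hand side, the key point is that $\pi^{\Gamma}_{*}$ commutes with $\phi_{\MS\circ\chi}$; granting this, $\pi^{\Gamma}_{*}\phi_{\MS\circ\chi}\big((\underline{\KU}^{\tilde\alpha}_{\Gamma})_{\hMm^{\an}_{D+p}}\big)\simeq\phi_{\MS\circ\chi}\big(\underline{\KU}^{\tilde\alpha}_{\hMm^{\an}_{D+p};\Gamma}\big)$ by the definition recalled above, and combining the two computations finishes the proof.

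The main obstacle — indeed the only non-formal ingredient — is the commutation of the equivariant pushforward $\pi^{\Gamma}_{*}$ with vanishing cycles. The relevant observation is that $\pi$ is finite, hence proper (as $\Gamma$ is a finite group), and that $\MS\circ\chi$ is pulled back along $\pi$, so what is needed is proper base change for $\phi$-functors in the relative equivariant setting, i.e. the $G$-sheaf-of-spectra analogue of the classical statement that $\phi_f$ commutes with $Rg_{*}$ for proper $g$. I expect this to be available as one of the structural results established for $G$-sheaves of spectra in Appendix~\ref{sec:app}; it is in any case the same mechanism on which the proof of Proposition~\ref{prop:phi-SLn} already relies, so the corollary imposes no new demands beyond those of the appendix. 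Everything else — functoriality of pushforward, base change along the cartesian square, exactness of $i_{*}$, and compatibility with shifts — is formal.
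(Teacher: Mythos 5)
Your proposal is correct and follows the paper's own argument: the paper likewise applies the equivariant pushforward along the quotient map (Definition \ref{defi:equi-push}), commutes it past $\phi_{\MS\circ\chi}$ using the proper-pushforward compatibility for equivariant vanishing cycles from the appendix (Corollary \ref{cor:proper-push}), and uses that this pushforward sends $(\underline{\KU}^{\tilde{\alpha}}_{\Gamma})_U$ to $\underline{\KU}^{\tilde{\alpha}}_{U;\Gamma}$. The only cosmetic difference is that you invoke base change along the cartesian square to commute the pushforward with $i_*$, where plain functoriality of pushforwards along $\pi\circ i=i\circ\pi$ already suffices.
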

\begin{proof}
We apply the equivariant pushforward functor $q_*^{\Gamma}$ defined in the appendix (see Definition \ref{defi:equi-push}), where $q$ denotes the quotient map $\hMm^{\an} \to \cMm^{\an}$.

According to Proposition \ref{cor:proper-push}, we have
$q_*^{\Gamma} \circ \phi_{\mu \circ \chi} \simeq \phi_{\mu \circ \chi} \circ q_*^{\Gamma}$. Furthermore, $q_*^{\Gamma}$ maps $(\underline{\KU}_{\Gamma})_U$ to $\underline{\KU}_{U;\Gamma}$. This yields the requisite equivalence
\end{proof}

\begin{corollary}
Let $s$ be the same integer as above. Then, we have an equivalence of sheaves of spectra 
$$
\phi_{\MS \circ \chi}\big((\underline{\KU})_{\hMm^{\an}_{D+p}}\big)\simeq i_*(\underline{\KU})_{\hMm^{\an}_D}[s]
$$
\end{corollary}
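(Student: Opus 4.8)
The plan is to deduce this final statement from the preceding corollary by forgetting the $\Gamma$-equivariant structure, exactly as the preceding corollary was deduced from Proposition \ref{prop:phi-SLn} by applying the equivariant pushforward $q_*^{\Gamma}$. Concretely, I would first recall that the vanishing cycles functor $\phi_{\MS \circ \chi}$ is defined in the appendix both equivariantly, on $\Gamma$-sheaves of spectra, and non-equivariantly, on ordinary sheaves of spectra, and that these two constructions are compatible with the forgetful functor $\mathsf{triv}^* \colon \Sheaves(-;\Gamma) \to \Sheaves(-)$ that discards the $\Gamma$-action (this compatibility should already be recorded in Appendix \ref{sec:app}, since $\phi$ is built from the nearby cycles and a cofiber, and both operations commute with pulling back along the map of sites that ignores equivariance). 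Likewise, $\mathsf{triv}^*$ commutes with the proper pushforward $i_*$, and it sends the $\Gamma$-equivariant constant sheaf $(\underline{\KU}_{\Gamma})_{\hMm^{\an}_{D+p}}$ to the ordinary constant sheaf $(\underline{\KU})_{\hMm^{\an}_{D+p}}$, and similarly for $\hMm^{\an}_D$; here one uses that the underlying spectrum of the naive $\Gamma$-spectrum $\KU_\Gamma$ representing $\Gamma$-equivariant $K$-theory is just $\KU$.

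Granting these compatibilities, the argument is then a one-line diagram chase: apply $\mathsf{triv}^*$ to the equivalence of Proposition \ref{prop:phi-SLn},
\[
\phi_{\MS \circ \chi}\big((\underline{\KU}_{\Gamma}^{\tilde{\alpha}})_{\hMm^{\an}_{D+p}}\big)\simeq i_*(\underline{\KU}_{\Gamma}^{\tilde{\alpha}})_{\hMm^{\an}_D}[s],
\]
and use that $\mathsf{triv}^*$ carries the left-hand side to $\phi_{\MS \circ \chi}\big((\underline{\KU})_{\hMm^{\an}_{D+p}}\big)$ and the right-hand side to $i_*(\underline{\KU})_{\hMm^{\an}_D}[s]$. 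Note that the relevant statement is the $\tilde\alpha=0$ specialization, so one could alternatively run the proof of Proposition \ref{prop:phi-SLn} verbatim with $\alpha$ trivial and $\Gamma$ the trivial group, invoking the ordinary (non-equivariant) smooth base change for vanishing cycles (Proposition \ref{prop:smooth-basechange} with trivial group) and the identification of the Milnor fiber of the quadratic form $c_2$ with an $r$-sphere; the shift $s$ is then visibly the same integer, since it only depends on the relative dimensions appearing in the $\ev$-smoothness reduction and on $r$, none of which involve $\Gamma$.

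The one point that needs a little care — and the only plausible obstacle — is making sure the constant $s$ produced here is \emph{literally the same} integer $s$ as in Proposition \ref{prop:phi-SLn}, rather than merely an integer playing the same role. This is why I would phrase the proof via the forgetful functor $\mathsf{triv}^*$ rather than by re-running the computation: applying an exact functor to a fixed equivalence manifestly preserves the shift, so the $s$ is unambiguously inherited. The remaining bookkeeping — that $\mathsf{triv}^*$ is exact, commutes with $i_*$ (proper pushforward), commutes with $\phi_{\MS\circ\chi}$, and sends equivariant constant $\KU$-sheaves to constant $\KU$-sheaves — is routine given the appendix, and I would simply cite the relevant lemmas from Appendix \ref{sec:app} rather than reprove them. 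Thus the proof is genuinely short, and I would write it as: ``Apply the forgetful functor $\mathsf{triv}^*$ from $\Gamma$-sheaves of spectra to sheaves of spectra to the equivalence of the preceding corollary, using that $\mathsf{triv}^*$ commutes with $i_*$ and with $\phi_{\MS\circ\chi}$ and sends $\underline{\KU}_{\Gamma}$ to $\underline{\KU}$.''
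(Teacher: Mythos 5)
Your proposal is correct and takes essentially the same approach as the paper, whose proof is precisely ``forget the $\Gamma$-structure (or alternatively, prove it the same way \emph{mutatis mutandis})''. The only point the paper makes explicit that you gloss slightly is why the twist disappears after forgetting equivariance: the gerbe $\alpha$ is neutral on $\hMm$ once its $\Gamma$-equivariant structure is discarded, and this is what justifies your claim that the forgetful functor carries the $\tilde{\alpha}$-twisted equivariant $\KU$-sheaves to the plain constant $\KU$-sheaves.
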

\begin{proof}
This follows from the previous result by forgetting the $\Gamma$-structure (or alternatively, could be prove the same way, \emph{mutatis mutandis}). The absence of $\alpha$ in the statement is explained by the fact that the gerbe $\alpha$ is neutral on $\hMm$, if its $\Gamma$-equivariant structure is not taken into account.
\end{proof}

\begin{theorem}\label{thm:main2}
There is an equivalence of complex $K$-theory \emph{spectra} $\KU(\hMm^{\an}) \simeq \KU(\cMm^{\an},\tilde{\alpha})$.
\end{theorem}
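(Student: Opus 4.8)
The plan is to descend from the meromorphic case treated in Theorem \ref{thm:main2twisted} to the canonical divisor $D=\Omega^1_C$ --- for which $\hMm=\hMm_D$ and $\cMm=\cMm_D$ --- using the equivariant vanishing cycle formalism of Appendix \ref{sec:app}. First I would run the Fourier--Mukai construction of Section \ref{sec:spreadingout} (Construction \ref{construction}) for the $\SL_n$- and $\PGL_n$-Hitchin systems of twist $D+p$, whose degree $2g-1$ exceeds $2g-2$: hypothesis (a) of Situation \ref{sit2} is supplied by Arinkin's twisted kernel over the elliptic locus (Theorem \ref{thm: twisted F-M between Hitchin systems}), hypothesis (b) by de Cataldo's support theorem, and the requisite torsion-freeness by Section \ref{sec:torsion}. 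Combined with the support theorem, this realises the equivalence of Theorem \ref{thm:main2twisted} already on the level of \emph{sheaves of spectra} over the Hitchin base $\mathcal{A}_{D+p}^{\an}$,
$$\Phi\colon \check h_*\,\underline{\KU}_{\hMm_{D+p}^{\an}}\ \xrightarrow{\ \sim\ }\ \hat h_*\,\underline{\KU}^{\tilde\alpha}_{\cMm_{D+p}^{\an}},$$
where $\check h$ and $\hat h$ are the respective (proper) Hitchin maps.

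Next I would apply the vanishing cycle functor $\phi_{\MS}$ of the Maulik--Shen function $\MS\colon\mathcal{A}_{D+p}\to\mathbb{A}^1$ (see \cite{MS}) to $\Phi$. As $\Phi$ is an equivalence, so is $\phi_{\MS}(\Phi)$; and as $\check h$ and $\hat h$ are proper, $\phi_{\MS}$ commutes with their pushforwards --- that is, $\phi_{\MS}\circ\check h_*\simeq\check h_*\circ\phi_{\MS\circ\chi}$, and likewise on the $\PGL_n$-side after the equivariant pushforward $q^{\Gamma}_*$ of Definition \ref{defi:equi-push} (Proposition \ref{cor:proper-push}). Feeding in the two Corollaries to Proposition \ref{prop:phi-SLn} --- the untwisted one for the $\SL_n$-side, the $\Gamma$-equivariant $\tilde\alpha$-twisted one for the $\PGL_n$-side --- then identifies the source and target of $\phi_{\MS}(\Phi)$, up to a common shift $[s]$ and up to the fully faithful pushforward $\iota_{\mathcal{A},*}$ along the closed immersion $\iota_{\mathcal{A}}\colon\mathcal{A}_D\hookrightarrow\mathcal{A}_{D+p}$, with $\iota_{\mathcal{A},*}\check h_{D,*}\underline{\KU}_{\hMm^{\an}}$ and $\iota_{\mathcal{A},*}\hat h_{D,*}\underline{\KU}^{\tilde\alpha}_{\cMm^{\an}}$ (using $\check h\circ i=\iota_{\mathcal{A}}\circ\check h_D$ for the canonical inclusion $i\colon\hMm^{\an}\hookrightarrow\hMm_{D+p}^{\an}$, and symmetrically for $\cMm$). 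Stripping $\iota_{\mathcal{A},*}$ and the shift gives an equivalence $\check h_{D,*}\underline{\KU}_{\hMm^{\an}}\xrightarrow{\ \sim\ }\hat h_{D,*}\underline{\KU}^{\tilde\alpha}_{\cMm^{\an}}$ of sheaves of spectra over $\mathcal{A}_D^{\an}$; taking global sections over $\mathcal{A}_D^{\an}$ then produces the asserted equivalence $\KU(\hMm^{\an})\simeq\KU(\cMm^{\an},\tilde\alpha)$, the right-hand side being by definition the $\tilde\alpha$-twisted $\Gamma$-equivariant complex $K$-theory of $\hMm_e^{\an}$.

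The hard part will be the second step. I expect the main effort to lie in verifying that the equivariant vanishing cycle functor of Appendix \ref{sec:app} genuinely commutes with the proper (equivariant) Hitchin pushforward of $\Gamma$-sheaves of spectra, and that the Milnor-fibre computation behind the Corollaries to Proposition \ref{prop:phi-SLn} --- trivial $\Gamma$-action, Milnor fibre an $r$-sphere --- survives with $\tilde\alpha$-twisted $\KU$-coefficients; this is precisely what the relative equivariant homotopy theory of Appendix \ref{sec:app} is designed to support. Two further points demand care: that the meromorphic input of the first step is genuinely an equivalence of sheaves over $\mathcal{A}_{D+p}^{\an}$ and not merely of global sections (which uses the support theorem together with the torsion-freeness of Section \ref{sec:torsion}), and the bookkeeping that transports the $\SL_n$-phrased Corollaries to the $\PGL_n$-side via $q^{\Gamma}_*$, keeping track of the degrees $d$, $e$ and of the presentation $\cMm=[\hMm_e/\Gamma]$.
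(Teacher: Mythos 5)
Your overall architecture --- compare the canonical case with the $D+p$ case via the Maulik--Shen function, identify $\phi_{\MS\circ\chi}$ of the constant $\KU$-sheaves with $i_*$ of the canonical-case sheaves up to a shift, and use properness of the Hitchin maps together with Corollary \ref{cor:proper-push} and $q_*^{\Gamma}$ to push this down the Hitchin base --- is exactly the paper's. The gap is in your first step: you feed a single \emph{integral} equivalence of sheaves of spectra $\Phi\colon \check{h}_*\underline{\KU}_{\hMm_{D+p}^{\an}}\simeq \hat{h}_*\underline{\KU}^{\tilde{\alpha}}_{\cMm_{D+p}^{\an}}$ over $\Aa_{D+p}^{\an}$ into the vanishing-cycle functor, but no such sheaf-level equivalence is established (or establishable by these methods). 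Theorem \ref{thm:main}/Corollary \ref{cor:torsion-free1}, hence Theorem \ref{thm:main2twisted}, give two weaker things: an equivalence of the \emph{rationalised} sheaves (via the support/decomposition argument) and an equivalence of \emph{global} spectra (via torsion-freeness of the global groups $\KU^*(\hMm_{D+p}^{\an})$, $\KU^*(\cMm_{D+p}^{\an},\tilde{\alpha})$ plus Whitehead). To upgrade the integral sheaf map \eqref{eqn:FM-KU-sheaves} to an equivalence you would need it to be an equivalence on stalks, i.e.\ on (twisted, equivariant) $\KU$ of preimages of small balls in the Hitchin base, essentially of Hitchin fibres; neither the support theorem nor Section \ref{sec:torsion} (which concerns only the total spaces) controls these, and since the kernel is an \emph{arbitrary} coherent extension of Arinkin's kernel from the elliptic locus, the map has no reason to be invertible fibrewise over the non-elliptic locus. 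So the parenthetical justification you offer for this "point demanding care" does not close it.

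The paper avoids this by never asserting an upstairs equivalence: it transports \emph{both} morphisms $F$ (induced by an extension of Arinkin's kernel) and $G$ (induced by an extension of the inverse kernel) through $\phi_{\mu}$, obtaining maps between $\check{\pi}_*\underline{\KU}_{\hMm^{\an}}$ and $\hat{\pi}_*\underline{\KU}^{\tilde{\alpha}}_{\cMm^{\an}}$ that are mutually inverse only over the elliptic locus of the canonical Hitchin base, and then reruns the argument of Theorem \ref{thm:main} there: rationalisation, the support input for the canonical $\SL_n$ system (so that Lemma \ref{lemma:p-retraction} applies), unipotence of the composite on global sections, and finally torsion-freeness of $\KU^*(\hMm^{\an})$ and $\KU^*(\cMm^{\an},\tilde{\alpha})$ to conclude an integral equivalence of spectra. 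Your identification of the vanishing cycles of the constant sheaves (Proposition \ref{prop:phi-SLn} and its corollaries) and the bookkeeping via $i_*$ and the shift are fine; the fix is to carry the two-sided kernel data through the vanishing-cycle functor and postpone the support/torsion argument to the canonical case, rather than claiming an integral sheaf equivalence in the meromorphic case.
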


\begin{proof}
Let $K$ be a canonical divisor on $X$, and $p$ an auxiliary point. We denote by $\Mm$ the moduli space of $K$-twisted Higgs bundles, and by $\Mm^{\prime}$ the moduli space of $(K+p)$-twisted Higgs bundles.
Similar notation will be applied to the ``$\SL_n$" and $\PGL_n$-variants, as well as the Hitchin bases.
We therefore have an embedding and regular $\Gamma$-invariant functions
\[
\xymatrix{
\hMm \ar[r]^i \ar[d]_{\check{\pi}} & \hMm^{\prime} \ar[r]^{\mu} \ar[d]_{\check{\pi}^{\prime}} & \Ab^1 \\
\Aa \ar[r] & \Aa^{\prime}\text{.} \ar[ur] &
}
\]
We now let $F\colon \hat{\pi}^{\prime}_*\underline{\KU}_{\cMm^{\prime,\an}}^{\tilde{\alpha}}\to \check{\pi}^{\prime}_* \underline{\KU}_{\hMm^{\prime,\an}}$ and $G\colon \check{\pi}^{\prime}_* \underline{\KU}_{\hMm^{\prime,\an}} \to \hat{\pi}^{\prime}_*\underline{\KU}_{\cMm^{\prime,\an}}^{\tilde{\alpha}}$ be the morphisms of sheaves induced by arbitrary extensions of Arinkin's kernel and the inverse kernel (see \ref{case-D}). By virtue of the previous corollaries, we obtain morphisms 
$\phi_{\mu}(F)$ and $\phi_{\mu}(G)$ relating the sheaves
$\hat{\pi}_*\underline{\KU}_{\cMm^{\an}}^{\tilde{\alpha}}$ and $\check{\pi}_*\underline{\KU}_{\hMm^{\an}}^{\tilde{\alpha}}$. As in the proof of Theorem \ref{thm:main} one shows that after rationalisation these morphisms are inverse to each other up to a unipotent map. As before, vanishing of torsion allows us to conclude that the induced map of spectra 
$\KU(\hMm^{\an}) \simeq \KU(\cMm^{\an},\tilde{\alpha})$
is an equivalence.
\end{proof}

\section{Vanishing of torsion for moduli spaces of Higgs bundles}\label{sec:torsion}

The last section is devoted to establishing torsion-freeness for complex $K$-theory for smooth moduli spaces of $G$-Higgs bundles, where $G=\GL_n$, $\SL_n$, or $\PGL_n$. For the first two groups, one can prove more generally that singular cohomology is torsion-free, whereas this does not hold for $\PGL_n$-Higgs bundles. This is another instance of the well-documented phenomenon that passing to complex $K$-theory annihilates the appropriate amount of torsion in singular cohomology.

We will begin by establishing torsion-freeness for $\GL_n$ and $\SL_n$-Higgs bundles, by using information about the $\G_m$-fixpoints which is provided by work of Garcia-Prada--Heinloth--Schmidt \cite{GPHS} and Garcia-Prada--Heinloth \cite{gph13}. Subsequently, we will show that torsion-freeness of singular cohomology of $\Mm$ implies vanishing of torsion in (twisted) complex $K$-theory of $\cMm$.

\subsection{$\GL_n$}
Let $D$ be a line bundle on $C$ such that either $\on{deg}(D)>2g-2$ or $D=K$ the canonical bundle. We denote by $\mathcal{M}$ the moduli space of semistable $D$-twisted $\on{GL}_n$-Higgs bundles of degree $d$ over $C$. We assume $(d,n)=1$ so that $\mathcal{M}$ is smooth. The goal of this subsection is to prove the following 
\begin{theorem}\label{thm:GLn}
The singular cohomology groups $H^*(\mathcal{M}^{an},\Zb)$ are torsion-free.
\end{theorem}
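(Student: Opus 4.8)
The plan is to run a Morse-theoretic argument along the $\G_m$-action on $\Mm$ that rescales the Higgs field, $t\cdot(E,\varphi)=(E,t\varphi)$. Because $\deg D>2g-2$ or $D=K$ and $(d,n)=1$, the limit $\lim_{t\to 0}t\cdot(E,\varphi)$ exists in $\Mm$ for every point and the fixed locus $\Mm^{\G_m}$ is projective; that is, $\Mm$ is semiprojective. For such a variety the Bia{\l}ynicki--Birula decomposition $\Mm=\bigsqcup_\lambda\Mm_\lambda^+$ into attracting cells is filtrable, each cell $\Mm_\lambda^+$ is a Zariski-locally trivial affine bundle over a connected component $F_\lambda$ of $\Mm^{\G_m}$, and the associated filtration of $\Mm$ by closed subvarieties is perfect over $\Zb$: there is an isomorphism of graded abelian groups
\[
H^*(\Mm^{\an};\Zb)\;\cong\;\bigoplus_\lambda H^{*-2d_\lambda}(F_\lambda^{\an};\Zb),
\]
where $d_\lambda$ is the rank of the attracting bundle. (This is the integral refinement of the Betti-number computations of Hitchin and Gothen; it follows from the equivariant formality of $\G_m$-actions, exactly as in the self-completing stratifications of Atiyah--Bott, once the relevant fixed loci are known to have torsion-free cohomology.) Hence it suffices to show that $H^*(F_\lambda^{\an};\Zb)$ is torsion-free for every fixed component $F_\lambda$.

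Next I would identify the fixed components. By coprimality every semistable Higgs bundle is stable with only scalar automorphisms, so a $\G_m$-fixed point is a system of Hodge bundles: a decomposition $E=\bigoplus_{i=1}^{\ell}E_i$ with $\varphi$ inducing maps $E_i\to E_{i+1}\otimes D$. Thus each $F_\lambda$ is a moduli space of semistable $D$-twisted holomorphic chains of a fixed numerical type, and---being a connected component of the fixed locus of the smooth variety $\Mm$---is itself smooth and projective. The theorem is therefore reduced to torsion-freeness of the integral cohomology of moduli spaces of chains.

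For this last point I would use the structural results on moduli of chains due to Garcia-Prada--Heinloth--Schmitt \cite{GPHS} and Garcia-Prada--Heinloth \cite{gph13}. Their algorithm exhibits each such chain moduli space, for a generic choice of stability parameter (so that semistable $=$ stable and every space in sight is a smooth projective variety), as assembled by a sequence of affine- and Grassmannian-bundle maps and flip/Hecke correspondences out of moduli spaces of semistable vector bundles on $C$ and symmetric powers $C^{(i)}$. Equivalently one can run the recursion by a further $\G_m$-action on the chain moduli space rescaling a subchain of the maps $\varphi_i$, whose fixed components are products of chain moduli of strictly shorter length, and induct on $\ell$; the base case $\ell=1$ is a moduli space of semistable bundles. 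Since symmetric products (Macdonald) and coprime moduli spaces of bundles (Atiyah--Bott, Harder--Narasimhan) have torsion-free integral cohomology, and each step of the recursion is again a perfect $\G_m$-stratification (or induces a split injection on integral cohomology), one obtains torsion-freeness of $H^*(F_\lambda^{\an};\Zb)$, and the theorem follows.

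The main obstacle is the passage from the \emph{motivic} identities of \cite{GPHS}---which live in a completion of the Grothendieck ring of varieties, where torsion in cohomology is invisible---to genuinely \emph{integral cohomological} splittings: one must verify that each flip in the algorithm, and each Gysin sequence in the iterated Bia{\l}ynicki--Birula decompositions, lifts to a split injection of integral cohomology groups rather than merely an identity of classes, so that no new torsion is created in the recursion. A secondary point is to stay throughout in a chamber where all intermediate chain moduli spaces are smooth projective varieties rather than singular spaces or stacks, so that the $\G_m$-rescaling inductions apply uniformly; this is arranged by a generic choice of stability parameter at each stage.
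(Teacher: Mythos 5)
Your first step coincides with the paper's: reduce, via the $\G_m$-action rescaling the Higgs field and perfection of the associated Morse/Bia{\l}ynicki--Birula stratification (the paper cites Frankel \cite{Frankel}), to torsion-freeness of the fixed components $F_\lambda$, which are smooth projective moduli of chains. The problem is your second step, which is where the actual content lies: you only sketch a plan to upgrade the Garcia-Prada--Heinloth--Schmitt recursion to \emph{integral} cohomological splittings, and you yourself flag this as ``the main obstacle'' without resolving it. That obstacle is genuine for the route you propose: the identities of \cite{GPHS,gph13} are cut-and-paste identities in $\hat{K}_{0}(\on{Var}_{\mathbb{C}})$ involving differences of classes (wall-crossing/flip terms), not chains of Zariski-locally trivial fibrations, so there is no evident reason each step should induce a split injection on $H^*(-;\Zb)$, and no argument is given that it does. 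As written, the proposal therefore does not prove torsion-freeness of the $F_\lambda$, hence not the theorem.

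The paper sidesteps this entirely, and your premise that torsion is ``invisible'' to the Grothendieck ring is the point where you miss the trick. Gillet--Soul\'e \cite{gs96} provide, for \emph{every} coefficient field $K$, a virtual Poincar\'e polynomial $P_K\colon \hat{K}_{0}(\on{Var}_{\mathbb{C}})\to \mathbb{Z}[t][[t^{-1}]]$ which is a ring homomorphism and agrees with honest Betti numbers on smooth projective varieties. Since Theorem B of \cite{gph13} writes $[F_\lambda]$ as a universal expression in $\mathbb{L}$ and the classes $[C^{(j)}]$, and since $P_{\mathbb{Q}}=P_{\mathbb{F}_p}$ on these generators (Macdonald: symmetric powers of a curve have torsion-free cohomology), one gets $P_{\mathbb{Q}}(F_\lambda,t)=P_{\mathbb{F}_p}(F_\lambda,t)$ for all $p$; because $F_\lambda$ is smooth and projective, equality of mod-$p$ and rational Betti numbers for all $p$ forces $H^*(F_\lambda;\Zb)$ to be torsion-free, with no need for any integral splitting of the recursion. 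If you want to salvage your approach, either import this Gillet--Soul\'e argument, or supply a genuine proof that each flip and each Gysin sequence in your iterated $\G_m$-inductions splits integrally --- the latter is not in \cite{GPHS,gph13} and would be a substantial new argument.
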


We consider the $\mathbb{G}_m$-action on $\mathcal{M}$ defined by scaling the Higgs field. The fixed point scheme $(\mathcal{M})^{\mathbb{G}_m}$ is a disjoint union of connected, smooth projective varieties $F_i$. By the work of Frankel \cite{Frankel}, in order to show the cohomology of $\mathcal{M}$ is torsion-free, it is enough to show the cohomology of each $F_i$ to be torsion-free. 

Let $K$ a field. For a smooth complex projective variety $X$, we define $P_K(X, t)\in \mathbb{Z}[t]$ to be its Poincar\'e polynomial with $K$-coefficients:
\[
P_K(X, t)\coloneqq\sum_{i}\on{dim}_K (H^{i}(X,K))t^i.
\]
By the work of Gillet and Soul\'e in \cite{gs96}, Section 3.3, there exists an extension of $P_K(X, t)$ to all complex algebraic varieties which satisfies 
\[
 P_K(Y, t)=P_K(Z, t)+P_K(Y-Z, t)
\]
for any complex algebraic variety $Y$ and closed subvariety $Z\subset Y$. In other words, this construction can be viewed as a map
\[
P_K: K_{0}(\on{Var}_{\mathbb{C}})\to \mathbb{Z}[t],
\]
were $K_{0}(\on{Var}_{\mathbb{C}})$ is the Grothendieck ring of complex algebraic varieties. The map $P_K$ is a ring homomorphism by \cite{gs96}, Proposition 5. Let $\mathbb{L}=[\mathbb{A}_{\mathbb{C}}^1]\in K_{0}(\on{Var}_{\mathbb{C}})$ be the Lefschetz motive and let $\hat{K}_{0}(\on{Var}_{\mathbb{C}})$ be the dimensional completion of $K_{0}(\on{Var}_{\mathbb{C}})[\mathbb{L}^{-1}]$. Then $P_K$ extends to a map
\[
P_K: \hat{K}_{0}(\on{Var}_{\mathbb{C}})\to \mathbb{Z}[t][[t^{-1}]]. 
\]

\begin{proof}[Proof of Theorem \ref{thm:GLn}]
It is enough to show that for any $\mathbb{G}_m$-fixed component $F_i$ and any prime $p$, we have the following equality
\begin{equation}\label{eq: poincare}
P_{\mathbb{Q}}(F_i, t)=P_{\mathbb{F}_p}(F_i, t).
\end{equation}
Each $F_i$ corresponds to the moduli of chains on $C$ of a certain type, see Lemma 9.2 in \cite{ht}. It is proved in \cite{gph13}, Theorem B that the class $[F_i]\in \hat{K}_{0}(\on{Var}_{\mathbb{C}})$ can be expressed in terms of $\mathbb{L}$ and the symmetric powers $[C^{(j)}]$ of the curve $C$. Therefore it is enough to prove 
(\ref{eq: poincare}) for $\mathbb{L}$ and $[C^{(j)}]$. The equalities for $\mathbb{L}$ are obvious. The equalities for $[C^{(j)}]$ follow from the fact that the cohomology groups of $[C^{(j)}]$ are torsion-free, see \cite{Mcdonald}. 
\end{proof}

\begin{corollary}\label{cor:GLn}
For $\alpha \in \Br(\Mm)$ the twisted complex $K$-theory groups $\KU^{*}(\Mm^{\an},\tilde{\alpha})$ are torsion-free.
\end{corollary}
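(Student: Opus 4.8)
The plan is to deduce this immediately from Theorem \ref{thm:GLn} together with Proposition \ref{prop:cohomology-KU}. Since $(d,n)=1$, the moduli space $\Mm$ is a smooth $\Cb$-variety, so it satisfies the hypotheses of Proposition \ref{prop:cohomology-KU}. Theorem \ref{thm:GLn} tells us that $H^*_{\sing}(\Mm^{\an};\Zb)$ is torsion-free; in particular $H^3(\Mm^{\an};\Zb)$ is free. Applying the second assertion of Proposition \ref{prop:cohomology-KU} with $\Xc=\Mm$ then yields that $\KU^{\tilde{\alpha},*}(\Mm^{\an})$ is torsion-free.

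It is worth spelling out the mechanism hidden in Proposition \ref{prop:cohomology-KU}, since that is where all the content sits. Because $\alpha \in \Br(\Mm)$ has finite order, its image $\tilde{\alpha} \in H^3(\Mm^{\an};\Zb)$ is a torsion class; but $H^3(\Mm^{\an};\Zb)$ is torsion-free, so $\tilde{\alpha}=0$ and the topological gerbe is trivial. Hence $\KU^{\tilde{\alpha},*}(\Mm^{\an}) \cong \KU^{*}(\Mm^{\an})$, and torsion-freeness of the latter follows from torsion-freeness of $H^*_{\sing}(\Mm^{\an};\Zb)$ by the standard comparison via the Atiyah--Hirzebruch spectral sequence (as in \cite[Bemerkung 14.18]{dold}): if integral cohomology is torsion-free the spectral sequence degenerates with torsion-free $E_2$-page, forcing $\KU^{*}$ to be torsion-free.

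There is essentially no obstacle here; the only thing to check is that the hypotheses line up, namely that $\Mm$ is smooth (guaranteed by the coprimality assumption $(d,n)=1$) and that it carries the Brauer class $\alpha$ (given). I would simply state the proof as ``This follows from Theorem \ref{thm:GLn} and Proposition \ref{prop:cohomology-KU}'' and, if desired, add the one-line remark about $\tilde{\alpha}$ being torsion and hence trivial. The real work was already done in Theorem \ref{thm:GLn}, whose proof rests on the Garcia-Prada--Heinloth--Schmitt description of the $\G_m$-fixed loci and the torsion-freeness of symmetric powers of the curve.
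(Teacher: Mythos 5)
Your proposal is correct and matches the paper's own proof: the corollary is deduced directly from Theorem \ref{thm:GLn} together with Proposition \ref{prop:cohomology-KU}, exactly as you do. The extra unpacking of the mechanism (torsion $\tilde{\alpha}$ in torsion-free $H^3$ forces $\tilde{\alpha}=0$, then untwisted torsion-freeness via \cite[Bemerkung 14.18]{dold}) is just the content of Proposition \ref{prop:cohomology-KU} and is consistent with the paper.
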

\begin{proof}
In Proposition \ref{prop:cohomology-KU} we showed that vanishing of torsion in singular cohomology implies vanishing of torsion in complex $K$-theory twisted with respect to a Brauer twist. Theorem \ref{thm:GLn} above therefore implies that the $\Zb/2\Zb$-graded abelian group $\KU^{*}(\Mm^{\an},\tilde{\alpha})$ is torsion-free.
\end{proof}

\subsection{$\SL_n$}
Let $L$ be a line bundle of degree $d$ on $C$, and let $\hMm\coloneqq \on{det}^{-1}(L)$, where $\on{det}: \mathcal{M}\longrightarrow J^d$ is the determinant map. We assume $(d,n)=1$. The goal of this subsection is to prove the following vanishing result on torsion:
\begin{theorem}\label{thm:SLn}
The singular cohomology groups $H^*(\hMm^{\an},\Zb)$ are torsion-free.
\end{theorem}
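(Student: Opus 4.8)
The plan is to follow the blueprint of the $\GL_n$ case (Theorem~\ref{thm:GLn}) as closely as possible, the new ingredient being that the symmetric powers of $C$ get replaced by certain finite \'etale covers of them. First I would exploit the $\mathbb{G}_m$-action on $\Mm$ rescaling the Higgs field. Since rescaling $\theta$ does not change $\on{det}(E)$, the determinant morphism $\on{det}\colon\Mm\to J^d$ is $\mathbb{G}_m$-invariant, so $\hMm=\on{det}^{-1}(L)$ is a $\mathbb{G}_m$-stable closed subvariety and $\hMm^{\mathbb{G}_m}=\Mm^{\mathbb{G}_m}\cap\hMm=\bigsqcup_i(F_i\cap\hMm)$, where the $F_i$ are the fixed components of $\Mm$. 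The subvariety $\hMm^{\mathbb{G}_m}$ is smooth, being the fixed locus of a reductive group acting on the smooth variety $\hMm$, and it is projective, since it is contained in the nilpotent cone and the Hitchin map is proper; hence each component $\hat{F}_i:=F_i\cap\hMm$ is a smooth projective variety. By the work of Frankel \cite{Frankel}, exactly as in the proof of Theorem~\ref{thm:GLn}, it then suffices to prove that $H^*(\hat{F}_i^{\an};\Zb)$ is torsion-free for every $i$.

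Next I would run the Gillet--Soul\'e comparison: since $\hat{F}_i$ is smooth projective, it is enough to check that $P_{\mathbb{Q}}(\hat{F}_i,t)=P_{\mathbb{F}_p}(\hat{F}_i,t)$ for every prime $p$, where $P_K\colon\hat{K}_0(\on{Var}_{\mathbb{C}})\to\mathbb{Z}[t][[t^{-1}]]$ is the Poincar\'e-polynomial ring homomorphism of \cite{gs96}; and for this it suffices, $P_K$ being a ring homomorphism, to express $[\hat{F}_i]\in\hat{K}_0(\on{Var}_{\mathbb{C}})$ in terms of classes on which the two Poincar\'e polynomials agree. For this I would invoke the Garcia-Prada--Heinloth--Schmitt recursion \cite{GPHS,gph13} in the fixed-determinant setting: each $\hat{F}_i$ is a moduli space of chains on $C$ of fixed total determinant $L$ (compare Lemma~9.2 of \cite{ht} in the $\GL_n$ case), and I expect this recursion to express $[\hat{F}_i]$ in $\hat{K}_0(\on{Var}_{\mathbb{C}})$ in terms of $\mathbb{L}$ and the classes $[\tilde{C}^{(j)}]$, where $\tilde{C}^{(j)}\to C^{(j)}$ is the finite \'etale Galois cover with group $\Gamma=J[n]$ obtained by base-changing the multiplication-by-$n$ isogeny $[n]\colon\on{Pic}^0(C)\to\on{Pic}^0(C)$ along the Abel--Jacobi map $C^{(j)}\to\on{Pic}^j(C)$. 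These covers are the genuinely new feature compared with the $\GL_n$ situation; they are forced by the fixed-determinant constraint, which itself reflects the isomorphism $\Mm\cong(J\times\hMm)/\Gamma$ for the diagonal $\Gamma$-action (translation on $J$, tensoring on $\hMm$). Granting the shape of the recursion, and since $P_{\mathbb{Q}}(\mathbb{L})=P_{\mathbb{F}_p}(\mathbb{L})$ is immediate and $P_{\mathbb{Q}}(C^{(j)})=P_{\mathbb{F}_p}(C^{(j)})$ holds by Macdonald's computation \cite{Mcdonald}, the whole theorem reduces to the single assertion that $H^*(\tilde{C}^{(j),\an};\Zb)$ is torsion-free for all $j\ge 0$.

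I expect this last assertion to be the main obstacle, and I would treat it by cases on $j$. For $j\ge 2g-1$ it is straightforward: the Abel--Jacobi map $C^{(j)}\to\on{Pic}^j(C)$ is a $\mathbb{P}^{j-g}$-bundle, hence so is its base change $\tilde{C}^{(j)}\to\widetilde{\on{Pic}}^{\,j}$; as $\widetilde{\on{Pic}}^{\,j}$ is a torsor under an abelian variety isogenous to $J$ and therefore has torsion-free integral cohomology, the projective bundle formula gives the claim. For $0\le j\le 2g-2$ I would instead use Macdonald's analysis \cite{Mcdonald} of the Abel--Jacobi map $a$, namely that the higher direct images $R^\bullet a_*\Zb$ are torsion-free and the Leray spectral sequence for $a$ degenerates; both properties survive the finite \'etale base change $\widetilde{\on{Pic}}^{\,j}\to\on{Pic}^j(C)$ (since the pushforward of $\Zb$ along a finite \'etale map is a local system of free $\Zb$-modules), yielding torsion-freeness of $H^*(\tilde{C}^{(j),\an};\Zb)$. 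Alternatively, one can re-run the Gillet--Soul\'e comparison once more, after checking that $[\tilde{C}^{(j)}]$ lies in the subring of $\hat{K}_0(\on{Var}_{\mathbb{C}})$ generated by $\mathbb{L}$, the symmetric powers of smooth projective curves, and abelian varieties. Beyond this, the step demanding genuine care is tracking the Garcia-Prada--Heinloth recursion through the fixed-determinant construction and confirming that only the covers $\tilde{C}^{(j)}$ occur, up to non-negative powers of $\mathbb{L}$.
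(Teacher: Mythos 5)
Your overall blueprint (Frankel reduction to $\G_m$-fixed components, then Gillet--Soul\'e comparison of $P_{\Qb}$ and $P_{\Fb_p}$) is sound in outline, but it is not the route the paper takes, and its central step is a gap rather than a proof. You ``expect'' a fixed-determinant version of the Garcia-Prada--Heinloth--Schmitt recursion expressing $[\hat F_i]$ in terms of $\mathbb{L}$ and the covers $\tilde C^{(j)}$, and you explicitly grant its shape; no such recursion is established, and the paper deliberately avoids having to prove one. Instead it multiplies by $J$ and realises $\hMm\times J$ as the pullback of $\Mm$ along the isogeny $[n]\colon J\to J$, then tracks the determinant map through the \emph{existing} $\GL_n$ computations (\cite{gph13}, \cite{GPHS}, together with Behrend--Dhillon for $[\on{Bun}_d]$). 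Crucially, the classes this actually produces are not your $\tilde C^{(j)}$: they are covers $\widetilde{Y}_n(\underline l,\underline k)$ of \emph{products} $\prod_i C^{(l_i)}$, pulled back along maps $\bigotimes_i\mu(k_i)$ involving various multiplication twists $k_i$ (coming from the determinant maps on $\on{Bun}_d$ and on the Hecke-modification loci), and such a cover of a product does \emph{not} split as a product of covers of the factors. So your claim that ``the whole theorem reduces to the single assertion that $H^*(\tilde C^{(j),\an},\Zb)$ is torsion-free'' is not what the correct reduction yields; torsion-freeness of the $\widetilde{Y}_n(\underline l,\underline k)$ is the hardest part of the paper's argument (Proposition \ref{prop: SL_n torsion-free main prop}), proved by a double induction using a homological-triviality lemma for fibrations (Lemma \ref{lemma: homologically trivial}) and an affine-complement lemma based on compactly supported cohomology and Poincar\'e duality (Lemma \ref{Lemma: affine open cohomology}).

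Even in the single-symmetric-power case your argument has a gap in the range $0\le j\le 2g-2$. Macdonald \cite{Mcdonald} computes $H^*(C^{(j)},\Zb)$ directly; he does not prove that $R^\bullet a_*\Zb$ is torsion-free or that the integral Leray spectral sequence of the Abel--Jacobi map degenerates, and in this range the fibres jump, so $R^\bullet a_*\Zb$ is not a local system. Moreover, even granting such statements for $a$, they do not formally transfer to the base-changed map $\tilde a$: degeneration of Leray for $a$ does not imply degeneration for $\tilde a$, and torsion-freeness of $H^p(\on{Pic}^j, R^qa_*\Zb)$ does not imply torsion-freeness of $H^p(\widetilde{\on{Pic}}^j, R^q\tilde a_*\Zb)$ --- the possibility of new torsion appearing on finite covers is precisely the difficulty of the whole theorem. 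The paper handles this range by a descending induction on $l$: embed $Y(l)\hookrightarrow Y(l+1)$ by adding the base point $c_0$, note the complement is affine, and apply Lemma \ref{Lemma: affine open cohomology} (see Lemma \ref{Lemma: one symmetric product}). Your large-$j$ projective-bundle argument is fine and parallels the paper's homologically-trivial-fibration step, but as it stands the proposal is missing both the key motivic input and a complete treatment of the covers that genuinely occur.
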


Since the isomorphic class of $\hMm$ does not depend on the choice of $L$, we fix a closed point $c_0\in C$ and restrict to the case where $L=\mathcal{O}(dc_0)$. Since $H^{*}(J)$ is torsion-free, proving Theorem \ref{thm:SLn} is equivalent to showing the cohomology of $\hMm\times J$ is torsion-free. We have the following pull-back diagram

\begin{equation}\label{eq: pull back jacobian}
    \xymatrix{
 \hMm\times J \ar[rr] \ar[d] &&J \ar[d]^{[n]} \\
  \mathcal{M} \ar[rr]^{\on{det}\otimes L^{-1}} & &J.
}
\end{equation}

\begin{proposition}
    The class $[\hMm\times J]\in \hat{K}_{0}(\on{Var}_{\mathbb{C}})$ can be written as a linear combination of classes of the form $[\widetilde{\displaystyle\prod_{i} C^{(l_i)}}]\times \mathbb{L}^m$, where $\widetilde{\displaystyle\prod_{i} C^{(l_i)}}$ fits into the following pull-back diagram
\begin{equation}\label{eq: pull-back jacobian main prop}
  \xymatrix{
 \widetilde{\displaystyle\prod_{i} C^{(l_i)}} \ar[rr] \ar[d] &&J \ar[d]^{[n]} \\
  {\displaystyle\prod_{i} C^{(l_i)}}\ar[rr]^{\displaystyle\bigotimes_i \mu(k_i)} & &J.
}  
\end{equation}
Here $\mu{(k_i)}: C^{(l_i)}\longrightarrow J$ is the composite of the Abel-Jacobi map with the $k_i$-th multiplication map $J\xrightarrow{[k_i]}J$. More precisely, $\mu{(k_i)}$ maps the class of $(c_1, c_2,\dots, c_r)$ to $\mathcal{O}_{C}(c_1+c_2+\cdots c_r-rc_0)^k$.
\end{proposition}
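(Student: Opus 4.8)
The plan is to upgrade the Garcia-Prada--Heinloth--Schmitt computation of the motive of $\mathcal{M}$ to the \emph{relative} Grothendieck group of varieties over $J$, keeping track of the determinant morphism, and then to base change along $[n]\colon J\to J$ as in \eqref{eq: pull back jacobian}. Throughout I would work in the Grothendieck group $\hat{K}_0(\on{Var}_J)$ of varieties equipped with a morphism to $J$ (completed and with $\mathbb{L}$ inverted), viewed as a module over $\hat{K}_0(\on{Var}_{\mathbb{C}})$, together with the base-change operation $[n]^{*}\colon [Z\xrightarrow{f}J]\mapsto [Z\times_{J,[n]}J\to J]$, which is $\hat{K}_0(\on{Var}_{\mathbb{C}})$-linear, and the forgetful map $\hat{K}_0(\on{Var}_J)\to \hat{K}_0(\on{Var}_{\mathbb{C}})$. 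The content of diagram \eqref{eq: pull back jacobian} is precisely that $[\hMm\times J]$ is the image under the forgetful map of $[n]^{*}[\mathcal{M}\xrightarrow{\on{det}\otimes L^{-1}}J]$, and likewise, by \eqref{eq: pull-back jacobian main prop}, $[\widetilde{\prod_i C^{(l_i)}}]$ is the image of $[n]^{*}[\prod_i C^{(l_i)}\xrightarrow{\otimes_i\mu(k_i)}J]$. Thus it suffices to prove that $[\mathcal{M}\xrightarrow{\on{det}\otimes L^{-1}}J]\in\hat{K}_0(\on{Var}_J)$ is a $\mathbb{Z}[\mathbb{L},\mathbb{L}^{-1}]$-linear combination of the classes $[\prod_i C^{(l_i)}\xrightarrow{\otimes_i\mu(k_i)}J]$.

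First I would establish a relative Bialynicki-Birula decomposition. The $\mathbb{G}_m$-action scaling the Higgs field commutes with $\on{det}$, since the determinant of a Higgs bundle does not involve the Higgs field; so $\on{det}\colon\mathcal{M}\to J^d$ is $\mathbb{G}_m$-invariant, the attracting set of each fixed component $F_i$ is an affine bundle over $F_i$ which retracts $\mathbb{G}_m$-equivariantly onto it, and its morphism to $J^d$ therefore factors through $F_i$. Stratifying $F_i$ so that the affine bundle trivialises, one obtains in $\hat{K}_0(\on{Var}_J)$ the identity
\[
[\mathcal{M}\xrightarrow{\on{det}\otimes L^{-1}}J]=\sum_i [F_i\xrightarrow{\on{det}\otimes L^{-1}}J]\cdot\mathbb{L}^{w_i},
\]
where $w_i$ is the rank of the positive part of the normal bundle of $F_i$.

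Next I would run the Garcia-Prada--Heinloth--Schmitt recursion \cite{GPHS,gph13} \emph{relatively over $J$}. That recursion realises the $F_i$ as moduli of holomorphic chains (cf. Lemma 9.2 of \cite{ht}), stratifies by Harder--Narasimhan type and by the ranks of the maps in the chain, fibres the strata over products of Quot schemes and of moduli stacks $\on{Bun}^{e}_{r}$, and expresses the classes of the latter through the zeta function of $C$, i.e. through the symmetric powers $[C^{(j)}]$. One carries the determinant morphism through each step: $\on{det}$ is multiplicative on direct sums, an elementary (Hecke) modification along an effective divisor $D$ twists $\on{det}$ by $\mathcal{O}(D)$, and the standard computation of $[\on{Bun}^{e}_{r}\xrightarrow{\on{det}}\on{Pic}^{e}]$ expresses it, over $\on{Pic}$, as a $\mathbb{Z}[\mathbb{L},\mathbb{L}^{-1}]$-combination of Abel--Jacobi classes $[C^{(j)}\to\on{Pic}^{j}]$. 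Tracing how each chain-piece contributes to $\on{det}E=\bigotimes_j\on{det}E_j$ — with the multiplicity $k_i$ recording how many times a given symmetric power $C^{(l_i)}$ enters, through the isomorphisms occurring in the chain and the normalisation $L=\mathcal{O}(dc_0)$ — produces exactly the composites $\mu(k_i)=[k_i]\circ(\text{Abel--Jacobi})$. The outcome is an identity $[F_i\xrightarrow{\on{det}\otimes L^{-1}}J]=\sum_\lambda c_{i,\lambda}\,\mathbb{L}^{m_{i,\lambda}}\,[\prod_a C^{(l_a)}\xrightarrow{\otimes_a\mu(k_a)}J]$ with $c_{i,\lambda}\in\mathbb{Z}$.

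Finally I would apply $[n]^{*}$ to the resulting expansion of $[\mathcal{M}\xrightarrow{\on{det}\otimes L^{-1}}J]$ and forget the map to $J$; by $\hat{K}_0(\on{Var}_{\mathbb{C}})$-linearity of $[n]^{*}$ and the identification \eqref{eq: pull-back jacobian main prop} of $[n]^{*}[\prod_a C^{(l_a)}\xrightarrow{\otimes_a\mu(k_a)}J]$ with $[\widetilde{\prod_a C^{(l_a)}}\to J]$, this yields the claimed formula for $[\hMm\times J]$. The main obstacle is the third step: \cite{GPHS,gph13} carry out their recursion in the absolute Grothendieck ring, so the real work is to verify that each geometric manipulation (the Quot-scheme and Hecke-modification fibrations, and especially the inductive removal of unstable strata) is the base change of an identity in $\hat{K}_0(\on{Var}_{\on{Pic}})$, and to pin down the multiplicities $k_i$ precisely. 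The first and last steps are formal once this relative bookkeeping is in place.
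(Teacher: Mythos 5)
Your plan is essentially the paper's own proof: the paper likewise tracks the determinant morphism through the Garcia-Prada--Heinloth--Schmitt reduction (chains, generically surjective chains, Hecke modifications fibred over $\on{Bun}_{d_r}\times\prod_i C^{(d_{i+1}-d_i)}$) and then base changes along $[n]\colon J\to J$, so the relative bookkeeping over $J$ you describe is exactly what is carried out. The one obstacle you flag -- making the class of $\on{Bun}_d$ relative over $\on{Pic}$ -- is handled in the paper via Behrend--Dhillon's Quot scheme $\on{Div}(D)$ with $D$ supported at $c_0$: its $\mathbb{G}_m^n$-fixed loci are products of symmetric powers on which the ($\mathbb{G}_m^n$-invariant) determinant map is visibly $\bigotimes_i\mu(m)$, which is precisely how the multiplicities $k_i$ get pinned down.
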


\begin{proof}
By Theorem B in \cite{gph13}, the class $[\mathcal{M}]\in \hat{K}_{0}(\on{Var}_{\mathbb{C}})$ can be written as a linear combination of classes of the form $[\displaystyle\prod_{i} C^{(l_i)}]\times \mathbb{L}^m$. The procedure explained in the proof of Theorem B in \cite{gph13} and Corollary 6.10 in \cite{GPHS} reduces the computation of $[\mathcal{M}]$ to the computation of the class $[\mathcal{M}{(\underline{n})}_{\underline{d}}^{\on{gen-surj}}]$ of the moduli stack of generically surjective chains of type $(\underline{n})=(n,n,\dots,n)$ and degree $\underline{d}=(d_1,d_2,\dots, d_r)$, which classifies the following data
\begin{equation}\label{eq: chains}
    E_1\xrightarrow{\phi_1}E_2\xrightarrow{\phi_2}E_3\xrightarrow{\phi_3}\cdots E_{r-1} \xrightarrow[]{\phi_{r-1}} E_r,
\end{equation}
where each $E_i$ is a vector bundle of rank $n$ and degree $d_i$, and each $\phi_i$ is a generically surjective map. The class $[\mathcal{M}{(\underline{n})}_{\underline{d}}^{\on{gen-surj}}]$ is computed in Corollary 4.10 of \cite{GPHS}, where the data in (\ref{eq: chains}) are considered as consecutive Hecke modifications. They considered the map $\mathcal{M}{(\underline{n})}_{\underline{d}}^{\on{gen-surj}}\longrightarrow\on{Bun}_{d_r}\times\displaystyle\prod_{i=1}^{r-1} C^{(d_{i+1}-d_i)}$ which maps the data in (\ref{eq: chains}) to $(E_r, \on{supp}(E_{i+1}/\phi_i(E_i))_{i=1,2,\dots,r-1})$. By identifying fibres of this map, they proved that \[[\mathcal{M}{(\underline{n})}_{\underline{d}}^{\on{gen-surj}}]=\mathbb{L}^{\chi}[\on{Bun}_{d_r}]\displaystyle\prod_{i=1}^{r-1} [(C\times \mathbb{P}^{{n-1})^{(d_{i+1}-d_i)}]}.
\]
Combining the pull-back diagram \ref{eq: pull back jacobian}, it follows that the class $[\hMm\times J]\in \hat{K}_{0}(\on{Var}_{\mathbb{C}})$ can be written as a linear combination of classes of the form $[\widetilde{\displaystyle\prod_{j}\on{Bun}_{d_j}\times\prod_{i} C^{(l_i)}}]\times \mathbb{L}^{m}$, where $\widetilde{\displaystyle\prod_{j}\on{Bun}_{d_j}\times\prod_{i} C^{(l_i)}}$ fits into the following pull-back diagram
\bd
\xymatrix{
\widetilde{\displaystyle\prod_{j}\on{Bun}_{d_j}\times\prod_{i} C^{(l_i)}} \ar[rrrrr] \ar[d] &&&&&J \ar[d]^{[n]} \\
  {{\displaystyle\prod_{j}\on{Bun}_{d_j}\times\prod_{i} C^{(l_i)}}}\ar[rrrrr]^{\displaystyle\bigotimes_j\on{det}{(m_j)}\otimes\bigotimes_i \mu(k_i)} &&&&&J.
}
\ed
Here $\on{det}(m):\on{Bun}_d\to J$ maps $E\in \on{Bun}_d$ to $\on{det}(E)^m\otimes \mathcal{O}_C(-mdc_0)$.

It is shown in \cite{BD} that the class $[\on{Bun}_d]$ can be written as a linear sum of classes of the form $[\displaystyle\prod_{i} C^{(l_i)}]\times \mathbb{L}^{m}$.\footnote{The computation in \cite{BD} Section 6 is written for the case of vector bundles with fixed determinant, but the same arguments work for $\on{Bun}_d$.} As explained in \cite{BD} Section 6, this computation is reduced to computing the class of $\on{Div}(D)$, where $\on{Div}(D)$ is the Quot scheme classifying locally free subsheaves
\[
 E\hookrightarrow \mathcal{O}_C(D)^n,
\]
of rank $n$ and degree $d$. Here $D$ is a effective divisor which we can assume to be supported solely on $c_0$. There is a $\mathbb{G}_m^n$-action on $\on{Div}(D)$, and the computation of $[\on{Div}(D)]$ can be reduced to computing the class of the fixed point loci. Those fixed points correspond to inclusions of the form 
\[
 \bigoplus_{i=1}^n \mathcal{O}_C(D-E_i)\hookrightarrow\mathcal{O}_C(D)^n,
\]
where $E_1, E_2, \dots, E_n$ are effective divisors such that $d= n\on{deg}D-\sum \on{deg}E_i$. It follows that the components of the fixed point loci are of the form ${\displaystyle\prod_{i=1}^n C^{(l_i)}}$. Note that the determinant map $\on{det}(m):\on{Bun}_d\to J$ is invariant with respect to the $\mathbb{G}_m^n$-action. Since we assume $D$ is supported solely on $c_0$, the map $\on{det}(m)$ corresponds exactly to
\[
{\displaystyle\prod_{i=1}^n C^{(l_i)}}\xrightarrow{\displaystyle\bigotimes_i \mu(m)} J
\]
when restricted to those fixed point components. 
\end{proof}

We denote ${\displaystyle\prod_{i} C^{(l_i)}}$ by $Y(\underline{l})$ and $\widetilde{\displaystyle\prod_{i} C^{(l_i)}}$ (defined by the pull-back diagram \ref{eq: pull-back jacobian main prop}) by $\widetilde{Y}_n(\underline{l}, \underline{k})$.  Proving Theorem \ref{thm:SLn} is thus reduced 
to establishing the following statement:

\begin{proposition}\label{prop: SL_n torsion-free main prop}
The cohomology groups of $\widetilde{Y}_n(\underline{l}, \underline{k})$ are torsion-free. 
\end{proposition}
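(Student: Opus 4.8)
The plan is to reduce, by a chain of Gysin sequences, to a case in which $\widetilde{Y}_n(\underline{l},\underline{k})$ is manifestly a projective bundle over a disjoint union of torsors under abelian varieties. Write $r$ for the number of factors and $u_{l_i}\colon C^{(l_i)}\to\on{Pic}^{l_i}(C)$, $D\mapsto\mathcal{O}_C(D)$, for the Abel--Jacobi map. The morphism $\bigotimes_i\mu(k_i)\colon Y(\underline{l})=\prod_iC^{(l_i)}\to J$ factors as $\nu\circ\prod_iu_{l_i}$, where $\nu\colon\prod_i\on{Pic}^{l_i}(C)\to J$ sends $(M_i)_i$ to $\bigotimes_iM_i^{k_i}\otimes\mathcal{O}_C(-\sum_i k_i l_i c_0)$; hence $\widetilde{Y}_n(\underline{l},\underline{k})\cong\big(\prod_iC^{(l_i)}\big)\times_{\prod_i\on{Pic}^{l_i}(C)}\widehat{P}$ with $\widehat{P}\coloneqq\prod_i\on{Pic}^{l_i}(C)\times_{J,[n]}J$ finite \'etale over $\prod_i\on{Pic}^{l_i}(C)$. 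Choosing base points identifies $\widehat{P}$ with the kernel of the surjective homomorphism of abelian varieties $J^{r+1}\to J$, $(\underline{\xi},\eta)\mapsto\sum_ik_i\xi_i-n\eta$, so $\widehat{P}$ is a finite disjoint union of torsors under an abelian variety and $H^*(\widehat{P},\Zb)$ is free. If moreover $l_i\geq 2g-1$ for all $i$, then $H^1(C,M)=0$ for every degree-$l_i$ line bundle $M$, so each $u_{l_i}$ is the projectivisation of a rank-$(l_i+1-g)$ vector bundle on $\on{Pic}^{l_i}(C)$, and after base change $\widetilde{Y}_n(\underline{l},\underline{k})\to\widehat{P}$ is a Zariski-locally trivial bundle with fibre $\prod_i\Pb^{l_i-g}$; by the Leray--Hirsch theorem $H^*(\widetilde{Y}_n(\underline{l},\underline{k}),\Zb)$ is then a free $H^*(\widehat{P},\Zb)$-module, hence free.

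For the general case I would induct on $\sum_il_i$ (the case $\sum_il_i=0$ being $\widetilde{Y}_n(\underline{0},\underline{k})=J[n]$, which has free cohomology). Pick $i_0$ with $l_{i_0}\geq 1$. Adding $c_0$ gives a codimension-one closed immersion $C^{(l_{i_0}-1)}\hookrightarrow C^{(l_{i_0})}$ compatible with Abel--Jacobi; it pulls back to a codimension-one closed immersion of smooth varieties $\widetilde{Y}_n(\underline{l}-e_{i_0},\underline{k})\hookrightarrow\widetilde{Y}_n(\underline{l},\underline{k})$ with smooth open complement $\mathcal{W}$, namely the cover of the same shape over $(C\setminus c_0)^{(l_{i_0})}\times\prod_{j\neq i_0}C^{(l_j)}$. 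In the associated Gysin sequence
\[
\cdots\to H^{k-2}\big(\widetilde{Y}_n(\underline{l}-e_{i_0},\underline{k})\big)\to H^k\big(\widetilde{Y}_n(\underline{l},\underline{k})\big)\xrightarrow{\,j^*\,}H^k(\mathcal{W})\xrightarrow{\,\partial\,}H^{k-1}\big(\widetilde{Y}_n(\underline{l}-e_{i_0},\underline{k})\big)\to\cdots
\]
the first group is free by the inductive hypothesis, so it suffices to show that $H^*(\mathcal{W},\Zb)$ is free and that $j^*$ is surjective: the sequence then splits into short exact sequences $0\to H^{k-2}(\widetilde{Y}_n(\underline{l}-e_{i_0},\underline{k}))\to H^k(\widetilde{Y}_n(\underline{l},\underline{k}))\to H^k(\mathcal{W})\to 0$, and freeness propagates.

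Freeness of $H^*(\mathcal{W},\Zb)$ is handled by the same machine applied to the once-punctured curve $C\setminus c_0$ (which still has torsion-free integral cohomology) in place of $C$: one removes further points of $C$ and stratifies by multiplicities, reducing either to the projective-bundle case above or to the case in which the fibres of the relevant Abel--Jacobi-type maps are combinatorially simple (for line bundles of large enough degree they become complements of projective hyperplane arrangements, whose cohomology is torsion-free), fibred over disjoint unions of torsors under abelian varieties. The surjectivity of $j^*$ becomes, after the \'etale base change, the surjectivity of the restriction map $H^*(C^{(m)},\Zb)\to H^*((C\setminus c_0)^{(m)},\Zb)$, a classical fact about symmetric products of a curve in the spirit of \cite{Mcdonald}. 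The main obstacle is precisely this last point: organising the nested induction (on $\sum_il_i$ and on the number of removed points) so that every Gysin sequence genuinely splits, i.e.\ establishing the required surjectivity of restriction maps for the finite abelian covers $\widetilde{Y}_n(\underline{l},\underline{k})$. This is a refinement of Macdonald's computation of the integral cohomology of symmetric products of a curve, and is the technical heart of the argument.
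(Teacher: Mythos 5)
Your reduction of the problem and your treatment of the large-degree regime are sound: the description of $\widetilde{Y}_n(\underline{l},\underline{k})$ as $Y(\underline{l})\times_{\prod_i\on{Pic}^{l_i}(C)}\widehat{P}$ with $\widehat{P}$ a disjoint union of torsors under an abelian variety, and the Leray--Hirsch argument when all $l_i\geq 2g-1$, are correct (and in that regime arguably cleaner than the paper's rational dimension count in Lemma \ref{lemma: homologically trivial}). But the inductive step for general $\underline{l}$ is not a proof. Everything hinges on two unproved inputs: torsion-freeness of $H^*(\mathcal{W},\Zb)$ and surjectivity of $j^*\colon H^*(\widetilde{Y}_n(\underline{l},\underline{k}),\Zb)\to H^*(\mathcal{W},\Zb)$, and you defer the second explicitly as ``the technical heart.'' Moreover, the proposed reduction of that surjectivity to Macdonald's statement $H^*(C^{(m)},\Zb)\twoheadrightarrow H^*((C\setminus c_0)^{(m)},\Zb)$ ``after the \'etale base change'' does not work integrally: $\mathcal{W}$ is still a nontrivial finite abelian cover of $(C\setminus c_0)^{(l_{i_0})}\times\prod_{j\neq i_0}C^{(l_j)}$, pulling back along such a cover changes exactly the cohomology you are trying to control, and transfer arguments only give information rationally or away from the primes dividing $n$ --- which is precisely where torsion could live. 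Similarly, ``the same machine applied to the once-punctured curve'' is not available: over the affine curve the Abel--Jacobi-type maps no longer have projective-space fibres, there is no Poincar\'e duality in the same form, and no argument is given for freeness of the cohomology of these open covers. So the Gysin sequences are not known to split and your upward induction on $\sum_i l_i$ does not close.

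The paper is organised precisely to avoid ever needing such a surjectivity or any computation on the open part. It runs the induction in the opposite direction: reverse induction on the degrees (and induction on the number of factors), starting from the large-degree regime, where torsion-freeness is obtained from the homologically trivial fibration criterion of Lemma \ref{lemma: homologically trivial}. To descend, one uses that the complement of the divisor $Y(l)\subset Y(l+1)$ (adding $c_0$) is the symmetric power of the affine curve $C\setminus\{c_0\}$, hence affine, and so is its preimage in the cover; Lemma \ref{Lemma: affine open cohomology} (Andreotti--Frankel type vanishing of compactly supported cohomology of an affine variety below the middle degree, plus Poincar\'e duality) then transfers torsion-freeness from the ambient cover to the closed subcover with no splitting hypothesis whatsoever --- this is how Lemma \ref{Lemma: one symmetric product} and Step 2 of the paper's proof work, with the multi-factor bookkeeping done via classes in $K_0(\on{Var}_{\Cb})$ and Gillet--Soul\'e Poincar\'e polynomials as in the $\GL_n$ case. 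To repair your argument you would either have to prove the integral surjectivity and the freeness of $H^*(\mathcal{W},\Zb)$ for these covers --- essentially an integral Macdonald-type computation that is not easier than the proposition itself --- or switch to the downward induction and replace the Gysin-splitting step by the affineness-of-the-complement argument.
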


Our proof of Proposition \ref{prop: SL_n torsion-free main prop} relies on the following two technical lemmas

\begin{lemma}\label{lemma: homologically trivial}
Let $E\longrightarrow B$ be a Serre fibration over a path-connected base $B$, and let $F$ be the fibre. Assume the cohomology of $F$ and $B$ are torsion-free. If the cohomology of $E$ satisfy the following inequality
\begin{equation}\label{eq: cohomology of serre fibration}
    \on{dim}_\mathbb{Q} H^{i}(E,\mathbb{Q})\geq\on{dim}_\mathbb{Q} H^{i}(B\times F,\mathbb{Q})
\end{equation}
for each $i$, then we have 
\[
    H^{i}(E,\mathbb{Z})\cong H^{i}(B\times F,\mathbb{Z})
\]
for each $i$. 
\end{lemma}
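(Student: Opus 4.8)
The plan is to run the Leray--Serre spectral sequence of $F\to E\to B$ with coefficients in each field $k\in\{\mathbb{Q}\}\cup\{\mathbb{F}_p:p\text{ prime}\}$ simultaneously, and to pit the given inequality against the universal coefficient theorem; I will use throughout that all cohomology groups in sight are of finite type.

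First I would extract the one piece of non-formal input: for every field $k$ as above,
\[
\on{dim}_k H^*(E,k)\ \le\ \on{dim}_k H^*(B\times F,k).
\]
This comes out of the spectral sequence $E_2^{p,q}=H^p\big(B;\underline{H^q(F,k)}\big)\Rightarrow H^{p+q}(E,k)$: since $E_\infty^{p,q}$ is a subquotient of $E_2^{p,q}$ one has $\on{dim}_kH^*(E,k)\le\sum_{p,q}\on{dim}_kE_2^{p,q}=\sum_q\on{dim}_kH^*\big(B;\underline{H^q(F,k)}\big)$, and one is reduced to the inequality $\on{dim}_kH^*(B;\mathcal{L})\le(\on{rank}\mathcal{L})\cdot\on{dim}_kH^*(B,k)$ for the local systems $\mathcal{L}=\underline{H^q(F,k)}$, after which the Künneth formula over the field $k$ identifies $\sum_q(\on{rank}\underline{H^q(F,k)})\cdot\on{dim}_kH^*(B,k)$ with $\on{dim}_kH^*(B\times F,k)$. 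In the situations to which the lemma is applied the monodromy of the fibration is finite, and when $F$ is discrete the only local systems occurring are the permutation modules $\underline{H^0(F,k)}$, so the bound becomes the statement that a finite cover of $B$ has total Betti number at most its degree times that of $B$; this I would verify directly on the explicit covers (of products of symmetric powers of curves) that occur.

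Granting the displayed inequality, the rest is bookkeeping. Because $H^*(B,\mathbb{Z})$ and $H^*(F,\mathbb{Z})$ are torsion-free, universal coefficients give $\on{dim}_kH^i(B,k)=\on{rank}H^i(B,\mathbb{Z})$ and likewise for $F$, so $\on{dim}_kH^*(B\times F,k)$ is independent of $k$; denote it $N$, so $\on{dim}_kH^*(E,k)\le N$ for all $k$. Over $\mathbb{Q}$ the hypothesis gives $\on{dim}_\mathbb{Q}H^i(E,\mathbb{Q})\ge\on{dim}_\mathbb{Q}H^i(B\times F,\mathbb{Q})$ in each degree; summing and using $\on{dim}_\mathbb{Q}H^*(E,\mathbb{Q})\le N=\on{dim}_\mathbb{Q}H^*(B\times F,\mathbb{Q})$ forces equality in every degree. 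For a prime $p$, universal coefficients give $\on{dim}_{\mathbb{F}_p}H^i(E,\mathbb{F}_p)\ge\on{rank}H^i(E,\mathbb{Z})=\on{dim}_\mathbb{Q}H^i(E,\mathbb{Q})$; summing and using $\on{dim}_{\mathbb{F}_p}H^*(E,\mathbb{F}_p)\le N$ forces $\on{dim}_{\mathbb{F}_p}H^i(E,\mathbb{F}_p)=\on{rank}H^i(E,\mathbb{Z})$ in every degree, which by the universal coefficient exact sequence means $H^*(E,\mathbb{Z})$ has no $p$-torsion. Since this holds for all $p$, $H^*(E,\mathbb{Z})$ is free of finite rank, with rank in degree $i$ equal to that of $H^i(B\times F,\mathbb{Z})$ (itself free, by Künneth and the torsion-freeness of $H^*(B,\mathbb{Z})$ and $H^*(F,\mathbb{Z})$); two finitely generated free abelian groups of equal rank being isomorphic, we conclude $H^i(E,\mathbb{Z})\cong H^i(B\times F,\mathbb{Z})$ for all $i$.

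The step I expect to be the real obstacle is the displayed inequality $\on{dim}_kH^*(E,k)\le\on{dim}_kH^*(B\times F,k)$, i.e. the local-system bound on $B$: this is where the geometry of the fibration (and the torsion-freeness hypothesis on the cohomology of $B$) has to be used, and unlike the rest of the argument it is not purely formal. When the monodromy on $H^*(F)$ happens to be trivial one can avoid it altogether and instead argue with the integral spectral sequence, whose degeneration is forced rationally by the hypothesis and then holds integrally because $E_2^{p,q}=H^p(B,\mathbb{Z})\otimes H^q(F,\mathbb{Z})$ is torsion-free.
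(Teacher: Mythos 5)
There is a genuine gap, and you have in fact flagged it yourself: your whole argument funnels through the total-dimension bound $\dim_k H^*(E,k)\le\dim_k H^*(B\times F,k)$ for every field $k$ (equivalently, $\dim_k H^*(B;\mathcal{L})\le\operatorname{rank}(\mathcal{L})\cdot\dim_k H^*(B,k)$ for the local systems $\mathcal{L}=\underline{H^q(F,k)}$, including $k=\mathbb{F}_p$), and this is never proved. Deferring it to a direct check ``on the explicit covers that occur'' does not prove the lemma as stated, and it also misreads how the lemma is used in the paper: it is applied to fibrations such as $\widetilde{Y}_n(l,1)\to J$ with fibre $\mathbb{P}^{l-g}$ (and, in the induction step, with fibre again a variety of the same kind), not to covering maps with discrete fibres; the finite cover $\widetilde{Y}_n(l,1)\to Y(l)$ enters only to produce the degreewise lower bound \eqref{eq: cohomology of serre fibration} via transfer. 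So the ``permutation module'' reduction is not available, and the local-system bound you need (over all $\mathbb{F}_p$ as well as $\mathbb{Q}$) is neither obvious nor addressed. The formal bookkeeping in your second paragraph (universal coefficients, forcing degreewise equality over $\mathbb{Q}$ and then absence of $p$-torsion) is fine, but it has nothing to feed on without that input.

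The missing idea — and the paper's actual argument — is that the \emph{degreewise} hypothesis \eqref{eq: cohomology of serre fibration} is strong enough to kill the monodromy outright, so the problematic bound never has to be confronted. If some $\underline{H^q(F)}$ is a non-trivial local system, take the smallest such $q$; since $H^q(F,\mathbb{Z})$ is free, non-trivial monodromy forces $\operatorname{rank}H^0(B,\underline{H^q(F)})<\operatorname{rank}H^q(F)$, while all terms $E_2^{p,q'}$ with $q'<q$ in total degree $q$ are untouched (trivial coefficients there by minimality); hence $\dim_\mathbb{Q}H^{q}(E,\mathbb{Q})<\dim_\mathbb{Q}H^{q}(B\times F,\mathbb{Q})$, contradicting \eqref{eq: cohomology of serre fibration} in degree $q$. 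So the monodromy is trivial on $H^*(F,\mathbb{Z})$, the integral $E_2$-page is $H^p(B,\mathbb{Z})\otimes H^q(F,\mathbb{Z})$, torsion-free, and any non-zero differential (look at the first page where one occurs) would strictly drop the ranks of $E_\infty$ in some total degree, again contradicting \eqref{eq: cohomology of serre fibration}; degeneration plus freeness of the associated graded splits the filtration and gives $H^i(E,\mathbb{Z})\cong\bigoplus_{p+q=i}H^p(B,\mathbb{Z})\otimes H^q(F,\mathbb{Z})\cong H^i(B\times F,\mathbb{Z})$. Your closing remark (``when the monodromy happens to be trivial one can argue with the integral spectral sequence'') is essentially this proof — the point is that triviality of the monodromy is not a lucky special case but a consequence of the hypothesis, and establishing it is the step your proposal leaves out.
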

\begin{proof}
Consider the Leray-Serre spectral sequence
\[
    E_2^{p,q}=H^{p}(B, H^{q}(F))\Longrightarrow H^{p+q}(E).
\]
Any non-trivial local system of $H^{q}(F)$ would result in \[\on{rank}H^{0}(B, H^{q}(F))< \on{rank}H^{q}(F),
\]
which contradicts (\ref{eq: cohomology of serre fibration}). It follows that $H^{p}(B, H^{q}(F))\cong H^{p}(B)\otimes_{\mathbb{Z}}H^{q}(F)$ which is torsion-free. The spectral sequence must degenerate at $E_2$, since any non-zero differential would contradict (\ref{eq: cohomology of serre fibration}). 
\end{proof}

\begin{lemma}\label{Lemma: affine open cohomology}
Let $X$ be a smooth projective variety over $\mathbb{C}$ and let $Y\subset X$ be a smooth closed subvariety such that $U\coloneqq X-Y$ is affine. If the cohomology of $X$ is torsion-free, then the cohomology of $Y$ is also torsion-free. 
\end{lemma}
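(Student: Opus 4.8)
The plan is to compare the cohomology of $Y$ with that of $X$ via the Gysin long exact sequence of the closed embedding $Y\hookrightarrow X$, using affineness of the complement $U=X\setminus Y$ to kill one of the three terms in a range of degrees, and then to invoke Poincar\'e duality on $Y$ to reach the degrees that the first step leaves untouched. Throughout I work with singular cohomology of the associated complex manifolds and with $\mathbb{Z}$-coefficients. Set $n=\dim_{\mathbb{C}}X$ and let $c$ be the codimension of $Y$, which we may assume equidimensional of pure codimension $c\geq 1$ (the codimension-$0$ case is trivial, and the equidimensional case suffices for the applications).

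First I would write down the Gysin sequence of the smooth pair $(X,Y)$. Since $Y$ and $X$ are smooth, the normal bundle of $Y$ is a complex, hence canonically oriented, rank-$c$ bundle, so the Thom isomorphism gives $H^{*}(X^{\an},U^{\an})\cong H^{*-2c}(Y^{\an})$ and therefore an exact sequence
\[
\cdots \longrightarrow H^{m+2c-1}(U^{\an}) \xrightarrow{\ \partial\ } H^{m}(Y^{\an}) \xrightarrow{\ i_{!}\ } H^{m+2c}(X^{\an}) \longrightarrow H^{m+2c}(U^{\an}) \longrightarrow \cdots
\]
The key property of $U$ is that, being a smooth affine variety of complex dimension $n$, it is homotopy equivalent to a CW complex of real dimension at most $n$ by the Andreotti--Frankel theorem; in particular $H^{k}(U^{\an};\mathbb{Z})=0$ for $k>n$. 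Hence, whenever $m+2c-1>n$, the connecting map vanishes, $i_{!}$ is injective, and torsion-freeness of $H^{m+2c}(X^{\an};\mathbb{Z})$ forces $H^{m}(Y^{\an};\mathbb{Z})$ to be torsion-free. Since $c\geq 1$, this already covers every degree $m\geq n$.

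It remains to treat $1\leq m\leq n-1$ (degree $m=0$ being clear). Here the term $H^{m+2c-1}(U^{\an})$ need not vanish — indeed when $m+2c-1=n$ it may itself carry torsion — so the Gysin sequence alone is insufficient; this is the one genuine obstacle. To circumvent it I would use that $Y$, being smooth and projective, is a closed oriented manifold of real dimension $N:=2(n-c)$, so Poincar\'e duality together with the universal coefficient theorem yields isomorphisms of torsion subgroups
\[
\operatorname{Tors} H^{m}(Y^{\an};\mathbb{Z}) \;\cong\; \operatorname{Tors} H_{m-1}(Y^{\an};\mathbb{Z}) \;\cong\; \operatorname{Tors} H^{\,N-m+1}(Y^{\an};\mathbb{Z}).
\]
For $m\leq n-1$ one has $N-m+1\geq n-2c+2>n-2c+1$, so the previous paragraph applies in degree $N-m+1$ and shows that group is torsion-free; hence $\operatorname{Tors} H^{m}(Y^{\an};\mathbb{Z})=0$ for all $m$. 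The heart of the argument is precisely this bookkeeping of degrees: the Gysin step handles large $m$, Poincar\'e duality converts a large degree of $Y$ into a small one, and one must verify — as the displayed inequality does, using the \emph{half-dimensional} Andreotti--Frankel bound rather than the naive bound $\dim_{\mathbb{R}}U$ — that the reflected degree always lands in the range already settled. Everything else (the Thom isomorphism for a smooth pair, exactness and naturality of the Gysin sequence, the split universal coefficient sequence) is standard.
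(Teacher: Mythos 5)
Your argument is correct; the degree bookkeeping checks out (the Gysin boundary map lands in $H^{m+2c-1}(U^{\an})$, which vanishes for $m+2c-1>n$ by Andreotti--Frankel, and the reflected degree $N-m+1\geq n-2c+2$ always falls back into that range). It is the same circle of ideas as the paper's proof, but run in the Poincar\'e--Lefschetz-dual direction. The paper uses the long exact sequence of compactly supported cohomology for the decomposition $X=U\sqcup Y$, together with $H^i_c(U^{\an})=0$ for $i<n$ and torsion-freeness of $H^n_c(U^{\an})$; this settles the \emph{low} degrees directly (indeed it gives $H^i(Y^{\an})\cong H^i(X^{\an})$ for $i<n-1$ and handles $i=n-1$ via the resulting short exact sequence), and then Poincar\'e duality on $Y$ takes care of degrees $>n-1$. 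You instead use the sequence of the pair $(X,U)$ with the Thom isomorphism, so affineness enters only through the cruder vanishing $H^{>n}(U^{\an})=0$; this settles the \emph{high} degrees of $Y$ directly by embedding them into $H^{*}(X^{\an})$, and PD plus UCT on $Y$ reflects the low degrees back into that range. What each buys: your version never needs to know that the top compactly supported group of $U$ is torsion-free, at the price of invoking the Thom class and assuming $Y$ equidimensional of codimension $c$ --- a harmless restriction, since the complement of a nonempty affine open in an irreducible smooth projective variety is pure of codimension one (in the paper's application $Y$ is a divisor), and in any case the Thom isomorphism can be applied component by component; the paper's version is codimension-free and gives the slightly finer isomorphisms in low degrees, which are not needed for the lemma.
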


\begin{proof}
Assume $X$ is of dimension $n$. Since $U$ is affine, it is homotopy equivalent to an $n$-dimensional CW complex. It follows that $H_c^{i}(U)=0$ when $i<n$ and $H_c^{n}(U)$ is torsion-free. By the long exact sequence of (compact supported) cohomology, we have $H^{i}(Y)\cong H^{i}(X)$ when $i<n-1$ and $H^{n-1}(Y)$ fits into
\[
    0\longrightarrow H^{n-1}(X)\longrightarrow H^{n-1}(Y)\longrightarrow H^{n}(U)\longrightarrow \cdots.
\]
Since both $H^{n-1}(X)$ and $H^{n}(U)$ are torsion-free, $H^{n-1}(Y)$ is also torsion-free. For $i>n-1$, $H^{i}(Y)$ is torsion-free by Poincar\'e duality.
\end{proof}

We start proving Proposition \ref{prop: SL_n torsion-free main prop} by considering the case when $\underline{l}=(l)$ has exactly one element. We have the following

\begin{lemma}\label{Lemma: one symmetric product}
The cohomology groups of $\widetilde{Y}_n(l, k)$ are torsion-free. 
\end{lemma}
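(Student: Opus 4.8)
The plan is to prove that $H^*(\widetilde{Y}_n(l,k),\mathbb{Z})$ is free by descending induction on $l$, keeping $k$ fixed. Throughout, write $u\colon C^{(l)}\to J$ for the Abel--Jacobi map $D\mapsto \mathcal{O}_C(D-lc_0)$, so that $\mu(k)=[k]\circ u$ and, by definition, $\widetilde{Y}_n(l,k)=C^{(l)}\times_{\mu(k),J,[n]}J$. Since $[n]\colon J\to J$ is finite étale, so is the projection $\widetilde{Y}_n(l,k)\to C^{(l)}$, and hence $\widetilde{Y}_n(l,k)$ is a smooth projective variety.

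For the base case I would take $l\geq 2g-1$. Then, up to the translation identifying $\on{Pic}^l(C)$ with $J$, the map $u$ is the projectivisation $\mathbb{P}(E)\to J$ of a vector bundle $E$ of rank $l-g+1$ (the pushforward of a Poincaré bundle). Setting $\widetilde{J}\coloneqq J\times_{[k],J,[n]}J$ and letting $f\colon\widetilde{J}\to J$ be the first projection, associativity of fibre products gives $\widetilde{Y}_n(l,k)=C^{(l)}\times_{u,J}\widetilde{J}=\mathbb{P}(f^*E)$. Now $f$ is the base change of the isogeny $[n]$ along $[k]$, hence a finite étale cover of $J$, so $\widetilde{J}$ is a disjoint union of abelian varieties and $H^*(\widetilde{J},\mathbb{Z})$ is free. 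The integral projective bundle formula then yields $H^*(\widetilde{Y}_n(l,k),\mathbb{Z})\cong H^*(\widetilde{J},\mathbb{Z})\otimes_{\mathbb{Z}}\mathbb{Z}[\zeta]/(\zeta^{l-g+1})$, which is free.

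For the inductive step, fix $l<2g-1$ and assume the statement for $l+1$ (same $k$). Consider the closed immersion $\sigma\colon C^{(l)}\hookrightarrow C^{(l+1)}$, $D\mapsto D+c_0$. A one-line computation gives $\mu(k)\circ\sigma=\mu(k)$ — the normalisations $-lc_0$ and $-(l+1)c_0$ differ exactly by the added point — so pulling the cover $\widetilde{Y}_n(l+1,k)\to C^{(l+1)}$ back along $\sigma$ recovers $\widetilde{Y}_n(l,k)\to C^{(l)}$. Thus $\widetilde{Y}_n(l,k)$ sits inside $\widetilde{Y}_n(l+1,k)$ as a smooth closed subvariety, with open complement equal to the preimage, under the finite map $\widetilde{Y}_n(l+1,k)\to C^{(l+1)}$, of $C^{(l+1)}\setminus\sigma(C^{(l)})=\on{Sym}^{l+1}(C\setminus\{c_0\})$. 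This last space is affine, being a finite-group quotient of the affine variety $(C\setminus\{c_0\})^{l+1}$, and the preimage of an affine scheme under a finite morphism is again affine; hence $\widetilde{Y}_n(l+1,k)\setminus\widetilde{Y}_n(l,k)$ is affine, while $\widetilde{Y}_n(l+1,k)$ is smooth projective with free cohomology. Lemma \ref{Lemma: affine open cohomology} then shows $H^*(\widetilde{Y}_n(l,k),\mathbb{Z})$ is torsion-free, and iterating brings us down to the given $l$.

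I do not expect a genuine obstacle here. The one point demanding care is the bookkeeping that lets the induction run in the single variable $l$: the identity $\mu(k)\circ\sigma=\mu(k)$, with the \emph{same} $k$ on both sides, plus the verification that the relevant complement $\on{Sym}^{l+1}(C\setminus\{c_0\})$ (and its preimage upstairs) is affine. Everything else is standard input — the integral projective bundle formula, symmetric powers of affine curves being affine, and the already-established Lemma \ref{Lemma: affine open cohomology}.
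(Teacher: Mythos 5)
Your argument is correct, and its skeleton (a base case for $l$ large, then descending induction on $l$ via the embedding $D\mapsto D+c_0$, the affine complement $\on{Sym}^{l+1}(C\setminus\{c_0\})$, and Lemma \ref{Lemma: affine open cohomology}) is the same as the paper's; the inductive step you give is in fact the paper's step spelled out more carefully, since the identity $\mu(k)\circ\sigma=\mu(k)$ and the identification of the preimage of $Y(l)$ inside $\widetilde{Y}_n(l+1,k)$ with $\widetilde{Y}_n(l,k)$ are exactly what the paper uses implicitly. Where you genuinely diverge is in two places. First, the paper begins by reducing to $k=1$: setting $m=\gcd(k,n)$ it splits $\widetilde{Y}_n(l,k)$ into $m$ copies of $\widetilde{Y}_{n/m}(l,1)$; you skip this and treat general $k$ directly, which works because your base case never needs $k=1$. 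Second, and more substantially, for $l\geq 2g-1$ the paper views $\widetilde{Y}_n(l,1)\to J$ as a Serre fibration with fibre $\mathbb{P}^{l-g}$, gets the rational dimension inequality from the transfer for the finite cover $\widetilde{Y}_n(l,1)\to Y(l)$, and then invokes Lemma \ref{lemma: homologically trivial} (degeneration of the Leray--Serre spectral sequence) to conclude $H^*(\widetilde{Y}_n(l,1),\mathbb{Z})\cong H^*(\mathbb{P}^{l-g}\times J,\mathbb{Z})$. You instead re-associate the fibre product so that the isogeny is pulled back to the base: $\widetilde{Y}_n(l,k)=\mathbb{P}(f^*E)$ over $\widetilde{J}=J\times_{[k],J,[n]}J$, a disjoint union of abelian varieties, and then apply the integral projective bundle formula. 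Your route avoids both the transfer-based dimension count and the spectral-sequence lemma, at the cost only of the (easy) observations that $\mu(k)=[k]\circ u$, that projectivisation commutes with base change, and that a finite \'etale cover of $J$ has free cohomology; the paper's route keeps Lemma \ref{lemma: homologically trivial} in play because it is reused later in the proof of Proposition \ref{prop: SL_n torsion-free main prop}, where no projective bundle structure is available. Either way the conclusion stands.
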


\begin{proof}
Let $m=\on{gcd}(k,n)$. Note that $\widetilde{Y}_n(l, k)$ is defined by the following pull-back diagrams
\bd
\xymatrix{
 \widetilde{Y}_n(l, k) \ar[r] \ar[d] & \displaystyle\coprod^{m}_{i=1} J \ar[rr]^{[k/m]}\ar[d]^{[{n}/{m}]} &&J\ar[d]^{[n]}\\
Y(l)\ar[r]^{\mu(1)} & J \ar[rr]^{[k]}&&J.
}
\ed
It follows that $\widetilde{Y}_n(l, k)$ is a disjoint union of $m$ copies of $\widetilde{Y}_{n/m}(l, 1)$, therefore it is enough to prove the lemma when $k=1$. 

We start by proving $H^{*}(\widetilde{Y}_{n}(l, 1),\mathbb{Z})$ is torsion-free when $l>2g-2$. 
\bd
\xymatrix{
 \widetilde{Y}_n(l, 1)\ar[d]\ar[rr] &&J\ar[d]^{[n]}\\
Y(l)\ar[rr]^{\mu(1)}&&J.
}
\ed
In this case, $Y(l)\xrightarrow{\mu(1)} J$ is a homologically trivial fibration with fibre $\mathbb{P}^{l-g}$. Since $\widetilde{Y}_n(l, 1)\longrightarrow Y(l)$ is a principal bundle with finite structure group, we have
\begin{equation}
    \on{dim}_\mathbb{Q} H^{i}(\widetilde{Y}_{n}(l, 1),\mathbb{Q})\geq\on{dim}_\mathbb{Q} H^{i}(Y(l),\mathbb{Q})
\end{equation}
for any $i$. It follows from Lemma \ref{lemma: homologically trivial} that 
\[
H^{*}(\widetilde{Y}_{n}(l, 1),\mathbb{Z})\cong H^{*}(Y(l),\mathbb{Z})\cong H^{*}(\mathbb{P}^{l-g}\times J, \mathbb{Z}),
\]
which is torsion-free. 

For the case when $l\leq 2g-2$, we argue by reverse induction on $l$. Consider the closed embedding $Y(l)\hookrightarrow Y(l+1)$ defined by adding $c_0\in C$. Note that $U\coloneqq Y(l+1)-Y(l)$ is affine: it is the symmetric product of an affine curve $C-\{c_0\}$. Let $\widetilde{U}$ be the preimage $U$ in $\widetilde{Y}_{n}(l, 1)$, which is also affine. If the cohomology of $\widetilde{Y}_{n}(l+1, 1)$ is torsion-free, Lemma \ref{Lemma: affine open cohomology} implies the cohomology of $\widetilde{Y}_{n}(l, 1)$ is also torsion-free. 
\end{proof}

\begin{proof}[Proof of Proposition \ref{prop: SL_n torsion-free main prop}]

 We prove Proposition \ref{prop: SL_n torsion-free main prop} by induction on the length $|\underline{l}|$ of $\underline{l}$. We assume the cohomology of $\widetilde{Y}_n(\underline{l}, \underline{k})$ is torsion-free when $|\underline{l}|<r$, and the goal is to show the cohomology of $\widetilde{Y}_n(\underline{l}, \underline{k})$ is torsion-free when $|\underline{l}|=r$. 

\emph{Step 1.} In this step, we show the cohomology groups of $\widetilde{Y}_n(\underline{l}, \underline{k})$ are torsion-free when $l_1>2g-2$. Let $\underline{l}'=(l_2, l_3,\dots,l_r)$ and let $\underline{k}'=(k_2, k_3,\dots,k_r)$. Let $m=\on{gcd}(k_1,n)$. Note that $\widetilde{Y}_n(\underline{l}, \underline{k})$ fits into the following pull-back diagrams
\bd
\xymatrix{
 \widetilde{Y}_n(\underline{l}, \underline{k})\ar[d]\ar[rrr] &&&J\ar[d]^{[n/m]}\\
Y(l_1)\times\widetilde{Y}_m(\underline{l}', \underline{k}')\ar[d] \ar[rrr]^{\mu(k_1/m)\otimes \widetilde{\mu}(\underline{k}')}&&&J\ar[d]^{[m]}\\
{Y}(\underline{l}, \underline{k})\ar[rrr]^{\mu(\underline{k})} &&&J.
}
\ed
The cohomology of $\widetilde{Y}_m(\underline{l}', \underline{k}')$ is torsion-free by induction hypothesis. Consider the fibration $$\widetilde{Y}_n(\underline{l}, \underline{k})\longrightarrow \widetilde{Y}_m(\underline{l}', \underline{k}')$$ with fibre $\widetilde{Y}_{n/m}(l_1, k_1/m)$. Since $n/m$ and $k_1/m$ are coprime, we have $\widetilde{Y}_{n/m}(l_1, k_1/m)\cong \widetilde{Y}_{n/m}(l_1, 1)$, therefore $H^{*}(\widetilde{Y}_{n/m}(l, k_1/m),\mathbb{Z})\cong H^{*}(Y(l),\mathbb{Z})$. Similar arguments as in the proof of Lemma \ref{Lemma: one symmetric product} implies the fibration $\widetilde{Y}_n(\underline{l}, \underline{k})\longrightarrow \widetilde{Y}_m(\underline{l}', \underline{k}')$ is homologically trivial, which implies the cohomology of $\widetilde{Y}_n(\underline{l}, \underline{k})$ is torsion-free. 

\emph{Step 2.}
In this step, we prove the cohomology of $\widetilde{Y}_n(\underline{l}, \underline{k})$ is torsion-free by reverse induction on $\displaystyle\sum_{i=1}^r l_i$. When $\displaystyle\sum_{i=1}^r l_i>r(2g-2)$, the Proposition is true by Step 1. For the general case, we consider the map $Y(l)\longrightarrow Y(l+1)$ defined by adding $c_0\in C$. Let $U(l+1)=Y(l+1)\backslash Y(l)$ and let $U(\underline{l}+1)=\displaystyle\prod_{i=1}^r U(l_i+1)$. Let $\widetilde{U}(\underline{l}+1)$ be the preimage of $U(\underline{l}+1)$ in $\widetilde{Y}_n(\underline{l}+1, \underline{k})$. Then we have the following equality in $K_0(\on{Var}_{\mathbb{C}})$
\begin{equation}\label{eq: induction on symmetric product}
    [U(\underline{l}+1)]=\prod_{i=1}^r([Y(l_i+1)]-[Y(l_i)]). 
\end{equation}
After multiplying out the right hand side, (\ref{eq: induction on symmetric product}) can be considered as an equation in the Grothendieck group $K_0(\on{Var}_{{Y}_n(\underline{l}+1)})$ of varieties over ${Y}_n(\underline{l}+1)$.
By pulling back along the covering map $\widetilde{Y}_n(\underline{l}+1, \underline{k})\longrightarrow {Y}_n(\underline{l}+1)$, we get an equation in $K_0(\on{Var}_{\widetilde{Y}_n(\underline{l}+1, \underline{k})})$, which in turn gives an equation in $K_0(\on{Var}_{\mathbb{C}})$ by forgetting the relative structure. This equality implies the class $[\widetilde{Y}_n(\underline{l}, \underline{k})]$ can be written as a linear combination of $[\widetilde{Y}_n(\underline{l}+1, \underline{k})-\widetilde{U}(\underline{l}+1)]$ together with classes of the form $[\widetilde{Y}_n(\underline{l}', \underline{k}')]$ with $\displaystyle\sum_{i=1}^r l'_i> \displaystyle\sum_{i=1}^r l_i$. Since $\widetilde{U}(\underline{l}+1)$ is affine, Lemma \ref{Lemma: affine open cohomology} implies the cohomology $H^{*}(\widetilde{Y}_n(\underline{l}+1, \underline{k})-\widetilde{U}(\underline{l}+1), \mathbb{Z})$ is torsion-free. This completes the induction step.
\end{proof}

\begin{corollary}
For $\alpha \in \Br(\Mm)$ the twisted complex $K$-theory groups $\KU^{*}(\Mm^{\an},\tilde{\alpha})$ are torsion-free.
\end{corollary}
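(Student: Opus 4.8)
The plan is to reduce this to the torsion-freeness of integral singular cohomology established immediately above, via the comparison result of Proposition \ref{prop:cohomology-KU}. First I would invoke Theorem \ref{thm:SLn}, whose proof was just completed through Proposition \ref{prop: SL_n torsion-free main prop}: the groups $H^*(\hMm^{\an},\Zb)$ are torsion-free. In particular $H^3(\hMm^{\an};\Zb)$ is free, so any finite-order class — and the topological gerbe $\tilde{\alpha}$ attached to a Brauer class $\alpha$ is finite-order — must vanish. Hence $\KU^{\tilde{\alpha},*}(\hMm^{\an}) \simeq \KU^{*}(\hMm^{\an})$, and the second assertion of Proposition \ref{prop:cohomology-KU} (which rests on \cite[Bemerkung 14.18]{dold}, the statement that vanishing of torsion in integral cohomology propagates to $\KU^{*}$) gives that $\KU^{*}(\hMm^{\an})$, and therefore $\KU^{*}(\hMm^{\an},\tilde{\alpha})$, is torsion-free.

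I do not anticipate any genuine obstacle here: the substantive input — that $[\hMm \times J]$ decomposes in $\hat{K}_0(\on{Var}_{\Cb})$ into classes $[\widetilde{Y}_n(\underline{l},\underline{k})]$ with torsion-free cohomology, obtained by combining the Garcia-Prada--Heinloth--Schmitt chain calculus with the homological-triviality Lemma \ref{lemma: homologically trivial} and the affine-complement Lemma \ref{Lemma: affine open cohomology} — is already in hand, as is Proposition \ref{prop:cohomology-KU}. The only thing left to check is that the hypothesis of the latter, torsion-freeness of $H^*_{\sing}(\hMm^{\an};\Zb)$, is satisfied, and this is precisely Theorem \ref{thm:SLn}. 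The deduction is word-for-word parallel to the passage from Theorem \ref{thm:GLn} to Corollary \ref{cor:GLn} in the $\GL_n$ case, so the argument consists essentially of citing the two preceding results and assembling them.
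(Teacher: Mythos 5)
Your argument is exactly the paper's: the published proof simply says ``analogous to the proof of Corollary \ref{cor:GLn}'', i.e.\ combine Theorem \ref{thm:SLn} (torsion-free $H^*(\hMm^{\an},\Zb)$, read the $\Mm$ in the statement as the ``$\SL_n$'' space) with Proposition \ref{prop:cohomology-KU}, which is precisely what you do, merely unpacking the proposition's proof (freeness of $H^3$ kills the finite-order class $\tilde{\alpha}$, then Dold's \cite[Bemerkung 14.18]{dold} gives torsion-freeness of $\KU^*$). No gaps; the proposal is correct and essentially identical to the paper's proof.
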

\begin{proof}
Analogous to the proof of Corollary \ref{cor:GLn}.
\end{proof}

\subsection{$\PGL_n$}

In this subsection we prove that complex $K$-theory of the DM stack $\cMm$ is torsion-free, even in the presence of a Brauer twist. It is important to emphasise that there are non-zero torsion classes in $H^*(\cMm^{\an},\Zb) = H^*_{J^{\an}}(\Mm^{\an},\Zb)$, and it is thus necessary to pass to complex $K$-theory in order to obtain vanishing of torsion.

\begin{rmk}
The moduli space $\hMm$ is simply connected by \cite{cp} (see the paragraph immediately below Theorem 1.1). Thus, the orbifold fundamental group of $\cMm^{\an}$ is isomorphic to $\Gamma = J[n]$. We infer $H^1(\cMm^{\an},\Z/n/\Zb) \simeq (\Zb/n\Zb)^{2g}$, and thus by the Bockstein sequence, $H^2(\cMm,\Zb)[n] \simeq (\Zb/n\Zb)^{2g}$. This shows that singular cohomology of $\cMm$ has non-trivial torsion elements.
\end{rmk}

\begin{theorem}\label{thm:PGLn}
The complex $K$-theory groups $\KU^{*}(\cMm^{\an},\tilde{\alpha}) = \KU^{*}_J(\Mm^{\an},\tilde{\alpha})$ are torsion-free for an arbitrary Brauer class $\alpha \in \Br(\cMm)$.
\end{theorem}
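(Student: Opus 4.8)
The plan is to reduce the assertion to torsion-freeness of the complex $K$-theory of the $\SL_n$-moduli space $\hMm$, which is guaranteed by Theorem \ref{thm:SLn} together with \cite[Bemerkung 14.18]{dold}. First I would use the second description of twisted $K$-theory from the remark following Theorem \ref{main}: writing $\cMm^{\an} \simeq [\hMm_e^{\an}/\Gamma]$ with $\Gamma = J[n]$ a finite abelian group, one has $\KU^{*}(\cMm^{\an},\tilde\alpha) \simeq \KU^{*}_{\Gamma}(\hMm_e^{\an},\tilde\alpha)$. As $\alpha \in \Br(\cMm)$ is of finite order, its image $\tilde\alpha \in H^{3}(\hMm_e^{\an},\Zb)$ is a torsion class, hence vanishes by Theorem \ref{thm:SLn}; so the underlying non-equivariant gerbe on $\hMm_e^{\an}$ is trivial. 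Since $\hMm_e^{\an}$ is in addition simply connected \cite{cp}, the twist is then recorded by a finite central extension $1 \to \mu_m \to \widetilde{\Gamma} \to \Gamma \to 1$ of the type of \eqref{eqn:central-extension} together with line-bundle data $(L_g^{\tilde\alpha}, m_{g,h})$, and $\KU^{*}(\cMm^{\an},\tilde\alpha)$ becomes a direct summand of the genuine $\widetilde{\Gamma}$-equivariant complex $K$-theory of $\hMm_e^{\an}$.

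Next I would transport torsion information across the Arinkin--Fourier--Mukai equivalence of Theorem \ref{thm: twisted F-M between Hitchin systems} using the spreading-out mechanism of Section \ref{sec:spreadingout}, but keeping track of integral rather than merely rational coefficients. Fixing arbitrary extensions $\mathcal{K}$ and $\widetilde{\mathcal{K}}$ of Arinkin's kernel and of its inverse over the elliptic locus, the induced morphisms of sheaves of spectra on the Hitchin base restrict over $\Aa^{\heartsuit}$ to mutually inverse equivalences; by the support-theorem hypothesis of Situation \ref{sit2}(b) and Lemma \ref{lemma:p-retraction} both composites are then rationally unipotent. Since $\hMm^{\an}$ is homotopy equivalent to a finite CW complex — it retracts onto the nilpotent cone under the $\G_m$-action — the group $\KU^{*}(\hMm^{\an})$ is finitely generated, and by Theorem \ref{thm:SLn} it is torsion-free; a rationally unipotent endomorphism of a finitely generated torsion-free abelian group is genuinely unipotent, hence invertible over $\Zb$. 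Therefore the Fourier--Mukai map $\KU^{*}(\cMm^{\an},\tilde\alpha) \to \KU^{*}(\hMm^{\an})$ is split surjective, which yields a decomposition $\KU^{*}(\cMm^{\an},\tilde\alpha) \simeq \KU^{*}(\hMm^{\an}) \oplus T$ in which the complementary summand $T$ is a finite group — its rank vanishes by the rational half of Theorem \ref{thm:main}, equivalently by the Hausel--Thaddeus conjecture (now a theorem) combined with Freed--Hopkins--Teleman.

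It then remains to prove that $T = 0$, and this I expect to be the main obstacle, because $T$ is invisible rationally: by the remark preceding the theorem $H^{*}(\cMm^{\an},\Zb)$ genuinely carries $n$-torsion, which complex $K$-theory must annihilate. The idea is to show that $T$ is concentrated away from the elliptic locus and then to eliminate the boundary contribution. Over $\Aa^{\heartsuit}$ the stack $\cMm^{\heartsuit}$ is a torsor under a compactified Prym — an abelian scheme whose cohomology is torsion-free and on which the finite-order twist $\tilde\alpha$ is therefore topologically trivial — so $\KU^{*}(\cMm^{\heartsuit,\an},\tilde\alpha)$ is torsion-free and is carried isomorphically onto $\KU^{*}(\hMm^{\heartsuit,\an})$ by the Fourier--Mukai equivalence. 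For the boundary I would feed the equivariant vanishing-cycle package of Appendix \ref{sec:app} into the splitting above: descending from the $(K+p)$-twisted moduli space to $\hMm$ and to $\cMm$, Proposition \ref{prop:phi-SLn} and its $\PGL_n$-counterpart identify $\phi_{\MS \circ \chi}$ of $\underline{\KU}^{\tilde\alpha}$ with a shift of the $K$-theory sheaf of the smaller moduli space, and the relevant vanishing-cycle sheaf is supported on the image of the elliptic Hitchin fibres, so that $\KU^{*}(\cMm^{\an},\tilde\alpha)$ gets assembled, compatibly with the splitting, out of twisted $K$-groups of Prym torsors and is thus torsion-free. An alternative route to $T = 0$ when $\deg D > 2g-2$ would be a Bialynicki-Birula/Frankel reduction to the $\G_m$-fixed loci of $\hMm_e$, which are the moduli of $\SL_n$-chains whose cohomology is shown torsion-free in the course of proving Theorem \ref{thm:SLn}; the delicate point there, as everywhere in this last step, is the integral degeneration of the relevant equivariant spectral sequence.
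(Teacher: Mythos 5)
Your first two paragraphs only reproduce what Theorem \ref{thm:main} already delivers: an identification of $\KU^{*}(\cMm^{\an},\tilde{\alpha})$ and $\KU^{*}(\hMm^{\an})$ modulo torsion, so that $\KU^{*}(\cMm^{\an},\tilde{\alpha})\simeq \KU^{*}(\hMm^{\an})\oplus T$ with $T$ finite. The entire content of Theorem \ref{thm:PGLn} is the statement $T=0$, and this is exactly where your argument has a genuine gap. The mechanism you invoke for it cannot work as stated: the support-type statements underlying Situation \ref{sit2}(b) and Lemma \ref{lemma:p-retraction} come from the decomposition theorem and hence only constrain the \emph{rational} sheaves $(\underline{\KU}^{\tilde\alpha})_{\Qb}$; there is no integral (or $K$-theoretic) analogue telling you that torsion classes are ``concentrated away from the elliptic locus'', and a Mayer--Vietoris assembly from an open locus and its boundary would in any case not propagate torsion-freeness (extensions and boundary maps can create torsion). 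Moreover the input you want over $\Aa^{\heartsuit}$ is itself unproved and dubious: the fibres there are \emph{compactified} Pryms (singular, not abelian schemes), quotiented by $\Gamma$ and carrying the nontrivial gerbe $\tilde\alpha$, so ``$\KU^{*}(\cMm^{\heartsuit,\an},\tilde{\alpha})$ is torsion-free'' is a statement of essentially the same difficulty as the theorem, not a known fact. The alternative Bialynicki-Birula route fails for the reason you yourself flag: $H^{*}(\cMm^{\an},\Zb)$ really has $n$-torsion, so no integral degeneration argument on the $\PGL_n$ side is available.

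For comparison, the paper's proof does not use the Fourier--Mukai equivalence over the Hitchin base at all. It works prime by prime: writing $n=p^{\nu}n'$, it dualises the $J$-torsor $\Mm^{0}\to\cMm$ using the duality of Beilinson $1$-motives (Lemmas \ref{lemma:duality}, \ref{lemma:duality-quotient}, \ref{lemma:duality-twist}) to produce a gerbe $\beta$ on $\cMm\times J$ and an equivalence as in \eqref{eqn:kucm}; the resulting locally constant sheaf of spectra on $J^{\an}$ with fibre $\KU^{\tilde\alpha}(\cMm^{\an})$ is trivialised by the isogeny $[p^{\nu}]$, so Lemma \ref{lemma:p-torsion} (the ``torus trick'') shows that any $p$-torsion in $\KU^{*}(\cMm^{\an},\tilde\alpha)$ would force $p$-torsion in $\KU^{\tilde\alpha}_{\Gamma'}(\Mm^{\an})$ for a subgroup $\Gamma'\subset\Gamma$ of order prime to $p$. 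That this latter group has no $p$-torsion is Corollary \ref{cor:no-p-torsion}, proved by induction on the rank via the concentration/localisation theorem over prime ideals $\pf\subset R(\Gamma')$: the trivial-cyclic-subgroup case reduces by Atiyah--Segal to the torsion-free $\GL_n$ statement (Corollary \ref{cor:GLn}), while nontrivial cyclic subgroups are handled by the Hausel--Thaddeus description of fixed loci as lower-rank moduli spaces on cyclic covers (Proposition \ref{prop:ht}, Corollary \ref{cor:Cc}). If you want to salvage your approach you would need a genuinely integral input of this kind; the rational FM/support machinery alone cannot see $T$.
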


The proof will be given after the following auxiliary results. Our exposition will use the duality formalism for commutative group stacks and more precisely of Beilinson $1$-motives (see the appendices in \cite{Ar,Tr,CZ}). In these sources the base is assumed to be schematic rather than a stack. This is no problem, since definitions and results of \emph{loc. cit.} are \'etale local. We therefore say that $\mathcal{G} \to \mathcal{S}$ is 
a commutative group stack, respectively a Beilinson $1$-motive, if its pullback along an \'etale atlas of $\mathcal{S}$ satisfies those properties. 

For the following lemmas we fix a DM stack $\mathcal{S}$ and a relative Beilinson $1$-motive $\pi\colon \Aa \to \mathcal{S}$ (e.g., an abelian $\mathcal{S}$-scheme, $B_{\mathcal{S}}G$, where $G$ is a finite abelian group). 

\begin{lemma}\label{lemma:duality}
For every $\Aa$-torsor $\pi\colon \Tt \to \mathcal{S}$ there exists a $\G_m$-gerbe $\beta$ on $\Aa^{\dual}$ such that we have an $\mathcal{S}$-linear derived equivalence
$$\Perf(\Tt) \cong \Perf^{\beta}(\Aa^{\dual}).$$
\end{lemma}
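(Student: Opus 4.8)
The statement is a relative, stacky version of the classical fact that for an abelian variety $A$ and its dual $A^\vee$, translation of the Poincaré bundle identifies $A$-torsors with $\G_m$-gerbes on $A^\vee$; equivalently, $\mathrm{Pic}(A^\vee)$ receives $A = (A^\vee)^\vee$ and $H^2(A^\vee,\G_m)$ receives the Brauer obstruction. The plan is to reduce to the absolute/known case by the \'etale-local principle announced just before the lemma. First I would recall the duality pairing: since $\pi\colon\Aa\to\mathcal S$ is a Beilinson $1$-motive, it has a dual $\Aa^\vee\to\mathcal S$ (again a Beilinson $1$-motive) equipped with a canonical Poincaré object $\mathcal P$ on $\Aa\times_{\mathcal S}\Aa^\vee$, and the Fourier--Mukai functor $\Phi_{\mathcal P}\colon \Perf(\Aa)\to\Perf(\Aa^\vee)$ is an $\mathcal S$-linear equivalence (this is exactly the relative form of Mukai's theorem recorded in the appendices of \cite{Ar,Tr,CZ}). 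I would then interpret an $\Aa$-torsor $\Tt$ as a class in $H^1_{\text{\'et}}(\mathcal S,\Aa)$.

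The key step is the construction of $\beta$. For this I would use the long exact sequence associated with the "universal" extension, i.e. the boundary map $H^1_{\text{\'et}}(\mathcal S,\Aa)\to H^2_{\text{\'et}}(\mathcal S, B\G_m) = H^3_{\text{\'et}}(\mathcal S,\G_m)$ — no, more precisely: the torsor $\Tt$ corresponds under duality to a $\G_m$-gerbe. Concretely, $\Aa$-torsors on $\mathcal S$ are classified by $\Pic^\natural(\Aa^\vee/\mathcal S)$ (the relative Picard functor, whose $\mathcal S$-points are exactly $\Aa = (\Aa^\vee)^\vee$), so choosing a torsor $\Tt$ is the same as choosing a line bundle on $\Aa^\vee\times_{\mathcal S}\Tt'$ for a suitable cover, and the failure of this to descend to a genuine line bundle on $\Aa^\vee$ is measured by a gerbe $\beta\in H^2(\Aa^\vee,\G_m)$. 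I would set $\beta$ to be this obstruction class, and then define the equivalence by transporting $\Phi_{\mathcal P}$: twist the Poincaré kernel $\mathcal P$ by the pullback of $\Tt$ along $\Aa\times_{\mathcal S}\Aa^\vee\to\mathcal S$, obtaining an $\alpha$-twisted--to--$\beta$-twisted kernel on $\Tt\times_{\mathcal S}\Aa^\vee$ whose Fourier--Mukai functor is $\Perf(\Tt)\xrightarrow{\sim}\Perf^\beta(\Aa^\vee)$.

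To verify that this functor is an equivalence and $\mathcal S$-linear, I would argue \'etale-locally on $\mathcal S$: after pulling back along an \'etale atlas $S_0\to\mathcal S$ on which $\Tt$ trivializes, $\Tt$ becomes $\Aa_{S_0}$ itself and $\beta$ becomes the trivial gerbe, and the functor reduces to the relative Fourier--Mukai equivalence of \cite{Ar,Tr,CZ}, which is already known to be an $S_0$-linear equivalence. Since being an equivalence and $\Perf(\mathcal S)$-linearity can both be checked after \'etale base change (the first by faithfully flat descent for the cone of the unit/counit, the second since the projection formula and base-change compatibilities are \'etale-local), this concludes the argument. The main obstacle is the bookkeeping in the second step: making the definition of $\beta$ canonical (independent of the cover used to trivialize $\Tt$) and checking that the twisted kernel is genuinely $\beta$-twisted for precisely this $\beta$ — i.e. matching the gerbe produced by the descent obstruction with the gerbe appearing in the source category of the Fourier--Mukai functor. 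This is where one must be careful, but it is a formal consequence of the compatibility of the Poincaré biextension with the duality isomorphism, and is handled in the references cited for $1$-motives.
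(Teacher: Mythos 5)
Your proposal is correct in substance and rests on the same pillars as the paper's argument: duality of Beilinson $1$-motives with its Poincar\'e kernel, the correspondence between $\Aa$-torsors on $\mathcal{S}$ and $\G_m$-gerbes on $\Aa^{\dual}$, and an \'etale-local reduction to the split case to handle the stacky base. Where you differ is precisely at the point you flag as the main obstacle. You construct $\beta$ as a descent obstruction and build a twisted kernel by gluing translated Poincar\'e bundles over a cover trivialising $\Tt$, which forces you to verify that the cocycle is canonical and that the kernel is twisted by exactly this $\beta$; you defer that bookkeeping to the references. The paper sidesteps it with a cleaner device: extend the torsor to a fibre sequence of commutative group stacks $\Aa \to P_{\Tt} \to \Zb$ with $\Tt$ the fibre over $1$, and apply $(-)^{\dual}=\Hom(-,B\G_m)$ to obtain $B\G_m \to P_{\Tt}^{\dual} \to \Aa^{\dual}$; the gerbe $\mathcal{G}_{\beta}=P_{\Tt}^{\dual}$ is then canonical by construction, and the equivalences $\D(P_{\Tt})\cong\D(\mathcal{G}_{\beta})$ and $\D(\Tt)\simeq\D^{\beta}(\Aa^{\dual})$ are quoted directly from the Chen--Zhu appendix (Theorems A.4.6 and A.7.2), with \'etale descent only needed to pass from a schematic base to the DM stack $\mathcal{S}$. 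So your route buys a more self-contained construction of the twisted kernel, while the paper's buys canonicity of $\beta$ for free and shorter verifications. Two small points to tighten in your write-up: the kernel in your construction is just $\beta$-twisted on the $\Aa^{\dual}$ factor (there is no $\alpha$ in this lemma), and after obtaining the equivalence of big derived categories one still has to pass to compact objects to get the statement for $\Perf$, which the paper does via \cite[Example 9.3]{hr}.
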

\begin{proof}
Let us denote by 
\begin{equation}\label{eqn:fibre-seq}
\Aa \to P_{\Tt} \xrightarrow{g} \Zb\end{equation}
a fibre sequence of commutative group stacks such that $g^{-1}(1) = \Tt$ as an $\Aa$-torsor. The duality functor is defined as 
$$(-)^{\dual}=\Hom(-,B\G_m).$$
It sends \eqref{eqn:fibre-seq} to
$$B\G_m \to P_{\Tt}^{\dual} \to \Aa^{\dual}.$$
Let us denote $P_{\Tt}^{\dual}$ by $\mathcal{G}_{\beta}$. As is visible from the fibre sequence above, it is the underlying stack of a $\G_m$-gerbe (or $B\G_m$-torsor). The class $\beta \in \Ext^2(\Aa^{\dual},\G_m)$.

According to \cite[Theorem A.4.6]{CZ}, there is a derived equivalence
$$\FM\colon\D(P_{\Tt}) \cong \D(\mathcal{G}_{\beta}).$$
Again, we emphasise that \emph{loc. cit.} works under the assumption that the base $\mathcal{S}$ is a scheme. We must therefore replace $\mathcal{S}$ by an \'etale atlas. Faithfully flat descent theory implies that the statement also holds over the DM stack $\mathcal{S}$.
Furthermore, we have $\D(\Tt) \simeq \D^{\beta}(\Aa^{\dual})$ (see \cite[Theorem A.7.2]{CZ}). Passing to compact objects and using \cite[Example 9.3]{hr}, we obtain the sought-for equivalence.
\end{proof}

Let $\phi\colon \Aa \to \Aa'$ denote a finite \'etale morphism of abelian schemes over $\mathcal{S}$. There is a dual morphism $\phi^{\dual}\colon \Aa'^{\dual} \to \Aa$, which is also finite \'etale. Furthermore, for every $\Aa$-torsor $\Tt$, we obtain an induced $\Aa'$-torsor $\Tt'$ by virtue of the natural map
$$H^1_{\text{\'et}}(\mathcal{S},\Aa) \to H^1_{\text{\'et}}(\mathcal{S},\Aa').$$

\begin{lemma}\label{lemma:duality-quotient}
Using the same notation as in Lemma \ref{lemma:duality}, there is a derived equivalence
$$\Perf(\Tt') \cong \Perf^{\beta'}(\Aa'^{\dual}),$$
where $\beta'=(\phi^{\dual})^*\beta$.
\end{lemma}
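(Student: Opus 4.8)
The plan is to deduce this from Lemma \ref{lemma:duality} applied to the pair of abelian schemes $\Aa'$ and its torsor $\Tt'$, and then identify the resulting gerbe with $(\phi^{\dual})^*\beta$ by functoriality of the construction in Lemma \ref{lemma:duality}. First I would recall that $\Tt'$ is, by construction, the pushout of the $\Aa$-torsor $\Tt$ along $\phi\colon \Aa \to \Aa'$; equivalently, if $\Aa \to P_{\Tt} \xrightarrow{g} \Zb$ is the fibre sequence of commutative group stacks with $g^{-1}(1) = \Tt$, then $\Tt'$ is cut out of the pushout $P_{\Tt'} = P_{\Tt} \sqcup^{\Aa} \Aa'$, which fits into a fibre sequence $\Aa' \to P_{\Tt'} \xrightarrow{g'} \Zb$. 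So the two fibre sequences are related by a map $P_{\Tt} \to P_{\Tt'}$ over $\Zb$, which restricts to $\phi$ on the copies of $\Aa$ and to the natural map $\Tt \to \Tt'$ on the degree-$1$ fibres.

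Next I would apply the duality functor $(-)^{\dual} = \Hom(-,B\G_m)$ to this map of fibre sequences. Since duality is contravariant, the map $P_{\Tt} \to P_{\Tt'}$ dualises to $P_{\Tt'}^{\dual} \to P_{\Tt}^{\dual}$, i.e. $\mathcal{G}_{\beta'} \to \mathcal{G}_{\beta}$, sitting over the dual morphism $\phi^{\dual}\colon \Aa'^{\dual} \to \Aa$ (using that $\phi^{\dual\dual} = \phi$ under biduality of abelian schemes, and that $B\G_m$ is sent to itself). Reading off the gerbe classes as in Lemma \ref{lemma:duality}, where $\beta \in \Ext^2(\Aa^{\dual},\G_m)$ is the class of the extension $B\G_m \to \mathcal{G}_{\beta} \to \Aa^{\dual}$, the commutative diagram of extensions shows precisely that $\beta' = (\phi^{\dual})^*\beta$ as an element of $\Ext^2(\Aa'^{\dual},\G_m)$; this is just pullback of the extension class along $\phi^{\dual}$. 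With this identification of $\beta'$ in hand, the derived equivalence $\Perf(\Tt') \cong \Perf^{\beta'}(\Aa'^{\dual})$ is a verbatim application of Lemma \ref{lemma:duality} to the torsor $\Tt'$ over $\Aa'$, using \cite[Theorems A.4.6, A.7.2]{CZ}, étale descent from an atlas of $\mathcal{S}$, and passage to compact objects via \cite[Example 9.3]{hr}.

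The main obstacle I anticipate is purely bookkeeping: making sure the identification $\beta' = (\phi^{\dual})^*\beta$ is correct on the nose rather than up to sign or up to the ambiguity of choosing $P_{\Tt}$ versus another representative of the torsor. The cleanest way to handle this is to observe that the entire construction $\Tt \mapsto (\Aa^{\dual},\beta)$ in Lemma \ref{lemma:duality} is functorial in the pair $(\Aa,\Tt)$ along morphisms of abelian schemes — the fibre-sequence presentation, the application of $(-)^{\dual}$, and the extraction of the gerbe class are all natural — so that the morphism $(\Aa,\Tt) \to (\Aa',\Tt')$ induced by $\phi$ automatically produces the compatibility $\beta' = (\phi^{\dual})^*\beta$. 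Since all the constructions are étale-local on $\mathcal{S}$, it suffices to check this naturality over an affine atlas, where it follows directly from the functoriality statements in \cite{CZ}. Everything else is a direct invocation of Lemma \ref{lemma:duality}.
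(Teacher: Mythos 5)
Your proposal is correct and follows essentially the same route as the paper: both reduce to identifying the dual of $P_{\Tt'}$ with $(\phi^{\dual})^*\mathcal{G}_{\beta}$ by dualising the relation between $P_{\Tt}$ and $P_{\Tt'}$, and both use that the resulting morphism of $\G_m$-gerbes over $\Aa'^{\dual}$ is automatically an equivalence before invoking the machinery of Lemma \ref{lemma:duality} for $\Tt'$. The only cosmetic difference is that the paper runs the duality through the fibre sequences involving the kernel $K$ of $\phi$ (producing a ladder with $BK^*$), whereas you dualise the map of extensions over $\Zb$ directly; this is the same argument in slightly different packaging.
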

\begin{proof}
Using the notation from the proof of Lemma \ref{lemma:duality}, we intend to establish the following duality relation of commutative group stacks:
\[
\xymatrix{
P_{\Tt} \ar[rd] \ar@{<.>}[rrrd] & & {\color{gray}\text{duality}} & & \mathcal{G}_{\beta'} \ar[ld] \\
& P_{\Tt'} \ar[rd] \ar@{<.>}[rrru] & & \mathcal{G}_{\beta} \ar[ld] & \\
& & S & &
}
\]
Hence, we are required to prove 
$$P_{\Tt'} \simeq \mathcal{G}_{\beta'}^{\dual}.$$
Let us denote the kernel of the isogeny $\phi$ by $K$. By virtue of definition, $K$ is a finite \'etale group scheme over $S$, and we have a commutative diagram where the rows are fibre sequences
\[
\xymatrix{
K \ar[r] \ar@{=}[d] & \Aa \ar[r] \ar[d] & \Aa' \ar[d] \\
K \ar[r] & P_{\Tt} \ar[r] & P_{\Tt'}.
}
\]
Applying the duality functor $(-)^{\dual} = Hom(-,B\G_m)$ we obtain the following commutative ladder (of fibre sequences):
\[
\xymatrix{
\Aa'^{\dual} \ar[r]^{\phi^{\dual}} \ar@{<-}[d] & \Aa^{\dual} \ar[r] \ar@{<-}[d] & BK^* \ar@{=}[d] \\
P_{\Tt'}^{\dual} \ar[r] & \mathcal{G}_{\beta} \ar[r] & BK^*.
}
\]
Here, we made use of the equivalence $P_{\Tt}^{\dual} \simeq \mathcal{G}_{\beta}$. The left-hand commutative square yields a morphism 
$$P_{\Tt'}^{\dual} \to (\phi^{\dual})^*\mathcal{G}_{\beta}.$$
Since it is a morphism of $B\G_m$-torsors on $\Aa'$ (i.e. $\G_m$-gerbes), it is an equivalence. This completes the proof, as we have an equivalence $(\phi^{\dual})^*\mathcal{G}_{\beta} \simeq \mathcal{G}_{\beta'}$.
\end{proof}

\begin{lemma}\label{lemma:duality-twist}
Let $\alpha$ be an arbitrary $\G_m$-gerbe on $\mathcal{S}$. Assume that the DM stacks
\[
\xymatrix{
\mathcal{U} \ar[r]^g & \mathcal{S}  & \mathcal{V} \ar[l]_h
}
\]
are \emph{Fourier-Mukai partners} relative to $\mathcal{S}$. Then, there is also a derived equivalences
$$\Perf^{g^*\alpha}(\mathcal{U}) \cong \Perf^{h^*\alpha}(\mathcal{V}).$$
\end{lemma}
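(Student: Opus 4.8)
The plan is to run the Fourier--Mukai formalism of Section~\ref{ssec:first} with \emph{twisted} kernels. By hypothesis, $\mathcal{U}$ and $\mathcal{V}$ being Fourier--Mukai partners relative to $\mathcal{S}$ provides a perfect kernel $\Kk$ on $\mathcal{U}\times_{\mathcal{S}}\mathcal{V}$, together with an inverse kernel $\widetilde{\Kk}$ on $\mathcal{V}\times_{\mathcal{S}}\mathcal{U}$, such that $\FM_{\Kk}$ and $\FM_{\widetilde{\Kk}}$ are mutually inverse $\Perf(\mathcal{S})$-linear equivalences; equivalently (see \cite[Section~4]{bfn}), the convolutions of $\Kk$ and $\widetilde{\Kk}$ over $\mathcal{S}$ are the diagonal structure sheaves $\Oo_{\Delta_{\mathcal{U}}}$ and $\Oo_{\Delta_{\mathcal{V}}}$. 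First I would observe that the structure morphism $r\colon\mathcal{U}\times_{\mathcal{S}}\mathcal{V}\to\mathcal{S}$ satisfies $r=g\circ\pr_{\mathcal{U}}=h\circ\pr_{\mathcal{V}}$, so that the $\G_m$-gerbe
\[
(g^{*}\alpha)^{-1}\boxtimes_{\mathcal{S}}(h^{*}\alpha)=\pr_{\mathcal{U}}^{*}g^{*}\alpha^{-1}\otimes\pr_{\mathcal{V}}^{*}h^{*}\alpha
\]
on $\mathcal{U}\times_{\mathcal{S}}\mathcal{V}$ is canonically isomorphic to $r^{*}(\alpha^{-1}\otimes\alpha)$, which the canonical trivialisation of $\alpha^{-1}\otimes\alpha$ renders trivial.

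Using this trivialisation, I would reinterpret the \emph{same} complex $\Kk$ as an $(g^{*}\alpha)^{-1}\boxtimes_{\mathcal{S}}(h^{*}\alpha)$-twisted perfect complex, which by Definition~\ref{def:FM} defines a $\Perf(\mathcal{S})$-linear functor $\FM_{\Kk}\colon\Perf^{g^{*}\alpha}(\mathcal{U})\to\Perf^{h^{*}\alpha}(\mathcal{V})$, and would likewise promote $\widetilde{\Kk}$ to a functor in the opposite direction. It then remains to check that these two twisted Fourier--Mukai functors stay mutually inverse. For this I would again invoke that a composition of Fourier--Mukai functors is computed by convolution of kernels: the convolution of the two twisted kernels lives on $\mathcal{U}\times_{\mathcal{S}}\mathcal{U}$ with respect to the gerbe $(g^{*}\alpha)^{-1}\boxtimes_{\mathcal{S}}(g^{*}\alpha)$, which is once more canonically trivial, and under this trivialisation the twisted convolution is identified with the untwisted one, namely $\Oo_{\Delta_{\mathcal{U}}}$ carrying its (trivial) twist along the diagonal; symmetrically on the $\mathcal{V}$ side. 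This would produce the asserted equivalence $\Perf^{g^{*}\alpha}(\mathcal{U})\simeq\Perf^{h^{*}\alpha}(\mathcal{V})$.

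The main obstacle I expect is bookkeeping: one must verify that the canonical trivialisations chosen on $\mathcal{U}\times_{\mathcal{S}}\mathcal{V}$, on $\mathcal{V}\times_{\mathcal{S}}\mathcal{U}$, and on the triple products are mutually compatible, so that the convolution of the twisted kernels really is the untwisted convolution with its trivial twist, rather than a twist by a nonzero class. An alternative route, which I would take if the direct bookkeeping becomes unwieldy, is the conceptual reformulation: $\Perf^{\alpha}(\mathcal{S})$ is an invertible $\Perf(\mathcal{S})$-module category (with inverse $\Perf^{\alpha^{-1}}(\mathcal{S})$), and for any of the stacks $\mathcal{X}\to\mathcal{S}$ under consideration there is a base-change equivalence $\Perf^{f^{*}\alpha}(\mathcal{X})\simeq\Perf(\mathcal{X})\otimes_{\Perf(\mathcal{S})}\Perf^{\alpha}(\mathcal{S})$ — a form of the projection formula for gerbes, deduced from \cite{bfn} and the twisted variants of \cite{CZ,hr} by passing to compact objects. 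One then simply applies the autoequivalence $-\otimes_{\Perf(\mathcal{S})}\Perf^{\alpha}(\mathcal{S})$ of $\Perf(\mathcal{S})$-module categories to the equivalence $\Perf(\mathcal{U})\simeq\Perf(\mathcal{V})$ furnished by the hypothesis. In this variant the work is concentrated entirely in establishing that base-change equivalence, after which the conclusion is formal.
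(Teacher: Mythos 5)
Your construction is the same as the paper's: both exploit that $\pr_{\mathcal{U}}^*g^*\alpha$ and $\pr_{\mathcal{V}}^*h^*\alpha$ are both canonically identified with the pullback of $\alpha$ along the structure map to $\mathcal{S}$, so the class $(g^*\alpha)^{-1}\boxtimes h^*\alpha$ splits and the given kernel can be reinterpreted as a twisted kernel, yielding a functor $\Perf^{g^*\alpha}(\mathcal{U})\to\Perf^{h^*\alpha}(\mathcal{V})$. Where you diverge is in verifying that this twisted functor is an equivalence. You propose either (i) convolving the twisted kernels and checking that all chosen trivialisations are compatible on triple products over $\mathcal{S}$ --- the bookkeeping you rightly flag as the main obstacle and leave incomplete --- or (ii) the module-theoretic route of tensoring the untwisted equivalence with the invertible $\Perf(\mathcal{S})$-module $\Perf^{\alpha}(\mathcal{S})$, which requires first establishing the base-change identification $\Perf^{f^*\alpha}(\mathcal{X})\simeq\Perf(\mathcal{X})\otimes_{\Perf(\mathcal{S})}\Perf^{\alpha}(\mathcal{S})$. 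The paper finishes more cheaply: since $\alpha$ splits \'etale-locally on $\mathcal{S}$ and whether the unit and counit of the adjunction are equivalences can be checked \'etale-locally over $\mathcal{S}$, the twisted Fourier--Mukai functor is locally identified with the untwisted one, which is an equivalence by hypothesis; one then passes to compact objects via \cite[Example 9.3]{hr}. This local splitting argument sidesteps entirely the triple-product compatibilities of trivialisations, and is the same device the paper uses again in the proof of Theorem \ref{thm: twisted F-M between Hitchin systems}. Your alternative (ii) would also work and is more conceptual, but it shifts the burden to proving the base-change equivalence, which the paper never needs.
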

\begin{proof}
Let $\F$ be a complex of quasi-coherent sheaves on $\mathcal{U} \times_{\mathcal{S}} \mathcal{V}$ which induces an equivalence $\FM_{\F}\colon \D(\mathcal{U}) \xrightarrow{\cong} \D(\mathcal{V})$. The exterior product $g^*\alpha^{-1} \boxtimes h^*\alpha$
 of gerbes on $\mathcal{U} \times_{\mathcal{S}} \mathcal{V}$
splits, since $pr_1^*g^*\alpha \simeq pr_2^*h^*\alpha$. We may therefore consider $\F$ as being $g^*\alpha^{-1} \boxtimes h^*\alpha$-twisted, and hence obtain a twisted Fourier-Mukai functor
$${}^{g^*\alpha}\FM_{\F}^{h^*\alpha}\colon \D{g^*\alpha}(\mathcal{U}) \to \D^{h^*\alpha}(\mathcal{V}).$$
\'Etale locally on $\mathcal{S}$, the gerbe $\alpha$ splits. We therefore see that ${}^{g^*\alpha}\FM_{\F}^{h^*\alpha}$ is an equivalence. Passing to compact objects, we obtain the desired equivalence of DG categories of perfect complexes $\Perf^{g^*\alpha}(\mathcal{U}) \cong \Perf^{h^*\alpha}(\mathcal{V})$. Here, we use \cite[Example 9.3]{hr}.
\end{proof}

We will also need \cite[Proposition 7.1]{ht}. For this purpose we recall some notation. Recall that elements of $\Gamma=J[n]$ are by definition $n$-torsion line bundles. By virtue of the Kummer sequence
$$0 \to \mu_r \to \G_m \xrightarrow{[r]} \G_m \to 0,$$
an element $\gamma \in \Gamma$ of order $r$ gives rise to a $\mu_{r}$-torsors $\widetilde{X}_{\gamma}\to X$. We will denote by $\widetilde{M}_{\gamma}$ the moduli space $\Mm_{\frac{n}{r}}(\widetilde{X}_{\gamma})$. The $\mu_r$-action on $\widetilde{X}_{\gamma}$ induces an action on $\widetilde{M}_{\gamma}$.

\begin{proposition}[Hausel--Thaddeus]\label{prop:ht}
The group $\mu_r$ acts freely on $\widetilde{M}_{\gamma}$. The fixpoint locus $\Mm^{\gamma}$ is isomorphic to $\widetilde{M}_{\gamma}/\mu_r$.
\end{proposition}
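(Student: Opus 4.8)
The plan is to identify the fixpoint locus $\Mm^{\gamma}$, on which $\gamma$ acts by $(E,\theta)\mapsto (E\otimes\gamma,\theta\otimes\id)$, with the quotient $\widetilde{M}_{\gamma}/\mu_{r}$ by means of pushforward along the cyclic étale cover $p\colon\widetilde{X}_{\gamma}\to X$ of degree $r$. Two inputs are used repeatedly: (i) $p$ is finite étale with $p^{*}\gamma\cong\Oo_{\widetilde{X}_{\gamma}}$ and $p_{*}\Oo_{\widetilde{X}_{\gamma}}\cong\bigoplus_{i=0}^{r-1}\gamma^{i}$ as a sheaf of $\Oo_{X}$-algebras, carrying the tautological $\mu_{r}$-action by deck transformations, and moreover $p^{*}p_{*}\widetilde{E}\cong\bigoplus_{\sigma\in\mu_{r}}\sigma^{*}\widetilde{E}$; (ii) in the coprime range a semistable $D$-twisted Higgs bundle is stable, hence simple. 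Write $\widetilde{D}=p^{*}D$.

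First I would construct the morphism $q\colon\widetilde{M}_{\gamma}\to\Mm$, $(\widetilde{E},\widetilde{\theta})\mapsto(p_{*}\widetilde{E},\,p_{*}\widetilde{\theta})$, the Higgs field being transported by the projection formula $p_{*}(\widetilde{E}\otimes\widetilde{D})\cong p_{*}\widetilde{E}\otimes D$. A slope argument shows this is well defined: since $p^{*}$ preserves semistability and étale pushforward preserves degree while multiplying rank by $r$, a destabilising subsheaf of $p_{*}\widetilde{E}$ would, after pullback and projection to one of the $\sigma^{*}\widetilde{E}$, contradict stability of $\widetilde{E}$; so $p_{*}\widetilde{E}$ is semistable, hence stable as $(n,d)=1$. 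The projection formula also gives $p_{*}\widetilde{E}\otimes\gamma\cong p_{*}\widetilde{E}$, so $q$ lands in $\Mm^{\gamma}$, and $p\circ\sigma=p$ makes it $\mu_{r}$-invariant; thus $q$ descends to $\bar q\colon\widetilde{M}_{\gamma}/\mu_{r}\to\Mm^{\gamma}$. For the converse, given $(E,\theta)\in\Mm^{\gamma}$ I would choose $\psi\colon E\otimes\gamma\xrightarrow{\sim}E$ intertwining $\theta$; by simplicity of $E$ the $r$-fold composite $\psi^{\circ r}$ (formed with a fixed trivialisation $\gamma^{\otimes r}\cong\Oo_{X}$) is a scalar, and one of the $r$ solutions of $\lambda^{r}\psi^{\circ r}=\id$ rescales $\psi$ to a normalised isomorphism. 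A normalised $\psi$ is precisely a $p_{*}\Oo_{\widetilde{X}_{\gamma}}$-module structure on $E$, equivalently — $p$ being affine — a torsion-free sheaf $\widetilde{E}$ on $\widetilde{X}_{\gamma}$ with $p_{*}\widetilde{E}\cong E$, while $\gamma$-equivariance of $\theta$ makes it $p_{*}\Oo_{\widetilde{X}_{\gamma}}$-linear, i.e. of the form $p_{*}\widetilde{\theta}$; running the slope inequality backwards (pushforward divides slope by $r$) shows $(\widetilde{E},\widetilde{\theta})$ is semistable, hence stable as $(n/r,d)=1$. The $r$ normalisations form a $\mu_{r}$-torsor, so on points this construction inverts $\bar q$; performing it over an arbitrary base — legitimate because all moduli functors in sight are fine in the coprime range — upgrades it to a scheme morphism inverse to $\bar q$ up to the $\mu_{r}$-action, proving $\bar q$ is bijective.

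Next I would prove freeness of the $\mu_{r}$-action. Suppose $\sigma^{k}\in\mu_{r}$ has order $m>1$ and fixes $\widetilde{E}$; normalising an isomorphism $(\sigma^{k})^{*}\widetilde{E}\cong\widetilde{E}$ exactly as above equips $\widetilde{E}$ with a $\mathbb{Z}/m$-equivariant structure for the free action of $\langle\sigma^{k}\rangle$, so $\widetilde{E}$ descends along the degree-$m$ étale cover $p_{m}\colon\widetilde{X}_{\gamma}\to\widetilde{X}_{\gamma}/\langle\sigma^{k}\rangle$: $\widetilde{E}\cong p_{m}^{*}E''$. Then $m\mid\deg\widetilde{E}$, while $\deg\widetilde{E}=\deg p_{*}\widetilde{E}=d$ and $m\mid r\mid n$, contradicting $(n,d)=1$. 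Hence $\mu_{r}$ acts freely, $\widetilde{M}_{\gamma}/\mu_{r}$ is a smooth variety, and $\bar q$, a bijective morphism of smooth complex varieties, is an isomorphism: concretely, the finite map $p$ satisfies $Rp_{*}=p_{*}$, so the hypercohomology of the Higgs deformation complex of $(p_{*}\widetilde{E},p_{*}\widetilde{\theta})$ on the $\gamma$-fixed locus agrees with that of $(\widetilde{E},\widetilde{\theta})$, showing $\bar q$ is étale, which together with bijectivity gives the claim.

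The step I expect to be the real obstacle is not any isolated point but the bookkeeping needed to turn the pointwise dictionary ``$(E,\theta)\in\Mm^{\gamma}$ together with a normalised $\psi$'' $\leftrightarrow$ ``$(\widetilde{E},\widetilde{\theta})\in\widetilde{M}_{\gamma}$'' into an honest isomorphism of moduli \emph{schemes}: one must check this correspondence is algebraic in families, that semistability is preserved in both directions with the rank and degree normalisations of $\Mm$ and $\widetilde{M}_{\gamma}$ consistently matched, and that the resulting bijection is étale. Once this is set up, the coprimality hypothesis $(n,d)=1$ is what powers both the freeness of the $\mu_{r}$-action and the étaleness of $\bar q$. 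This is carried out in \cite[Proposition 7.1]{ht}, whose proof I would follow.
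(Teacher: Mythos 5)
Your proposal is correct and matches the paper's treatment: the paper proves this proposition simply by citing \cite[Proposition 7.1]{ht}, and your sketch is essentially a reconstruction of that argument (pushforward along the cyclic \'etale cover, normalising the intertwiner via simplicity, and freeness plus \'etaleness from the coprimality $(n,d)=1$), which you yourself note you would follow.
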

\begin{proof}
This is \cite[Proposition 7.1]{ht}
\end{proof}
%

We will need a further refinement of this result. Let $L_{\gamma}$ denote the line bundle induced by the $\Gamma$-equivariant gerbe $\alpha$ on $\Mm^{\gamma}$ (see \cite[Section 4]{ht}). The relation 
$$L_{\gamma_1}\otimes L_{\gamma_2} \simeq L_{\gamma_1\gamma_2}$$
for $\gamma_1,\gamma_2 \in \langle \gamma \rangle = c$ gives rise to the structure of a sheaf of algebras on $\bigoplus_{\gamma' \in c} L_{\gamma'}$. According to Lemma \ref{lemma:cyclic} it is commutative. The relative spectrum will be denoted by $C_{c}^{\alpha}$. It is a $c^*$-torsor over the fixpoint locus $\Mm^{c}= \Mm^{\gamma}$.

\begin{corollary}\label{cor:Cc}
There is an isomorphism
\[
\xymatrix{
C_{c}^{\alpha} \ar[rd] \ar[rr]^{\simeq} & & \widetilde{M}_{\gamma} \ar[ld] \\
& \Mm^{\gamma}. & 
}
\]
\end{corollary}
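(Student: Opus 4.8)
The plan is to recognise both $C_c^\alpha$ and $\widetilde{M}_\gamma$ as torsors over $\Mm^\gamma$ under a pair of Cartier-dual finite group schemes and to identify the two torsors by comparing their associated graded sheaves of algebras. Write $r = \ord(\gamma)$, so that $c = \langle \gamma\rangle \cong \Zb/r\Zb$ has Cartier dual $c^* \cong \mu_r$. By Proposition~\ref{prop:ht} the quotient map $p\colon \widetilde{M}_\gamma \to \Mm^\gamma$ is a torsor under the deck group of the Kummer cover $\widetilde{X}_\gamma \to X$, which is $\mu_r$; hence both $p$ and $\tau\colon C_c^\alpha \to \Mm^\gamma$ are torsors under $c^* \cong \mu_r$. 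Since a torsor under a diagonalisable group $c^*$ over a base $Z$ is the same datum as a $c$-graded locally free $\Oo_Z$-algebra which is invertible in each graded component — with $\Spec_Z$ recovering the torsor, and morphisms of torsors matching morphisms of graded algebras — it suffices to produce an isomorphism of the two $c$-graded sheaves of $\Oo_{\Mm^\gamma}$-algebras.

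On the $C_c^\alpha$ side this graded algebra is $\tau_*\Oo_{C_c^\alpha} = \bigoplus_{\gamma' \in c} L_{\gamma'}$, with multiplication given by the maps $m_{\gamma',\gamma''}$ arising from the $\Gamma$-equivariant gerbe $\alpha$. On the $\widetilde{M}_\gamma$ side, freeness of the $\mu_r$-action (Proposition~\ref{prop:ht}) makes $p_*\Oo_{\widetilde{M}_\gamma}$ a locally free $\Oo_{\Mm^\gamma}$-algebra of rank $r$ with a $\mu_r$-action, so it decomposes into isotypic line bundles $p_*\Oo_{\widetilde{M}_\gamma} \simeq \bigoplus_{\chi \in \widehat{\mu_r}} \mathcal{M}_\chi$, the algebra structure refining to maps $\mathcal{M}_\chi \otimes \mathcal{M}_{\chi'} \to \mathcal{M}_{\chi\chi'}$. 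Under the canonical identification $\widehat{\mu_r} \cong c$, what remains is a compatible family of isomorphisms $\mathcal{M}_{\chi_{\gamma'}} \simeq L_{\gamma'}$.

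This last identification is where we invoke the construction of the $L_{\gamma'}$ in \cite[Section~4]{ht}. For a Higgs bundle $\widetilde{E}$ on $\widetilde{X}_\gamma$ with pushforward $(E,\theta)$ to $X$, the canonical isomorphism $(E,\theta)\otimes \gamma' \simeq (E,\theta)$ comes, via the projection formula, from a trivialisation of the pullback of $\gamma'$ to $\widetilde{X}_\gamma$; the $\G_m$-torsor of such trivialisations is exactly $(L_{\gamma'})^{\times}$, and by construction it is the $\chi_{\gamma'}$-isotypic component of the $\mu_r$-cover $p$. Unwinding the definitions identifies the cocycle maps $m_{\gamma',\gamma''}$ with the algebra multiplication on $\bigoplus_\chi \mathcal{M}_\chi$, so applying $\Spec_{\Mm^\gamma}$ to the resulting isomorphism of graded $\Oo_{\Mm^\gamma}$-algebras yields the asserted isomorphism $C_c^\alpha \xrightarrow{\simeq} \widetilde{M}_\gamma$ over $\Mm^\gamma$.

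The main obstacle is precisely this final step: matching the Hausel--Thaddeus description of $L_{\gamma'}$ (as the line of automorphisms of the trivialised equivariant gerbe on the fixed locus) with the isotypic eigensheaves of the spectral cover $\widetilde{M}_\gamma \to \Mm^\gamma$, and verifying that the two associative multiplications — the gerbe cocycle $m_{\gamma',\gamma''}$ on the one hand, and composition of the cover's descent data on the other — agree. This is bookkeeping internal to the formalism of \cite[Section~4]{ht}, but it is the only non-formal input; everything else is the standard equivalence between torsors under a diagonalisable group and strongly graded algebras.
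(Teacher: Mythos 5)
Your overall strategy --- view both $C_c^{\alpha}$ and $\widetilde{M}_{\gamma}$ as $\mu_r$-torsors over $\Mm^{\gamma}$ and compare them --- is the right starting point, and your reduction to an isomorphism of $c$-graded sheaves of algebras is legitimate. But the step you defer as ``bookkeeping'' is in fact the entire content of the corollary, and as sketched it does not go through in families. Your identification of the fibre of $L_{\gamma'}$ with the $\G_m$-torsor of isomorphisms $(E,\theta)\otimes\gamma'\simeq(E,\theta)$ (via the projection formula and a trivialisation of $\pi^*\gamma'$ on $\widetilde{X}_{\gamma}$) is a pointwise statement; to promote it to an isomorphism of line bundles $\mathcal{M}_{\chi_{\gamma'}}\simeq L_{\gamma'}$ over $\Mm^{\gamma}$, compatibly with the multiplications, you must make the construction vary algebraically over the moduli space. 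That is exactly where the nontriviality of $\alpha$ bites: there is no universal Higgs bundle over $\Mm^{\gamma}\times X$ (the gerbe $\alpha$ is precisely the obstruction), so your fibrewise trivialisations have no evident reason to glue, and your write-up never invokes the input that makes them glue.

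The paper supplies that missing input and, with it, a shortcut that lets you avoid matching graded algebras component by component. Since $d'$ is coprime to $n/r$, there is a universal family $(\widetilde{\mathcal{E}},\widetilde{\theta})$ on $\widetilde{M}_{\gamma}\times\widetilde{X}_{\gamma}$; its pushforward along $\pi$ is a $\mu_r$-equivariant universal family of rank-$n$ Higgs bundles on $\widetilde{M}_{\gamma}\times X$. Because $\alpha$ measures the obstruction to such a family, this shows that $\alpha$, together with its $c$-equivariant structure, splits after pullback along $\widetilde{M}_{\gamma}\to\Mm^{\gamma}$; such a splitting is precisely a compatible system of trivialisations of the pulled-back $L_{\gamma'}$, $\gamma'\in c$, i.e.\ a morphism of $\mu_r$-torsors $\widetilde{M}_{\gamma}\to C_c^{\alpha}$ over $\Mm^{\gamma}$, and any morphism of torsors is automatically an isomorphism. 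So your projection-formula mechanism is the right one, but it must be run with the universal family on the cover (using the coprimality hypothesis, which your argument never uses), and the ``morphism of torsors is an isomorphism'' principle then spares you the verification that the two associative multiplications agree. As it stands, your proposal has a genuine gap at exactly this point.
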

\begin{proof}
It suffices to construct a morphism of $\mu_r$-torsors $\widetilde{M}_{\gamma} \to C_{c}^{\alpha}$, as it will then automatically be an isomorphism. Recall that $\widetilde{M}_{\gamma}$ is the moduli space of rank $\frac{n}{r}$ degree $d'$ Higgs bundles degree on the finite \'etale cover $\widetilde{X}_{\gamma} \xrightarrow{\pi} X$. Since $d'$ is coprime to $\frac{n}{r}$, we see that there exists a universal family $(\widetilde{\mathcal{E}},\widetilde{\theta})$ on $\widetilde{M}_{\gamma} \times \widetilde{X}_{\gamma}$. The pushforward $(\pi_*\widetilde{\mathcal{E}},\tr(\widetilde{\theta}))$ is a universal family of rank $n$ and degree $d$. It is $\mu_r$-equivariant with respect to the action on $\widetilde{M}_{\gamma}$. Since the gerbe $\alpha$ measures the obstruction to the existence of a universal family, this implies that $\alpha$ splits as a $c$-equivariant gerbe when pulled back along $\widetilde{M}_{\gamma} \to \Mm^{\gamma}$. This concludes the proof.
\end{proof}

\begin{corollary}\label{cor:no-p-torsion}
Let $p$ be a prime and let $\Gamma' \subset \Gamma$ denote a subgroup of order prime to $p$. Then, we have $\KU^{*,\tilde{\alpha}}_{\Gamma'}(\Mm^{\an,\gamma})[p] = 0$.
\end{corollary}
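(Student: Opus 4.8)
The plan is to reduce everything to torsion-freeness of $\KU^{*}$ of $\GL_{m}$-Higgs moduli spaces, for which Theorem \ref{thm:GLn} is available, by means of a twisted version of the Atiyah--Segal localisation theorem that becomes \emph{integrally} valid because $|\Gamma'|$ is prime to $p$. First I would observe that $\KU^{*,\tilde\alpha}_{\Gamma'}(\Mm^{\an,\gamma})[p]=0$ is equivalent to $\KU^{*,\tilde\alpha}_{\Gamma'}(\Mm^{\an,\gamma})\otimes\Zb_{(p)}$ being torsion-free, and that in $\Zb_{(p)}$ the integer $|\Gamma'|$ is a unit. Hence, after $p$-localisation, the spectrum of $R(\Gamma')$ splits as a disjoint union indexed by the elements $\delta\in\Gamma'$ (recall $\Gamma'$ is abelian), and the tom Dieck--Atiyah--Segal decomposition — in its $\tilde\alpha$-twisted form — gives
$$\KU^{*,\tilde\alpha}_{\Gamma'}(\Mm^{\an,\gamma})\otimes\Zb_{(p)}\;\simeq\;\bigoplus_{\delta\in\Gamma'}\Big(\KU^{*}\big(\Mm^{\an,\langle\gamma,\delta\rangle};\mathcal{L}_{\delta}\big)\otimes\Zb_{(p)}\Big)^{\Gamma'}\otimes_{\Zb_{(p)}}\Zb_{(p)}[\zeta_{\ord(\delta)}],$$
where $\Mm^{\langle\gamma,\delta\rangle}=(\Mm^{\gamma})^{\delta}$ is the fixed locus of the subgroup generated by $\gamma$ and $\delta$, and $\mathcal{L}_{\delta}$ is the finite-order rank-one local system on it cut out by the equivariant structure carried by $\tilde\alpha$ (in the notation of Section \ref{sec:autoduality} and \cite[Section 4]{ht}, a topological incarnation of the line bundles $L_{\delta}$). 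Since $\Zb_{(p)}[\zeta_{\ord(\delta)}]$ is a torsion-free $\Zb_{(p)}$-module and $\Gamma'$-invariants of a $\Zb_{(p)}$-module are a direct summand (via the averaging idempotent, $|\Gamma'|$ being invertible), it suffices to prove that each $\KU^{*}(\Mm^{\an,\langle\gamma,\delta\rangle};\mathcal{L}_{\delta})$ has no $p$-torsion.

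To handle these pieces I would use Proposition \ref{prop:ht} together with Corollary \ref{cor:Cc}. Iterating Proposition \ref{prop:ht} (first for $\gamma$, then for the residual element inside $\Mm^{\gamma}=\widetilde M_{\gamma}/\mu_{r}$, and so on) identifies $\Mm^{\langle\gamma,\delta\rangle}$, up to a \emph{free} quotient by a finite group whose order divides $|\Gamma'|$ and is therefore prime to $p$, with a moduli space of $\GL_{m}$-Higgs bundles of rank $m=n/|\langle\gamma,\delta\rangle_{\mathrm{eff}}|$ and coprime degree on a covering curve; here one uses that $\langle\gamma\rangle\subseteq\Gamma'$ (as holds in our applications, so that $\ord(\gamma)$ is prime to $p$). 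On that $\GL_{m}$-Higgs moduli space Corollary \ref{cor:Cc} says precisely that the pulled-back gerbe, and with it the local system $\mathcal{L}_{\delta}$, become trivial, so $\KU^{*}$ there is torsion-free by Theorem \ref{thm:GLn} and Proposition \ref{prop:cohomology-KU}. Descending back along the prime-to-$p$ free quotients, and extracting the relevant isotypic summand (again via averaging, legitimate in $\Zb_{(p)}$), preserves torsion-freeness; this gives the required vanishing for each summand. The whole step can alternatively be packaged by splitting off a cyclic subgroup on which $\tilde\alpha$ is commutative (Lemma \ref{lemma:cyclic}) and applying Lemma \ref{lemma:induction-step} to replace $\Gamma'$ by a proper quotient, arguing by induction on $|\Gamma'|$.

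The hard part is the bookkeeping of twists, in two respects. First, one must establish the twisted, abelian form of the decomposition displayed above with honest finite-order local systems $\mathcal{L}_{\delta}$; over $\Cb$ this is \cite[Theorem 3.9]{fht}, but here it must be run integrally after inverting $|\Gamma'|$, which is possible exactly because the relevant completion/localisation primes of $R(\Gamma')$ are those at the points of $\Spec R(\Gamma')[1/|\Gamma'|]$. Second, and more delicately, the local systems $\mathcal{L}_{\delta}$ have order dividing that of $\tilde\alpha$, which need \emph{not} be prime to $p$, so a naive "summand of a torsion-free module'' argument is unavailable on the fixed loci themselves; this is precisely why one must pass to the cover furnished by Corollary \ref{cor:Cc}, over which $\mathcal{L}_{\delta}$ trivialises and whose degree is prime to $p$. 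Controlling this interplay — finite-order gerbe/local-system twists versus a group action of order prime to $p$ — is the crux, and the combination of Corollary \ref{cor:Cc} (trivialisation on a prime-to-$p$ cover) with Theorem \ref{thm:GLn} (torsion-freeness on the $\GL$-side) is what makes it go through.
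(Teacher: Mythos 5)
Your geometric engine is the paper's: pass through Proposition \ref{prop:ht} and Corollary \ref{cor:Cc} to a prime-to-$p$ cover that is a lower-rank $\GL$-Higgs moduli space on which the twist trivialises, and quote torsion-freeness there (Corollary \ref{cor:GLn}). But the group-theoretic reduction you build this on has a genuine gap: the displayed element-indexed decomposition of $\KU^{*,\tilde\alpha}_{\Gamma'}(\Mm^{\an,\gamma})\otimes\Zb_{(p)}$ into $\mathcal{L}_{\delta}$-twisted $\KU$ of fixed loci is not available at this level of coefficients. The result you cite, \cite[Theorem 3.9]{fht}, is proved with \emph{complex} coefficients, and complexification kills exactly the torsion you are trying to control; the remark that it ``must be run integrally after inverting $|\Gamma'|$'' because the relevant primes live in $\Spec R(\Gamma')[1/|\Gamma'|]$ is not an argument. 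What Segal-type localisation gives you integrally at a prime $\pf\subset R(\Gamma')$ with support $c$ is concentration on the $c$-fixed locus as an $R(\Gamma')_{\pf}$-module; the further identification of that localised piece with ordinary $K$-theory twisted by an honest finite-order local system $\mathcal{L}_{\delta}$ is precisely the delicate step, and in the twisted setting it is exactly what the paper's proof is structured to avoid. (You also slip in the hypothesis $\langle\gamma\rangle\subseteq\Gamma'$, i.e.\ $\ord(\gamma)$ prime to $p$, which is not part of the statement.)

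The paper instead argues by induction on the rank $n$, localising at each prime $\pf\subset R(\Gamma')$ above $p$ and splitting into two cases according to the support cyclic subgroup $c$. If $c$ is trivial, the Atiyah--Segal completion theorem identifies the localisation with Borel-equivariant ($p$-localised) twisted $K$-theory, and since $p\nmid|\Gamma'|$ the homotopy fixed points inherit torsion-freeness from Corollary \ref{cor:GLn}; no fixed-point decomposition with local systems is needed. If $c$ is nontrivial, Lemma \ref{lemma:cyclic} plus the categorical equivalence of Lemma \ref{lemma:induction-step}, combined with Corollary \ref{cor:Cc} and Proposition \ref{prop:ht}, replaces $(\Mm^{\gamma},\Gamma',\alpha)$ by a rank-$\tfrac{n}{r}$ moduli space with group $\Gamma'/c$, and induction on rank closes the argument. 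Your closing alternative (``split off a cyclic subgroup and induct on $|\Gamma'|$'') is essentially this route, but as stated it overlooks that Lemma \ref{lemma:induction-step} requires the split-off subgroup to act trivially on the space, which is what concentration at $\pf$ secures, and that the induction must bottom out in the completion-theorem case rather than in your unproven integral decomposition. Repairing your write-up means either proving that decomposition $p$-locally in the twisted setting, or reorganising along the paper's two-case localisation argument.
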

\begin{proof}
If the rank is $1$ this assertion holds, since the moduli space in question is then a cotangent bundle of an abelian variety. Assume by induction that the assertion holds for all ranks $< n$.

We will apply the concentration theorem (a.k.a. localisation theorem). Let $\pf \subset R(\Gamma')$ be a prime ideal above $p$. We denote by $c \subset \Gamma'$ the corresponding cyclic subgroup. There are two cases to consider.

\noindent\textit{If $c$ is the trivial subgroup}: then $\KU^{*,\tilde{\alpha}}_{\Gamma'}(\Mm^{\an,\gamma})_{\pf}$ is simply given by Borel-equivariant twisted complex $K$-theory localised at $p$:
$$\KU^{*,\tilde{\alpha}}_{\Gamma'}(\Mm^{\an,\gamma})_{\pf} \simeq \pi_*\left(\KU^{\tilde{\alpha}}(\Mm^{\an,\gamma})^{h\Gamma'}\right) \otimes \Zb_{(p)}.$$
This follows from the Atiyah--Segal completion theorem. By Corollary \ref{cor:GLn}, $\KU^{\tilde{\alpha}}(\Mm^{\an,\gamma})$ does not have $p$-torsion. Since $\Gamma'$ is also assumed to be $p$-torsion-free, we see that the same property holds for homotopy invariants $\KU^{\tilde{\alpha}}(\Mm^{\an,\gamma})^{h^{\Gamma'}}$.

\noindent\textit{Let $c=\langle \gamma \rangle$, where $\gamma \neq 1$}: The derived equivalence of Lemma \ref{lemma:duality}
$$\Perf^{\alpha}([\Mm^{\gamma}/\Gamma']) \cong \Perf^{\alpha}([C_c^{\alpha}/(\Gamma'/c)])$$
implies 
$$\KU^{\tilde{\alpha}}_{\Gamma'}(\Mm^{\an,\gamma}) \simeq \KU^{\tilde{\alpha}}_{\Gamma'/c}(C^{\alpha}_c).$$
By virtue of Corollary \ref{cor:Cc}, the right-hand side can be rewritten as $\KU^{\tilde{\alpha}}_{\Gamma'/c}(\widetilde{M}_c)$. As explained in Proposition \ref{prop:ht}, this space is itself isomorphic to a moduli space of Higgs bundles of rank $\frac{n}{r}$ (where $r$ is the order of $c$) on a curve $\widetilde{X}_c$. We have therefore reduced the question of vanishing of $p$-torsion to the one for a lower rank $\frac{n}{r}$. This concludes the proof by induction.
\end{proof}

Finally, we will need the following lemma which will allow us to infer vanishing of torsion.

\begin{lemma}\label{lemma:p-torsion}
Let $\F$ be a locally constant sheaf of spectra on a topological torus $\Tb^m=(\Sb^1)^m$ with fibre $\Eb\in \Spectra$ having finitely generated homotopy groups. Assume that $\F$ is trivialisable after pullback along the map
$T \xrightarrow{[p^{\nu}]} T$ of degree $p^{\nu}$ where $p$ is a fixed prime. Then, if $\pi_*(\Eb)[p] \neq 0$, we also have $\pi_{*}(\Gamma(\Tb^m,\F))[p] \neq 0$.
\end{lemma}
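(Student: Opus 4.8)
The plan is to reduce the statement to the case of a single circle and then detect the torsion by a count of $\Sb/p$-homotopy, where $\Sb/p$ denotes the mod $p$ Moore spectrum.

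First I would unwind the topology. Since $\Tb^m=(\Sb^1)^m=K(\Zb^m,1)$, a locally constant sheaf of spectra on $\Tb^m$ is the same datum as a spectrum $\Eb$ with a $\Zb^m$-action, and $\Gamma(\Tb^m,\F)\simeq\Eb^{h\Zb^m}$. The hypothesis that $[p^\nu]^{*}\F$ is trivial says that the commuting monodromy operators $\psi_1,\dots,\psi_m\in\Aut(\Eb)$ satisfy $\psi_i^{p^\nu}\simeq\id$. Iterating the identification $\Eb^{h\Zb}=\fib(1-\psi\colon\Eb\to\Eb)$ — or, geometrically, writing $\Tb^m=\Tb^{m-1}\times\Sb^1$ with projection $\pr$ and using proper base change, which sends $\F$ to a locally constant sheaf on $\Tb^{m-1}$ that is again trivialised by $[p^\nu]$ and has fibre $\fib(1-\psi_m\colon\Eb\to\Eb)$ — one reduces to the following one-circle assertion, which one then applies $m$ times: if $Y$ is a spectrum with finitely generated homotopy groups and $\psi\in\Aut(Y)$ satisfies $\psi^{p^\nu}\simeq\id$, then $\pi_*(Y)[p]\neq 0$ implies $\pi_*\big(\fib(1-\psi\colon Y\to Y)\big)[p]\neq 0$. (Finite generation is preserved, since the fibre of a map of such spectra has finitely generated homotopy by the long exact sequence.)

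For the one-circle assertion, write $W=\fib(1-\psi)$. Both $-\otimes\Sb/p$ and $-\otimes\Qb$ commute with $\fib$, and $\pi_*(Y)[p]\neq0$ is equivalent to $\dim_{\Fb_p}\pi_n(Y\otimes\Sb/p)>\operatorname{rank}_\Zb\pi_n(Y)$ for some $n$, via the universal coefficient sequence $0\to\pi_nY\otimes\Fb_p\to\pi_n(Y\otimes\Sb/p)\to(\pi_{n-1}Y)[p]\to0$ (in the $\KU$-linear situation in which the lemma is applied all these groups are $2$-periodic $\Fb_p$-vector spaces, so this is a literal comparison of dimensions; in general one argues with lengths). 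Running the long exact sequences of $1-\psi$ on $\pi_*(Y\otimes\Sb/p)$ and on $\pi_*Y\otimes\Qb$, and using $\dim\ker=\dim\coker$ for an endomorphism of a finite-dimensional vector space, the problem reduces to showing $\sum_n\dim_{\Fb_p}\ker\!\big(1-\psi\mid\pi_n(Y\otimes\Sb/p)\big)>\sum_n\dim_\Qb\ker\!\big(1-\psi\mid\pi_nY\otimes\Qb\big)$. Applying $\psi$-invariants (which is left exact) to the universal coefficient sequence and then to $0\to\pi_nY\xrightarrow{p}\pi_nY\to\pi_nY\otimes\Fb_p\to0$, the left-hand side is bounded below, term by term, by $\dim_{\Fb_p}\big((\pi_nY)^\psi\otimes\Fb_p\big)=\operatorname{rank}_\Zb(\pi_nY)^\psi+c_n$, where $c_n$ is the number of cyclic $p$-power summands of $(\pi_nY)^\psi$, while $\operatorname{rank}_\Zb(\pi_nY)^\psi=\dim_\Qb\ker\!\big(1-\psi\mid\pi_nY\otimes\Qb\big)$; so it all comes down to showing $c_{n_0}>0$ for some $n_0$.

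The step I expect to be the main obstacle is precisely this last point: that the $p$-torsion of $\pi_*Y$ is not washed out on passing to $\psi$-invariants — spectral-sequentially, that the torsion detected at the bottom of the homotopy fixed point spectral sequence of $\fib(1-\psi)$ is not killed by differentials. This is where the $p$-power order of $\psi$ is essential. If $\pi_{n_0}Y$ has $p$-torsion, then $(\pi_{n_0}Y)[p]$ is a nonzero module over the group ring $\Fb_p[\langle\psi\rangle]$; since $\langle\psi\rangle$ is a finite cyclic $p$-group, $\Fb_p[\langle\psi\rangle]\cong\Fb_p[t]/(t^{p^k})$ is an Artinian \emph{local} ring, so every nonzero module over it has nonzero socle, i.e.\ nonzero $\psi$-invariants; hence $(\pi_{n_0}Y)^\psi$ has $p$-torsion and $c_{n_0}\ge1$. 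This completes the one-circle assertion, and iterating over the $m$ circle factors proves the lemma. Everything else — the descent identification and the exactness of $-\otimes\Sb/p$ and $-\otimes\Qb$ — is formal.
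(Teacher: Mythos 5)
Your overall skeleton is the same as the paper's: induct over the circle factors by pushing forward to $\Tb^{m-1}$, reduce to a one-circle statement about $\fib(1-\psi)$, and feed in the fact that an automorphism of $p$-power order on a nonzero $\Fb_p$-vector space has a nonzero fixed vector. The divergence is in how you run the one-circle step, and there the bookkeeping has a genuine gap. Your termwise lower bound $\dim\ker\big(1-\psi\mid\pi_n(Y\otimes\Sb/p)\big)\ge\dim_{\Fb_p}\big((\pi_nY)^{\psi}\otimes\Fb_p\big)$ is obtained by applying $\psi$-invariants to the sequence $0\to\pi_nY\xrightarrow{p}\pi_nY\to\pi_nY\otimes\Fb_p\to 0$; but multiplication by $p$ is injective only when $\pi_nY$ has no $p$-torsion, so this sequence is not exact precisely in the case you care about, and the map $(\pi_nY)^{\psi}\otimes\Fb_p\to(\pi_nY\otimes\Fb_p)^{\psi}$ need not be injective (its kernel is governed by $(\pi_nY)^{\psi}\cap p\,\pi_nY$, which can be strictly larger than $p\,(\pi_nY)^{\psi}$).

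Concretely, take $A=\Zb\oplus\Zb/p$ with $\psi(a,b)=(a,\,b+\overline{a})$, so $\psi^{p}=\mathrm{id}$, and let $Y$ have $\pi_iY=A$ concentrated in a single degree. Then $A^{\psi}=p\Zb\oplus\Zb/p\cong\Zb\oplus\Zb/p$, so $\operatorname{rank} A^{\psi}=1$, $c_i=1$ and $\dim A^{\psi}\otimes\Fb_p=2$, while $(A\otimes\Fb_p)^{\psi}$ is only $1$-dimensional; since $\pi_{i-1}Y=0$, one computes $k_i=\dim\ker\big(1-\psi\mid\pi_i(Y\otimes\Sb/p)\big)=1=q_i$. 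So the inequality $k_n\ge q_n+c_n$ fails, and — worse — the reduction ``it all comes down to $c_{n_0}>0$'' is insufficient: here $c_i=1$ yet $k_i=q_i$. The conclusion of the lemma is still true in this example ($\pi_iW=A^{\psi}$ has $p$-torsion), but it is detected only in the adjacent degree, via the quotient $\pi_iY[p]$ sitting inside $\pi_{i+1}(Y\otimes\Sb/p)$ (equivalently, via the kernel rather than the cokernel of $1-\psi$ integrally), which your chain of inequalities never uses. So the final step does not close as written; a repair has to exploit the $[p]$-part of the Moore-spectrum homotopy in the neighbouring degree (or argue directly with the integral long exact sequence of $1-\psi$, as the paper does), rather than only the invariants of $\pi_nY\otimes\Fb_p$.
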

\begin{proof}
We will prove this statement by induction on $m$. The starting case of a $0$-dimensional torus holds, since $\Gamma(\Tb^0,\F) \simeq \Eb$. Let us assume that the assertion holds for tori of dimension $\leq m-1$.

For $i=1,\dots,m$ we denote by $\mon_i\colon \Eb \to \Eb$ the monodromy of $\F$ along the $i$-th factor $j_i\colon \Sb^1 \hookrightarrow \Tb$. Let $q\colon \Tb^m \to \Tb^{m-1}$ denote the projection with kernel $j_m$.

The pushforward $q_*\F$ is a locally constant sheaf with fibre $\Gamma(\Sb^1,j_m^*\F)$, acted on by the monodromy operators $\mon_1,\dots,\mon_{m-1}$. By induction, it therefore suffices to prove 
$$\pi_*(\Gamma(\Sb^1,j_m^*\F))[p] \neq 0.$$
The Mayer--Vietoris sequence for $\Sb^1$ implies that $\Gamma(\Sb^1,j_m^*\F)$ fits into a fibre sequence of spectra
$$\Gamma(\Sb^1,j_m^*\F) \to \Eb \xrightarrow{\alpha} \Eb \xrightarrow{\bullet},$$
where $\alpha=\mon_m - 1$. We obtain the following long exact sequence of homotopy groups
\begin{equation}\label{eqn:exactness}
\cdots \to \pi_i(\Gamma(\Sb^1,j_m^*\F)) \to \pi_i(\Eb) \xrightarrow{\alpha} \pi_i(\Eb) \to \pi_{i-1}(\Gamma(\Sb^1,j_m^*\F)) \to \cdots.
\end{equation}
The map $\alpha$ is of the form $\mon_m - 1$, where $\mon_m$ is an automorphism. According to Claim \ref{claim:below} below, $\mon_m$ fixes a $p$-torsion class in $\pi_*(\Eb)[p]$.
This shows that the map of finite abelian groups
$$\pi_i(\Eb)[p^{\infty}] \xrightarrow{\alpha} \pi_i(\Eb)[p^{\infty}]$$
has non-trivial kernel. Thus, there exists an element $w \in \pi_i(\Eb)[p^{\infty}]$ which does not lie in the image of $\pi_i(\Eb)[p^{\infty}]$. Assume by contradiction that $w = \alpha(v)$, where $v \in \pi_i(\Eb)$. Write $v= v' + v_{p^{\infty}}$, where $v_{p^{\infty}} \in \pi_i(\Eb)[p^{\infty}]$ denotes the $p$-power-torsion part, and $v'$ belongs to the complementary component $\pi_i(\Eb)'$. Since $\mon_m$ is an automorphism, it fixes $\pi_i(\Eb)'$. Therefore, we have  $\alpha(v') = \mon_m(v')-v' \in \pi_i(\Eb)'$. By assumption on $w$ we must have $\alpha(v')=0$. This shows that $w = \alpha(v_{p^{\infty}})$, which is a contradiction.

Using exactness of \eqref{eqn:exactness} we exhibit a non-zero $p$-torsion class in $\pi_{i-1}(\Sb^1,j_m^*\F)$. This concludes the proof of the lemma modulo the claim below.
\end{proof}
\begin{claim}\label{claim:below}
Let $i \in \Zb$ be a degree such that $\pi_i(\Eb)[p] \neq 0$. Then, there exists $v \in \pi_i(\Eb)[p]$ such that $\mon_m(v) = v$.
\end{claim}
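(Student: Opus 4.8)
The plan is to reduce Claim \ref{claim:below} to the elementary fact that a finite $p$-group acting linearly on a nonzero $\Fb_p$-vector space has a nonzero fixed vector.

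First I would unravel the trivialisability hypothesis in terms of monodromy. The locally constant sheaf of spectra $\F$ on $\Tb^m$ is classified by a continuous action of $\pi_1(\Tb^m) \cong \Zb^m$ on the fibre $\Eb$, and pullback along the degree-$p^{\nu}$ self-map $[p^{\nu}]\colon \Tb^m \to \Tb^m$ corresponds to restricting this action to the subgroup $(p^{\nu}\Zb)^m \subset \Zb^m$. The assumption that this pullback is trivialisable therefore forces $\mon_j^{p^{\nu}} \simeq \id$ on $\Eb$ for every $j$, and in particular the induced automorphism $\mon_m$ of the finitely generated abelian group $\pi_i(\Eb)$ has order dividing $p^{\nu}$.

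Next I would pass to the subgroup $V \defeq \pi_i(\Eb)[p]$. By hypothesis $V \neq 0$, and since $\pi_i(\Eb)$ is finitely generated, $V$ is a nonzero finite-dimensional $\Fb_p$-vector space; it is evidently preserved by $\mon_m$. Because $\mon_m$ has $p$-power order, the cyclic subgroup $\langle \mon_m \rangle \subset \Aut(V)$ it generates is a finite $p$-group acting $\Fb_p$-linearly on $V$. The standard orbit-counting argument (the cardinality of the fixed set is congruent mod $p$ to $|V|$) gives $|V^{\langle \mon_m \rangle}| \equiv |V| \equiv 0 \pmod p$; since $0 \in V^{\langle \mon_m \rangle}$, there must be at least $p$ fixed vectors, hence a nonzero $v \in \pi_i(\Eb)[p]$ with $\mon_m(v) = v$, which is exactly what the claim asserts.

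I do not anticipate a genuine obstacle here: the only point needing care is the passage from ``trivialisable after pullback along $[p^{\nu}]$'' to ``$\mon_m^{p^{\nu}}$ acts trivially on homotopy groups'', which is immediate once the local system is described through its monodromy representation, and the rest is the classical $p$-group fixed-point lemma.
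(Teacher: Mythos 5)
Your proposal is correct and follows essentially the same route as the paper: both deduce from trivialisability after pullback along $[p^{\nu}]$ that $\mon_m$ acts on $\pi_i(\Eb)$ with $p$-power order, and then invoke the fact that an automorphism of $p$-power order of the nonzero $\Fb_p$-vector space $\pi_i(\Eb)[p]$ has a nonzero fixed vector. The only (cosmetic) difference is how this last fact is proved — the paper argues via eigenvalues over $\bar{\Fb}_p$ (any eigenvalue $\lambda$ satisfies $\lambda^{p^{\nu}}=1$, hence $\lambda=1$), while you use the orbit-counting lemma for the finite $p$-group $\langle \mon_m\rangle$, which incidentally avoids the small descent step from $\bar{\Fb}_p$-coefficients back to $\Fb_p$ implicit in the paper's argument.
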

\begin{proof} To see this, we consider the $\Fb_p$-vector space $V=\pi_0(\Eb)[p]$. The group $\Sb^1[p^{\nu}]$ is a cyclic group of order $p^{\nu}$ acting on $\Eb$ through $\mon_m$. Let $v \in \bar{V} = V \otimes_{\Fb_p} \bar{\Fb}_p$ be an eigenvector of $\mon_m$. And, let $\lambda\in\bar{\Fb}_p^{\times}$ be the corresponding eigenvalue. 
Since $\mon_m^{p^{\delta}} = 1$, we have $\lambda^{p^{\delta}} = 1$. This implies $\lambda = 1$, and thus $v$ is a fixvector and therefore we have $v \in V$.
\end{proof}
We are now ready to establish vanishing of torsion for the $\PGL_n$-case.
\begin{proof}[Proof of Theorem \ref{thm:PGLn}]
Let $J=\Jac(X)$ and consider the $J$-torsor 
$$\pi\colon \Mm^0  \to \cMm$$
where $\Mm^0$ denote the subvariety corresponding to trace-free Higgs fields. The inclusion $(\Mm^0)^{\an} \hookrightarrow \Mm^{\an}$ is a deformation retract.

We denote the gerbe corresponding to the $J$-torsor $\pi$ by $\beta$ (see Lemma \ref{lemma:duality}). It is a gerbe on $\cMm \times J^{\dual} = \cMm \times J$. 
We then have a derived equivalence
$$\Perf^{\pi^*\alpha}(\Mm^0) \cong \Perf^{\pr_1^*\alpha \otimes \beta}(\cMm \times J)$$
by virtue of Lemma \ref{lemma:duality-twist}.
\begin{claim}\label{claim:n}
The gerbe $\beta$ splits when pulled back along $[n]\colon J \to J$.
\end{claim}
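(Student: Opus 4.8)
The plan is to exhibit $[n]^{*}\beta$ as the gerbe attached, via the recipe of Lemma~\ref{lemma:duality}, to a \emph{trivial} $J$-torsor, and then to observe that such a gerbe splits.

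First I would identify the $J$-torsor $\pi\colon\Mm^{0}\to\cMm$ more precisely. Trace-freeness forces the Higgs field induced on $\det E$ to vanish, so the determinant gives a morphism $\det\colon\Mm^{0}\to\Pic^{e}(X)$ which is equivariant for the $J$-action, the latter acting on $\Pic^{e}(X)$ through the isogeny $[n]\colon J\to J$ (since $\det(E\otimes\mathcal{L})=\det E\otimes\mathcal{L}^{\otimes n}$). Fixing the reference bundle $L^{e}$, whose stabiliser is $\Gamma=J[n]$, yields $\Mm^{0}\cong J\times_{\Gamma}\hMm^{e}_{n}$ with $\hMm^{e}_{n}=\det^{-1}(L^{e})$; equivalently $\cMm\cong[\hMm^{e}_{n}/\Gamma]$, and $\Mm^{0}\to\cMm$ is the $J$-torsor induced from the $\Gamma$-torsor $\hMm^{e}_{n}\to\cMm$ along $\iota\colon\Gamma\hookrightarrow J$. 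Consequently $[\Mm^{0}]\in H^{1}_{\text{\'et}}(\cMm,J)$ lies in the image of $\iota_{*}$, hence is annihilated by $n$.

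Next I would apply Lemma~\ref{lemma:duality-quotient} with base $\mathcal{S}=\cMm$, abelian scheme $\Aa=\Aa'=J\times\cMm$, and the isogeny $\phi=[n]\colon J\to J$ (finite \'etale in characteristic $0$); its dual is again $\phi^{\vee}=[n]\colon J^{\vee}\to J^{\vee}$ under the identification $J^{\vee}\cong J$ used throughout. The lemma then produces a derived equivalence $\Perf(\Tt')\cong\Perf^{[n]^{*}\beta}(J^{\vee}\times\cMm)$, where $\Tt'$ is the $J$-torsor induced from $\Tt=\Mm^{0}$ along $\phi_{*}\colon H^{1}_{\text{\'et}}(\cMm,J)\to H^{1}_{\text{\'et}}(\cMm,J)$. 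Since $[n]=n\cdot\id$ as an endomorphism of the group scheme $J$ and $H^{1}_{\text{\'et}}(\cMm,-)$ is additive, $\phi_{*}$ is multiplication by $n$, so $\Tt'=n\cdot[\Mm^{0}]=0$, i.e.\ $\Tt'$ is the trivial torsor.

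Finally, tracing through the proof of Lemma~\ref{lemma:duality-quotient}, the gerbe $[n]^{*}\beta$ has total space $P_{\Tt'}^{\vee}$, where $J\to P_{\Tt'}\to\Zb$ is the extension of commutative group stacks with $\Tt'$ sitting over $1\in\Zb$ (as in the proof of Lemma~\ref{lemma:duality} applied to $\Tt'$). A trivialisation of $\Tt'$ splits this extension, $P_{\Tt'}\simeq J\times\Zb$, whence $P_{\Tt'}^{\vee}\simeq J^{\vee}\times B\G_{m}$; that is, $[n]^{*}\beta$ is the trivial gerbe. I expect the only delicate point to be the bookkeeping of the first paragraph --- correctly recognising $\Mm^{0}\to\cMm$ as induced from the $\Gamma$-torsor $\hMm^{e}_{n}\to\cMm$, hence as an $n$-torsion class --- together with confirming that the dual of $[n]$ is again $[n]$; both are routine but must be stated carefully.
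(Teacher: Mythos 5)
Your argument is correct and follows essentially the paper's own route: both identify the $J$-torsor $\Mm^0\to\cMm$ as induced from the $\Gamma=J[n]$-torsor $\hMm\to\cMm$ (equivalently, $[\Mm^0/\Gamma]\cong\cMm\times J$ via $(\pi,\det)$), so its class is killed by $[n]_*$, and then conclude via Lemma \ref{lemma:duality-quotient} that $[n]^*\beta$ is the gerbe dual to a trivial torsor, hence split. The only difference is presentational: the paper exhibits the trivialisation of the induced torsor explicitly, whereas you deduce it from the class being $n$-torsion and spell out the duality bookkeeping that the paper leaves implicit.
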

To see this, it suffices to observe that the $J$-torsor $\pi$ is sent to the trivial torsor by the map
$$H^1_{\text{\'et}}(\cMm,J) \xrightarrow{[n]} H^1_{\text{\'et}}(\cMm,J).$$
Indeed, we have a commutative diagram
\[
\xymatrix{
[\Mm^0 / \Gamma] \ar[r]^{(\pi,\det)}_{\simeq} \ar[d]_{\pi} & \cMm \times J \ar[ld]^{pr_2} \\
\cMm . & 
}
\]

Let us factor $n$ as 
$$n=p^{\nu}\cdot{}n',\text{ where }\gcd(p,n')=1.$$
We denote by $\beta'$ the pullback of $\beta$ along $J \xrightarrow{[n']} J$. According to Lemma \ref{lemma:duality-quotient} there is an equivalence 
$$\Perf^{\pi^*\alpha}([\Mm^0/\Gamma]) \cong \Perf^{\pr_1^*\alpha \otimes \beta'}(\cMm \times J)$$
and thus an equivalence of spectra 
\begin{equation}\label{eqn:kucm}
\KU^{\pi^*\tilde{\alpha}}_{\Gamma'}(\Mm^{\an}) \simeq \KU^{\pi^*\tilde{\alpha}\tilde{\beta'}}(\cMm^{\an} \times J^{\an}).
\end{equation}

Assume by contradiction that $p$ is a prime number such that $\KU^*(\cMm^{\an},\tilde{\alpha})[p] \neq 0$. We now consider the projection $pr_2\colon \cMm \times J \to J$ and the following sheaf of spectra
$$\F=(pr_2)_*\underline{\KU}^{\pi^*\alpha\beta}_{\cMm^{\an} \times J^{\an}}.$$

This is a locally constant sheaf of spectra with fibre $\Eb = \KU^{\tilde{\alpha}}(\cMm^{\an})$. By virtue of Claim \ref{claim:n} and construction as a pullback along $[n']$, the sheaf $\F$ is trivialised by pullback along the isogeny
$$0 \to \Gamma[p^{\nu}] \to J^{\an} \xrightarrow{[p^{\nu}]} J^{\an} \to 0.$$

We infer from Lemma \ref{lemma:p-torsion} that $\Gamma(J^{\an},\F)$ has non-trivial $p$-torsion. It follows from \eqref{eqn:kucm} that $\KU^{*}(\Mm^{\an},\pi^*\tilde{\alpha}[p])^{\Gamma'}\subset \KU^*(\Mm^{\an},\pi^*\tilde{\alpha})$ has non-vanishing $p$-torsion, which contradicts Corollary \ref{cor:no-p-torsion}.
\end{proof}

\appendix

\section{G-sheaves of spectra and equivariant vanishing cycles}

\subsection{Motivation: $G$-equivariant homotopy theory}

In this section we denote by $G$ a finite group and will develop a relative or sheaf-theoretic analogue of equivariant homotopy theory. The latter is an ambiguous term since stable and unstable homotopy theory can be extended to several distinct equivariant theories. In the stable case one counts three main contestants laying claim to the term \emph{$G$-spectra} or \emph{equivariant spectra}:

\begin{itemize}
\item[-] \emph{Borel} $G$-spectra
\item[-] \emph{naive} $G$-spectra
\item[-] \emph{honest} $G$-spectra
\end{itemize}

By definition, a Borel $G$-spectra is a spectrum endowed with a $G$-action up to homotopy. In categorical terms, one considers the $\infty$-category of functors $\Fun(BG^{\op},\Spectra)$. This approach to equivariant homotopy theory is too naive to capture most equivariant cohomology theories, such as equivariant $K$-theory. In a nutshell, the issue is that non-equivariant homotopy equivalences shape the purportedly equivariant category $\Fun(BG,\Spectra)$ into a sort of hybrid existence strung between the equivariant and non-equivariant world. 

To develop a more accurate equivariant picture it is best to rebuild equivariant homotopy theory from scratch by introducing a (model) category of topological spaces endowed with strict $G$-actions and $G$-equivariant weak equivalences. Localising along the latter produces an $\infty$-category $\Spaces_G$ of  $G$-spaces, which can be further stabilised to yield the stable $\infty$-category of \emph{naive} $G$-spectra $\Sp_G$. This framework handles equivariant $K$-theory sufficiently well, although it misses additional structure presenting itself in the guise of an additional grading of $K$-groups; hence the need for a further theory of \emph{honest} $G$-spectra $\Sp_G^{\text{honest}}$.

The stable $\infty$-category $\Sp_G^{\text{honest}}$ is constructed from $\Spaces_G$ by inverting suspension with respect to arbitrary representation spheres. That is, for every linear $G$-representation $V$ we have a $G$-space given by the one-point compactification $\mathbb{S}^V=V \cup \{\infty\}$. The corresponding reduced suspension functors $\{\Sigma^V\}_V$ are then inverted for all isomorphism classes of $G$-representations of finite rank. 

The first goal of this appendix is to develop the rudiments of a theory of $G$-sheaves (of spectra) on a topological space $X$ endowed with a $G$-action. The resulting stable $\infty$-category $\Sh_{\Sp}(X;G)$ is supposed to be equivalent to the $\infty$-category of naive $G$-spectra $\Sp_G$ when $X$ is a point. Furthermore, for a naive $G$-spectrum $\Eb$ and a sufficiently nice space $X$, we want the space of global sections $\Gamma(X,\underline{\Eb})$ of the induced $G$-sheaf $\Eb$ on $X$ to be naturally equivalent to equivariant cohomology $\Hb_G^*(X,\Eb)$.

Subsequently, we will use $G$-sheaves and their pushforward and pullback operations to define an equivariant vanishing cycle formalism which can be applied to twisted equivariant $K$-theory.

\subsection{Recollection of $G$-equivariant homotopy theory}

We denote by $G$ a finite group. A $G$-set is a set endowed with a right $G$-action and similarly for $G$-actions on topological spaces. As mentioned above, an honest approach to equivariant homotopy theory is built on a category of topological spaces endowed with strict $G$-actions, localised with respect to \emph{equivariant} homotopy equivalences. However, there is an alternative approach via orbit categories, which is more amenable to generalisation. By virtue of Elemendorf's theorem, this yields an equivalent $\infty$-category of $G$-spaces. 

While we limit ourselves to describing the necessary theory in broad strokes we refer the reader in search of a more detailed account or references to \cite[Section 3.2]{temp} and \cite{schwede}.

\begin{definition}\label{defi:orbit}
We denote by $\Orb_G$ the category of $G$-orbits. That is, objects of this category are right $G$-sets isomorphic to $H \backslash G$ where $H$ is a subgroup of $G$. Morphisms are given by $G$-equivariant maps.
\end{definition}

\begin{definition}\label{defi:G-spaces}
The $\infty$-category of presheaves $\Pr(\Orb_G)=\Fun(\Orb_G^{\op},\Spaces)$ is called the $\infty$-category of $G$-spaces $\Spaces_G$. 
\end{definition}

There is a natural functor from $BG$ to $\Orb_G$, sending the sole object $\bullet \in BG$ to $\{e\} \setminus G \in \Orb_G$. Restriction along this inclusion induces a functor $\Pr(\Orb_G,\Spaces) \to \Pr(BG)$, that is, from $G$-spaces to Borel $G$-spaces. This functor is far from being an equivalence.
This concludes the expository part of this appendix and will now turn to defining an $\infty$-category of $G$-sheaves of spaces (and later of spectra) on an equivariant space $X$.

\subsection{$G$-sheaves of spaces and spectra}

\subsubsection{Definitions}

As before we let $G$ be a finite group. Let $X$ be a topological $G$-space, i.e. a topological space endowed with a continuous $G$-action. The purpose of this section is to define an $\infty$-category of space-valued $G$-sheaves $\Sheaves(X;G)$ satisfying the following properties:
\begin{itemize}
\item For $X= \{\bullet\}$ there is an equivalence $\Sheaves\big(\{\bullet\};G\big) \cong \Spaces_G$.
\item For every open subset subset $U \subset X^G$, there is a \emph{global sections functor}
$$\Gamma_G(U,-)\colon \Sheaves(X;G) \to \Spaces_G.$$
\item If $f\colon X \to Y$ is $G$-equivariant, then there are adjoint functors
$$f^{-1}_G\colon \Sheaves(Y;G) \rightleftarrows \Sheaves(X;G) \cocolon f^G_*.$$
Furthermore, there is a fully subcategory $G$-hypersheaves $\Sh(X;G) \subset \Sheaves(X;G)$ for which we also have pullback and pushforward functors.
\end{itemize} 

Mimicking the definition of $\Orb_G$, we introduce a site of \emph{generalised $G$-orbits} $\Orb(X;G)$.

\begin{definition}\label{defi:gen-orbit}
Let $H \subset G$ be a subgroup and $U \subset X^{H}$ an open subset. We let 
$i\colon U \times H \backslash G = P \to X$
be the map $(x,H g) \mapsto x\cdot{} g$. We call $P$ a \emph{generalised $G$-orbit}.
\end{definition}

For example, one could consider the generalised $G$-orbit $H \setminus G \times X^H$ for any subgroup $H \subset G$. 
Next, we must define morphisms of generalised $G$-orbits.

\begin{definition}
\begin{itemize}
\item[(a)] For $H \subset G$ a subgroup, we denote by $X^{s(H)}$ the union $\bigcup_{g \in G}X^{gHg^{-1}}$.
\item[(b)] If $H_1, H_2 \subset G$ is a pair of subgroups and $P= U \times H_1 \backslash G$ a generalised $G$-orbit, then we denote by $P|_{s(H_2)}$ the fibre product $P \times_X X^{s(H_2)}$.
\item[(c)] A morphism of generalised $G$-orbits $P_i = U_i \times H_i \backslash G$ is given by a commutative diagram
\[
\xymatrix{
P_1 |_{s(H_2)}  \ar@/^/[rr]^{i_1} \ar[r]_{{f}} & P_2 \ar[r]_{i_2} & X 
}
\]
where $f$ is a $G$-equivariant continuous map and $H_1$ is included in $H_2$ up to conjugation.
\item[(d)] We denote the resulting category of generalised $G$-orbits in $X$ by $\Cov(X;G)$.
\end{itemize}
\end{definition}

Note that if $H_1$ and $H_2$ are conjugate subgroups, then $X^{s(H_2)} = X^{s(H_1)}$, and thus $i_1(P_1) \subset X^{s(H_1)} = X^{s(H_2)}$. This means that in this case we have $P_1|_{s(H_2)} = P_1$, and thus $f$ will simply be a $G$-equivariant morphism $P_1 \to P_2$ satisfying $i_2 \circ f = i_1$.

\begin{example}
Let $X$ be a $G$-space with $X^G = \{x\}$. We denote by $P_1$ the generalised $G$-orbit $X \times G$, and by $P_2$ the generalised $G$-orbit $\{x\} \times G \backslash G = \{x\}$. Then, there is a morphism of generalised $G$-orbits $P_1 \to P_2$ given by the constant map 
$P_1 |_{s(G)} = \{x\} \times G \to \{x\}.$
\end{example}

This category $\Cov(X;G)$ is endowed with a natural Grothendieck topology, induced by the notion of open covers in topology.

\begin{definition}
We define a Grothendieck topology, denoted $\std$ and called \emph{standard topology}, on $\Cov(X;G)$ as follows: a collection of maps $\{P_i \xrightarrow{f_i} P\}_{i \in I}$ in $\Cov(X;G)$ is a covering, if for every $i \in I$ we have that $f_i$ is an open immersion and that $\bigcup_{i \in I} \image f_i = P$. The empty covering $\{\}$ is defined to be a covering of the empty generalised $G$-orbit $\emptyset \to X$.
\end{definition}

We leave the straightforward verification that this defines a Grothendieck topology to the reader. This topology will also be referred to as the \emph{standard topology} on $\Cov(X;G)$. We emphasise that if $\{P_i \to P\}$ is a non-empty covering of a non-empty $G$-orbit, then we obtain isomorphism of $G$-sets $H_i \backslash G \simeq H \backslash G$. Thus, each $H_i$ is conjugate to $H$ in $G$.

\begin{rmk}\label{rmk:singleton}
If $X$ is a singleton $\{\bullet\}$ acted on trivially by $G$, then there is a fully faithful functor 
$$\Orb_G \hookrightarrow \Cov\big(\{\bullet\};G\big).$$
It is not an equivalence, since $\emptyset \in \Cov(\{\bullet\};G)$ is not in its essential image. However, we have equivalences 
$$\Sh_{\std}\big(\Cov(\{\bullet\};G)\big) \cong \Sheaves_{\std}\big(\Cov(\{\bullet\};G)\big) \cong \Pr(\Orb_G),$$
since a (hypersheaf) $\F$ satisfies $\F(\emptyset) \simeq \emptyset$ (as can be seen by applying the sheaf condition to the empty covering of $\emptyset$).
\end{rmk}

The $\infty$-category of sheaves on this site is the sought-for theory of $G$-sheaves mimicking the construction of $G$-spaces in equivariant homotopy theory (see Remark \ref{rmk:singleton2} below).

\begin{definition}
We define the $\infty$-category of $G$-sheaves on $X$, denoted by $\Sheaves(X;G)$, to be $$\Sheaves_{\std}(\Cov(X;G)).$$ The full subcategory of $G$-hypersheaves is denoted by $\Sh(X;G)$.
\end{definition}

\begin{rmk}\label{rmk:singleton2}
It follows from Remark \ref{rmk:singleton} and Definition \ref{defi:G-spaces} that $\Sh\big(\{\bullet\};G\big) = \Sheaves\big(\{\bullet\};G\big)$ is equivalent to $\Spaces_G$. Similarly, if $G=\{e\}$ is the trivial group, then $\Cov(X;G)$ is simply the site of open subsets of $X$ and thus $\Sheaves(X;\{e\}) = \Sheaves(X)$ and $\Sh(X;\{e\})=\Sh(X)$.
\end{rmk}

\begin{lemma}\label{lemma:subgroup-compatibility}
$\Sheaves(H \setminus G ; G) \cong \Spaces_H$
\end{lemma}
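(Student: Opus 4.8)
The plan is to unwind both sides of the claimed equivalence $\Sheaves(H\setminus G;G)\cong\Spaces_H$ to explicit site-theoretic models and then exhibit an equivalence of sites (or more precisely of the relevant $\infty$-categories of sheaves). The key observation is that a generalised $G$-orbit mapping to the homogeneous space $X=H\setminus G$ is, by definition, of the form $U\times K\backslash G$ where $K\subset G$ is a subgroup and $U\subset X^{K}=(H\setminus G)^{K}$ is open. Since $H\setminus G$ is a finite discrete set with trivial topology on each point, the only constraint is combinatorial: $(H\setminus G)^{K}\neq\emptyset$ forces $K$ to be subconjugate to $H$, and in that case the fixed-point set is a disjoint union of points indexed by the cosets fixed by $K$. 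Thus $\Cov(H\setminus G;G)$ decomposes, over the single $G$-orbit $H\setminus G$ itself, into pieces governed precisely by subgroups of $H$ up to conjugacy, together with the empty orbit.

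The concrete steps I would carry out are as follows. First, I would describe $\Cov(H\setminus G;G)$ explicitly: every non-empty connected generalised $G$-orbit over $H\setminus G$ is isomorphic to $K\backslash G$ for some $K\subseteq H$ (up to $G$-conjugacy), with the structure map $K\backslash G\to H\setminus G$ the canonical projection, and morphisms between such are exactly the $G$-maps over $H\setminus G$. Second, I would identify these with objects of $\Orb_H$ by the standard equivalence: the functor $K\backslash G\mapsto K\backslash H$ (using that $G$-maps $K\backslash G\to K'\backslash G$ over $H\setminus G$ correspond to $H$-maps $K\backslash H\to K'\backslash H$ — this is the elementary fact that the slice of $G$-orbits over $H\setminus G$ is equivalent to the category of $H$-orbits). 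Third, I would check that the standard Grothendieck topology on $\Cov(H\setminus G;G)$ matches the (trivial/atomic) topology on $\Orb_H$ used in Definition \ref{defi:G-spaces}: since $H\setminus G$ is discrete, coverings are just surjective families, so the topology is generated by surjections, and the empty-orbit condition forces $\F(\emptyset)\simeq\emptyset$, exactly as in Remark \ref{rmk:singleton}. Fourth, invoking Remark \ref{rmk:singleton}/\ref{rmk:singleton2} (which is the case $H=\{e\}$ or rather the case where the base is a point for the group $H$), conclude $\Sheaves_{\std}(\Cov(H\setminus G;G))\cong\Pr(\Orb_H)=\Spaces_H$.

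The main obstacle I anticipate is the third step: verifying that the standard topology on $\Cov(H\setminus G;G)$ is exactly the one whose sheaf category is $\Pr(\Orb_H)=\Fun(\Orb_H^{\op},\Spaces)$, i.e.\ that \emph{every} presheaf on the orbit-like part is already a sheaf once one imposes only $\F(\emptyset)\simeq\emptyset$. One must check that a covering family of $K\backslash G\to H\setminus G$ in the sense of the standard topology, after passing through the equivalence with $\Orb_H$, cannot impose any descent condition beyond the empty-orbit one — this is because $K\backslash G$ is a single transitive $G$-orbit, so any open immersion onto it covering it must be an isomorphism (a transitive $G$-set has no proper open $G$-subsets), and hence the only non-trivial covering sieves come from the empty orbit. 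Once this is pinned down, the identification of sheaf categories is formal by Elmendorf's theorem as packaged in Remark \ref{rmk:singleton}. A secondary (but genuinely routine) point is to ensure the morphism sets match: a morphism of generalised $G$-orbits $K_1\backslash G\to K_2\backslash G$ over $H\setminus G$ involves the restriction to $X^{s(K_2)}$, but since $K_1\subseteq K_2\subseteq H$ up to conjugacy and $X=H\setminus G$, this restriction is automatically everything, so morphisms are plain $G$-equivariant maps over $X$, matching $\Hom_{\Orb_H}$ precisely.
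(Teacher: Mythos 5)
Your proposal is correct and follows essentially the same route as the paper's proof: reduce to the single-point generalised orbits $\{\ast\}\times K\backslash G$ with $K$ subconjugate to $H$, observe that coverings of such transitive orbits refine to the trivial cover so the sheaf condition is vacuous beyond the empty orbit, and identify the resulting full subcategory with $\Orb_H$ (you via the slice equivalence of $G$-orbits over $H\setminus G$ with $H$-orbits, the paper via a direct comparison of $\Hom$-sets by restricting to $H'\backslash H\subset H'\backslash G$ — the same content). Your explicit check that the restriction to $X^{s(K_2)}$ in the definition of morphisms is automatic here is a point the paper leaves implicit, and is a welcome addition.
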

\begin{proof}
By definition, the objects in $\Cov(H \setminus G ; G)$ are given by $G$-equivariant maps $U \times H'\setminus G \to H\setminus G$ where $H' \subset G$ is a subgroup and $U \subset (H\setminus G)^{H'}$ is a subset. Expressing $U$ as a disjoint union of singletons, we see that there is a disjoint cover by objects isomorphic to 
$$P_{H'}=\{\ast\} \times H'\setminus G \xrightarrow{\act} H \setminus G,$$
where $\ast$ is a fixed element in $H \setminus G$ and $\act$ denotes the action map. Furthermore, every $\std$-cover can be refined to disjoint cover of objects isomorphic to $P_{H'}$ with $H' \subset H$. Denoting the full subcategory with objects $\{P_{H'}\}_{H' \subset H}$ by $\Cov'$ we therefore have equivalences
$$\Sheaves_{\std}(\Cov(H \setminus G; G)) \cong \Sheaves_{\std}(\Cov') = \Pr(\Cov').$$
For the last identification we used that the sheaf condition is vacuous on $\Cov'$ since every $\std$-cover of $P_{H'}$ can be refined to the trivial cover $\{P_{H'} \to P_{H'}\}$. To conclude the proof it suffices to produce an equivalence between $\Cov'$ and $\Orb_H$. This is achieved by sending $H' \setminus H$ to $P_{H'}$ and by observing that a commutative diagram of $G$-equivariant maps
\[
\xymatrix{
\{\ast\} \times H' \setminus G \ar[r]^{g} \ar[rd] & \{\ast\}\times H''\setminus G \ar[d] \\
& H \setminus G
}
\]
implies that $g|_{H'\setminus H}$ yields an $H$-equivariant map $H'\setminus H \to H'' \setminus H$ and vice versa. Therefore, we have $\Hom_{\Cov(H \setminus G; G)}(P_{H'},P_{H''}) \simeq \Hom_{\Orb_H}(H'\setminus H, H'' \setminus H)$, which shows that $\Cov'$ is equivalent to $\Orb_H$.
\end{proof}

\subsubsection{Equivariant pullback and pushforward}

\begin{construction}\label{const:pushforward}
Let $f\colon X \to Y$ be a $G$-equivariant map of topological $G$-spaces. There is a natural functor
$$f^*\colon \Cov(Y;G) \to \Cov(X;G),$$
which respects coverings.
\end{construction}
\begin{proof}[Details]
Recall that an object in $\Cov(Y;G)$ is given by a $G$-equivariant map
$$i \colon U \times H \backslash G \to Y$$
where $U \subset Y^{H}$ is an open subset. The functor $f^*$ sends this object to the generalised $G$-orbit $(f^H)^{-1}(U) \times H \backslash G$. We leave the remaining details (definition of $f^*$ on morphisms) to the reader.
\end{proof}

This continuous map of sites allows us to define the requisite pair of adjoint functors as pullback and pushforward.

\begin{definition}\label{defi:equi-push}
We define the $G$-pushforward functor
$$f^G_*\colon \Sheaves(X;G) \to \Sheaves(Y;G)$$
as restriction of sheaves along the covering-preserving functor $f^*\colon \Cov(Y;G) \to \Cov(X;G)$. We have $f^G_*(\Sh(X;G)) \subset \Sh(Y;G)$.
\end{definition}

The assertion that pushforward preserves hypersheaves follows from \cite[Proposition 6.5.2.13]{htt}. The functor $f^G_*$ preserves limits of presheaves, sheaves and hypersheaves and therefore has a left adjoint 
$$f_G^{-1}\colon  \Sheaves(Y;G) \to  \Sheaves(X;G)$$ 
by virtue of the adjoint functor theorem \cite{htt}. We emphasise that $f_G^{-1}(\Sh(Y;G))$ is not always contained in $\Sh(X;G)$. It is therefore necessary to post-compose with a hypersheafification functor to obtain a left adjoint
$$\widehat{f}_G^{-1}\colon \Sh(Y;G) \to \Sh(X;G)$$
to the restricted functor $f^G_*|_{\Sh(X;G)}$.

\begin{definition}\label{defi:enriched-sections}
Let $U \subset X$ be a subset satisfying $U \cdot{} G = U$. Then, there is a functor
$$\Gamma(U,-)\colon \Sheaves(X;G) \to \Spaces_G,$$
which is defined to be the composition
$$\Sheaves(X;G) \xrightarrow{i^{-1}_G} \Sheaves(U;G) \xrightarrow{p^G_*} \Sheaves\big(\{\bullet\};G\big) \cong \Spaces_G.$$
Here, we denote by $i\colon U \subset X$ the inclusion and $p\colon U \to \{\bullet\}$ the unique map to a singleton. The last equivalence stems from Remark \ref{rmk:singleton2}.
\end{definition}

In a similar vain, we have the following equivalence.

\begin{lemma}\label{lemma:trivial-action-case}
Assume that $G$ acts trivially on $X$, then $\Sheaves(X;G)$ is equivalent to the $\infty$-category of $G$-space valued sheaves $\Sheaves_{\Spaces_G}(X)$ and likewise for hypersheaves. Furthermore, for a continuous map of spaces $f\colon X \to Y$ acted on trivially by $G$ we have a commutative diagram
\[
\xymatrix{
\Sheaves(X;G) \ar[r]^{\cong} \ar[d]_{f^G_*} & \Sheaves_{\Spaces_G}(X) \ar[d]^{f_*} \\
\Sheaves(Y;G) \ar[r]^{\cong} & \Sheaves_{\Spaces_G}(Y)
}
\]
of $\infty$-categories and likewise for hypersheaves.
\end{lemma}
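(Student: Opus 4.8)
The plan is to exhibit, when $G$ acts trivially on $X$, an equivalence of sites $\Cov(X;G) \simeq \Open(X) \times \Orb_G$, with the right-hand side carrying the product of the standard topology on $\Open(X)$ and the trivial topology on $\Orb_G$, and then to read off both claims from the formal fact that (hyper)sheaf conditions are limit conditions detected objectwise in a functor $\infty$-category.

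First I would unwind the definition of $\Cov(X;G)$ under the hypothesis $X^{H} = X$ for every subgroup $H \subseteq G$: its objects are then the generalised orbits $U \times H\backslash G$ of Definition \ref{defi:gen-orbit} with $U \subseteq X$ an arbitrary open subset, and since $X^{s(H)} = \bigcup_{g \in G} X^{gHg^{-1}} = X$ one has $P_1|_{s(H_2)} = P_1$, so a morphism $U_1 \times H_1\backslash G \to U_2 \times H_2\backslash G$ is merely a $G$-equivariant continuous map over $X$. Unwinding $G$-equivariance together with the identity $i_2 \circ f = i_1$ shows that such a map is the same datum as an inclusion $U_1 \subseteq U_2$ together with a $G$-map $H_1\backslash G \to H_2\backslash G$ — the latter existing precisely when $H_1$ is subconjugate to $H_2$, in agreement with the conjugacy condition in the definition of morphisms of generalised orbits. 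This yields the equivalence $\Cov(X;G) \simeq \Open(X) \times \Orb_G$ (the initial ``empty'' objects on either side being irrelevant for (hyper)sheaves, cf.\ Remark \ref{rmk:singleton}). It then remains to match the topologies: an open immersion $U_i \times H_i\backslash G \hookrightarrow U \times H\backslash G$ forces the orbit component to be an isomorphism $H_i\backslash G \xrightarrow{\sim} H\backslash G$ and $U_i \hookrightarrow U$ to be an open immersion, and such a family is a covering if and only if $\bigcup_i U_i = U$; hence the standard topology on $\Cov(X;G)$ corresponds to the product topology above.

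With the site identified, the two equivalences are formal. A $\Spaces$-valued sheaf on $\Open(X)\times\Orb_G$ for the product topology is the same as a functor $F\colon \Orb_G^{\op} \to \Pr(\Open(X))$ with $F(O)$ a sheaf on $X$ for every orbit $O$ (the covers never moving the $\Orb_G$-variable), i.e. $\Sheaves(X;G) \simeq \Fun(\Orb_G^{\op}, \Sheaves(X))$. On the other hand a $\Spaces_G$-valued presheaf on $X$ is a functor $\Open(X)^{\op} \to \Spaces_G = \Pr(\Orb_G)$, and since limits in $\Pr(\Orb_G)$ are computed objectwise, its sheaf condition is equivalent to sheafness of each composite $\ev_O \circ(-)$; hence $\Sheaves_{\Spaces_G}(X) \simeq \Fun(\Orb_G^{\op}, \Sheaves(X))$ as well, which proves the first assertion. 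For hypersheaves one uses that in $\Fun(\Orb_G^{\op}, \mathcal{X})$ with $\mathcal{X}$ an $\infty$-topos the truncation functors — hence $\infty$-connectivity, hence hypercompleteness — are detected objectwise (\cite{htt}); therefore $\Sh(X;G) \simeq \Fun(\Orb_G^{\op}, \Sh(X))$, which is the hypersheaf counterpart of $\Sheaves_{\Spaces_G}(X)$.

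Finally, for the compatibility square I would trace the functor $f^{*}\colon \Cov(Y;G) \to \Cov(X;G)$ of Construction \ref{const:pushforward}: with trivial actions $f^{H} = f$, so it sends $U \times H\backslash G$ to $f^{-1}(U) \times H\backslash G$, i.e. under the identification $\Cov(-;G) \simeq \Open(-)\times\Orb_G$ it is simply $f^{-1}\times\id_{\Orb_G}$. Consequently $f^{G}_{*}$, being restriction along $(f^{*})^{\op}$, is exactly the objectwise application of the ordinary pushforward $f_{*}$ in $\Fun(\Orb_G^{\op}, \Sheaves(-))$, which is precisely the pushforward on $\Spaces_G$-valued sheaves; the square therefore commutes, and the hypersheaf version is its restriction. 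I expect the only genuine work to be the bookkeeping in the first step — the sets $X^{s(H)}$, the subconjugacy condition, and the description of open immersions and coverings of generalised orbits — after which everything reduces to manipulations of functor $\infty$-categories and the objectwise behaviour of limits and of hypercompletion.
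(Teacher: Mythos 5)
Your route is essentially the paper's: the paper also observes that, for a trivial action, every object of $\Cov(X;G)$ is of the form $U\times H\backslash G$ with $U\in\Open(X)$ and $H\backslash G\in\Orb_G$, identifies the resulting (pre)sheaf category with $\Spaces_G$-valued (pre)sheaves on $X$ by currying, and checks that the sheaf/hypersheaf conditions and the pushforwards match; the only stylistic difference is that the paper passes through strict functor categories and localisation at weak equivalences, whereas you argue directly in the $\infty$-categorical functor categories using objectwise limits and objectwise detection of hypercompleteness, which is fine.

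One small inaccuracy in your first step deserves attention: a morphism $U_1\times H_1\backslash G\to U_2\times H_2\backslash G$ over $X$ is \emph{not} just an inclusion $U_1\subseteq U_2$ together with a single $G$-map $H_1\backslash G\to H_2\backslash G$; continuity and equivariance only force the orbit component to be a \emph{locally constant family} of $G$-maps parametrised by $U_1$, so $\Cov(X;G)$ is genuinely larger than $\Open(X)\times\Orb_G$ whenever $U_1$ is disconnected, and the claimed equivalence of sites is false as stated. This does not derail the conclusion: on sheaves (and hypersheaves) the extra morphisms carry no extra information, since one may split $U_1$ into the open loci where the family is constant and use descent for disjoint open covers to define the restriction maps uniquely, so the comparison functor still induces an equivalence of sheaf $\infty$-categories; but this extension argument (or an appeal to a comparison-of-sites statement) should be made explicit rather than asserting an equivalence of the underlying categories. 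The same elision occurs in the paper's own ``straightforward'' identification, so with this one patch your proof matches the intended argument, including the treatment of $f^G_*$ as the objectwise pushforward under the identification of Construction \ref{const:pushforward}.
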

\begin{proof}
It suffices to construct an equivalence $\Pr(X;G) \cong \Pr_{\Spaces_G}(X)$ and to observe that sheaf and hypersheaf conditions are preserved by it. To simplify this process we evoke stricitfication of presheaves and work with classical models for these presheaf categories, i.e. with strict functor categories
$$\Fun_{str}(\Cov(X;G)^{\op},\Spaces^{str}) \text{ and }\Fun_{str}(\Open(X)^{\op},\Spaces_G^{str}),$$
where we denote by $\Spaces_G^{str}$ the (model) category of $G$-spaces given by $\Fun_{str}(\Orb_G^{\op},\Spaces^{str})$. Since an object in $\Cov(X;G)$ is of the sheaf $U \times H \setminus G$ where $U \in \Open(X)$ and $H\setminus G \in \Orb_G$, it is straightforward to construct an equivalence of classical categories $\Fun_{str}(\Cov(X;G)^{\op},\Spaces^{str}) \cong \Fun_{str}(\Open(X)^{\op},\Spaces_G^{str}).$ Localisation at weak equivalences then yields an equivalence between the resulting $\infty$-categories of presheaves:
$$\Pr(X;G)\cong \Pr_{\Spaces_G}(X).$$
It is clear that, again by the nature of objects in $\Cov(X;G)$, sheaves and hypersheaves are preserved by this equivalence.

The same strategy can be used to construct a square of classical categories and functors, which commutes up to a natural transformation:
\[
\xymatrix{
\Fun_{str}(\Cov(X;G)^{\op},\Spaces^{str}) \ar[r]^{\cong} \ar[d]_{f^G_*} &  \Fun_{str}(\Open(X)^{\op},\Spaces_G^{str}) \ar[d]^{f_*} \\
\Fun_{str}(\Cov(Y;G)^{\op},\Spaces^{str}) \ar[r]^{\cong} &  \Fun_{str}(\Open(Y)^{\op},\Spaces_G^{str})
}
\]
Localising at weak equivalences we obtain the requisite commutative squares of $\infty$-categories of presheaves, sheaves and hypersheaves.
\end{proof}

\subsubsection{The forgetful functor to sheaves}

As one would expect, there is a forgetful functor 
$$\Phi\colon\Sheaves(X;G) \to \Sheaves(X)$$
from $G$-sheaves on $X$ to sheaves on $X$. This functor preserves hypersheaves.

A direct construction proceeds as follows: an open subset $U \subset X$ induces as above a generalised $G$-orbit $U \times G \to X$ (the action map). A presheaf 
$$\F \colon \Cov(X;G)^{\op} \to \Spaces$$
can be composed with this functor $\Open(X)^{\op} \to \Cov(X;G)^{\op}$.

As can be confirmed by tracing through the definitions, this functor agrees with $\act^{-1}_G$, where $\act\colon X \times G \to X$ is the action on $X$, and the left-hand side is endowed with the trivial $G$-action on $X$ and right translation on the $G$-component.

More generally, for a subgroup $H \subset G$ we have a forgetful functor $\Phi^G_H\colon\Sheaves(X;G) \to \Sheaves(X^H)$. Its construction is based on a map of sites
$$F_{H}\colon\Open(X^H) \to \Cov(X;G)$$
which sends an open subset $U \subset X^{H}$ to the generalised orbit $U \times H \backslash G \to X^{H} \subset X$.

\begin{definition}\label{defi:PhiGH}
We let $\Phi^G_{H}$ denote the functor $\Sheaves(X;G) \to \Sheaves(X^{H})$ induced by composition with $F_{H}$. 
\end{definition}

\subsubsection{Stalks}
Let $X$ and $H \subset G$ be as above and assume that $x \in X^{H}$ is an $H$-fixpoint. We then obtain a $G$-equivariant map
$$i_{x;H}\colon H \setminus G \to X$$
sending $[e] \in H \setminus G$ to $x$. This yields a functor 
$$i^{-1}_{x;H}\colon \Sheaves(X;G) \to \Sheaves(H \setminus G ;G) \cong \Spaces_{H},$$
where we used the equivalence of Lemma \ref{lemma:subgroup-compatibility}. This functor will be called the \emph{$H$-stalk at $x$}. We will also use the notation $\F_{x,H}$ to denote the $H$-space $i^{-1}_{x;H}\F \in \Spaces_H = \Pr(\Orb_H)$.

\begin{definition}
Let $x$ be an $H$-fixpoint and $P \in \Orb_H$. For $\F \in \Sheaves(X;G)$ we denote by $\F_{x;H,P}$ the space $\F_{x;H}(P) \in \Spaces$.
\end{definition}

\begin{lemma}\label{lemma:enough-points}
\begin{itemize}
\item[(a)] The classical topos $\Sh_{\Sets}(X;G)$ has enough points. 
\item[(b)] If $X$ is sober, then every point of $\Sheaves_{\Sets}(X;G)$ is induced by a triple $(x,H,P)$ where $H \subset G$ is a subgroup, $x \in X^{H}$ is an $H$-fixpoint, and $P \in \Orb_H$ is an $H$-orbit. 
\end{itemize}
\end{lemma}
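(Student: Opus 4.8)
The plan for \textbf{(a)} is to realise the stalk functors as points of the topos and then check joint conservativity by passing to the fixed‑point subspaces. Fix a subgroup $H\subseteq G$, a point $x\in X^{H}$ and an orbit $P\in\Orb_{H}$. The functor $\F\mapsto\F_{x;H,P}=(i^{-1}_{x;H}\F)(P)$ factors, by Lemma~\ref{lemma:subgroup-compatibility}, as
\[
\Sheaves_{\Sets}(X;G)\xrightarrow{\ i^{-1}_{x;H}\ }\Sheaves_{\Sets}(H\setminus G;G)\simeq\Fun(\Orb_{H}^{\op},\Sets)\xrightarrow{\ \ev_{P}\ }\Sets .
\]
Both functors are left exact and preserve all colimits: evaluation of presheaves does so objectwise, and $i^{-1}_{x;H}$ can be written, after unwinding its definition, as a filtered colimit of evaluation functors $\F\mapsto\F(Q)$ at generalised orbits $Q$, each left exact on sheaves, and filtered colimits commute with finite limits in $\Sets$. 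Hence each $\F\mapsto\F_{x;H,P}$ is the inverse image of a point $p_{x,H,P}$ of $\Sheaves_{\Sets}(X;G)$. For joint conservativity, let $f\colon\F\to\F'$ induce an isomorphism on every $\F_{x;H,P}$. Taking $P=H\setminus H$ terminal in $\Orb_{H}$, the colimit formula identifies $\F_{x;H,H\setminus H}$ naturally with the ordinary stalk $(\Phi^{G}_{H}\F)_{x}$ of the sheaf $\Phi^{G}_{H}\F$ on $X^{H}$ (the cofinal generalised orbits dominating $i_{x;H}$ are the $U\times H\setminus G$ with $x\in U\subseteq X^{H}$ open). So $\Phi^{G}_{H}(f)$ is a stalkwise isomorphism of sheaves of sets on the space $X^{H}$, hence an isomorphism; and since every object of $\Cov(X;G)$ is of the form $U\times H\setminus G$ with $\F(U\times H\setminus G)=(\Phi^{G}_{H}\F)(U)$, this forces $f$ to be an isomorphism. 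This proves (a); no sobriety hypothesis is needed here.

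For \textbf{(b)} we use the identification of a point of $\Sheaves_{\Sets}(X;G)=\Sh_{\std}(\Cov(X;G))$ with a $\std$‑continuous flat functor $F\colon\Cov(X;G)\to\Sets$, together with the stalk formula $p^{*}\F\simeq\colim_{(Q,\xi)\in\int F}\F(Q)$, the colimit being filtered. Applying $p^{*}$ to the terminal sheaf shows $F$ is not identically $\emptyset$. Since a morphism in $\Cov(X;G)$ from an object of isotropy $H_{1}$ to one of isotropy $H_{2}$ forces $H_{1}$ to be subconjugate to $H_{2}$, and $G$ has finitely many subgroups, filteredness of $\int F$ yields a subgroup $H$, unique up to conjugacy, which is maximal among the isotropy groups of objects $Q$ with $F(Q)\neq\emptyset$; every such isotropy group is then subconjugate to $H$, and the full subcategory $\mathcal{D}\subseteq\int F$ spanned by the objects $U\times H\setminus G$ of isotropy conjugate to $H$ is cofinal (hence itself filtered). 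Thus $p^{*}\F\simeq\colim_{\mathcal{D}}\F$, and it remains to understand $F|_{\mathcal{D}}$.

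After conjugating we may assume the isotropy is exactly $H$; then the over‑$X$ morphisms between objects $U_{1}\times H\setminus G$ and $U_{2}\times H\setminus G$ are inclusions of opens of $X^{H}$ twisted by the Weyl group $W_{G}(H)=N_{G}(H)/H$, and $\std$‑covers among these correspond to ordinary open covers of $X^{H}$. Consequently $F|_{\mathcal{D}}$ is a flat, continuous functor for the translation‑groupoid site of the $W_{G}(H)$‑space $X^{H}$, i.e.\ a point of the topos of $W_{G}(H)$‑equivariant sheaves of sets on $X^{H}$. Since $X^{H}$ is sober (being a closed subspace of the sober space $X$) and $W_{G}(H)$ is finite, such a point is given by a point $x\in X^{H}$ together with a point of the residual $B\Stab_{W_{G}(H)}(x)$, and by the orbit‑category description of equivariant points the latter datum is encoded by an orbit $P\in\Orb_{H}$. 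Tracing this through the cofinal subcategory $\mathcal{D}$ identifies $p^{*}$ with $\F\mapsto\F_{x;H,P}$, so $p\simeq p_{x,H,P}$, which proves (b). (Equivalently one may argue by induction on $|G|$: the case $H=\{e\}$ reduces to a point of the sober space $X$, and the case $H\neq\{e\}$ to part (b) for the strictly smaller group $W_{G}(H)$ acting on $X^{H}$.)

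The main work lies in part (b): the identification of the cofinal subcategory $\mathcal{D}$ with a Weyl‑twisted version of $\Open(X^{H})$, the matching of the standard topology with open covers of $X^{H}$, and the precise dictionary between the residual stabiliser datum and the orbit $P\in\Orb_{H}$ all require carefully unwinding the definition of morphisms of generalised orbits (including the $P|_{s(H)}$‑construction). A secondary, routine point is verifying that the equivariant pullback functors on sheaves of sets are left exact, so that the stalk functors of part (a) are genuinely points of the topos; everything else reduces to the classical theory of sheaves on a sober topological space.
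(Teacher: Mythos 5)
Your part (a) is correct and is essentially the paper's own argument: joint conservativity of the stalk functors is checked by observing that every object of $\Cov(X;G)$ lies in the image of some $F_H\colon\Open(X^H)\to\Cov(X;G)$, so that an isomorphism on all stalks of the sheaves $\Phi^G_H\F$ on the spaces $X^H$ forces an isomorphism on all sections; your additional verification that the stalk functors are left exact and cocontinuous (hence genuinely points) is a reasonable supplement to what the paper leaves implicit, and no sobriety is needed there.

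Part (b), however, has a genuine gap at its pivotal step. In $\Cov(X;G)$ a morphism goes from an object of isotropy $H_1$ to one of isotropy $H_2$ only if $H_1$ is subconjugate to $H_2$, and the colimit computing $p^*\F$ is indexed by the \emph{opposite} of the category of elements of the flat functor $F$ (exactly as the classical stalk is a colimit over shrinking neighbourhoods, i.e.\ over ever \emph{smaller} objects of the site). Consequently the objects that are cofinal for this colimit are those of \emph{minimal} isotropy in the support of $F$, not maximal: for any $(Q,\xi)$, cofilteredness produces a common source, whose isotropy is subconjugate to both, so the occurring isotropy classes are downward directed and one must shrink to the smallest orbit type. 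Your choice of the \emph{maximal} isotropy $H$ and the claim $p^*\F\simeq\colim_{\mathcal D}\F$ for the corresponding subcategory $\mathcal D$ fail already in the simplest example: take $X=\{\bullet\}$, so $\Sheaves_{\Sets}(X;G)\simeq\Fun(\Orb_G^{\op},\Sets)$, and let $p$ be evaluation at the free orbit $\{e\}\backslash G$. Then $F(H\backslash G)=\Hom_G(\{e\}\backslash G,H\backslash G)\neq\emptyset$ for every $H$, so your maximal isotropy is $H=G$, and $\colim_{\mathcal D}\F=\F(G\backslash G)$, which is evaluation at the trivial orbit and differs from $p^*$ (e.g.\ on the sheafified representable of the free orbit when $G\neq\{e\}$); moreover objects of isotropy $G$ admit no morphisms to objects of smaller isotropy, so $\mathcal D$ cannot be cofinal. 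The rest of your argument (identification with a point of the $W_G(H)$-equivariant sheaf topos on the sober space $X^H$, which is the same reduction the paper performs by citing sobriety of $X^H$ and Mac Lane--Moerdijk IX.3.2) is therefore carried out for the wrong subgroup and the wrong subcategory; it could plausibly be repaired by working with the minimal isotropy class instead, but as written the key formula is false. A smaller, secondary looseness: even after that fix, the asserted dictionary ``a point of $B\,\mathrm{Stab}_{W_G(H)}(x)$ is encoded by an orbit $P\in\Orb_H$'' needs justification, since the points of $B\Gamma$ for a finite group $\Gamma$ form a connected groupoid of $\Gamma$-torsors, and the orbit $P$ in the triple $(x,H,P)$ is largely redundant (different choices give isomorphic points), so the bookkeeping relating the residual datum to $P$ must be made precise rather than invoked.
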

\begin{proof}
For the proof of (a) we observe first that for a fixed subgroup $H$ we have a functor $$F_{H}\colon \Open(X^{H}) \to \Cov(X;G).$$ Furthermore, the essential images of these functors ranging over all possible subgroups cover the site $\Cov(X;G)$. Let $f\colon \F \to \Gc$ be a map of set-valued sheaves on $\Cov(X;G)$ such that for every triple $(x;H,P)$ we have that the induced map of stalks $\F_{x;H,P} \to \Gc_{x;H,P}$ is an isomorphism. As in the non-equivariant case, this directly implies that for every $\tilde{P} \in \Cov(X;G)$ the induced map of sets $\F(\tilde{P}) \to \Gc(\tilde{P})$ is a bijection.

To prove claim (b) we use that the closed subset $X^H \subset X$ is also sober, and hence one can argue as in \cite[Proposition IX.3.2]{MMcL} to verify the second assertion. 
\end{proof}

\begin{proposition}
We assume that $X$ is a sober space. Then, a morphism $f\colon \F \to \Gc$ in $\Sh(X;G)$ is an equivalence if and only if for every subgroup $H$ and for every $H$-fixpoint $x \in X^H$ we have that $\F_{x;H} \to \Gc_{x;H}$ is an equivalence of $H$-spaces.
\end{proposition}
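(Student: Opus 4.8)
The plan is to reduce the statement to the fact that $\Sh(X;G)$ is a hypercomplete $\infty$-topos with enough points, where the points are precisely the $H$-stalk functors supplied by Lemma \ref{lemma:enough-points}. The ``only if'' direction is free, since every functor preserves equivalences, so I concentrate on the ``if'' direction.

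First I would recall the bookkeeping for stalks. For a subgroup $H\subset G$ and an $H$-fixpoint $x\in X^H$, the $H$-stalk at $x$ is the inverse image $i_{x;H}^{-1}$ of the geometric morphism attached to $i_{x;H}\colon H\setminus G\to X$, followed by the equivalence $\Sheaves(H\setminus G;G)\simeq\Spaces_H$ of Lemma \ref{lemma:subgroup-compatibility}. Postcomposing with the exact, colimit-preserving evaluation $\ev_P\colon\Spaces_H=\Pr(\Orb_H)\to\Spaces$ at an orbit $P$ then exhibits $\F\mapsto\F_{x;H,P}=\F_{x;H}(P)$ as the inverse image of a point of the $\infty$-topos $\Sheaves(X;G)$. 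By Lemma \ref{lemma:enough-points}, part (a) gives that the classical topos $\Sheaves_{\Sets}(X;G)$ has enough points, and part (b) identifies these points with exactly the triples $(x,H,P)$.

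Next I would invoke that $\Sh(X;G)$, being by construction the hypercompletion of $\Sheaves(X;G)$, is a hypercomplete $\infty$-topos, and that the points of the hypercomplete $\infty$-topos of hypersheaves on a site coincide with the points of the underlying $1$-topos (\cite[Section 6.5]{htt}). Hence the family $\{\,\F\mapsto\F_{x;H,P}\,\}_{(x,H,P)}$ is jointly conservative on $\Sh(X;G)$. From this I would deduce in the standard way that $f\colon\F\to\Gc$ is an equivalence iff $\F_{x;H,P}\to\Gc_{x;H,P}$ is an equivalence of spaces for all $(x,H,P)$: each stalk is the inverse image of a point and so commutes with homotopy sheaves, so if all stalks of $f$ are equivalences then every relative homotopy sheaf $\pi_n(f)$ has vanishing stalks and thus vanishes, whence $f$ is $\infty$-connective and therefore, by hypercompleteness, an equivalence. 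Finally I would pass from ``evaluated at every $P$'' to ``equivalence of $H$-spaces'': equivalences in the functor $\infty$-category $\Spaces_H=\Fun(\Orb_H^{\op},\Spaces)$ are detected objectwise, so for fixed $(x,H)$ the map $\F_{x;H}\to\Gc_{x;H}$ is an equivalence of $H$-spaces precisely when $\F_{x;H}(P)\to\Gc_{x;H}(P)$ is an equivalence for all $P\in\Orb_H$. Chaining the two equivalences of conditions gives the claim.

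The main obstacle I anticipate is the transfer of ``enough points'' from the classical $1$-topos, where Lemma \ref{lemma:enough-points} is phrased, to the hypercomplete $\infty$-topos $\Sh(X;G)$, together with the resulting detection of equivalences on stalks. This rests on the comparison between points of a $1$-topos and points of its associated hypercomplete $\infty$-topos and on the compatibility of stalk functors with Postnikov towers and homotopy-sheaf truncations; some care is warranted here because the analogous assertion genuinely fails for the non-hypercomplete $\infty$-topos $\Sheaves(X;G)$. Once this input is in place, the remaining steps are formal.
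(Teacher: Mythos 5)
Your proof is correct and follows essentially the same route as the paper: the easy direction by functoriality of the stalk functors, and the hard direction by combining Lemma \ref{lemma:enough-points} (enough points of the underlying $1$-topos, realised by the triples $(x,H,P)$) with hypercompleteness of $\Sh(X;G)$ to detect equivalences stalkwise, then passing to $H$-spaces via objectwise detection of equivalences in $\Pr(\Orb_H)$. The paper merely compresses the stalkwise-detection step into a citation of Jardine's work via \cite[Remark 6.5.2]{htt}, which your argument through vanishing relative homotopy sheaves, $\infty$-connectivity and hypercompleteness unpacks correctly.
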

\begin{proof}
The implication ``$\Rightarrow$" is a direct consequence of functoriality of $i^{-1}_{x;H}$. It remains to establish the other direction ``$\Leftarrow$". Thus, we assume that $\F$ and $\Gc$ are hypersheaves and that $f\in \Hom(\F,\Gc)$ is a morphism satisfying the assumption that $f_x$ is an equivalence of $H$-spaces for all points $x \in H$ and all subgroups $H \subset G$. Since the topos $\Sh_{\Sets}(X;G)$ was shown to have enough points (see Lemma \ref{lemma:enough-points}) it follows from work of Jardine that $f$ must be an equivalence of hypersheaves (see \cite[Remark 6.5.2]{htt}).
\end{proof}

\subsubsection{Hypercompleteness}\label{sssec:hyper}

In light of the previous paragraph it is useful to have a criterion to recognise when sheaves and hypersheaves agree (= hypercompleteness of $\Sheaves(X;G)$). This happens under similar assumptions as for sheaves on topological spaces (without group action).

\begin{definition}
We say that a topological space $Z$ has \emph{covering dimension $\leq n$} if for every open covering $(U_i)_{i \in I}$ there exists an open refinement $(V_j)_{j \in J}$ such that intersection $V_{j_0} \cap \cdots \cap V_{j_{n+1}} = \emptyset$ if the indices $j_0,\dots,j_{n+1}$ are pairwise distinct.
\end{definition}

We recall that a topological space $X$ is called \emph{heriditarily paracompact} if every subset $M \subset X$ is paracompact. It is well-known that this property holds if and only if every open subset $U \subset X$ is paracompact. Therefore, separable metric spaces have this property, and thus also complex-analytic spaces.

\begin{proposition}
Assume that $X$ is a \emph{heriditarily paracompact} topological space acted on by $G$ such that for every subgroup $H \subset G$ we have that $X^H$ has covering dimension $\leq n$ for some positive integer $n$. Then, the $\infty$-topos $\Sheaves(X;G)$ is \emph{hypercomplete}, i.e. $\Sh(X;G)=\Sheaves(X;G)$.
\end{proposition}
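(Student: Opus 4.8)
The plan is to deduce this from the criterion \cite[Corollary 7.2.1.12]{htt}, according to which an $\infty$-topos that is \emph{locally of homotopy dimension $\leq m$} for some integer $m$ is hypercomplete. So it suffices to produce an integer $m$ together with a collection of objects generating $\Sheaves(X;G)$ under colimits whose slice $\infty$-topoi all have homotopy dimension $\leq m$.

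First I would take the generating collection to be the (hyper)sheafifications of the representable presheaves $j(P)$, $P \in \Cov(X;G)$; these generate $\Sheaves(X;G) = \Sheaves_{\std}(\Cov(X;G))$ under colimits. By construction of the site, every object $P$ has the form $U \times H\backslash G$ for a subgroup $H \subset G$ and an open subset $U \subset X^{H}$. The next step is to identify the slice. Since $\Sheaves(X;G)$ is a sheaf $\infty$-topos, its slice over a representable is sheaves on the slice site: $\Sheaves(X;G)_{/j(P)} \simeq \Sheaves_{\std}(\Cov(X;G)_{/P})$. Unravelling the definition of a morphism of generalised $G$-orbits — a morphism $V\times H'\backslash G \to U\times H\backslash G$ over $X$ forces $H'$ to be subconjugate to $H$ — one checks, exactly as in the proof of Lemma \ref{lemma:subgroup-compatibility}, that $\Cov(X;G)_{/P}$ with its induced topology is equivalent to the site $\Cov(U;H)$ of the $H$-space $U$, on which $H$ acts trivially since $U \subset X^{H}$. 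Hence $\Sheaves(X;G)_{/j(P)} \simeq \Sheaves(U;H)$, and Lemma \ref{lemma:trivial-action-case} identifies the latter with $\Sheaves_{\Spaces_H}(U) \simeq \Fun(\Orb_H^{\op},\Sheaves(U))$, the last equivalence because the sheaf condition is a limit condition, computed pointwise in a functor category.

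It remains to bound the homotopy dimension of $\Fun(\Orb_H^{\op},\Sheaves(U))$ uniformly in $H$ and $U$. Since $X$ is hereditarily paracompact (and Hausdorff, as is implicit in ``paracompact''), $X^{H}$ is closed and hence hereditarily paracompact, so $U$ is paracompact; and $U$ has covering dimension $\leq n$, since covering dimension does not increase under passage to open subspaces of such spaces. By \cite[Theorem 7.2.3.6]{htt} the $\infty$-topos $\Sheaves(U)$ then has homotopy dimension $\leq n$. Writing $\Fun(\Orb_H^{\op},\Sheaves(U)) \simeq \Spaces_H \otimes \Sheaves(U)$ as a tensor product of $\infty$-topoi, and using that homotopy dimension is subadditive under such tensor products together with the standard fact that $\Spaces_H$ has finite homotopy dimension $\leq \ell(H)$ (the length of a maximal chain of subgroups of $H$, proved by induction over the subgroup lattice via isotropy separation), we obtain that $\Fun(\Orb_H^{\op},\Sheaves(U))$ has homotopy dimension $\leq n + \ell(G)$. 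Taking $m = n + \ell(G)$ shows that $\Sheaves(X;G)$ is locally of homotopy dimension $\leq m$, hence hypercomplete, i.e. $\Sh(X;G) = \Sheaves(X;G)$.

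The main obstacle is the identification $\Sheaves(X;G)_{/j(P)} \simeq \Sheaves(U;H)$: one must verify carefully that the Grothendieck topology on $\Cov(X;G)_{/P}$ induced by the standard topology matches the standard topology on $\Cov(U;H)$, and keep track of the conjugacy bookkeeping for isotropy subgroups. The homotopy-dimension estimate for $\Fun(\Orb_H^{\op},\Sheaves(U))$ is the other point needing care, but it is a routine relative version of the classical computation of the homotopy dimension of $\Spaces_H$.
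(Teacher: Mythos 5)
Your overall route is the same as the paper's: you invoke the criterion that an $\infty$-topos which is locally of homotopy dimension $\leq m$ is hypercomplete (\cite[Corollary 7.2.1.12]{htt}), take the generalised $G$-orbits (representables) as generators, identify the slice over $P=U\times H\backslash G$ with a functor category over a finite orbit-type category with values in $\Sheaves(U)$, and finish with the covering-dimension/homotopy-dimension theorem for paracompact spaces. In fact your identification of the slice with $\Fun(\Orb_H^{\op},\Sheaves(U))$ is more precise than the paper's reduction to $\Sheaves\big(\Open(U_i)\times\Latt(H)\big)$, which suppresses the Weyl-group automorphisms and is only claimed as an ``essential'' reduction; the conjugacy bookkeeping you mention is exactly the relative version of the standard equivalence $(\Orb_G)_{/(H\backslash G)}\simeq \Orb_H$, and that part of your argument is fine.

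The step that does not hold up as written is the final dimension bound. You rest it on two unproved assertions: that $\Spaces_H$ has homotopy dimension $\leq \ell(H)$, and, more seriously, that homotopy dimension is subadditive under tensor products of $\infty$-topoi. The latter is not a standard citable fact (neither \cite{htt} nor the usual references contain it), and your sketch does not supply an argument, so as stated there is a gap at the crux of the estimate. It is, however, easily repaired, and with a better constant: since $\Orb_H$ has a terminal object $H/H$, global sections on $\Fun(\Orb_H^{\op},\Sheaves(U))$ is evaluation at $H/H$ followed by $\Gamma(U,-)$, and connectivity in this functor category is detected objectwise; hence its homotopy dimension is at most that of $\Sheaves(U)$, which is $\leq n$ by \cite[Theorem 7.2.3.6]{htt} because $U$ is paracompact of covering dimension $\leq n$. (In particular $\Spaces_H$ itself has homotopy dimension $0$, not merely $\leq \ell(H)$.) With this replacement your proof is complete, gives the uniform bound $m=n$, and coincides in substance with the paper's argument.
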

\begin{proof}
We apply Jardine's criterion for hypercompleteness: an $\infty$-topos $\Xc$ which is locally of homotopy dimension $\leq n$ is hypercomplete. Recall that for $\Xc$ to be locally of homotopy dimension $\leq n$ we must construct a family of objects $(T_i)_{i \in I}$ generating $\Xc$ under colimits such that for each slice $\Xc_i=\Xc_{/U_i}$ we have a section $1_{\Xc_i} \to F$ for every $n$-connective object $F \in \Xc_i$.

For $\Xc = \Sheaves(\Xc;G)$ we let $\{T_i\}_{i \in I}$ be a family of representatives of isomorphism classes of generalised $G$-orbits. That is, $T_i$ can be written as the sheaf represented by a $G$-equivariant map
$$P_i = U_i \times H_i \setminus G \to X,$$
where $U_i \subset X^{H_i}$ is an open subset. More concretely, $T_i$ assigns to another generalised $G$-orbit $(P,f)$ the set of maps 
$$\Hom_{\Cov(X;G)}(P,P_i).$$

By definition, objects of the slice topos $\Sheaves(\Xc;G)_{/T_i}$ are sheaves $\F$ equipped with a morphism $\F \to T_i$. Therefore, $\F(P) \neq \emptyset$ if and only if there exists a map $P \to P_i$. This property could be described as $\F$ \emph{being supported on $P_i$}. For the $\infty$-categories of sheaves we have:
$$\Sheaves(\Xc;G)_{/T_i} \cong \Sheaves\big(\Cov(X;G)\big)_{/ T_i} \cong \Sheaves\big(\Cov(X;G)_{/T_i}\big) \cong \Sheaves\big(\Open(U_i) \times \Latt(H)\big),$$
where $\Latt$ denotes the poset of subgroups of $H$. Since the latter is a finite poset, we are now essentially reduced to showing that $\Sheaves(U_i)$ has homotopy dimension $\leq n$. Since $U_i$ is a paracompact space, this is equivalent to $U_i$ having covering dimension $\leq n$ (see \cite[Corollary 7.2.3.7]{htt}). This follows right from the assumption that $U_i$ is an open subset of $X^H$ which is assumed to have covering dimension $\leq n$.
\end{proof}

\subsubsection{Compatibility with equivariant cohomology}

The purpose of this subsection is to exhibit for a $G$-space $\E \in \Spaces_G$ a natural isomorphism
\begin{equation}\label{eqn:eqi}\underline{\Map}(|\Sing X|,\E) \simeq p_*^G\widehat{p}^{-1}_G\E,\end{equation}
where $\Map$ denotes the internal mapping space in $\Spaces_G$, $\Sing$ the functor from topological $G$-spaces to $G$-simplicial sets (given by singular chains in $X$), which are then mapped to $\Spaces_G$ via geometric realisation $|-|$ and $p\colon X \to \{\bullet\}$ denotes the constant $G$-equivariant map.

There are some technical assumptions on $X$ required, which we will explain below (see Proposition \ref{prop:equi-compatibility}). 

As a first step towards this result, we examine the following special case.

\begin{lemma}\label{lemma:HG}
Let $X$ be $H\setminus G$ endowed with the canonical right $G$-action. Then, 
$$p_*^G\widehat{p}^{-1}_G\E \simeq  \underline{\Map}(H \setminus G,\E),$$
and thus we have an equivalence of spaces as asserted in equation \eqref{eqn:eqi} if $X = H \setminus G$.
\end{lemma}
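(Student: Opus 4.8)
The plan is to unwind both sides of the claimed equivalence $p_*^G\widehat{p}^{-1}_G\E \simeq \underline{\Map}(H\setminus G,\E)$ using the identification $\Sheaves(H\setminus G;G)\cong \Spaces_H$ from Lemma \ref{lemma:subgroup-compatibility}, together with the equivalence $\Sheaves(\{\bullet\};G)\cong \Spaces_G$ from Remark \ref{rmk:singleton2}. Under these identifications, $p\colon H\setminus G \to \{\bullet\}$ corresponds, on the level of sites, to the functor $\Orb_H \to \Cov(H\setminus G;G)$ used in the proof of Lemma \ref{lemma:subgroup-compatibility}, and the pushforward $p_*^G$ becomes the right adjoint to the restriction functor $\Spaces_G = \Pr(\Orb_G) \to \Pr(\Orb_H) = \Spaces_H$ induced by the forgetful functor $\Orb_H \to \Orb_G$ (sending an $H$-orbit $H'\setminus H$ to the $G$-orbit $H'\setminus G$, using that subgroups of $H$ are subgroups of $G$). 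Thus the first step is to identify $p^G_*$ with the \emph{coinduction} (right Kan extension) functor $\Spaces_H \to \Spaces_G$ along $\Orb_H \hookrightarrow \Orb_G$, and $\widehat{p}^{-1}_G$ (equivalently, since $H\setminus G$ is $0$-dimensional so sheaves are hypersheaves and pullback is exact, $p^{-1}_G$) with restriction $\Spaces_G \to \Spaces_H$.

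Next I would compute $p^{-1}_G\E$ concretely. Since $\widehat{p}^{-1}_G$ is, after the site identifications, restriction along $\Orb_H\hookrightarrow\Orb_G$, the $G$-sheaf $\widehat{p}^{-1}_G\E$ on $H\setminus G$ corresponds to the $H$-space $\Res^G_H\E$, i.e.\ the functor $\Orb_H^{\op}\to\Spaces$, $H'\setminus H \mapsto \E(H'\setminus G)$. Then $p^G_*$ applied to this produces the $G$-space which is the right Kan extension of $\Res^G_H\E$ along $\Orb_H\hookrightarrow\Orb_G$. Pointwise, by the formula for right Kan extensions, this evaluates on an object $K\setminus G\in\Orb_G$ to the limit of $\Res^G_H\E$ over the comma category $(\Orb_H)_{K\setminus G/}$, whose objects are $G$-maps $K\setminus G \to H'\setminus G$ with $H'\subseteq H$. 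The key combinatorial observation is that this comma category has an initial-type description in terms of $G$-equivariant maps $K\setminus G \to H\setminus G$: I would show that the limit collapses and that the resulting $G$-space is naturally $\underline{\Map}(H\setminus G,\E)$, where the internal mapping space in $\Spaces_G = \Pr(\Orb_G)$ is computed so that its value on $K\setminus G$ is $\Map_{\Spaces_G}(K\setminus G\times(H\setminus G),\E)$ — matching the Kan extension formula after using the orbit decomposition of $K\setminus G\times H\setminus G$ into $G$-orbits and Yoneda.

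Concretely, I expect the cleanest route to be: (i) identify $\underline{\Map}(H\setminus G,-)$ in $\Spaces_G$ as the functor represented, via the exponential/internal-hom adjunction in the presheaf $\infty$-topos $\Pr(\Orb_G)$, so that $\Map_{\Spaces_G}(T,\underline{\Map}(H\setminus G,\E))\simeq\Map_{\Spaces_G}(T\times H\setminus G,\E)$ naturally in $T$; (ii) identify $p^G_*\widehat p^{-1}_G$ as the composite of restriction and coinduction, and note that coinduction is right adjoint to restriction which in turn is $(-\times H\setminus G$)-pullback composed with the forgetful functor — more precisely, use that for the étale map / covering $H\setminus G\to\{\bullet\}$ of $G$-orbits the functor $p^{-1}_G$ has the form $\E\mapsto\E\times_{\{\bullet\}}(H\setminus G)$ internally, whose right adjoint $p^G_*$ is then $\underline{\Map}(H\setminus G,-)$ by the usual adjunction $(-\times H\setminus G)\dashv\underline{\Map}(H\setminus G,-)$; (iii) conclude $p^G_*\widehat p^{-1}_G\E\simeq\underline{\Map}(H\setminus G,\E)$ and then observe the final sentence of the lemma follows because, for $X=H\setminus G$, $|\Sing X|$ is equivalent (as a $G$-space) to $H\setminus G$ itself — $\Sing$ of a discrete $G$-set with its $G$-CW structure is that $G$-set up to equivalence — so $\underline{\Map}(|\Sing X|,\E)\simeq\underline{\Map}(H\setminus G,\E)$ as well.

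The main obstacle, I anticipate, is step (ii): making precise that $\widehat{p}^{-1}_G$ for the constant map $p\colon H\setminus G\to\{\bullet\}$ really is given internally by $\E\mapsto\E\times(H\setminus G)$ — i.e.\ that pullback along a covering of the terminal $G$-orbit is "base change by $H\setminus G$" in the sheaf-theoretic sense — and that its right adjoint is therefore the internal mapping space out of $H\setminus G$. This requires carefully tracing the site map of Construction \ref{const:pushforward} in the case $X=H\setminus G$, $Y=\{\bullet\}$, checking that the induced functor on presheaf categories agrees with restriction along $\Orb_H\hookrightarrow\Orb_G$ under the identifications of Remark \ref{rmk:singleton2} and Lemma \ref{lemma:subgroup-compatibility}, and then invoking uniqueness of adjoints; since $H\setminus G$ is a finite discrete $G$-set, all sheaf/hypersheaf subtleties are inert (covering dimension $0$), so no hypercompletion issue arises and the argument is purely a bookkeeping identification of functors between presheaf $\infty$-categories.
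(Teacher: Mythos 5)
Your strategy is sound in outline but it is a genuinely different argument from the paper's. The paper never unwinds $p^G_*\widehat p^{-1}_G$ functorially: it checks the identity by a direct computation for classical set-valued sheaves, and then passes to space-valued sheaves by writing every sheaf as a colimit of representables and showing that both $p^G_*p^{-1}_G$ and $\underline{\Map}(H\backslash G,-)$ commute with arbitrary colimits ($p^{-1}_G$ because it is a left adjoint, $p^G_*$ because finiteness of $H\backslash G$ makes every object of the site quasi-compact, and $\underline{\Map}(H\backslash G,-)$ because a product of two orbits is a finite colimit of representables). You instead propose to identify the adjoint pair $(\widehat p^{-1}_G,p^G_*)$ with restriction and coinduction along $\Orb_H\to\Orb_G$ and then to use the standard equivalence $\mathrm{CoInd}^G_H\Res^G_H\E\simeq\underline{\Map}(H\backslash G,\E)$ (equivalently, $T\times H\backslash G\simeq\mathrm{Ind}^G_H\Res^G_H T$ plus adjunction). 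If your step (ii) is actually carried out, this is cleaner in one respect: it proves an equivalence of functors directly, with no reduction to discrete sheaves and no colimit argument. Your closing remark that $|\Sing (H\backslash G)|\simeq H\backslash G$ correctly handles the last sentence of the lemma.

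The caveat is that your step (ii) is not the bookkeeping you hope for; it is where the entire content of the lemma sits, and it is essentially the paper's ``direct computation for set-valued sheaves'' in disguise. Concretely, with the formula of Construction \ref{const:pushforward} taken at face value, $p^*$ sends the orbit $H'\backslash G\in\Cov(\{\bullet\};G)$ to $(H\backslash G)^{H'}\times H'\backslash G$, so $(p^G_*\F)(H'\backslash G)$ is a product with one factor $\F(P_{gH'g^{-1}})$ for each $H'$-fixed coset $Hg$ (i.e.\ $gH'g^{-1}\subseteq H$). Coinduction of the restriction --- and likewise $\underline{\Map}(H\backslash G,\E)$ evaluated on $H'\backslash G$ --- instead has one factor $\E\big((gH'g^{-1}\cap H)\backslash G\big)$ for \emph{every} $H$-orbit of $\Res^G_H(H'\backslash G)$, i.e.\ for every double coset; already at $H'=G$ (with $H\neq G$) the fixed-point set $(H\backslash G)^{G}$ is empty while the other side gives $\E(H\backslash G)$. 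So to identify $p^G_*$ with coinduction you must justify reading the pullback of a cover as the full fibre product $(H\backslash G)\times_{\{\bullet\}}(H'\backslash G)$ decomposed into generalised orbits (the double-coset decomposition), and match the resulting values under the equivalences of Lemma \ref{lemma:subgroup-compatibility} and Remark \ref{rmk:singleton2}. That is a genuine orbit computation, not uniqueness of adjoints; once it is done, your argument closes and is a legitimate alternative to the paper's colimit-based proof.
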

\begin{proof}
This identity clearly holds for classical set-valued sheaves, as shown by a direct computation. On the other hand, every space-valued sheaf on a classical site is a colimit of representable (and therefore, classical) sheaves by Corollary 5.1.5.8 in \cite{htt}.  Hence, it is sufficient to check that the functors $p_*^Gp^{-1}_G$ and $\underline{\Maps}(H\backslash G,-)$ commute with arbitrary colimits. Being a left adjoint, the functor $p^{-1}_G$ commutes with arbitrary colimits. Finiteness of the set $H\backslash G$ implies that every object of the site $\Orb(H\backslash G; G)$ is quasi-compact and therefore that $p_*^G$ commutes with arbitrary colimits. 
\begin{claim}
The functor $\underline{\Maps}(H\backslash G,-)$ commutes with arbitrary colimits.
\end{claim}  
There is a canonical map
$$\colim_I \underline{\Maps}(H\backslash G,\mathcal{F}_i) \to \underline{\Maps}(H\backslash G,\colim_{I}\mathcal{F}_i).$$
Let $H'\backslash G \in \Orb_G$ be a test object. We obtain a map
$$\colim_I \underline{\Maps}(H\backslash G \times H'\backslash G,\mathcal{F}_i) \to \underline{\Maps}(H\backslash G \times H'\backslash G,\colim_{I}\mathcal{F}_i),$$
where the product $H\backslash G \times H'\backslash G$ is taken in the presheaf category $\Pr(\Orb_G) = \Spaces_G$. Since this product is itself a finite colimit of representable presheaves, we obtain that $\Maps(H\backslash G \times H'\backslash G,-)$ commutes with colimits. This concludes the proof.
\end{proof}

The proof of the following lemma relies on the Equivariant Proper Base Change Theorem, which we will prove in Subsection \ref{ssec:proper-base} in the slightly more general context of topological stacks.

\begin{lemma}[G-homotopy invariance]
Let $\Ee \in \Spaces_G$ be a $G$-space and $X$ a locally compact topological space endowed with a $G$-action. Assume that $\Sh(X;G) = \Sheaves(X;G)$. Then, we have that the natural pullback morphism induces an equivalence
$$(p_{X \times [0,1]})_*^G \underline{\Ee}_{X \times [0,1]} \simeq (p_X)_*^G \underline{\Ee}_X.$$
\end{lemma}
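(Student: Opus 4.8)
The plan is to reduce the $G$-equivariant statement to the known non-equivariant homotopy invariance of sheaves of spectra, by checking the equivalence on stalks and applying the Equivariant Proper Base Change Theorem (from Subsection \ref{ssec:proper-base}). First I would observe that the natural map $(p_{X \times [0,1]})_*^G \underline{\Ee}_{X \times [0,1]} \to (p_X)_*^G \underline{\Ee}_X$ is obtained by applying $(p_X)_*^G$ to the counit of the adjunction between $\pi^{-1}_G$ and $\pi^G_*$ for the projection $\pi\colon X \times [0,1] \to X$, using that $p_X \circ \pi = p_{X \times [0,1]}$. Since both sides are objects of $\Spaces_G = \Sheaves(\{\bullet\};G)$, and by Remark \ref{rmk:singleton} a morphism there is an equivalence iff it induces an equivalence on all $H$-orbit evaluations for $H \subset G$, it suffices to evaluate at each generalised orbit $P_H$. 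The key point is that $(p_X)_*^G$ evaluated at $P_H = \{\ast\} \times H \setminus G$ computes the (non-equivariant) global sections $\Gamma(X^H, \Phi^G_H(-))$ of the $H$-fixed-point restriction — more precisely, via the forgetful functor $\Phi^G_H\colon \Sheaves(X;G) \to \Sheaves(X^H)$ of Definition \ref{defi:PhiGH}, which is compatible with pushforward along $G$-equivariant maps (as it comes from the map of sites $F_H$).

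Next I would use the Equivariant Proper Base Change Theorem to commute $\Phi^G_H$ past the pushforward $(p_X)_*^G$ and its analogue for $X \times [0,1]$. The projection $X \times [0,1] \to X$ is proper (since $[0,1]$ is compact and $X$ is locally compact Hausdorff), and it is $G$-equivariant; taking $H$-fixed points yields the projection $X^H \times [0,1] \to X^H$. Base change then identifies $\Phi^G_H\big((p_{X \times [0,1]})_*^G \underline{\Ee}_{X \times [0,1]}\big)$ with $(p_{X^H})_*\big(\Phi^G_H \underline{\Ee}_{X \times [0,1]}\big)$, and the latter is the ordinary pushforward of the constant sheaf on $X^H \times [0,1]$ with fibre the space $\Ee(H \setminus G)$ (using Lemma \ref{lemma:subgroup-compatibility} and the fact that $\Phi^G_H$ takes constant sheaves to constant sheaves, which one reads off the site map $F_H$). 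The comparison map then becomes, under $\Phi^G_H$, the classical map
\[
(p_{X^H \times [0,1]})_* \underline{\Ee(H\setminus G)}_{X^H \times [0,1]} \to (p_{X^H})_* \underline{\Ee(H\setminus G)}_{X^H},
\]
which is an equivalence by the standard homotopy invariance of sheaf cohomology on a locally compact space with contractible interval factor (here one uses the hypothesis $\Sh(X;G) = \Sheaves(X;G)$, which by the hypercompleteness criterion earlier forces $X^H$ to be well-behaved enough — paracompact of finite covering dimension — for homotopy invariance to apply, e.g. via \cite[Corollary 7.2.3.7]{htt} and the usual argument with the projection from a cylinder).

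Since the comparison morphism becomes an equivalence after applying every $\Phi^G_H$, and the family $\{\Phi^G_H\}_{H \subset G}$ is jointly conservative on $\Sheaves(\{\bullet\};G) = \Spaces_G$ (this is Remark \ref{rmk:singleton} together with Lemma \ref{lemma:enough-points} applied to a point, or simply the fact that a $G$-space is detected by its $H$-fixed points), we conclude that the original map is an equivalence. I expect the main obstacle to be the bookkeeping in the base-change step: one must set up the Equivariant Proper Base Change Theorem so that it applies to the square formed by $\pi\colon X \times [0,1] \to X$, the constant maps to a point, and the fixed-point functors — in particular verifying that $\Phi^G_H$ genuinely intertwines the $G$-pushforward with the ordinary pushforward along $X^H \times [0,1] \to X^H$, which amounts to chasing the definitions of the site maps $f^*$ (Construction \ref{const:pushforward}) and $F_H$ and checking they are compatible. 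The rest is a routine transport of the classical homotopy invariance result.
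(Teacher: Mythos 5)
Your reduction has several correct ingredients: the evaluation formula $\big((p_X)_*^G\F\big)(H\backslash G)\simeq \Gamma\big(X^H,\Phi^G_H\F\big)$, the definitional intertwining $\Phi^G_H\circ f^G_* \simeq (f^H)_*\circ \Phi^G_H$ (which, as you say, is a site-level chase and not really a base-change statement), and the observation that $X\times[0,1]\to X$ is proper. But the final step is a genuine gap. After pushing everything down to the point you are left with showing that
$$\Gamma\big(X^H,\underline{\Ee(H\backslash G)}\big)\longrightarrow \Gamma\big(X^H\times[0,1],\underline{\Ee(H\backslash G)}\big)$$
is an equivalence, and this is exactly the non-equivariant case of the lemma being proved; for sheaves of spaces/spectra on an arbitrary locally compact space it is not a citable ``standard'' fact, because the usual proof (stalks of the pushforward along the proper projection, plus contractibility of $[0,1]$) needs stalkwise detection of equivalences, i.e.\ hypercompleteness of $\Sheaves(X^H)$ or $\Sheaves(X^H\times[0,1])$. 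Your attempted justification runs the paper's hypercompleteness criterion backwards: the hypothesis $\Sh(X;G)=\Sheaves(X;G)$ does \emph{not} force $X^H$ to be paracompact of finite covering dimension (the criterion in the paper is only a sufficient condition in the other direction), nor is it shown anywhere that it implies hypercompleteness of the non-equivariant topoi $\Sheaves(X^H)$. A related, smaller instance of the same issue is your identification of $\Phi^G_H\underline{\Ee}_X$ with the constant sheaf at $\Ee(H\backslash G)$, which again would be checked on stalks.

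The paper's proof sidesteps this by never leaving $X$: it uses the hypothesis $\Sheaves(X;G)=\Sh(X;G)$ precisely to check \emph{stalkwise on $X$} (at pairs $(x,H)$) that $\underline{\Ee}_X\to h_*^G\,\underline{\Ee}_{X\times[0,1]}$ is an equivalence, where $h\colon X\times[0,1]\to X$ is the projection; the equivariant proper base change theorem applied to $h$ over the point $x$ identifies the stalk with $(p_{[0,1]})_*^G\underline{\Ee}_{[0,1]}$, and since $G$ acts trivially on $[0,1]$ this reduces (via the trivial-action lemma) to ordinary sheaf cohomology of the interval, where contractibility, compactness and finite dimension are automatic. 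If you reorganise your argument in this order --- apply base change \emph{before} taking global sections, so that the only homotopy invariance you ever invoke is over $[0,1]$, and use the hypothesis on $X$ exactly where stalkwise detection is needed --- the proof goes through.
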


Consequently, one obtains for a $G$-homotopy $H\colon X \times [0,1] \to Y$ of locally compact topological spaces that $H_0$ and $H_1$ induce the same map in $\Ho(\Spaces_G)$ between $(p_Y)_*^G \Ee$ and $(p_X)_*^G\Ee$. Indeed, this follows from the result above: we have $H_t = H \circ \mathrm{ev}_t$, where $\mathrm{ev}_t$ is the section $X \to X \times [0,1]$ corresponding to $t \in [0,1]$. 

Therefore, if $X \simeq Y$ is a $G$-homotopy equivalence, and both $X$ and $Y$ are locally compact, then $(p_{X})_*^G \underline{\Ee}_{X} \simeq (p_Y)_*^G \underline{\Ee}_Y.$ 
\begin{proof}[Proof of the lemma]
Since every sheaf on $X$ satisfies hyperdescent, it suffices to check for every pair $(x,H)$ with $x \in X^H$ that 
$$\big((p_{X})_*^G\underline{\Ee}\big)_{x;H} \to \big((p_{X \times [0,1]})_*^G\underline{\Ee}\big)_{x;H}$$
is an equivalence of spaces. 

Let us denote by $h\colon X \times [0,1] \to X$ the $G$-equivariant map of spaces given by projection to the first component.

The Proper Base Change Theorem applied to $h_*^G$ yields
$$g^{-1}_G h_*^G \simeq h_*^{'G} g'^{-1}_G$$ 
where we denote by $g$ a $G$-map along which we base change
(see Theorem \ref{thm:proper-base}).
In particular, we have
$$(h^G_*\underline{\E}_{X \times [0,1]})_x \simeq (p_{[0,1]})_*^G \underline{\E}_{[0,1]}.$$
Therefore, it remains to show that $\E \simeq (p_{[0,1]})_*^G \underline{\E}_{[0,1]}$. This is indeed the case, since $G$ acts trivially on $[0,1]$ and $(p_{[0,1]})_*^G \underline{\E}_{[0,1]}$ is therefore equivalent to non-equivariant sheaf cohomology of the $\Spaces_G$-valued sheaf $\underline{\E}$ on $[0,1]$ (see Lemma \ref{lemma:trivial-action-case}):
$$(p_{[0,1]})_*^G \underline{\E}_{[0,1]} \simeq (p_{[0,1]})_* \underline{\E}_{[0,1]} \simeq \E$$
where we used non-equivariant homotopy equivalence for (non-equivariant) sheaf cohomology in the last step.
\end{proof}

For a topological $G$-space $X$ we denote by $q_X\colon X \to X/G$ the projection map to the quotient space.

\begin{situation}\label{sit:loc-contract}
Assume that $X$ is locally compact and that there exists an open covering $\{U_i\}_{i \in I}$ of the quotient $X/G$ such that for every $i \in I$ we have a $G$-equivariant deformation retraction $$q_X^{-1}(U_i) \cong H_i \setminus G.$$
\end{situation}

\begin{proposition}\label{prop:equi-compatibility}
Assume that we are in Situation \ref{sit:loc-contract}. Then, there is an equivalence of $G$-spaces as asserted in equation \eqref{eqn:eqi}, i.e.
$$\underline{\Map}(|\Sing X|,\E) \simeq p_*^G\widehat{p}^{-1}_G\E,$$
\end{proposition}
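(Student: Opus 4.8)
The plan is to reduce the general statement to the special case $X = H\backslash G$ treated in Lemma \ref{lemma:HG} by a local-to-global argument, using the hypercover associated with the covering $\{U_i\}$ of $X/G$ and $G$-homotopy invariance. Both sides of \eqref{eqn:eqi} are functors on the (opposite of the) poset of $G$-stable open subsets of $X$ that send $G$-equivariant homotopy equivalences to equivalences — for the left-hand side this is because $\Sing$ is a homotopy invariant and $|\Sing(-)|$ depends only on the weak homotopy type, and for the right-hand side this is the $G$-homotopy invariance lemma proved just above (whose hypotheses are satisfied since $X$ is locally compact and $\Sh(X;G)=\Sheaves(X;G)$ holds under the paracompactness/finite-covering-dimension assumptions implicit in Situation \ref{sit:loc-contract}). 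Moreover both sides turn colimits of $G$-spaces in the $U$-variable into limits: the left-hand side because $\underline{\Map}(-,\E)$ and $|\Sing(-)|$ each convert homotopy colimits to homotopy limits, and the right-hand side because $p_*^G\widehat p_G^{-1}$ is a composite of a left adjoint followed by a right adjoint and one checks descent directly (a sheaf of spectra/spaces satisfies hyperdescent along the \v Cech nerve of an open cover, and $p^G_*$ preserves limits).

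\textbf{Key steps.} First I would set up the \v Cech nerve: let $V_i := q_X^{-1}(U_i)$, and for a finite tuple $i_0,\dots,i_k$ write $V_{i_0\cdots i_k} := V_{i_0}\cap\cdots\cap V_{i_k}$, a $G$-stable open subset of $X$. By hyperdescent for $\Sheaves(X;G)$ (which holds since this is an $\infty$-topos and, under Situation \ref{sit:loc-contract}, a hypercomplete one) one has
\[
p_*^G\widehat p_G^{-1}\E \;\simeq\; \lim_{[k]\in\Delta}\ \prod_{i_0,\dots,i_k} (p_{V_{i_0\cdots i_k}})_*^G\,\widehat{(p_{V_{i_0\cdots i_k}})}{}^{-1}_G\E,
\]
and symmetrically, since $\{|\Sing V_i|\}$ is a covering of $|\Sing X|$ in $\Spaces_G$ and $\underline{\Map}(-,\E)$ sends this colimit to the matching limit,
\[
\underline{\Map}(|\Sing X|,\E)\;\simeq\;\lim_{[k]\in\Delta}\ \prod_{i_0,\dots,i_k}\underline{\Map}(|\Sing V_{i_0\cdots i_k}|,\E).
\]
Second, I would observe that the comparison map of \eqref{eqn:eqi} is natural in $X$, hence induces a map of cosimplicial diagrams; it therefore suffices to prove \eqref{eqn:eqi} for each $V_{i_0\cdots i_k}$. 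Third, each such $V_{i_0\cdots i_k}$ is an open subset of the locally compact space $X$ on which $G$ acts, and is $G$-equivariantly homotopy equivalent to an orbit $H'\backslash G$ for a suitable subgroup $H'$ — indeed $V_{i_0}\simeq H_{i_0}\backslash G$ by hypothesis, and an open $G$-stable subset of a space $G$-homotopy equivalent to a single orbit is again such (the stabiliser of any point of $V_{i_0\cdots i_k}$ is conjugate into $H_{i_0}$, and one restricts the retraction). Fourth, by $G$-homotopy invariance of both sides — the left-hand side trivially, the right-hand side by the $G$-homotopy invariance lemma — the comparison map for $V_{i_0\cdots i_k}$ is identified with the comparison map for $H'\backslash G$, which is an equivalence by Lemma \ref{lemma:HG}. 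Passing to the limit over $\Delta$ finishes the proof.

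\textbf{Main obstacle.} The delicate point is verifying that an open $G$-stable subset $W$ of a $G$-space which is $G$-equivariantly homotopy equivalent to a single orbit $H\backslash G$ is itself $G$-equivariantly homotopy equivalent to some orbit $H'\backslash G$; equivalently, that the intersections $V_{i_0\cdots i_k}$ remain of "orbit type" up to $G$-homotopy. Concretely one argues as follows: the $G$-equivariant retraction $V_{i_0}\to H_{i_0}\backslash G$ exhibits $V_{i_0}$, and hence $W$, as a disjoint union over $G$-orbits of points of pieces each mapping to a point of $H_{i_0}\backslash G$; the image of $W$ in $H_{i_0}\backslash G$ is a $G$-stable, hence empty-or-all, subset, and when nonempty one picks a point $x\in W$ with stabiliser $H'\subset H_{i_0}$ and checks, using the straight-line homotopy in the contractible fibre together with the $G$-action, that the inclusion $H'\backslash G\hookrightarrow W$ of the orbit of $x$ is a $G$-deformation retract. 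I expect assembling this carefully — and confirming that the hyperdescent spectral sequence / $\lim$ over $\Delta$ converges as stated, which uses that the indexing category $\Orb(X;G)$ restricted to each slice has finite cohomological dimension by the covering-dimension hypothesis — to be the part requiring the most care, with everything else being formal manipulation of the adjunctions $p^{-1}_G\dashv p^G_*$ and the already-established Lemma \ref{lemma:HG} and $G$-homotopy invariance.
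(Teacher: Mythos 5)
Your reduction breaks at Step 3, and the ``Main obstacle'' paragraph does not repair it. The claim that a $G$-stable open subset of a space $G$-homotopy equivalent to a single orbit is again $G$-homotopy equivalent to an orbit is false, already for $G$ trivial: $\mathbb{R}^2\setminus\{0\}$ is open in the contractible space $\mathbb{R}^2$ but is not homotopy equivalent to a point (nor to any disjoint union of points). Restricting the deformation retraction of $V_{i_0}$ to an open subset does not yield a deformation retraction of that subset, and there is no ``straight-line homotopy'' in the fibres of the retraction: they are merely contractible topological spaces with no linear structure, and an open subset of a contractible space need not be contractible. Hence the double and higher intersections $V_{i_0\cdots i_k}$ in your \v{C}ech nerve need not be of orbit type, so Lemma \ref{lemma:HG} plus $G$-homotopy invariance does not apply to the terms of your cosimplicial diagram and the descent argument over the single covering furnished by Situation \ref{sit:loc-contract} does not close. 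Already for the trivial group and $X=S^2$ covered by two contractible opens, your scheme would need the statement for an annulus, which is not an instance of the orbit case; and Situation \ref{sit:loc-contract} supplies only one covering of $X/G$, not a basis of equivariantly contractible opens (nor the paracompactness/covering-dimension hypotheses you describe as ``implicit''), so one cannot simply refine to a good hypercover without further argument.

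There is a second gap: you invoke ``the comparison map of \eqref{eqn:eqi}'' and its naturality, but no such map exists prior to this proposition --- \eqref{eqn:eqi} is the equivalence to be exhibited, and constructing a natural map is a substantive part of the proof. The paper proceeds differently and thereby avoids both issues: it considers the $\Spaces_G$-valued presheaves on $X/G$ given by $U\mapsto \underline{\Map}(|\Sing q_X^{-1}(U)|,\E)$ and $U\mapsto \underline{\Map}(\pi_0^G(q_X^{-1}(U)),\E)$, builds a comparison map via the natural morphism $|\Sing(-)|\to\pi_0^G(-)$, identifies the latter presheaf with $p^G_*p_G^{-1,\mathrm{pre}}\E$, and then checks the map is an equivalence \emph{stalkwise} --- this is precisely where the local equivariant contractibility of Situation \ref{sit:loc-contract} is used, rather than through the \v{C}ech nerve of the given cover. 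Your descent observations for the two sides (hyperdescent evaluated on each orbit, and equivariant Seifert--van Kampen for $|\Sing X|$) are correct as far as they go, but without orbit-type intersections they cannot replace the stalkwise argument, so the proposal as written does not prove the proposition.
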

\begin{proof}
We will prove more generally that there is an equivalence of sheaves of $G$-spaces on $X/G$:
$$\underline{\Map}(|\Sing (X / (X/G))|,\E) \simeq (q_X)_*^G \widehat{p}^{-1}_G\E,$$
where $|\Sing(X/(X/G))|$ denotes the (strict) presheaf (taking values in the opposite category of topological $G$-spaces) associating to an open subset $U \subset X/G$ the $G$-space $|\Sing q_X^{-1}(U)|$. The asserted equivalence will then be obtained from this one by applying the global sections functor.

Since both sides are hypersheaves it suffices to 
\begin{itemize}
\item[-] construct a morphism, 
\item[-] and to show that it is an equivalence stalk-wise.
\end{itemize}
To construct the morphism we appeal to the adjunction between geometric realisation and $\Sing$, which yields a natural map (the co-unit) $|\Sing U| \to U$ for every $G$-space $U$.  

For a $G$-space $U$ we denote by $\pi_0^{G}(U)$ the $G$-set corresponding to the set-valued presheaf on $\Orb_G$ assigning to $H\backslash G$ the set of homotopy classes of $G$-equivariant maps $H\backslash G \to U$. There is a natural map 
$$|\Sing U| \to \pi^G_0(U),$$
which in fact also stems from an adjunction (the right adjoint being the inclusion functor of $G$-sets in $G$-spaces).

This yields a map of presheaves
$$\underline{\Map}(\pi_0^G (X / (X/G)),\E) \to \underline{\Map}(|\Sing (X / (X/G))|,\E),$$
which by assumption on local equivariant contractibility, induces an equivalence on stalks and thus an equivalence of associated hypersheaves.

The right-hand presheaf already being a hypersheaf, it remains to check that the hypersheafification of the left-hand presheaf is equivalent to $p^G_*\widehat{p}_G^{-1}\Ee$.
This follows directly from the claim that the presheaf $p^G_* p_G^{-1,{\rm pre}}\Ee$ is equivalent to $\underline{\Map}(\pi_0^G (X / (X/G)),\E)$ by virtue of definition. To see that this claim holds one uses the assumption to reduce to the case where $X = H \backslash G$ and uses the equivalence
$$\Pr(H\backslash G;G) \cong \Spaces_G/{H\backslash G},$$
describing the first $\infty$-category as a comma-category. With respect to this equivalence, the functor $p_G^{-1}$ corresponds to $- \times (H\backslash G)$, and $p^G_*$ is given by the relative space of sections $\underline{\Maps}_{/(H \backslash G)}(H\backslash G, -)$. The composition of these two functors is then given by the asserted internal mapping space.
\end{proof}

\subsubsection{$\Cc$-valued $G$-sheaves}

Let $\Cc$ be an $\infty$-category. We denote by $\Pr_{\Cc}(X;G)$ the $\infty$-category of functors
$\Fun\big(\Cov(X;G)^{\op},\Cc).$
\begin{definition}
A $\Cc$-valued $G$-presheaf $\F \in \Pr_{\Cc}(X;G)$ is a \emph{sheaf} (respectively hypersheaf) if for every object $X \in \Cc$ the space-valued $G$-presheaf $\Maps(X,\F) \in \Pr(X;G)$ is a sheaf (respectively hypersheaf). 
\end{definition}

For a continuous and $G$-equivariant map $f\colon X \to Y$ we obtain as before a pushforward functor $f^G_*\colon \Pr_{\Cc}(X;G)\to \Pr_{\Cc}(Y;G)$ preserving the full subcategories of sheaves and hypersheaves. If $\Cc$ is presentable, then we also dispose of an adjoint functor $f^{-1}_G$.

\begin{proposition}
Let $\Cc$ be a presentable $\infty$-category and $\Ee \in \Cc_G:=\Sh_{\Cc}(\{\bullet\};G)$. Under the same assumptions as in Proposition \ref{prop:equi-compatibility} we have 
$$p^G_*p^{-1}_G \Ee \simeq \underline{\Maps}(\Sing X,\Ee) \in \Cc_G.$$
\end{proposition}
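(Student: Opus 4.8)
The plan is to reduce the statement to the already-proven space-valued version, Proposition~\ref{prop:equi-compatibility}, by checking the identity on corepresentable objects and using presentability to bootstrap to all of $\Cc$. More precisely, I would first recall that for a presentable $\infty$-category $\Cc$, the $\infty$-category $\Cc_G = \Sh_{\Cc}(\{\bullet\};G)$ is again presentable, and that a morphism $\F \to \Gc$ in $\Cc_G$ is an equivalence if and only if $\Maps(X,\F) \to \Maps(X,\Gc)$ is an equivalence of $\Spaces_G$ for every $X$ in a set of generators of $\Cc$. Dually, for such $X$ there is a left adjoint $X \otimes (-)\colon \Spaces \to \Cc$ (tensoring with the generator), and this assembles to a functor $\Cc$-enrichment compatible with all the $G$-sheaf operations.

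First I would construct the comparison map. By adjunction $(\Sing \dashv |-|)$ and the co-unit $|\Sing X| \to X$, together with the adjunction $(p^{-1}_G \dashv p^G_*)$, one obtains a natural transformation $p^{-1}_G \Ee \to \widehat{p}^{-1}_G\Ee$ and hence a map $p^G_* p^{-1}_G \Ee \to \underline{\Maps}(\Sing X,\Ee)$ in $\Cc_G$, exactly as in the space-valued proof; here I would note that the internal mapping object $\underline{\Maps}(S,-)$ for a simplicial set $S$ makes sense in any presentable $\infty$-category since $\Cc_G$ is cotensored over $\Spaces$. To check this map is an equivalence, I would apply the functors $\Maps_{\Cc_G}(X,-)$ for $X$ ranging over a generating set of $\Cc$. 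Both functors $p^G_* p^{-1}_G$ and $\underline{\Maps}(\Sing X,-)$ commute with $\Maps_{\Cc_G}(X,-)$ up to the space-valued analogues: on the one hand $\Maps_{\Cc_G}(X,\underline{\Maps}(\Sing X',\Ee)) \simeq \underline{\Maps}(\Sing X', \Maps_{\Cc_G}(X,\Ee))$ because cotensors are computed pointwise; on the other hand $\Maps$ commutes with the pushforward $p^G_*$ by definition of $\Cc$-valued sheaves, and with the pullback $p^{-1}_G$ because $p^{-1}_G$ is a left adjoint and $\Maps_{\Cc_G}(X,-)$ preserves the relevant limits appearing in the sheafification. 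Thus the comparison map is carried to the space-valued comparison map of Proposition~\ref{prop:equi-compatibility} applied to $\Maps_{\Cc_G}(X,\Ee) \in \Spaces_G$, which is an equivalence by hypothesis.

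I expect the main obstacle to be the careful bookkeeping around hypersheafification: the left adjoint $\widehat{p}^{-1}_G$ is the composition of the non-hypercomplete pullback $p^{-1}_G$ with hypersheafification, and one must verify that $\Maps_{\Cc_G}(X,-)$ intertwines $\Cc$-valued hypersheafification with $\Spaces$-valued hypersheafification. This is where the assumption (inherited from Situation~\ref{sit:loc-contract}, via the hypotheses of Proposition~\ref{prop:equi-compatibility}) that $\Sheaves(X;G)$ is hypercomplete becomes essential: under that hypothesis sheaves and hypersheaves coincide, so the subtlety evaporates and one may work with the ordinary pullback $p^{-1}_G$ throughout. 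Once that point is dispatched, the argument is a formal consequence of presentability and the generation-by-corepresentables principle, together with the space-valued statement already in hand.

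Finally, I would remark that the same proof works verbatim with $|\Sing X|$ replaced by $\Sing X$ as written in the statement, since under the local contractibility assumption of Situation~\ref{sit:loc-contract} the relevant stalks only see $\pi_0^G$, and the geometric realisation versus simplicial set distinction is immaterial after applying $\underline{\Maps}(-,\Ee)$; this is precisely the reduction carried out at the end of the proof of Proposition~\ref{prop:equi-compatibility}.
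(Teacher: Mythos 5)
Your proposal is correct and takes essentially the same route as the paper, whose entire proof is ``combine Proposition \ref{prop:equi-compatibility} with Yoneda's lemma'': since $\Cc$-valued ($G$-)sheaves are by definition detected by $\Maps_{\Cc}(c,-)$ for $c\in\Cc$, testing the comparison map against (a generating family of) corepresentables reduces the claim to the space-valued statement, which is exactly your argument via cotensors and generators. The one point where your write-up is looser than it should be is the commutation of $\Maps_{\Cc_G}(c,-)$ with the pullback $p^{-1}_G$ (``because it is a left adjoint'' is not a justification, and hypercompleteness is not literally among the hypotheses of Situation \ref{sit:loc-contract}), but the paper's own one-line proof leaves precisely this bookkeeping implicit, so this does not constitute a gap relative to the paper's argument.
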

\begin{proof}
This follows by combining Proposition \ref{prop:equi-compatibility} with Yoneda's lemma.
\end{proof}
Using the Yoneda lemma one could prove $\Cc$-valued counterparts of almost every result on $\Sheaves(X;G)$ established so far. We will refrain from including the details here.

The most important case for us will be when $\Cc = \Spectra$ equals the stable $\infty$-category of spectra. Objects of $\Sheaves_{\Spectra}(X;G)$ will also be referred to as \emph{spectra-valued $G$-sheaves}.

\subsection{$G$-sheaves on topological stacks}

In this subsection we develop a mixed framework, combining the notion of $G$-sheaves developed above with Borel's notion. This will allow us to consider a set-up where a topological space $X$ is endowed with two actions $G_1 \curvearrowright X \curvearrowleft G_2$, and study an $\infty$-category of sheaves endowed with a $G_1$-structure and a Borel-equivariant $G_2$ structure. It turns out that combining these two notions of equivariance is essential for our application to moduli spaces of Higgs bundles (see Subsection \ref{ssec:canonical}).

Our mixed framework will be built, by allowing $X$ in $Sh(X;G)$ to be a topological stack. Following Noohi (see \cite[Section 4]{noohi}), we let $\Top$ be the category of compactly generated Hausdorff spaces, endowed with the Grothendieck topology given by open covers. 

\begin{definition}
A continuous map $f\colon Y \to X$ of topological spaces is said to be \emph{locally cartesian}, if it is an open map and for every $x \in X$ there exists an open neighbourhood $U$ such that $f^{-1}(U)$ admits an open covering $\bigcup_{i \in I_x} V_i$, such that for all $i \in I_x$ there is a topological space $B_i$ and a homeomorphism $\alpha_i\colon V_i \to f(V_i) \times B_i$, satisfying
$f|_{U_i} = \mathrm{pr_1} \circ \alpha_i.$
\end{definition}

We fix the class of local fibrations $\mathbf{LF}$ (required for the definition of a topological stack) to be \emph{locally cartesian maps}. A \emph{topological stack} is then defined to be a stack $\Xc$ on $\Top$, such that there is an atlas 
$$p\colon X \to \Xc,$$
which is a locally cartesian and representable epimorphism.

We denote the $2$-category of topological stacks by $\TopSt$. For a finite group $G$, we denote by $BG$ the groupoid consisting of a single object $\ast$ with automorphism group $G$.

\begin{definition}
\begin{itemize}
\item[(a)] A $G$-action on a topological stack $\Xc$ is defined to be a functor $F\colon BG \to \TopSt$ satisfying $F(\ast) = \Xc$. We denote the ensuing $2$-category of such functors by $\GTopSt$.
\item[(b)] For a subgroup $H \subset G$, we consider $H \backslash G$ with the usual (right) $G$-action. For $\Xc \in \GTopSt$ we define the $H$-fixpoints, denoted by $\Xc^H$, to be the stack $\Xc \times_{X_{mod}} X_{mod}^H$, where $X_{mod}$ denotes the \emph{coarse moduli space} of $\Xc$ defined in \cite[Section 4.3]{noohi}.
\end{itemize}
\end{definition}

\begin{lemma}
The stack $\Xc^H$ of $G$-fixpoints is topological for every $\Xc \in \GTopSt$ and every subgroup $H \subset G$.
\end{lemma}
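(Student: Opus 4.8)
The plan is to reduce the claim to statements about ordinary topological spaces and stacks. Recall that by definition $\Xc^H = \Xc \times_{X_{\mathrm{mod}}} X_{\mathrm{mod}}^H$, where $X_{\mathrm{mod}}$ is the coarse moduli space of $\Xc$. First I would recall from \cite[Section 4.3]{noohi} that the coarse moduli space $X_{\mathrm{mod}}$ of a topological stack is an honest topological space, and that the natural map $\Xc \to X_{\mathrm{mod}}$ is representable (indeed, the coarse moduli map of any topological stack is representable). I would also note that $X_{\mathrm{mod}}$ carries an induced $G$-action (functoriality of the coarse moduli space construction), so that the fixpoint subspace $X_{\mathrm{mod}}^H \subset X_{\mathrm{mod}}$ is a well-defined closed subspace of $X_{\mathrm{mod}}$, and in particular an object of $\Top$.

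The key point is that the class $\mathbf{LF}$ of locally cartesian maps is stable under base change along arbitrary continuous maps of topological spaces, and more generally along representable morphisms of topological stacks. I would verify this directly from the definition: if $f\colon Y \to X$ is locally cartesian and $g\colon X' \to X$ is any continuous map, then over a point $x' \in X'$ one pulls back the local product decomposition $V_i \cong f(V_i) \times B_i$ along $g$ to obtain $g^{-1}(V_i) \cong g^{-1}(f(V_i)) \times B_i$, which exhibits the pullback $Y \times_X X' \to X'$ as locally cartesian near $x'$. Consequently, given an atlas $p\colon X \to \Xc$ which is a locally cartesian representable epimorphism, the base change $X \times_{\Xc} \Xc^H \to \Xc^H$ is again a locally cartesian representable epimorphism. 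Since $X \times_{\Xc} \Xc^H = X \times_{X_{\mathrm{mod}}} X_{\mathrm{mod}}^H$ is a topological space (being the fibre product of topological spaces over the topological space $X_{\mathrm{mod}}$), this produces an atlas for $\Xc^H$ by a genuine topological space.

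It then remains only to observe that $\Xc^H$ is a stack on $\Top$ — this is automatic, since it is defined as a $2$-fibre product of stacks on $\Top$, and $2$-fibre products of stacks are again stacks. Combining this with the atlas constructed above shows that $\Xc^H$ satisfies the definition of a topological stack. The main (and only genuine) obstacle is the stability of the local fibration class $\mathbf{LF}$ under base change along the non-representable, but coarse-moduli, morphism $\Xc \to X_{\mathrm{mod}}$; here one uses that the coarse moduli morphism is itself representable, so the relevant base change is along a representable map and reduces to the purely topological statement about locally cartesian maps verified above. This completes the proof.
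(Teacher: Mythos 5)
Your proposal is correct in substance, but it takes a genuinely more hands-on route than the paper: the paper disposes of the lemma in one line, observing that $\Xc^H=\Xc\times_{X_{\mathrm{mod}}}X_{\mathrm{mod}}^H$ is by definition a $2$-fibre product of topological stacks and citing Noohi's result (\cite[Proposition 13.9(iii)]{noohi}) that such fibre products are again topological. You instead reprove the relevant special case directly: pull back the atlas $p\colon X\to\Xc$ along $\Xc^H\to\Xc$, identify $X\times_{\Xc}\Xc^H\simeq X\times_{X_{\mathrm{mod}}}X_{\mathrm{mod}}^H$, which is an honest topological space, and note that the projection to $\Xc^H$ is a representable epimorphism in $\mathbf{LF}$ because these properties are stable under base change. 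This is sound, avoids the black box, and only costs you the verification that locally cartesian maps are stable under base change --- which in Noohi's formalism is in any case one of the axioms imposed on the class $\mathbf{LF}$. (Small typo there: the relevant open sets are the preimages of the $V_i$ in $Y\times_X X'$, not $g^{-1}(V_i)$, since $V_i\subset Y$.)

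One genuine blemish: the claim that ``the coarse moduli map of any topological stack is representable'' is false (e.g.\ $BG\to\ast$ is not representable for a nontrivial finite group $G$), and your closing paragraph both asserts and denies it. Fortunately it is never needed. The only base changes your argument actually uses are of the representable $\mathbf{LF}$-epimorphism $p\colon X\to\Xc$ along $\Xc^H\to\Xc$, and of the inclusion of spaces $X_{\mathrm{mod}}^H\hookrightarrow X_{\mathrm{mod}}$ along $\Xc\to X_{\mathrm{mod}}$; base change of a representable morphism along an \emph{arbitrary} morphism of stacks is automatically representable, and the identification $X\times_{\Xc}\Xc^H\simeq X\times_{X_{\mathrm{mod}}}X_{\mathrm{mod}}^H$ is pure $2$-categorical bookkeeping. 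Excise the representability claim and the confused final paragraph (and note that calling $X_{\mathrm{mod}}^H$ a \emph{closed} subspace needs Hausdorffness of $X_{\mathrm{mod}}$, which you do not need --- a subspace suffices), and the proof stands.
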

\begin{proof}
This follows directly from the definition $\Xc^H=\Xc \times_{X_{mod}} X_{mod}^H$ since fibre products of topological stacks are topological according to \cite[Proposition 13.9(iii)]{noohi}.
\end{proof}

Equipped with these definitions we can define the site $\Cov_{\mathbf{LF}}(\Xc;G)$.
\begin{definition}
Let $H \subset G$ be a subgroup and let $U \subset \Xc^{H}$ be an open subset. We let 
$i\colon U \times H \backslash G \to \Xc$
be the map $(x,H g) \mapsto x\cdot{} g$. We call the map $i$ a \emph{generalised $G$-orbit}.
\end{definition}

\begin{definition}
\begin{itemize}
\item[(a)] For a subgroup $H \subset G$ we let $X^{s(H)}$ be the union $\bigcup_{g \in G} X^{gHg^{-1}}$.
\item[(b)] If $f\colon P = U \times H_1 \backslash G \to X$ is a generalised $G$-orbit and $H_2 \subset G$ is a subgroup, then we write 
$P|_{s(H_2)}$ to denote the fibre product $P \times_X X^{s(H_2)}$.
\item[(c)] A morphism of generalised $G$-orbits is given by a commutative diagram
\[
\xymatrix{
P_1|_{s(H_2)} \ar@/^/[rr]^{i_1} \ar[r]_{{f}}  & P_2 \ar[r]_{i_2} & \Xc 
}
\]
where $f$ is a $G$-equivariant map.
\item[(d)] We let $\Cov(\Xc;G)$ be the category of generalised $G$-orbits in $\Xc$.
\end{itemize}
\end{definition}

\begin{definition}
We define a Grothendieck topology, denoted by $\mathbf{LF}$, on $\Cov(\Xc;G)$ as follows: a collection of maps $\{Z_i \xrightarrow{f_i} Z\}_{i \in I}$ in $\Cov(\Xc;G)$ is a covering, if for every $i \in I$ there exists a representation of the morphism $f_i$ as $(j_i,\tilde{f}_i)$, where 
$$j_i \colon U_i \hookrightarrow U$$
is representable and locally cartesian $\forall i \in I$ and $\bigcup_{i \in I} j_i(U_i) = U$.
\end{definition}

Let $\Xc=X$ be in $\Top$. By virtue of definition, for every cover $\{U_i \to X\}_{i \in I}$ in $\Cov_{\mathbf{LF}}(X;G)$ there is an open cover $\{V_j \hookrightarrow X\}_{j \in J}$ such that for every $i \in I$ there is an $j \in J$ and a factorisation
\[
\xymatrix{
& U_i \ar[d] \\
V_j \ar@{-->}[ru] \ar[r] & X\text{.}
}
\]

\begin{lemma}
For $X \in \Top$ endowed with a $G$-action, the restriction map along $\Cov(X;G) \to \Cov_{\mathbf{LF}}(X;G)$ gives rise to an equivalence of $\infty$-categories of sheaves
$$\Sheaves(\Cov_{\mathbf{LF}}(X;G)) \to \Sheaves(\Cov(X;G))$$
as well as hypersheaves.
\end{lemma}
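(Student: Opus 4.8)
The plan is to exploit the fact that, when the ambient topological stack is an honest space $X\in\Top$, the category $\Cov(X;G)$ of the standard site and the category $\Cov_{\mathbf{LF}}(X;G)$ coincide: a generalised $G$-orbit $U\times H\backslash G\to X$ with $U\subset X^{H}$ open, and a morphism of such, are described in exactly the same way by Definition~\ref{defi:gen-orbit} and its $\mathbf{LF}$-analogue, using that the coarse moduli space of a space is that space, so that $X^{H}$ retains its usual meaning and ``representable'' is automatic. Granting this identification, the functor in the statement is the identity on underlying categories and the whole assertion reduces to the claim that the two Grothendieck topologies $\std$ and $\mathbf{LF}$ on this common category have the same covering sieves; the induced functor on presheaves is then the identity and restricts to the identity equivalence on the common full subcategories of sheaves, and hence also on their hypercompletions $\Sh(-)$. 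So I would invest nothing in the functor and everything in the comparison of the two topologies.

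For that comparison I would first record the trivial inclusion $\std\subseteq\mathbf{LF}$: an open immersion is representable and locally cartesian (take the fibres $B_{i}$ in the local-cartesianness condition to be single points), so every jointly surjective family of open immersions is in particular an $\mathbf{LF}$-cover, and every $\std$-covering sieve is $\mathbf{LF}$-covering. The substantive direction is the converse. Given an $\mathbf{LF}$-cover $\{P_{i}\to P\}_{i\in I}$ of a generalised $G$-orbit $P=U\times H\backslash G$, the underlying maps $j_{i}\colon U_{i}\to U$ on $H$-fixed loci are (representable and) locally cartesian with $\bigcup_{i}j_{i}(U_{i})=U$; a locally cartesian map of topological spaces admits a continuous section over each member of a suitable open cover of its target (restrict to a local trivialisation $V\cong j_{i}(V)\times B_{i}$ and choose a point of $B_{i}$). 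This produces an open cover $\{W_{k}\hookrightarrow U\}_{k\in K}$ of $U$ together with factorisations $W_{k}\hookrightarrow U_{i(k)}\xrightarrow{\,j_{i(k)}\,}U$, and inducing back up along $H\backslash G$ yields a $\std$-cover $\{W_{k}\times H\backslash G\to P\}$ of $P$ refining $\{P_{i}\to P\}$. Since covering sieves are sieves, an $\mathbf{LF}$-covering sieve on $P$ — which by definition contains some $\mathbf{LF}$-cover — then also contains such a $\std$-cover and is therefore $\std$-covering. This step uses that $X^{H}$ is again an object of $\Top$, so that the open-refinement argument (which is precisely the content of the remark preceding the lemma, stated there for $X$ itself) applies to $U\subset X^{H}$ unchanged.

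Combining the two inclusions, $\std$ and $\mathbf{LF}$ have the same covering sieves, hence define the same topology on $\Cov(X;G)$; therefore $\Sheaves(\Cov_{\mathbf{LF}}(X;G))=\Sheaves(\Cov(X;G))$ and $\Sh(\Cov_{\mathbf{LF}}(X;G))=\Sh(\Cov(X;G))$ as full subcategories of the common presheaf $\infty$-category, and the restriction functor is the identity, in particular an equivalence. The only genuinely non-formal input is the middle step — producing an open $\std$-refinement of an arbitrary $\mathbf{LF}$-cover — and even that is a mild bookkeeping exercise given the remark already in the text; the main things to watch are that one must handle covers of general generalised $G$-orbits $U\times H\backslash G$ rather than merely of $X$ (done by passing to $U\subset X^{H}$ and inducing up), and that the local sections, once chosen on the $H$-fixed part, induce $G$-equivariant morphisms after induction along $H\backslash G$, which is automatic.
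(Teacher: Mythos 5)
Your proposal is correct and follows essentially the same route as the paper: identify the two sites as having the same underlying category when $\Xc=X$ is a space, note that open immersions are $\mathbf{LF}$-coverings, and refine every $\mathbf{LF}$-cover by an open ($\std$) cover using local sections of locally cartesian maps, so the two topologies generate the same covering sieves and the sheaf and hypersheaf categories coincide. The only difference is one of detail: the paper relegates the refinement step to the remark preceding the lemma ("by virtue of definition"), whereas you spell it out, including the case of covers of general generalised $G$-orbits $U\times H\backslash G$ rather than just of $X$.
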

\begin{proof}
As we recalled above every $\mathbf{LF}$-covering has a refinement by an $\std$-covering. This implies that these two topologies are equivalent, and thus give rise to the same sieves. The statement above then follows right from the definition of sheaves (see \cite[Definition 6.2.2.6]{htt}).
\end{proof}

\subsection{Proper base change}\label{ssec:proper-base}

The proper base change result presented below is likely to hold under much weaker assumptions. The assumption of hypercompleteness below (which is always satisfied in the cases of interest to us) was added to shorten the argument.

\begin{theorem}[Equivariant Proper Base Change]\label{thm:proper-base}
Assume that we are given a $G$-equivariant cartesian square of topological stacks endowed with a $G$-action. 
\[
\xymatrix{
Y' \ar[r]^{\tau} \ar[d]_{f} & X' \ar[d]^{g} \\
\Yc \ar[r]^{\pi} & \Xc
}
\]
Furthermore, let $\pi$ be a proper morphism and assume that all objects above are locally compact and Hausdorff, $X'$ and $Y'$ are topological spaces, and that $\Sheaves(X';G) = \Sh(X';G)$.

Then, for every $\F \in \Sheaves(\Yc;G)$ the natural transformation
\begin{equation}\label{nattra}g^{-1}_G\pi^G_*\F \simeq \tau_*^Gf^{-1}_G \F \end{equation}
is an equivalence.
\end{theorem}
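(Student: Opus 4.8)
The plan is to reduce the statement to the non-equivariant proper base change theorem for hypercomplete $\infty$-topoi of sheaves of spaces on locally compact Hausdorff spaces, applied on the level of fixpoint strata. Since $\Sheaves(X';G) = \Sh(X';G)$ by hypothesis, and since a morphism of hypersheaves on $X'$ is an equivalence if and only if it induces an equivalence on all $H$-stalks $(-)_{x;H}$ for $x \in (X')^H$ and $H \subset G$ (Proposition in subsubsection on stalks), it suffices to check that the natural transformation \eqref{nattra} becomes an equivalence after applying $i^{-1}_{x';H}$ for each such pair $(x', H)$. The first step is therefore to identify the $H$-stalk of both sides.

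\textbf{Key steps.} First I would set up the pointwise comparison: fix a subgroup $H \subset G$ and an $H$-fixpoint $x' \in (X')^H$, giving the $G$-equivariant inclusion $i_{x';H}\colon H\setminus G \to X'$. Base-changing the cartesian square along this map and using that $\pi$ is proper, one obtains a cartesian square whose left vertical arrow is the fibre of $f$ over the image $G$-orbit, which by properness of $\pi$ is a compact topological stack fibred over $H\setminus G$. Second, I would analyze $i^{-1}_{x';H} g^{-1}_G \pi^G_* \F$: using the compatibility of $g^{-1}_G$ with stalks and the construction of the site $\Cov(X';G)$, this stalk is computed as the $H$-equivariant sections (over $H\setminus G$) of $\pi^G_*\F$ pulled back, which by the definition of the equivariant pushforward $\pi^G_*$ as restriction along $\pi^*\colon \Cov(\Xc;G)\to\Cov(\Yc;G)$, reduces to sections of $\F$ over $\pi^{-1}$ of a neighbourhood basis of the orbit of $x'$. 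Third, on the other side $i^{-1}_{x';H}\tau^G_* f^{-1}_G\F$ is, by the analogous computation, the $H$-equivariant global sections of the restriction of $\F$ to the fibre $Y'_{x'} = \Yc\times_{\Xc}(H\setminus G)$. The fourth and central step is to match these two via the ordinary proper base change theorem: passing to the forgetful functors $\Phi^G_H$ to sheaves on fixpoint strata (Definition \ref{defi:PhiGH}) and using Lemma \ref{lemma:subgroup-compatibility} to identify $\Sheaves(H\setminus G;G)\cong \Spaces_H$, the comparison becomes, $H$-orbit by $H$-orbit, the classical statement that for a proper map the fibre of the pushforward is the cohomology of the fibre. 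Here I would invoke \cite[Corollary 7.3.4.11]{htt} (proper base change for $\infty$-topoi) applied to the hypercomplete topos $\Sheaves(X')_{/\text{stratum}}$, which is legitimate since all spaces in sight are locally compact Hausdorff and $\Sheaves(X';G)$ is hypercomplete.

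\textbf{Main obstacle.} The hard part will be the bookkeeping of the equivariant structures through the base change: one must verify that the comparison map \eqref{nattra}, which a priori is only defined as the mate of the obvious natural transformation $\pi^G_* \to \pi^G_* (f^{-1}_G)^{\mathrm{left\text{-}adj}}$-style unit/counit, is compatible with passing to $H$-stalks, i.e. that $i^{-1}_{x';H}$ of the mate is the mate of $i^{-1}_{x';H}$ applied termwise. This is a coherence check that requires care because $g^{-1}_G$ does not in general preserve hypersheaves and one must insert the hypersheafification $\widehat{g}^{-1}_G$, but since $\Sheaves(X';G) = \Sh(X';G)$ by assumption this complication disappears on the target. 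A secondary technical point is that the fibre $\Yc\times_{\Xc}(H\setminus G)$ is a topological stack rather than a space, so one must either observe that its coarse moduli space suffices for the stalk computation, or run the argument directly on an atlas; I would handle this by noting that $\pi$ proper forces the relevant fibres to be compact and that sections over a compact stack agree with sections over its coarse space after the forgetful functor, reducing everything to the already-established space-level statement.
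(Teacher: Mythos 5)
Your proposal is correct and follows essentially the same route as the paper: reduce via hypercompleteness to a stalkwise check at pairs $(x',H)$, compute both $H$-stalks through the forgetful functor $\Phi^G_H$ to sheaves on the fixpoint strata (using the definition of $\pi^G_*$ on the site $\Cov(\Xc;G)$ and a neighbourhood colimit), and then conclude by Lurie's non-equivariant proper base change for locally compact Hausdorff spaces. The only differences are cosmetic — the paper cites \cite[Corollary 7.3.1.18]{htt} rather than 7.3.4.11, and it handles the stack-versus-space issue by appending the cartesian square over the orbit $\{x\}\times H\backslash G$ and reducing without loss of generality to topological spaces, rather than via your coarse-moduli remark.
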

\begin{proof}
By virtue of assumption, it suffices to verify on all stalks that \eqref{nattra} is an equivalence. Let $H \subset G$ be a subgroup and $x \in X'^H$ a fixpoint. We abbreviate the fibre $\tau^{-1}(x)$ by $Y'_x$. By appending the cartesian square
\[
\xymatrix{ 
Y'_x \times H \backslash G \ar[r] \ar[d] & \{x\}\times H \backslash G \ar[d] \\ 
Y' \ar[r]^{\tau} & X' \\
}
\]
we see that it suffices to prove the Proper Base Change Theorem for this square of topological $G$-spaces. Therefore, we may assume without loss of generality that $\Xc$ and $\Yc$ are topological spaces and that $X' = \{x\} \times H \backslash G$ where $x \in X^H$. Recall the functor between sites
$F_H\colon \Open(X^H) \to \Cov(X;G)$
constructed in Definition\ref{defi:PhiGH}. As explained there, we obtain an induced functor $\Phi^G_H \colon \Sheaves(X;G) \to \Sheaves(X^H)$. We have the stalk $\F_{x;H,H\backslash G} = \F_{x,H}(H \backslash G) \in \Spaces$. By construction, it satisfies
$$\F_{x,H}(P)\cong \colim_U \F(U \times H \backslash G) \cong \colim_U \Phi^G_H(\F)(U) = \big(\colim_U \Phi^G_H(\F)\big)_x,$$
where $U \subset X^H$ ranges over all open neighbourhoods of $x$.
Therefore, we have 
$$\big(f_*^G\F\big)_{x;H,H \backslash G} \simeq \big(\Phi^G_H(f^G_*\F) \big)_{x}\simeq \big(f_*(\Phi^G_H(\F)) \big)_x.$$
The assertion then follows from Lurie's Proper Base Change Theorem applied to $Y^H \to X^H$ (see \cite[Corollary 7.3.1.18]{htt}).
\end{proof}

\subsection{Vanishing cycles}

Using the formalism introduced so far in this section, we can develop vanishing and nearby cycles for $G$-sheaves of spectra on complex analytic $G$-spaces (and stacks). In the non-equivariant case, i.e. for $G=\{e\}$ being the trivial group, this amounts to a spectral refinement of the classical theory of vanishing cycles. And for this case, the theory presented here is well-known and closely related constructions can be found in \cite{ayoub}.
\subsubsection{Definitions}\label{app:def}

We denote by $G$ a finite group. Let $\Db=\Db_{\varepsilon}(0) \subset \Cb$ be a complex disk endowed with the trivial $G$-action, $\Zc$ a complex analytic stack acted on by $G$ and $\mathcal{Z} \xrightarrow{f} \Db$ a $G$-equivariant morphism of complex analytic stacks. We denote by $\Db^*$ the punctured disk and by $\widetilde{\Db}^* \xrightarrow{\pi} \Db^*$ a universal covering space. Likewise, we write $\Zc^*$ for the base change $\Zc \times_{\Db} \Db^*$, and $\widetilde{\Zc}^*$ for the fibre product $\Zc \times_{\Db} \widetilde{\Db}^*$.

\[
\xymatrix{
\Zc_0 \ar@{^(->}[r]^{i_{\Zc}} \ar[d] & \Zc \ar[d]_{f} & \Zc^{*} \ar@{_(->}[l]_{j_{\Zc}} \ar[d] & \widetilde{\Zc}^* \ar[d] \ar[l]_{\pi_{\Zc}} \\
\{0\} \ar@{^(->}[r]^i & \Db & \Db^* \ar@{_(->}[l]_{j} & \widetilde{\Db}^* \ar[l]_{\pi}
}
\]

We fix a presentable stable $\infty$-category $\Cc$, and denote by $\Sh_{\Cc}(\Zc)$ the category of $\Cc$-valued $G$-hypersheaves on $\Zc$.

\begin{definition}
The $G$-equivariant \emph{nearby cycle functor} 
$\psi_{f}^G\colon \Sh_{\Cc}(\Zc;G) \to \Sh_{\Cc}(\Zc_0;G)$ 
is defined to be 
$$\psi_f=i^{-1}_G(j_{\Zc} \circ \pi_{\Zc})^G_*(j \circ \pi_{\Zc})^{-1}_G.$$
\end{definition}

By adjunction, there is a natural transformation
$$\id_{\Sh_{\Cc}(\Zc)} \to (j_{\Zc} \circ \pi_{\Zc})^G_*(j \circ \pi_{\Zc})_G^{-1}.$$
By composition with $i^{-1}_G$ we infer the existence of a natural transformation, also known as the specialisation morphism
\begin{equation}\label{eqn:van-natural}
\spe\colon i^{-1}_G \to \psi^G_f.
\end{equation}

\begin{definition}
The cofibre of \eqref{eqn:van-natural} will be referred to as the vanishing cycle functor
$$\phi_f^G\colon \Sh_{\Cc}(\Zc;G) \to \Sh_{\Cc}(\Zc_0;G).$$
\end{definition}

\subsubsection{Smooth base change}

\begin{construction}\label{con:nat}
Consider a cartesian diagram of topological stacks and $G$-equivariant morphisms
\begin{equation}\label{eqn:smoothbasechange}
\xymatrix{
\Yc' \ar[r]^{f'} \ar[d]_{g} & \Xc' \ar[d]^{g'} \\
\Yc \ar[r]^f & \Xc.
}
\end{equation}
For $\F \in \Sh_{\Cc}(\Yc;G)$ we have by adjunction a morphism
$$\F \to g^G_*g^{-1}_G\F,$$
which can be composed with $f^G_*$ to obtain
$$f^G_*\F \to f^G_*g^G_*g^{-1}_G\F.$$
Using that $f^G_*g^G_* \simeq g'^G_*f'^G_*$ by commutativity of the diagram above, we can rewrite this map as
$$f^G_*\F \to g'^G_*f'^G_*g^{-1}_G\F.$$
Applying the adjunction of $g'^{-1}_G$ and $g'^G_*$, we obtain a natural morphism
$$g'^{-1}_Gf^G_*\F \to f'^G_*g_G^{-1}\F.$$
\end{construction}

Let $\Yc \xrightarrow{h} \Zc \xrightarrow{f} \Db$ be $G$-equivariant morphisms of complex analytic stacks equipped with a $G$-action. We will use notation similar to the one employed in \ref{app:def}.
\[
\xymatrix{
\Yc_0 \ar@{^(->}[r]^{i_{\Yc}} \ar[d]_{h_0} & \Yc \ar[d]_h & \Yc^{*} \ar@{_(->}[l]_{j_{\Yc}} \ar[d]_{h} & \widetilde{\Yc}^* \ar[d]_{\tilde{h}} \ar[l]_{\pi_{\Yc}} \\
\Zc_0 \ar@{^(->}[r]^{i_{\Zc}} \ar[d] & \Zc \ar[d]_f & \Zc^{*} \ar@{_(->}[l]_{j_{\Zc}} \ar[d] & \widetilde{\Zc}^* \ar[d] \ar[l]_{\pi_{\Zc}} \\
\{0\} \ar@{^(->}[r]^i & \Db & \Db^* \ar@{_(->}[l]_{j} & \widetilde{\Db}^* \ar[l]_{\pi}
}
\]
\begin{construction}\label{con:nat-psi}
Let $\F \in \Sh_{\Cc}(\Yc;G)$. Using the natural transformation from Construction \ref{con:nat} we obtain
$$(h_0)_G^{-1}(j\circ p)^G_* \to (j \circ p)^G_*h^{-1}_G$$
From this we obtain
$$i^{-1}_Gh^{-1}_G(j\circ p)^G_*(j\circ p)^{-1}_G\F \to i^{-1}_G(j \circ p)^G_*h^{-1}_G(j\circ p)^{-1}_G\F.$$
The right-hand side is equivalent to $i^{-1}_G(j \circ p)^G_*(j\circ p)^{-1}_Gh^{-1}_G\F$, whereas the left-hand side is equivalent to $(h_0)_G^{-1}i^{-1}_G(j\circ p)^G_*(j\circ p)^{-1}_G\F$. Thus, we have constructed a natural morphism
\begin{equation}\label{eqn:nat-psi}
(h_0)_G^{-1}\psi^G_f\F \to \psi^G_{f\circ h}h^{-1}_G\F.
\end{equation}
Likewise, we obtain a natural morphism
\begin{equation}\label{eqn:nat-phi}
(h_0)_G^{-1}\phi_f^G\F \to \phi^G_{f\circ h}h^{-1}_G\F.
\end{equation}
\end{construction}

The natural morphism constructed above is required to state the following base change result for vanishing cycles along smooth (i.e. holomorphic and submersive) maps of complex analytic spaces. We remark that it seems likely that some of the assumptions can be relaxed.  

\begin{proposition}[Smooth base change]\label{prop:smooth-basechange}
Suppose that the following assumptions are satisfied: 
\begin{itemize}
\item[-] $h$ is a representable, smooth and $G$-equivariant morphism of complex analytic stacks (with $G$-actions), 
\item[-] $G$ acts trivially on $\Yc$,
\item[-] and $\F \in \Sheaves_{\C}(\Yc;G)$ is a locally constant $G$-sheaf.
\end{itemize}
Then, the morphisms \eqref{eqn:nat-psi} and \eqref{eqn:nat-phi} are equivalences.
\end{proposition}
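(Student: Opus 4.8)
The plan is to reduce the smooth base change statement for vanishing cycles to the already-established Equivariant Proper Base Change Theorem (Theorem \ref{thm:proper-base}), combined with ordinary (non-equivariant) smooth base change for locally constant sheaves. Since $G$ acts trivially on $\Yc$, Lemma \ref{lemma:trivial-action-case} lets us pass freely between $\Sheaves(\Yc;G)$ and $\Sheaves_{\Spaces_G}(\Yc)$; this is what makes the $G$-structure essentially invisible on $\Yc$ and allows the argument to be carried out ``one geometric point at a time''. First I would observe that the defining formulas
$$\psi^G_f = i^{-1}_G (j_{\Zc}\circ\pi_{\Zc})^G_* (j\circ\pi_{\Zc})^{-1}_G, \qquad \psi^G_{f\circ h} = i^{-1}_G (j_{\Yc}\circ\pi_{\Yc})^G_* (j\circ\pi_{\Yc})^{-1}_G,$$
together with the obvious commutations $h^{-1}_G$ with the restrictions $i^{-1}_G$, $(j\circ\pi)^{-1}_G$ (left adjoints commute with each other, and all the squares in the big diagram are cartesian), reduce the problem to showing that the Construction \ref{con:nat} comparison map
$$(h_0')^{-1}_G (j_{\Zc}\circ\pi_{\Zc})^G_* \longrightarrow (j_{\Yc}\circ\pi_{\Yc})^G_* \tilde h^{-1}_G$$
is an equivalence after applying $i^{-1}_G$, where $h_0'$ denotes the relevant restriction of $h$. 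Equivalently, one must check the statement stalkwise: fix a subgroup $H\subset G$ and an $H$-fixed point $y\in\Yc_0^H$, and verify the comparison map on $H$-stalks.

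Next I would cut the map $j_{\Zc}\circ\pi_{\Zc}$ into two pieces, $j_{\Zc}$ and $\pi_{\Zc}$, handling them separately. For the open immersion $j_{\Zc}\colon\Zc^*\hookrightarrow\Zc$ the relevant base change is along the smooth map $h$; here I would invoke that $\F$ is locally constant, so after restricting to a suitable $G$-invariant neighbourhood and using smoothness of $h$ (hence local triviality of $h$ in the analytic topology, i.e. $h$ looks like a projection $U\times\Delta^k\to U$ after passing to a cover), the stalk computation becomes an honest smooth base change statement for locally constant sheaves on topological spaces, which is classical. For the covering map $\pi_{\Zc}\colon\widetilde{\Zc}^*\to\Zc^*$ (a pro-object, the universal cover of the punctured disk direction) I would note that $\pi$ is a filtered limit of finite covers and $(j\circ\pi)^G_*$ can be computed as the corresponding filtered colimit of the pushforwards along the finite stages; since $h$ is smooth the finite covers pull back compatibly, and one reduces to base change along the finite (proper!) covering maps, where Theorem \ref{thm:proper-base} applies directly — a finite covering map of locally compact Hausdorff spaces is proper. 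Having shown the comparison is an equivalence for $\psi$, the statement for $\phi$ follows formally, since $\phi^G$ is defined as the cofibre of $\spe\colon i^{-1}_G\to\psi^G$, the natural transformation \eqref{eqn:van-natural} is compatible with the base change maps, and $h_0^{-1}_G$ is exact (being a left adjoint between stable $\infty$-categories), so it commutes with cofibres.

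The main obstacle I anticipate is the bookkeeping around the covering map $\pi$ of the punctured disk: $\widetilde{\Db}^*$ is not locally compact in the way Theorem \ref{thm:proper-base} wants (it is an infinite cover, contractible, but the map $\pi$ is not proper), so one genuinely has to present $(j\circ\pi)^G_*$ as a colimit over the finite subcovers $\widetilde{\Db}^*_N\to\Db^*$ of degree $N$ and argue that (i) this colimit presentation is compatible with $\tilde h^{-1}_G$ and with restriction to stalks — which uses that stalks commute with filtered colimits and that $h^{-1}_G$, being a left adjoint, commutes with the colimit — and (ii) for each finite stage the relevant square is cartesian with proper vertical maps, so Theorem \ref{thm:proper-base} applies. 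A secondary technical point is verifying that the Construction \ref{con:nat}/\ref{con:nat-psi} comparison maps are the ``correct'' ones, i.e. that the stalkwise identifications assemble into the natural transformations \eqref{eqn:nat-psi} and \eqref{eqn:nat-phi} rather than merely abstract isomorphisms; this is a diagram-chase using the explicit description of $i^{-1}_G$ on stalks from the proof of Theorem \ref{thm:proper-base} and the triviality of the $G$-action on $\Yc$ (via Lemma \ref{lemma:trivial-action-case}), but it is routine once the setup is in place.
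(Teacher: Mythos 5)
Your overall reduction strategy (check on stalks, exploit the trivial $G$-action, deduce the statement for $\phi$ from the one for $\psi$ by exactness of $G$-pullback) is sound and consistent with the spirit of the paper's argument, but the central step of your proof fails. You propose to handle the non-proper map $j\circ\pi$ by writing $(j\circ\pi)^G_*$ as a filtered colimit of pushforwards along the finite cyclic covers $\Db^*_N\to\Db^*$ and applying Theorem \ref{thm:proper-base} at each finite stage. That colimit presentation is false for sheaves valued in a general presentable stable $\infty$-category, even for constant coefficients. Take $\Zc=\Db$, $f=\mathrm{id}$, and $\F$ the constant sheaf with value a spectrum $E$. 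The stalk of $\psi_f\F$ at $0$ is $\Gamma$ of $E$ over the universal cover of a small punctured disk, i.e.\ $E$, since that cover is contractible. Each finite stage contributes $\Gamma$ over a degree-$N$ cyclic cover of the punctured disk, which is again homotopy equivalent to a circle, giving $E\oplus\Sigma^{-1}E$; the transition map to the stage of degree $NM$ is the identity on $E$ and multiplication by $M$ on $\Sigma^{-1}E$, so the colimit is $E\oplus\Sigma^{-1}(E\otimes\Qb)$, which differs from $E$ whenever $E$ is not rationally trivial in that degree (e.g.\ $E=H\Zb$ or $E=\KU$). For non-constant local systems the defect is even worse: if the monodromy is not quasi-unipotent, already $\pi_0$ of the colimit of monodromy invariants $\ker(1-T^N)$ fails to recover the sections over the universal cover. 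This is precisely why the topological/spectral theory of nearby cycles must use the honest universal cover rather than its profinite approximation, in contrast with the \'etale theory with torsion coefficients, and it is why your reduction to equivariant proper base change cannot be repaired by better bookkeeping. (A secondary point: Theorem \ref{thm:proper-base} as stated requires the base-changed objects to be topological spaces, not stacks, so even the per-stage application would first need an atlas reduction.)

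The paper avoids this entirely. Its proof never decomposes $j\circ\pi$ and never invokes proper base change: since equivalences of $G$-sheaves are detected after pullback along an atlas $Y\to\Yc$ (which is $G$-equivariant because the $G$-action on $\Yc$ is trivial), one reduces to the case of a topological space, then further to $Y$ contractible; there, with trivial $G$-action and (locally) constant $\F$, the statement is the classical local computation for nearby cycles of a smooth map — local triviality of $h$ in the analytic topology plus homotopy invariance of sections of locally constant sheaves along a disk factor — for which the paper cites the argument of Theorem 1.3.5.1 in Conrad's notes. Your treatment of the open-immersion piece gestures at exactly this homotopy-invariance input, so the correct ingredients are partially present in your write-up; but as organised, the argument does not close, because the sheaf you push forward along $j$ is itself the output of the (incorrectly approximated) pushforward along $\pi$, and no order of operations lets you avoid confronting the universal cover directly.
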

\begin{proof}
Let $Y \xrightarrow{q} \Yc$ be an atlas of the complex analytic stack $\Yc$. We endow $Y$ with the trivial $G$-action, and hence $q$ is a $G$-equivariant morphism by the second assumption above.

For a subgroup $H \subset G$ we have that $\Yc^H = \Yc$ and likewise, $Y^H = Y$. In particular, we have that 
$$q^H\colon Y^H \to \Yc^H$$
is an atlas. Hence, the induced map between the underlying topological stacks $\underline{Y} \to \underline{\Yc}$ gives rise to an $\mathbf{LF}$-covering for each fixpoint locus $\Yc^H$. In particular, a morphism of $G$-sheaves $\Gc_1 \xrightarrow{\gamma} \Gc_2$ on $\Yc$ is an equivalence if and only if $h^{-1}_G(\gamma)$ is an equivalence.
We may therefore replace $\Yc$ by $Y$ without loss of generality.

Furthermore, by a similar argument we may assume that $Y$ is contractible. Since the $G$-action on $Y$ is the trivial one, and $\F$ is assumed to be (locally) constant, the proof can be finished by evoking a familiar argument (see the proof of Theorem 1.3.5.1 in \cite{conrad}).
\end{proof}

\subsubsection{Pushforwards along proper maps}

\begin{corollary}[Vanishing cycles and proper pushforward]\label{cor:proper-push}
Let $\Yc \xrightarrow{g} \Zc$ be a $G$-equivariant proper morphism of complex analytic stacks with $G$-action, and $f\colon \Zc \to \Db$ a $G$-invariant holomorphic map. Then, for every $\F \in \Sh_{\C}(\Yc;G)$ we have an equivalence
$$\phi^G_f\big(g^G_*(\F)\big) \simeq g^G_*\big(\phi^G_*(\F)\big)\text{ and }\psi^G_f\big(g^G_*(\F)\big) \simeq g^G_*\big(\psi^G_*(\F)\big).$$
\end{corollary}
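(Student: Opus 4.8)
The plan is to reduce the statement to the Equivariant Proper Base Change Theorem \ref{thm:proper-base} by unwinding the definitions of $\psi^G_f$ and $\phi^G_f$ from \ref{app:def}. First I would recall the commutative diagram with rows given by the disk and its punctured/universal-cover versions, both for $\Zc$ and for $\Yc$, and base change everything along $g$. By definition,
$$\psi^G_f\big(g^G_*\F\big) = i^{-1}_{\Zc,G}(j_\Zc\circ\pi_\Zc)^G_*(j_\Zc\circ\pi_\Zc)^{-1}_G\, g^G_*\F.$$
Since $g$ is proper and $G$-equivariant, its base changes $g^* \colon \Zc^* \to \Zc$, $\tilde{g}^*\colon \widetilde{\Zc}^* \to \widetilde{\Zc}{}^*$ along the open immersion $j_\Zc$ and the covering $\pi_\Zc$, as well as $g_0\colon \Yc_0 \to \Zc_0$ along the closed immersion $i_\Zc$, are again proper. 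The key point is that pullback along an open immersion or a (universal) covering map commutes with $g^G_*$ by \emph{smooth} base change — Proposition \ref{prop:smooth-basechange} applies here since $j$ and $\pi$ are smooth in the relevant sense, and more directly open immersions and covering maps satisfy base change for pushforward with no properness hypothesis on $g$ at all; alternatively, one invokes \ref{thm:proper-base} directly on the cartesian squares built from $j_\Zc, \pi_\Zc, i_\Zc$ since $g$ is proper.

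The argument then runs as follows. Using $(j_\Zc\circ\pi_\Zc)^{-1}_G g^G_* \simeq \tilde{g}^{*,G}_* (j_\Yc\circ\pi_\Yc)^{-1}_G$ (base change along the open immersion composed with the covering — here one uses that the fibre product $\Zc^* \times_\Zc \Yc = \Yc^*$, and similarly for the cover), followed by $(j_\Zc\circ\pi_\Zc)^G_* \tilde{g}^{*,G}_* \simeq g^{*,G}_* (j_\Yc\circ\pi_\Yc)^G_*$ (which is just functoriality of pushforward, $g\circ(j\circ\pi)_\Yc = (j\circ\pi)_\Zc \circ \tilde g^*$), and finally the Equivariant Proper Base Change Theorem \ref{thm:proper-base} applied to the cartesian square
\[
\xymatrix{
\Yc_0 \ar[r]^{i_\Yc} \ar[d]_{g_0} & \Yc \ar[d]^{g} \\
\Zc_0 \ar[r]^{i_\Zc} & \Zc
}
\]
which gives $i^{-1}_{\Zc,G} g^G_* \simeq (g_0)^G_* i^{-1}_{\Yc,G}$ (the relevant properness being that of $g$, whose base change $g_0$ is proper, and the locally compact Hausdorff / hypercompleteness hypotheses holding since all objects are complex analytic), we obtain
$$\psi^G_f\big(g^G_*\F\big) \simeq (g_0)^G_* i^{-1}_{\Yc,G}(j_\Yc\circ\pi_\Yc)^G_*(j_\Yc\circ\pi_\Yc)^{-1}_G\F = (g_0)^G_* \psi^G_{f\circ g}\F.$$
(Strictly, $g_0$ is the restriction of $g$ to $\Yc_0$, and I would write $g^G_*$ for it by slight abuse, as in the statement.) Chaining the equivalences requires checking that the composite of all the base-change natural transformations agrees with the natural transformation \eqref{eqn:nat-psi} of Construction \ref{con:nat-psi}; this is a diagram chase using that each individual square is cartesian and that the base-change maps are built from the same units and counits. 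The statement for $\phi^G_f$ then follows formally: $\phi^G_f$ is the cofibre of $\spe\colon i^{-1}_G \to \psi^G_f$, the functor $g^G_*$ is exact (it is a right adjoint between stable $\infty$-categories, but also a left adjoint up to the proper-pushforward identification, hence preserves cofibre sequences — in any case pushforward of sheaves of spectra is exact), and the specialisation morphism is natural in the relevant sense, so $g^G_*$ applied to the cofibre sequence defining $\phi^G_{f\circ g}$ yields the cofibre sequence defining $\phi^G_f$ after the identification for $\psi$ and for $i^{-1}_G$ just established.

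The main obstacle I expect is bookkeeping rather than a genuine mathematical difficulty: verifying that the chain of base-change equivalences is \emph{compatible} with the natural transformations \eqref{eqn:nat-psi}, \eqref{eqn:nat-phi}, i.e. that the abstractly-constructed comparison map is the one being shown to be an equivalence. One must also be slightly careful that \ref{thm:proper-base} requires $\Sheaves(X';G) = \Sh(X';G)$ — hypercompleteness of the total space of the relevant cartesian square — which holds here because $\Zc_0$ (and all the analytic spaces appearing) are heriditarily paracompact of finite covering dimension on each fixpoint locus, so the hypercompleteness proposition of Subsection \ref{sssec:hyper} applies. With those points checked, the proof is a formal consequence of \ref{thm:proper-base} and \ref{prop:smooth-basechange}.
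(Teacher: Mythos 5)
Your proposal is correct and follows essentially the same route as the paper: unwind $\psi^G_f$ from its definition, commute $g^G_*$ past $(j\circ\pi)^{-1}_G$ and past $i^{-1}_G$ via the Equivariant Proper Base Change Theorem \ref{thm:proper-base} applied to the two cartesian squares (plus functoriality of pushforward for the composite map), and then deduce the $\phi^G_f$ statement formally from the cofibre sequence, exactly as the paper's terse proof does. The one caveat is that Proposition \ref{prop:smooth-basechange} is not the right citation for the open-immersion/covering square (it concerns smooth pullback of $\psi$ and $\phi$ under trivial-action and locally-constant hypotheses), but your stated fallback of invoking \ref{thm:proper-base} there, since $g$ is proper, is precisely the paper's argument, so the proof goes through.
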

\begin{proof}
It suffices to prove the result for nearby cycles, which is done as follows:
$$g^G_*\big(\psi^G_*(\F)\big) \simeq g^G_*i^{-1}_G(j \circ \pi_{\Yc})^G_*(j \circ \pi_{\Yc})^{-1}_G(\F) \simeq i^{-1}_G g^G_* (j \circ \pi_{\Yc})^G_*(j \circ \pi_{\Yc})^{-1}_G(\F),$$
where we used proper base change (Theorem \ref{thm:proper-base}) in the last step. The right-hand side is now equivalent to 
$$i^{-1}_G g^G_* (j \circ \pi_{\Yc})^G_*(j \circ \pi_{\Yc})^{-1}_G(\F) \simeq i^{-1}_G (j \circ \pi_{\Zc})^G_*g^G_* (j \circ \pi_{\Zc})^{-1}_G(\F) \simeq i^{-1}_G (j \circ \pi_{\Zc})^G_*(j \circ \pi_{\Zc})^{-1}_Gg^G_* (\F),$$
by proper base change.\end{proof}


\end{document}